\newcommand{\NN}{\mathbb{N}}
\newcommand{\ZZ}{\mathbb{Z}}
\newcommand{\RR}{\mathbb{R}}
\newcommand{\CC}{\mathbb{C}}
\newcommand{\oper}[1]{\operatorname{#1}}
\newcommand{\calA}{\mathcal{A}}
\newcommand{\calI}{\mathcal{I}}
\newcommand{\calJ}{\mathcal{J}}
\newcommand{\calL}{\mathcal{L}}
\newcommand{\calN}{\mathcal{N}}
\newcommand{\calS}{\mathcal{S}}
\newcommand{\cS}{\mathcal{S}}
\newcommand{\xx}{\mathbf{x}}
\newcommand{\nn}{\mathbf{n}}
\newcommand{\rr}{\mathbf{r}}
\newcommand{\vv}{\mathbf{v}}
\newcommand{\zz}{\mathbf{z}}
\newcommand{\uu}{\mathbf{u}}
\newcommand{\ww}{\mathbf{w}}
\DeclareMathOperator{\Dom}{Dom}
\DeclareMathOperator{\supp}{supp}
\DeclareMathOperator{\Span}{Span}
\newcommand{\Cliq}{\mathsf{Cliq}}
\newcommand{\Graph}{\mathsf{Graph}}
\newcommand{\floor}[1]{\lfloor #1 \rfloor}
\newcommand{\ceil}[1]{\lceil #1 \rceil}
\newtheorem{theorem}{Theorem}[section]
\newtheorem{lemma}[theorem]{Lemma}
\newtheorem{corollary}[theorem]{Corollary}
\newtheorem{definition}[theorem]{Definition}
\newtheorem{proposition}[theorem]{Proposition}
\newtheorem{assumption}[theorem]{Assumption}
\theoremstyle{remark}
\newtheorem{remark}[theorem]{Remark}
\newtheorem{problem}[theorem]{Problem}
\title[Bimodule coefficients, Riesz transforms on Coxeter groups and strong solidity]{Bimodule coefficients, Riesz transforms on Coxeter groups and strong solidity}
\date{\noindent \today.  MSC2010 keywords: 46L10, 20F55.  MC is supported by the NWO Vidi grant `Non-commutative harmonic analysis and rigidity of operator algebras', VI.Vidi.192.018. MW was supported by the Research Foundation – Flanders (FWO) through a Postdoctoral
Fellowship and by long term structural funding - Methusalem grant of the Flemish Government. }
\author[Matthijs Borst]{Matthijs Borst}
\author[Martijn Caspers]{Martijn Caspers}
\address{TU Delft, EWI/DIAM,
	P.O.Box 5031,
	2600 GA Delft,
	The Netherlands}
\email{m.j.borst@outlook.com}
\email{m.p.t.caspers@tudelft.nl}
\author[Mateusz Wasilewski]{Mateusz Wasilewski}
\address{KU Leuven,
Celestijnenlaan 200B,
3001 Leuven,
Belgium}
\email{mateusz.wasilewski@kuleuven.be}
\begin{document}
	
	\begin{abstract}
In deformation-rigidity theory it is often important to know whether certain  bimodules are weakly contained in the coarse bimodule. Consider a bimodule $H$ over the group algebra $\mathbb{C}[\Gamma]$, with $\Gamma$ a discrete group. The starting point of this paper is that if a dense set of the so-called coefficients of $H$  is contained in the Schatten $\calS_p$ class $p \in [2, \infty)$ then the $n$-fold tensor power $H^{\otimes n}_\Gamma$ for $n \geq p/2$ is quasi-contained in the coarse bimodule.  We apply this to gradient bimodules associated with the  carr\'e du champ  of a symmetric quantum Markov semi-group.

 For Coxeter groups we give a number of characterizations of having coefficients in $\calS_p$ for the gradient bimodule constructed from the word length function. We get equivalence of: (1) the gradient-$\calS_p$ property introduced by the second named author, (2)  smallness at infinity of a natural compactification of the Coxeter group, and for a large class of Coxeter groups: (3) walks in the Coxeter diagram called parity paths.

We derive several strong solidity results. In particular, we extend current strong solidity results for right-angled Hecke von Neumann algebras beyond right-angled Coxeter groups that are small at infinity. Our general methods also yield a concise proof of a result by T. Sinclair for discrete groups admitting a proper cocycle into a $p$-integrable representation.

	\end{abstract}

	\maketitle


	\section{Introduction}

This paper establishes bridges between the Riesz transform in modern harmonic analysis  and  von Neumann algebra theory. The original Riesz transform   can be defined as follows. Consider the   positive unbounded Laplace operator $\Delta$ and the directional gradient $\nabla_j$ on $L_2(\mathbb{R}^n)$  given by
\[
\Delta = - \sum_{j=1}^n \frac{\partial^2}{ \partial x_j^2}, \qquad  \nabla_j = \frac{\partial}{\partial x_j}, \qquad 1 \leq j \leq n.
\]
     Then the Riesz transform $R_j = \nabla_j \circ \Delta^{- \frac{1}{2}}$ for $1 \leq j \leq n$ is an isometry on $L_2(\mathbb{R}^n)$ that has been studied extensively in classical harmonic analysis in the context of Fourier multipliers, singular integral operators and  Calder\'on-Zygmund theory.

     Riesz transforms can be defined abstractly for any $C_0$-semi-group of positive measure preserving unital contractions on $L_\infty(X, \mu)$, with $(X, \mu)$ a finite Borel measure space. Such semi-groups admit a generator $\Delta$ and a natural replacement of the gradient $\nabla$ known as the {\it carr\'e du champ}. The Riesz transform is then defined as $\nabla \circ \Delta^{-\frac{1}{2}}$. These Riesz transforms were studied by Meyer  \cite{Meyer} for (commutative) Gaussian algebras and their study was continued by Bakry \cite{Bakry1}, \cite{Bakry2}, Gundy \cite{Gundy}, Pisier \cite{Pisier}, amongst others. This in particular involves an analysis of diffusion semi-groups on compact Riemannian manifolds with lower bounds on the Ricci curvature \cite{Bakry2}. In the non-commutative situation Clifford algebras were considered by Lust-Piquard \cite{LustPiquard1}, \cite{LustPiquard2}.  Also recently the Riesz transform was studied on general groups \cite{JMP} using certain multipliers associated with cocycles.

     In this paper we study Riesz transforms associated with non-commutative generalizations of diffusion semi-groups: (symmetric) quantum Markov semi-groups. Let $M$ be a finite von Neumann algebra and $\Phi = (\Phi_t)_{t \geq 0}$ a point-strongly continuous semi-group of trace preserving unital completely positive maps.  Such a semi-group comes with a generator $\Delta$. The proper replacement of the gradient is played by a bilinear form that is a non-commutative version of the carr\'e du champ. For simplicity we consider mostly quantum Markov semi-groups of Fourier multipliers associated with a discrete group $\Gamma$, acting on the group algebra $\mathbb{C}[\Gamma]$. Then the carr\'e du champ allows the construction of a  $\mathbb{C}[\Gamma]$ bimodule $H_\nabla$ and a derivation, i.e. a map satisfying the Leibniz rule, $\nabla: \mathbb{C}[\Gamma] \rightarrow H_\nabla$ such that (here formally) $\Delta = \nabla^\ast \nabla$. So $\nabla$ is a root of $\Delta$ just as in the case of the Laplace operator and the gradient. We refer to Cipriani-Sauvageot \cite{CiprianiSauvageot} where also the analytical framework is established.   Then there is an isometry $\nabla \circ \Delta^{- \frac{1}{2}}: \ell_2(\Gamma) \rightarrow H_\nabla$ called the {\it Riesz transform}. This Riesz transform was studied in the context of $q$-Gaussian algebras \cite{caspersL2CohomologyDerivationsQuantum2021}, \cite{LustPiquard1}, \cite{LustPiquard2} and compact quantum groups \cite{caspersRieszTransformCompact2021}, \cite{caspersGradientFormsStrong2021}.

     In the current paper we are interested in applications of the Riesz transform to group von Neumann algebras of discrete groups; we focus on Coxeter groups but we also obtain results for other groups.

\vspace{0.3cm}

      Recall that to a discrete group $\Gamma$ we may associate the group von Neumann algebra $\calL(\Gamma)$ which is the von Neumann algebra generated by the left regular representation.
          Let $\mathbb{F}_2$ be the free group with two generators.
     In his fundamental papers on free probability Voiculescu \cite{Voiculescu} showed  that $\calL(\mathbb{F}_2)$ does not possess a Cartan subalgebra, meaning that there does not exist a maximal abelian subalgebra (MASA) of $\calL(\mathbb{F}_2)$ whose normalizer generates $\calL(\mathbb{F}_2)$. An important consequence is that  $\calL(\mathbb{F}_2)$ does not non-trivially decompose as a crossed product and cannot be constructed from an equivalence relation with a cocycle as was shown by Feldman and Moore \cite{FeldmanMoore1}, \cite{FeldmanMoore2}. In \cite{OzawaPopaAnnals} Ozawa and Popa gave an alternative proof of the Voiculescu's result. They showed that $\calL(\mathbb{F}_2)$ is strongly solid. This means that the normalizer of any diffuse amenable von Neumann subalgebra of $\calL(\mathbb{F}_2)$ generates a von Neumann algebra that is amenable again. Since $\calL(\mathbb{F}_2)$  is nonamenable and since MASA's are diffuse it automatically follows that $\calL(\mathbb{F}_2)$ does not possess a Cartan subalgebra. After  \cite{OzawaPopaAnnals} many von Neumann algebras were proven to be strongly solid, see e.g.  \cite{isonoExamplesFactorsWhich2015}, \cite{OzawaPopaAJM}, \cite{PopaVaesCrelle} and references given there.
          As a consequence of the methods in this paper we are able to prove such strong solidity results as well.

\vspace{0.3cm}

To motivate the first part of this paper we recall the following theorem from \cite{caspersL2CohomologyDerivationsQuantum2021}. We do not explain for now the technical terms that occur in this theorem but in the subsequent paragraph we explain what the crucial part is.  Theorem \ref{Thm=RieszImpliesAOIntro} itself is actually not that hard to prove; however its consequences (see \cite{PopaVaesCrelle}, \cite{isonoExamplesFactorsWhich2015}) and proving that its assumptions hold in examples is rather intricate.

\begin{theorem}[Proposition 5.2 of \cite{caspersL2CohomologyDerivationsQuantum2021}]  \label{Thm=RieszImpliesAOIntro}
Let $H$ be a $\mathbb{C}[\Gamma]$ bimodule and let $V: \ell_2(\Gamma) \rightarrow H$ be bounded. Assume that $H$ is quasi-contained in the coarse bimodule of $\Gamma$,  that $V$ is almost bimodular and that $V^\ast V$ is Fredholm. Assume that $C_r^\ast(\Gamma)$ is locally reflexive.  Then $\calL(\Gamma)$ satisfies AO$^+$.
\end{theorem}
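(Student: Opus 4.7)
The goal is to produce a $\ast$-homomorphism $\overline{\nu}: C_r^\ast(\Gamma) \otimes_{\min} C_r^\ast(\Gamma)^{\op} \to B(\ell_2(\Gamma))/\calK(\ell_2(\Gamma))$ extending the canonical map $a \otimes b \mapsto [\lambda(a) \rho(b)]$, together with a unital completely positive lift to $B(\ell_2(\Gamma))$; these are the ingredients of AO$^+$. The bimodule $H$ together with the map $V$ will serve as a transport mechanism that converts the min-continuity coming from quasi-containment into min-continuity of the regular-representation picture, modulo compacts.

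First I would exploit quasi-containment of $H$ in the coarse bimodule: the coarse bimodule carries exactly the standard faithful representation of $C_r^\ast(\Gamma) \otimes_{\min} C_r^\ast(\Gamma)^{\op}$, so the bimodule structure on $H$ extends to a $\ast$-representation $\pi_H$ of this minimal tensor product on $B(H)$. Conjugating by $V$, define the bounded linear map
\[
\Phi: C_r^\ast(\Gamma) \otimes_{\min} C_r^\ast(\Gamma)^{\op} \to B(\ell_2(\Gamma)), \qquad \Phi(x) := V^\ast \pi_H(x) V.
\]
Using almost bimodularity of $V$ (i.e.\ $V\lambda(g) - \lambda_H(g)V$ and $V\rho(g) - \rho_H(g)V$ are compact for $g$ in a dense set) and taking adjoints, a short calculation gives for $a,b \in \mathbb{C}[\Gamma]$
\[
\Phi(a \otimes b) \equiv (V^\ast V)\,\lambda(a)\rho(b) \pmod{\calK(\ell_2(\Gamma))},
\]
and shows that $V^\ast V$ commutes with every $\lambda(a)\rho(b)$ modulo compacts. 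Since $V^\ast V$ is Fredholm, pick a parametrix $T$ with $TV^\ast V \equiv I \pmod{\calK(\ell_2(\Gamma))}$. Left multiplication by $T$ in the previous displayed congruence yields
\[
\lambda(a)\rho(b) \equiv T\,\Phi(a\otimes b) \pmod{\calK(\ell_2(\Gamma))}.
\]
Hence $x \mapsto [T\,\Phi(x)]$ is a bounded linear extension to $C_r^\ast(\Gamma) \otimes_{\min} C_r^\ast(\Gamma)^{\op}$ of $a\otimes b \mapsto [\lambda(a)\rho(b)]$; since this extension agrees with a $\ast$-homomorphism on the dense subalgebra $\mathbb{C}[\Gamma] \odot \mathbb{C}[\Gamma]^{\op}$, it is itself a $\ast$-homomorphism, giving the desired $\overline{\nu}$.

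To obtain the unital completely positive lift required for AO$^+$, I would invoke local reflexivity of $C_r^\ast(\Gamma)$ and the standard Effros--Haagerup-type lifting theorem for $\ast$-homomorphisms into Calkin algebras; this supplies a ucp map $\psi: C_r^\ast(\Gamma) \otimes_{\min} C_r^\ast(\Gamma)^{\op} \to B(\ell_2(\Gamma))$ whose composition with the quotient map equals $\overline{\nu}$. The principal obstacle is the compact-error bookkeeping in the middle step: one must apply almost bimodularity symmetrically for both the left and right actions (and their adjoints) to establish simultaneously the congruence $\Phi(a\otimes b) \equiv (V^\ast V)\lambda(a)\rho(b)$ and the fact that $V^\ast V$ lies in the commutant of the image of $\mathbb{C}[\Gamma] \odot \mathbb{C}[\Gamma]^{\op}$ modulo $\calK(\ell_2(\Gamma))$, so that the Fredholm parametrix $T$ can be used to invert $V^\ast V$ cleanly in the Calkin algebra. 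Once that is secured, the density argument and the lifting via local reflexivity are routine.
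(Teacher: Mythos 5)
The paper does not prove this statement itself; it is cited as Proposition~5.2 of \cite{caspersL2CohomologyDerivationsQuantum2021}. So I will assess your argument on its own terms against what the statement requires.

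Your first four steps are correct: quasi-containment makes $\pi_H$ a $\ast$-representation of $C_r^\ast(\Gamma)\otimes_{\min}C_r^\ast(\Gamma)^{\op}$ on $H$; $\Phi := V^\ast\pi_H(\cdot)V$ is completely positive; almost bimodularity (applied to both variables and their adjoints) yields $\Phi(a\otimes b^{\op}) \equiv V^\ast V\,\lambda(a)\rho(b)$ modulo $\mathcal K(\ell_2(\Gamma))$ and shows that $[V^\ast V]$ commutes with each $[\lambda(a)\rho(b)]$ in the Calkin algebra; and with a parametrix $T$ for the Fredholm operator $V^\ast V$, the map $x\mapsto[T\Phi(x)]$ produces a bounded, hence $\ast$-homomorphic, extension $\overline\nu$ of $a\otimes b^{\op}\mapsto[\lambda(a)\rho(b)]$ to $C_r^\ast(\Gamma)\otimes_{\min}C_r^\ast(\Gamma)^{\op}$.

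The gap is the final lifting step. There is no ``Effros--Haagerup-type lifting theorem'' that lets you lift $\overline\nu$ (or a ucp map) from the Calkin algebra under only the hypothesis that the domain is locally reflexive. The Choi--Effros and Effros--Haagerup lifting theorems require nuclearity or the local lifting property (LLP) of the domain, and local reflexivity is not known to imply either: by Kirchberg, an exact algebra with the LLP is nuclear, so a non-nuclear exact $C^\ast$-algebra such as $C_r^\ast(\mathbb F_2)$ is locally reflexive yet fails the LLP. Note also that local reflexivity is already required as condition~(1) in Definition~\ref{Dfn=AO}; it is not what produces $\theta$. If local reflexivity plus the existence of the Calkin $\ast$-homomorphism (classical AO) automatically gave the ucp lift, AO$^+$ would not be a genuinely stronger notion than AO.

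The lifting is in any case unnecessary, because you already have a concrete cp map $\Phi$ landing in $B(\ell_2(\Gamma))$ and only need to correct the defect $\Phi(1)=V^\ast V$. Since $V^\ast V$ is a positive Fredholm operator, $\ker(V^\ast V)$ is finite dimensional; let $Q$ be the projection onto it and set $P:=V^\ast V+Q$, which is positive and invertible with $P-V^\ast V=Q$ finite rank. Fix any state $\omega$ on $C_r^\ast(\Gamma)\otimes_{\min}C_r^\ast(\Gamma)^{\op}$ and define
\[
\theta(x) := P^{-1/2}\,\Phi(x)\,P^{-1/2} + \bigl(P^{-1/2}QP^{-1/2}\bigr)\,\omega(x).
\]
This is completely positive and unital, since $\theta(1)=P^{-1/2}(V^\ast V+Q)P^{-1/2}=1$. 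In the Calkin algebra $[P]=[V^\ast V]$ commutes with each $[\lambda(a)\rho(b)]$, hence so does $[P^{-1/2}]$ by continuous functional calculus on the positive invertible element $[P]$, and $[P^{-1/2}][V^\ast V][P^{-1/2}]=[1]$. Combining this with $\Phi(a\otimes b^{\op})\equiv V^\ast V\lambda(a)\rho(b)$ and the finite-rank nature of the correction term gives $\theta(a\otimes b^{\op})-\lambda(a)\rho(b)\in\mathcal K(\ell_2(\Gamma))$, which together with the assumed local reflexivity of $C_r^\ast(\Gamma)$ establishes AO$^+$.
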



The Akemann-Ostrand property AO$^+$ (as in \cite{isonoExamplesFactorsWhich2015}) will be used frequently in this paper for which we refer to Definition \ref{Dfn=AO}. If $\Gamma$ is weakly amenable then AO$^+$ implies strong solidity \cite{PopaVaesCrelle}, \cite{isonoExamplesFactorsWhich2015}.  The Coxeter groups in this paper are weakly amenable \cite{fendlerWeakAmenabilityCoxeter2002}, \cite{Janus} as are all hyperbolic discrete groups \cite{OzawaHyperbolic}.

 In view of  Theorem \ref{Thm=RieszImpliesAOIntro} we are mostly still interested in two things: (1) constructing almost bimodular maps $V: \ell_2(\Gamma) \rightarrow H$   with  $H$ a $\mathbb{C}[\Gamma]$ bimodule;  (2) showing that the $\mathbb{C}[\Gamma]$ bimodule $H$ is quasi-contained in the coarse bimodule $\ell_2(\Gamma) \otimes \ell_2(\Gamma)$ of $\Gamma$.
   It turns out that very often the Riesz transform is an almost bimodular map. Further, we provide comprehensible conditions that show that the gradient bimodule is quasi-contained in the coarse bimodule. We will develop general theory for this as follows.

\vspace{0.3cm}

In the first part of this paper we study bimodules over $\mathbb{C}[\Gamma]$ and their {\it coefficients}. We define coefficients of a $\mathbb{C}[\Gamma]$ bimodule as a certain map $\mathbb{C}[\Gamma] \rightarrow \mathbb{C}[\Gamma]$. This notion occurs for instance in \cite[Section 13]{PopaDelaRocheBook} for von Neumann algebras; the more algebraic notion we present here is more convenient for our purposes.   Since $\mathbb{C}[\Gamma] \subseteq \ell_2(\Gamma)$ a coefficient determines a densely defined map $\ell_2(\Gamma) \rightarrow \ell_2(\Gamma)$. We study when these maps are contained in the Schatten von Neumann non-commutative $L_p$-space $\calS_p$.

For two  $\mathbb{C}[\Gamma]$ bimodules $H_1$ and $H_2$  we shall also show that $H_1 \otimes H_2$ has  a natural $\mathbb{C}[\Gamma]$ bimodule structure and we denote this bimodule by $H_1 \otimes_\Gamma H_2$. As a Hilbert space $H_1 \otimes_\Gamma H_2 = H_1 \otimes H_2$. Recall that the coarse bimodule of $\Gamma$ is given by $\ell_2(\Gamma) \otimes \ell_2(\Gamma)$ where the left action of $\mathbb{C}[\Gamma]$ is on the first tensor leg and the right action on the second tensor leg.
 In Section \ref{Sect=Coefficients} we prove the following, amongst other results (except for part {\it (4)}, which is proved in Section \ref{Sect=RieszEtc}, see Corollary \ref{Cor=RieszBimodular}).

\begin{theorem}\label{Theorem=IntroA}
Let $H, H_1$ and $H_2$ be $\mathbb{C}[\Gamma]$ bimodules.
\begin{enumerate}
\item If a dense set of coefficients of $H$ are in $\cS_2$ then $H$ is a  $\calL(\Gamma)$ bimodule that is quasi-contained in the coarse bimodule of $\Gamma$.
\item If a dense set of coefficients of $H_i, i=1,2$ is contained in $\cS_{p_i}, p_i \in [1, \infty)$ then a dense set of coefficients of $H_1 \otimes_\Gamma H_2$ is contained in  $\cS_{p}$ where $\frac{1}{p} = \frac{1}{p_1} + \frac{1}{p_2}$.
\item If $V_i: \ell_2(\Gamma) \rightarrow H_i, i=1,2$ is almost $\mathbb{C}[\Gamma]$ bimodular then so is $V_1 \ast V_2 := (V_1 \otimes V_2) \circ \Delta_\Gamma: \ell_2(\Gamma) \rightarrow H_1 \otimes_\Gamma H_2$ where $\Delta_\Gamma: \ell_2(\Gamma) \rightarrow \ell_2(\Gamma) \otimes \ell_2(\Gamma)$ is the comultiplication.
\item Consider a proper length function $\psi: \Gamma \rightarrow \mathbb{Z}_{\geq 0}$ that is conditionally of negative type, defined on a finitely generated group $\Gamma$. Then the associated Riesz transform $R: \ell_2(\Gamma) \rightarrow \ell_2(\Gamma)_\nabla$ is almost bimodular.
\end{enumerate}
\end{theorem}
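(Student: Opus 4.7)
The plan is to build up the coefficient formalism and then peel off each item in order. For (1), a Hilbert--Schmidt coefficient $T_{\xi,\eta}$ of $H$ is, via the identification $\cS_2(\ell_2(\Gamma)) \cong \ell_2(\Gamma)\otimes \ell_2(\Gamma)$, the same data as a vector $\widetilde{T}_{\xi,\eta}$ in the coarse bimodule. Because $T_{\xi,\eta}$ encodes the matrix coefficients of the $\mathbb{C}[\Gamma]$ bimodule action from both sides, the assignment $\xi \mapsto \widetilde{T}_{\xi,\xi}$ extends bimodularly to the cyclic sub-bimodule generated by $\xi$ and is isometric up to normalization. Density of such $\xi$ exhibits $H$ as a sub-bimodule of a direct sum of copies of the coarse bimodule, which is exactly quasi-containment; normality of the actions on the coarse bimodule then upgrades the structure on $H$ to an $\calL(\Gamma)$ bimodule.

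For (2), the main algebraic point is multiplicativity of coefficients under the relative tensor product: for elementary tensors one verifies directly that
\[
T_{\xi_1 \otimes \xi_2,\, \eta_1 \otimes \eta_2} = T_{\xi_1, \eta_1}\cdot T_{\xi_2, \eta_2}
\]
as densely defined operators on $\ell_2(\Gamma)$, reflecting the contraction of the middle $\mathbb{C}[\Gamma]$-action in $H_1 \otimes_\Gamma H_2$. The non-commutative H\"older inequality $\|AB\|_{\cS_p} \leq \|A\|_{\cS_{p_1}}\|B\|_{\cS_{p_2}}$ with $\tfrac{1}{p}=\tfrac{1}{p_1}+\tfrac{1}{p_2}$ then places such coefficients in $\cS_p$, and density in $H_1\otimes_\Gamma H_2$ is inherited from the density hypotheses in each factor. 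For (3), since $\Delta_\Gamma \lambda(g) = (\lambda(g)\otimes \lambda(g))\Delta_\Gamma$ one has $(V_1\ast V_2)\lambda(g) = (V_1\lambda(g)\otimes V_2\lambda(g))\Delta_\Gamma$; writing $V_i\lambda(g) = \pi_i(g)V_i + K_i(g)$ with $K_i(g)$ compact and expanding the tensor gives the principal term $(\pi_1(g)\otimes \pi_2(g))(V_1\ast V_2)$, which is exactly the left action of $g$ on $H_1\otimes_\Gamma H_2$, plus three error terms each containing a compact factor. The right action is handled identically.

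For (4), use the GNS construction for the cnd function $\psi$ to realize $H_\nabla$ as $\ell_2(\Gamma)\otimes \calH_\pi$, where $\pi:\Gamma\to U(\calH_\pi)$ is the associated orthogonal representation and $b:\Gamma\to \calH_\pi$ the $1$-cocycle with $\|b(g)\|^2 = \psi(g)$. Then $\nabla \delta_g = \delta_g \otimes b(g)$, the left action is $\lambda(g)\cdot(\delta_h\otimes v) = \delta_{gh}\otimes \pi(g)v$, the right action acts only on the $\ell_2(\Gamma)$ leg, and $R\delta_g = \psi(g)^{-1/2}\delta_g\otimes b(g)$ for $g\ne e$. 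Using the cocycle identity $b(gh) = b(g) + \pi(g)b(h)$,
\[
(\lambda(g) R - R\lambda(g))\delta_h = \delta_{gh}\otimes\Bigl((\psi(h)^{-1/2}-\psi(gh)^{-1/2})\pi(g)b(h) - \psi(gh)^{-1/2}b(g)\Bigr).
\]
The second summand has norm $\psi(gh)^{-1/2}\|b(g)\| \to 0$ as $\psi(h)\to \infty$, by properness of $\psi$. For the first, the standard cocycle estimate $|\psi(gh)^{1/2}-\psi(h)^{1/2}|\leq \|b(g)\|$ combined with $\|\pi(g)b(h)\| = \psi(h)^{1/2}$ gives that the $h$-th entry has norm $O(\psi(h)^{-1/2})$; hence both terms give compact operators. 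An analogous estimate, using $b(gh^{-1}) = b(g)+\pi(g)b(h^{-1})$, handles the right action.

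The main obstacle I expect lies in (1): making precise in what sense the Hilbert--Schmidt identification is bimodular, and checking that the passage from a dense set of individual coefficients to a global intertwiner into a multiple of the coarse bimodule is compatible with the bimodule structure. Once this structural lemma is in place, (2)--(4) reduce to the algebraic identity, the tensor expansion, and the cocycle estimate described above.
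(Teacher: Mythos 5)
Your treatment of (2) rests on an incorrect identity. In this paper $H_1\otimes_\Gamma H_2$ is the ordinary Hilbert-space tensor product equipped with the \emph{diagonal} left and right actions $s\cdot(\xi\otimes\eta)=s\xi\otimes s\eta$ and $(\xi\otimes\eta)\cdot s=\xi s\otimes\eta s$; it is not a relative (Connes-fusion) tensor product over $\calL(\Gamma)$, so there is no ``middle action to contract.'' Computing
$\tau\bigl(T_{\xi\otimes\eta}(\lambda_s)\lambda_t\bigr)=\langle s\xi t,\xi\rangle\langle s\eta t,\eta\rangle=\tau\bigl(T_\xi(\lambda_s)\lambda_t\bigr)\,\tau\bigl(T_\eta(\lambda_s)\lambda_t\bigr)$
shows that $T_{\xi\otimes\eta}$ is the \emph{Schur (entrywise) product} of $T_\xi$ and $T_\eta$ in the basis $\{\delta_g\}$, equivalently $T_{\xi\otimes\eta}=\Delta_\Gamma^\ast(T_\xi\otimes T_\eta)\Delta_\Gamma$ --- not the operator composition $T_\xi T_\eta$. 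Operator H\"older therefore does not apply; the paper instead writes $\|T_{\xi\otimes\eta}\|_{\cS_r}^r=\sum_g\langle|T_\xi|^r\delta_g,\delta_g\rangle\langle|T_\eta|^r\delta_g,\delta_g\rangle$ and uses scalar H\"older over $g$ together with the Jensen estimate $\langle Sv,v\rangle^t\le\langle S^tv,v\rangle$ for $t\ge 1$. Your identity and the ensuing Schatten--H\"older step would be correct for Connes fusion but fail for the tensor product actually in play.

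In (1) the gap you anticipate is real. The Hilbert--Schmidt identification produces a vector $\zeta_\xi$ with $\langle x\otimes y^{{\rm op}},\zeta_\xi\rangle=\langle x\xi y,\xi\rangle$, i.e.\ implementing the functional via the \emph{trace pairing}; this is not the same as producing $\eta$ with $\langle x\eta y,\eta\rangle=\langle x\xi y,\xi\rangle$, and $\xi\mapsto T_{\xi,\xi}$ is quadratic so cannot itself be a bimodular map. The missing step is to observe that the normal functional so defined is positive (a $z^\ast z$ computation) and then invoke the standard form of $\calL(\Gamma)\overline{\otimes}\calL(\Gamma)^{{\rm op}}$ to produce $\eta$; this is the content of Proposition~\ref{proposition:quasicontainment}. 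In (3) the expansion is right, but you must justify that $(K\otimes T)\Delta_\Gamma$ is compact even though $K\otimes T$ itself need not be; the paper does this by using $(P_F\otimes\oper{Id})\circ\Delta_\Gamma=\Delta_\Gamma\circ P_F$ to approximate by finite-rank operators in norm. Your argument for (4) via the explicit cocycle realization and the estimate $|\psi(gh)^{1/2}-\psi(h)^{1/2}|\le\|b(g)\|$ is a genuinely different and more hands-on route than the paper's, which instead derives almost-bimodularity from the abstract filtered/subexponential-growth criterion (Theorems~\ref{Thm=AlmostBimodularRiesz} and~\ref{subsection:show-filtered-subexponential-growth}); it avoids using integer-valuedness of $\psi$, though you should justify the identification of $L_2(\calL(\Gamma))_\nabla$ with the cocycle bimodule $\ell_2(\Gamma)\otimes\calH_\pi$ before applying it.
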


Theorem \ref{Theorem=IntroA} provides a clear strategy towards obtaining the input of Theorem \ref{Thm=RieszImpliesAOIntro}. Namely we start with a proper length function $\psi: \Gamma \rightarrow \mathbb{R}$ that is conditionally of negative type. We construct the associated gradient bimodule $H_\nabla$ and show that its coefficients are in $\cS_p$ for some $p \in [1, \infty)$. By tensoring we obtain a bimodule $(H_{\nabla})_\Gamma^{\otimes n }, n \geq \lceil \frac{p}{2} \rceil$ and a map
\[
V^{\ast n }: \ell_2(\Gamma) \rightarrow (H_{\nabla})_\Gamma^{\otimes n },
\]
 with the desired properties of Theorem \ref{Thm=RieszImpliesAOIntro}. This is the rough idea of our strategy. We say `rough' since in all applications we need some suitably adapted variation of this idea.

\vspace{0.3cm}

In the second part of this paper we analyse when coefficients of a gradient bimodule $H_\nabla$ are in $\calS_p, p \in [1, \infty)$. In order to do so we recall the property {\it gradient}-$\cS_p$ for quantum Markov semi-groups from \cite{caspersGradientFormsStrong2021},  \cite{caspersL2CohomologyDerivationsQuantum2021}. If a quantum Markov semi-group has gradient-$\cS_p$ then a dense set of coefficients of $H_\nabla$ are in $\cS_p$; consequently $H_\nabla$  is quasi-contained in the coarse bimodule of $\Gamma$.

  We first show (Lemma \ref{lemma:reduction-to-set-generating-the-group2}) that if $\psi: \Gamma \rightarrow \mathbb{Z}$ is a proper    length function that is conditionally of negative type then gradient $\cS_p, p \in [1, \infty)$ for the associated quantum Markov semi-group is independent of $p$. Then we analyse when the word length function of a general (finite rank) Coxeter group is gradient-$\cS_p$. We find the following characterization.

\begin{theorem}\label{Thm=ReflexionsIntro}
	Let $W = \langle S|M\rangle$ be a finite rank Coxeter system. Fix $p\in [1,\infty]$. The following are equivalent:
	\begin{enumerate}
		\item The quantum Markov semi-group  associated with the word length is gradient-$\calS_p$.
        \item   For all $s,t \in S$ the set $\{ w \in W : ws = t w \}$ is finite.
		\item The Coxeter system $\langle S|M\rangle$ is small at infinity (as in \cite{klisseTopologicalBoundariesConnected2020}).
\end{enumerate}
	\end{theorem}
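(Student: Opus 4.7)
The plan is to reduce the statement to a concrete combinatorial calculation using the standard reflection cocycle that realizes the word length as conditionally of negative type. By Lemma \ref{lemma:reduction-to-set-generating-the-group2}, since the word length is integer-valued, proper, and conditionally of negative type, the gradient-$\calS_p$ property is independent of $p \in [1, \infty)$; so we may work throughout with $p = 2$, where being in $\calS_p$ just means Hilbert--Schmidt and the coefficient analysis reduces to counting.

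First I would make explicit the gradient bimodule $H_\nabla$ via the reflection cocycle. Let $R$ denote the set of reflections of $W$ and let $\pi: W \to O(\ell_2(R))$ be the orthogonal representation acting (up to signs) by conjugation $\pi(w)\delta_r = \epsilon(w,r)\,\delta_{wrw^{-1}}$, with associated 1-cocycle $b: W \to \ell_2(R)$ satisfying $\|b(w)\|^2 = \ell_S(w)$. Then $H_\nabla$ embeds in $\ell_2(R) \otimes \ell_2(W)$ with left action $w\cdot(\delta_r \otimes \delta_u) = \pi(w)\delta_r \otimes \delta_{wu}$ and standard right action. From this description, a natural family of coefficients $c_r$, $r \in R$, is built from the basis vectors of $\ell_2(R)$, and the matrix entries of $c_r$ are supported on cosets of the form $\{w \in W : \pi(w)\delta_r = \pm \delta_{r'}\} = \{w : wrw^{-1} = r'\}$.

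For (1)$\iff$(2): the $\calS_2$-norm of $c_r$ decomposes as a sum over $r' \in R$ of the cardinalities of the sets $\{w : wrw^{-1} = r'\}$, and these coefficients span a dense subspace, so gradient-$\calS_2$ is equivalent to finiteness of all conjugator sets between reflections. Since every reflection is $W$-conjugate to some element of $S$, and an arbitrary conjugator set between two reflections is a translate of a conjugator set between their generator representatives, this reduces cleanly to finiteness of $\{w : wsw^{-1} = t\} = \{w : ws = tw\}$ for all $s, t \in S$, which is condition (2).

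For (2)$\iff$(3) I would invoke Klisse's compactification $\overline{W}$ from \cite{klisseTopologicalBoundariesConnected2020}. Smallness at infinity asserts that the diagonal $W$-action on $\overline{W} \times \overline{W}$ extends continuously to a trivial action on the boundary factor at ideal points. For (3)$\Rightarrow$(2), an infinite family $(w_n)$ with $w_n s w_n^{-1} = t$ yields, after passing to a subsequence, a boundary point $\omega$ for which $w_n s$ and $t w_n$ both converge to $\omega$, but then the action-translated sequences $\pi(w_n)\delta_s$ witness a nontrivial boundary displacement, contradicting smallness at infinity. For (2)$\Rightarrow$(3), one analyses the Klisse compactification in terms of walls: any obstruction to smallness at infinity produces via reflection combinatorics an infinite family of conjugators between two generators.

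The main obstacle is the equivalence (2)$\iff$(3), which requires a careful translation between Klisse's topological/dynamical criterion and the intrinsic combinatorial finiteness assumption. The direction (2)$\Rightarrow$(3) looks the most delicate: propagating finiteness of generator-level conjugator sets to continuity of the $W$-action on the entire boundary $\partial \overline{W}$ plausibly demands wall-combinatorial arguments and an analysis of stabilizers of boundary points within the Davis complex / Niblo--Reeves framework underlying Klisse's construction. By contrast, once the coefficients are identified via the cocycle, (1)$\iff$(2) is essentially a direct Hilbert--Schmidt bookkeeping.
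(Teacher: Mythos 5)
Your route to the equivalence (1)$\iff$(2) is genuinely different from the paper's and, once the details are filled in, correct. The paper works directly with the operators $\Psi^{\lambda_u,\lambda_w}$: by \eqref{Eqn=FiniteRankish}--\eqref{Eqn=PsiComp} these are essentially the multiplication operators by the functions $\gamma_{u,w}^{\psi_S}$, and Lemma \ref{lemma:shifting-identity} computes $|\gamma_{u,w}^{\psi_S}(\vv)| = 2\,\mathds{1}(u\vv = \vv w)$ by a word-combinatorial argument using the exchange condition. Your route instead realises the gradient bimodule as the cocycle bimodule $\ell_2(R)\otimes \ell_2(W)$ for the reflection representation $\pi$ (using the cocycle $b$ with $\|b(\ww)\|^2 = |\ww|$), and reads off coefficients as matrix coefficients of $\pi$ at the vectors $\delta_r$. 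Both land on the same conjugator-set finiteness. Be careful with one wording point: the relevant statement is not that ``the $\calS_2$-norm of $c_r$ is a sum over $r'$ of $|\{w : wrw^{-1}=r'\}|$'' (that sum equals $|W|$, which is infinite whenever $W$ is infinite); rather, for each pair of reflections $r,r'$ the coefficient $T_{\delta_r\otimes\delta_e,\,\delta_{r'}\otimes\delta_e}$ is, up to left and right translations, a diagonal operator whose $\calS_2$-norm squared is $|\{w : wrw^{-1}=r'\}|$, and gradient-$\calS_2$ requires each of these individually to be finite. You would also need to justify that the bimodule-cyclic span of these $\delta_r\otimes\delta_e$ identifies with the cyclic span of $\nabla(\CC[W])$ inside $H_\nabla$, so that Lemma \ref{lemma:reduction-to-cyclic-subset} and Theorem \ref{Thm=GradientCoefficients} apply.

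The real gap is the direction (2)$\Rightarrow$(3) (finiteness of conjugator sets implies smallness at infinity). You flag this as the delicate step and speculate that it needs a wall-combinatorial analysis of the Klisse compactification from scratch. That would indeed be a substantial undertaking, but the paper does not do it: it routes through the intermediate condition that the centralizers $\{\vv\in W : s\vv = \vv s\}$ are finite for all $s\in S$, and cites \cite[Theorem 0.3]{klisseTopologicalBoundariesConnected2020} for the equivalence of this centralizer condition with smallness at infinity. The logic is then a short loop: (2) trivially implies the centralizer condition (set $t=s$); Klisse's theorem gives smallness at infinity; and the paper's own short contradiction argument (along the lines you sketch) gives smallness at infinity $\Rightarrow$ (2). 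Without invoking Klisse's Theorem 0.3, your proposal has a genuine missing ingredient at this step, and as written it would not close.
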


In particular for right-angled Coxeter groups these statements are equivalent to the Coxeter group being a free product of finite abelian Coxeter groups, see \cite{klisseTopologicalBoundariesConnected2020}. This shows that gradient-$\calS_p$ is rather rare. However with the right tensor techniques it can still be turned into a very useful property. We also provide an almost characterization of when the equivalent statements of  Theorem \ref{Thm=ReflexionsIntro} hold in the following theorem. For the definition of the graph $\Graph_S(W)$ we refer to Definition \ref{Dfn=Graph}. The definition of a parity path is given in Definition \ref{Dfn=Parity}.

	\begin{theorem}\label{Theorem=NoParityIntro}
	Let $W=\langle S|M\rangle$ be a Coxeter group. If there does not exist a cyclic parity path in $\Graph_S(W)$ then the semi-group $(\Phi_t)_{t\geq 0}$ associated to the word length $\psi_{S}$ is gradient-$\mathcal{S}_p$ for all $p\in[1,\infty]$. The converse holds true if $m_{i,j} \not = 2$ for all $i,j$.
	\end{theorem}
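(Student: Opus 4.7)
The plan is to leverage Theorem \ref{Thm=ReflexionsIntro}, which identifies the gradient-$\calS_p$ property with the purely combinatorial assertion that the set $C_{s,t} := \{w \in W : ws = tw\}$ is finite for every pair $s, t \in S$. The theorem then reduces to showing (a) that the absence of a cyclic parity path in $\Graph_S(W)$ forces each $C_{s,t}$ to be finite, and conversely (b) that under the assumption $m_{i,j} \neq 2$, the existence of a cyclic parity path produces some infinite $C_{s,t}$.

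For direction (a), the starting observation is that $ws = tw$ means $w^{-1} t w = s$, so $w$ conjugates one simple reflection to another. The plan is to argue that any such $w$, chosen of minimal length in its coset modulo the centraliser of $s$, admits a canonical decomposition into building blocks corresponding to edges of $\Graph_S(W)$. Concretely, a length-reducing multiplication $w \mapsto w \cdot s_i$ either commutes $s_i$ past a tail of $w$ (an even/commuting step) or turns it into another simple reflection via a braid relation $s_i s_j s_i \cdots = s_j s_i s_j \cdots$ of length $m_{i,j}$ (an odd/parity step). Iterating the exchange condition shows that the labels $s = r_0, r_1, \ldots, r_N = t$ of the simple reflections encountered trace out a walk in $\Graph_S(W)$ whose edge labels exhibit the required parity pattern. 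If $\Graph_S(W)$ contains no cyclic parity path, then the set of parity paths from $s$ to $t$ is finite, whence $\ell(w)$ is bounded and $C_{s,t}$ is finite.

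For direction (b), given a cyclic parity path at some $s \in S$ traversing vertices $s = s_0, s_1, \ldots, s_n = s$, I would construct the explicit element
\[
u = b_{s_0, s_1}\, b_{s_1, s_2} \cdots b_{s_{n-1}, s_n},
\]
where $b_{a,b}$ is the half-braid element $aba\cdots$ of appropriate length derived from $m_{a,b}$; the hypothesis $m_{i,j} \neq 2$ ensures each block is genuinely nontrivial and the parity conditions on the cycle force $u^{-1} s u = s$. A word-length argument (using Matsumoto and the fact that reduced expressions obtained this way cannot shorten when concatenated with themselves, again relying on $m_{i,j} \neq 2$ to rule out spurious commutations that would collapse $u$) shows $u$ has infinite order in $W$. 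Then $\{u^k : k \in \mathbb{Z}\} \subseteq C_{s,s}$ is infinite.

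The main obstacle is the structural claim in (a): that any minimal-length conjugator $w$ really does decompose along a walk in $\Graph_S(W)$ whose parity pattern is exactly the notion of \emph{parity path} used in Definition~\ref{Dfn=Parity}. Making this decomposition canonical, and showing that the parity pattern of the walk is forced rather than chosen, is the delicate step; it must systematically exploit the strong exchange condition together with the precise form of braid relations, and it is also where the hypothesis $m_{i,j} \neq 2$ becomes necessary in the converse to prevent commutation shortcuts from producing ``accidental'' cyclic parity paths that do not yield infinite conjugator sets.
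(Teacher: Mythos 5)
Your plan follows essentially the same route as the paper: the reduction via Theorem~\ref{Thm=ReflexionsIntro} to the finiteness of $\{\vv\in W: u\vv=\vv w\}$ is the same one the paper obtains through Lemma~\ref{lemma:shifting-identity}, and the canonical decomposition of such conjugators into half-braid blocks $\rr_{i,j}$ tracing a parity path in $\Graph_S(W)$ — the step you correctly flag as the delicate one — is precisely Lemma~\ref{lemma:word-sequence-expression}, proved by induction using the exchange condition as you anticipate, with the pigeon-hole bound on parity-path length supplied in Theorem~\ref{theorem:no-parity-path-implies-gradient-Sp}. For the converse, Theorem~\ref{theorem:parity-path-implies-not-gradient-Sp} likewise iterates a cyclic parity path and certifies the resulting words have unbounded length by showing they are $M$-reduced via Tits' criterion, and it is exactly here that $m_{i,j}\neq 2$ is used to rule out collapses — matching the infinite-order/reducedness argument you sketch.
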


  Section 	\ref{section:semi-groups-word-length}   shows that it is usually easy to determine whether  $\Graph_S(W)$ has a parity path, see Corollaries \ref{corollary:gradient-Sp-Coxeter-groups} and \ref{corollary:no-labels-two-iff}.

\vspace{0.3cm}

 We now come to the applications.  The first one is essentially the main result of \cite{Sinclair}. Now this theorem  follows rather directly from our analysis of bimodule coefficients in Section \ref{Sect=Coefficients}. The theorem in particular applies to icc lattices in the groups $SO(n,1), n \geq 3$ and $SU(m,1), m \geq 2$.  

\begin{theorem}[Application A]\label{Thm=IntroPIntegrable}
Let $\Gamma$ be a discrete group admitting a proper cocycle into a $p$-integrable representation for some $p<\infty$. Assume $C_r^\ast(\Gamma)$ is locally reflexive.  Then $\calL(\Gamma)$ has   property AO$^{+}$.
\end{theorem}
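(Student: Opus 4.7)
The plan is to verify the hypotheses of Theorem \ref{Thm=RieszImpliesAOIntro} directly from the cocycle data, using the machinery of Section \ref{Sect=Coefficients} packaged in Theorem \ref{Theorem=IntroA}. Let $\pi: \Gamma \to \mathcal{U}(H_\pi)$ be a $p$-integrable unitary representation with proper $1$-cocycle $b: \Gamma \to H_\pi$, and set $\psi(g) := \|b(g)\|^2$, a proper conditionally negative definite length on $\Gamma$. The associated symmetric Fourier multiplier semigroup $\Phi_t(\lambda_g) = e^{-t\psi(g)}\lambda_g$ has gradient bimodule $H_\nabla$ which, as a Hilbert $\mathbb{C}[\Gamma]$-bimodule, is canonically identified with $H_\pi \otimes \ell_2(\Gamma)$ equipped with the actions $\lambda_g \cdot (\xi \otimes \delta_h) = \pi(g)\xi \otimes \delta_{gh}$ and $(\xi \otimes \delta_h) \cdot \lambda_g = \xi \otimes \delta_{hg}$, and with derivation $\partial \lambda_g = b(g) \otimes \delta_g$.

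The first step is to compute the coefficients of $H_\nabla$. A short calculation with $v = \xi \otimes \delta_e$ and $w = \eta \otimes \delta_e$ shows that the corresponding coefficient is the diagonal multiplication operator on $\ell_2(\Gamma)$ with symbol $g \mapsto \langle \pi(g)\xi, \eta \rangle$. Since $\pi$ is $p$-integrable, these symbols lie in $\ell_p(\Gamma)$ for $\xi, \eta$ in a dense subspace of $H_\pi$, so the corresponding bimodule coefficients lie in $\calS_p$ and span a dense set. Applying Theorem \ref{Theorem=IntroA}(2) to $n := \lceil p/2 \rceil$ tensor factors, a dense set of coefficients of $(H_\nabla)^{\otimes n}_\Gamma$ lies in $\calS_{p/n} \subseteq \calS_2$. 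Theorem \ref{Theorem=IntroA}(1) then gives that $(H_\nabla)^{\otimes n}_\Gamma$ is an $\calL(\Gamma)$-bimodule quasi-contained in the coarse bimodule of $\Gamma$.

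Next I would construct the almost bimodular map. The Riesz transform $R = \nabla \circ \Delta^{-1/2}$ is given explicitly by $R\delta_g = \|b(g)\|^{-1} b(g) \otimes \delta_g$ for $g \neq e$ and $R\delta_e = 0$; by Theorem \ref{Theorem=IntroA}(4) it is almost bimodular, and by Theorem \ref{Theorem=IntroA}(3) the iterated convolution $V := R \ast R \ast \cdots \ast R: \ell_2(\Gamma) \to (H_\nabla)^{\otimes n}_\Gamma$ is too. Since $\Delta_\Gamma \delta_g = \delta_g \otimes \delta_g$, we have $V\delta_g = (R\delta_g)^{\otimes n}$, and the orthogonality $\langle R\delta_g, R\delta_h\rangle = \delta_{g,h}$ for $g,h \neq e$ immediately yields $V^\ast V = \Id - P_{\mathbb{C}\delta_e}$, a finite-rank perturbation of the identity, hence Fredholm.

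All assumptions of Theorem \ref{Thm=RieszImpliesAOIntro} are now in place: the bimodule $(H_\nabla)^{\otimes n}_\Gamma$ is quasi-contained in the coarse bimodule, $V$ is almost bimodular, $V^\ast V$ is Fredholm, and local reflexivity of $C_r^\ast(\Gamma)$ is assumed. We conclude that $\calL(\Gamma)$ has property AO$^+$. The main technical point I expect to require most care is the identification of the abstractly defined gradient bimodule $H_\nabla$ (built from the carr\'e du champ) with the concrete cocycle bimodule $H_\pi \otimes \ell_2(\Gamma)$ in a manner that makes the $\ell_p$-integrability of matrix coefficients of $\pi$ directly yield $\calS_p$-membership of a dense family of bimodule coefficients; once this bridge is in place the rest of the argument is essentially a bookkeeping exercise on top of Theorems \ref{Thm=RieszImpliesAOIntro} and \ref{Theorem=IntroA}.
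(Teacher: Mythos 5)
Your strategy is the same as the paper's (realize the derivation in $K := H_\pi \otimes \ell_2(\Gamma)$, compute diagonal $\ell_p$ coefficients, tensor, convolve, apply Theorem \ref{Thm=RieszImpliesAOIntro}), but there is a genuine gap at the almost-bimodularity step. You appeal to Theorem \ref{Theorem=IntroA}(4), which (via Corollary \ref{Cor=RieszBimodular}, Theorem \ref{Thm=AlmostBimodularRiesz} and Theorem \ref{subsection:show-filtered-subexponential-growth}) requires $\psi$ to be a proper $\mathbb{Z}_{\geq 0}$-valued \emph{length function}: both integer-valuedness and subadditivity $\psi(uv)\leq\psi(u)+\psi(v)$ are used to prove that $\Delta_\psi$ is filtered with subexponential growth. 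But $\psi(g)=\|b(g)\|^2$ is typically real-valued and is \emph{not} subadditive — the cocycle identity only yields $\sqrt{\psi(uv)}\leq\sqrt{\psi(u)}+\sqrt{\psi(v)}$ — so none of these hypotheses are met, and calling $\psi$ a ``length'' is already incorrect. The paper avoids this entirely by appealing to \cite[Proposition 5.3]{caspersL2CohomologyDerivationsQuantum2021}, a separate result establishing almost bimodularity of $\partial\circ\Delta^{-1/2}$ directly in the derivation/cocycle setting, without any filtering or integer-length hypothesis; your argument needs that same ingredient, not Theorem \ref{Theorem=IntroA}(4).

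A secondary issue is the ``canonical identification'' of the abstract carr\'e-du-champ bimodule $H_\nabla$ with $K$. Over a complex Hilbert space this is not automatic: a short computation with the cocycle identity gives $\Gamma(\lambda_g,\lambda_h)=\mathrm{Re}\langle b(g),b(h)\rangle\,\lambda_{h^{-1}g}$, so $H_\nabla$ only sees the real parts of cocycle inner products, whereas $K$ carries the full Hermitian ones; for collections $\{(g,s)\}$ with a fixed product $gs=w$ the two Gram matrices $[\mathrm{Re}\langle b(g),b(h)\rangle]$ and $[\langle b(g),b(h)\rangle]$ can genuinely differ, so the map need not be isometric. The paper sidesteps this by never passing through the abstract gradient bimodule — it defines $K$, $\partial$ and $\partial\circ\Delta^{-1/2}$ directly and computes the coefficients of $K$ directly. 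Dropping that detour and using the cited almost-bimodularity result would bring your proposal in line with a correct proof.
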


\vspace{0.3cm}

Next we obtain strong solidity results for Hecke von Neumann algebas: $q$-deformations of Coxeter groups.  
 The following theorem extends \cite[Theorem 0.7]{klisseTopologicalBoundariesConnected2020} in the case of a right-angled Coxeter system. What is of particular interest is that our methods really improve on the approach based on compactifications and boundaries in \cite{klisseTopologicalBoundariesConnected2020}. More precisely, \cite{klisseTopologicalBoundariesConnected2020} shows that if the action of a right-angled Coxeter group on a natural boundary associated with it is small at infinity, then actually the   Coxeter group is a free product of finite (commutative) Coxeter groups. So the approach in \cite[Theorem 0.7]{klisseTopologicalBoundariesConnected2020} cannot be extended to the current generality.

\begin{theorem}[Application B]\label{Thm=IntroHecke}
Let $W = \langle S | M \rangle$ be a right-angled Coxeter group and let $q = (q_s)_{s\in S}$ with $q_s >0$. Assume that  all elements in
\[
\mathcal{I} := \{ r \in S : \exists s,t \in S \textrm{ such that } m_{r,s} = m_{r,t} = 2 \textrm{ and } m_{s,t} = \infty \}
\]
commute. Then the Hecke von Neumann algebra $\calN_q(W)$ satisfies AO$^+$ and  is strongly solid.
\end{theorem}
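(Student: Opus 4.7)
The plan is to follow the AO$^+$/strong-solidity blueprint from the group case, but carried out for the Hecke algebra $\calN_q(W)$. Concretely, I would first establish a Hecke analogue of Theorem~\ref{Thm=RieszImpliesAOIntro}: exhibiting a $\calN_q(W)$ bimodule $H$ quasi-contained in the coarse bimodule, together with an almost bimodular bounded map $V\colon L_2(\calN_q(W))\to H$ whose square $V^\ast V$ is Fredholm, should imply AO$^+$. Combined with weak amenability of right-angled Hecke algebras, strong solidity then follows as in \cite{PopaVaesCrelle,isonoExamplesFactorsWhich2015}. All the heavy lifting is thus to produce the pair $(H,V)$.

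For that I would build $H$ and $V$ from an iterated Riesz transform attached to a conditionally negative length function $\psi$ on $W$. The naive choice $\psi=\psi_S$ fails, since Theorem~\ref{Thm=ReflexionsIntro} identifies gradient-$\cS_p$ with $\mathcal{I}=\emptyset$. I would therefore pass to a modified length that neutralises the generators in $\mathcal{I}$, that is, is constant on cosets of the parabolic subgroup $W_\mathcal{I}$. Here the commutativity hypothesis on $\mathcal{I}$ is crucial: it forces $W_\mathcal{I}\cong(\ZZ/2)^{|\mathcal{I}|}$ to be a \emph{finite} abelian subgroup, so neutralising $\mathcal{I}$ only introduces a finite-dimensional defect. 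Once $\mathcal{I}$ is collapsed, every cyclic parity path through a witness triple $(r,s,t)$ with $r\in\mathcal{I}$ disappears from the relevant graph, so a Hecke-side version of Theorem~\ref{Theorem=NoParityIntro} should yield gradient-$\cS_p$ for some $p<\infty$. Applying Theorem~\ref{Theorem=IntroA}(2)--(3) and iterating convolutions of Riesz transforms $n\geq\lceil p/2\rceil$ times then produces an almost bimodular $R^{\ast n}$ landing in a bimodule $(H_\nabla)_W^{\otimes n}$ whose coefficients lie in $\cS_2$, hence quasi-contained in the coarse bimodule.

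The main obstacle is twofold. First, $W_\mathcal{I}$ is generally not normal in $W$, so $\psi$ cannot literally factor through a quotient group; it must be replaced by a quasi-invariant substitute and the compensating almost bimodular map has to track this defect, which is where the commutativity of $\mathcal{I}$ enters most quantitatively (it bounds how badly conjugation by $\mathcal{I}$-letters spreads a given generator). Second, everything has to be carried out $q$-equivariantly: left and right actions on Hecke bimodules are $q$-twisted, and the Schatten-class estimates for coefficients must be re-derived with respect to the Hecke inner product. This requires extending the bimodule-coefficient calculus of Section~\ref{Sect=Coefficients} from $\mathbb{C}[\Gamma]$ to $\mathbb{C}_q[W]$ in the right-angled case, and verifying that the Fredholm property of $V^\ast V$ survives the $q$-deformation. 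Once these two technical extensions are in place, the conclusion reduces by construction to the criterion in the first paragraph.
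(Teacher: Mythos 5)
Your core idea---replace $\psi_S$ by the weighted length $\psi_{S\setminus\mathcal{I}}$, use commutativity of $\mathcal{I}$ to make $W_\mathcal{I}$ finite and hence $\psi_{S\setminus\mathcal{I}}$ proper, then run the Riesz-transform machinery---is exactly the idea the paper uses in Section~\ref{Sect=HeckeStronglySolid}. However, your execution has a genuine gap at the step where you propose ``iterating convolutions of Riesz transforms $n\geq\lceil p/2\rceil$ times'' via Theorem~\ref{Theorem=IntroA}(2)--(3). That tensoring construction is built on the comultiplication $\Delta_\Gamma\colon \gamma\mapsto\gamma\otimes\gamma$, which is a group-theoretic object; it has no analogue for the deformed algebra $\mathbb{C}_q[W]$, and the paper states explicitly (Section~\ref{Sect=Problems}) that the $\otimes_\Gamma$ tensor-product technique is unavailable for Hecke algebras precisely because of the absence of a suitable comultiplication. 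So the ``$q$-equivariant'' extension of the convolution calculus that you flag as a technical chore is in fact an obstruction, not a chore.

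The paper's workaround is that no tensoring is needed: for the weighted length $\psi_{S\setminus\mathcal{I}}$, the QMS on $\calN_q(W)$ is already gradient-$\cS_2$. This is shown in Theorem~\ref{Thm=SpHecke} by a direct computation of $\Psi^{T_u,T_w}$, which produces two terms---the ``group-like'' term $\gamma_{u,w}^{\psi}(\vv)T_{u\vv w}$ and a $q$-correction term proportional to $\gamma_{u,w}^{\psi_S}(\vv)\bigl(\psi(u\vv)-\psi(\vv)\bigr)T_{u\vv}p_w$. The correction term is controlled by observing that if $u\in\mathcal{I}$ then $\psi_{S\setminus\mathcal{I}}(u)=0$ kills the second term, while if $u\notin\mathcal{I}$ then $\gamma_{u,w}^{\psi_{S\setminus\mathcal{I}}}=\gamma_{u,w}^{\psi_S}$, and both lie in $\ell_2$ by Theorem~\ref{lemma:gradient-Sp-weighted-word-length-right-angled-Coxeter-group}. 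Once gradient-$\cS_2$ is in hand, quasi-containment in the coarse bimodule follows from the Hecke analogue of Theorem~\ref{Thm=GradientCoefficients}, and a single Riesz transform (no iteration) is the almost bimodular partial isometry. Your secondary worry about $W_\mathcal{I}$ not being normal and $\psi$ not factoring through a quotient is a non-issue: the paper never passes to a quotient; the weighted length $\psi_{S\setminus\mathcal{I}}$ counts letters outside $\mathcal{I}$ in a reduced expression and is directly shown to be well-defined, conditionally negative type (Lemma~\ref{lemma:weighted-word-lenght-conditionally-negative}) and proper (Proposition~\ref{Prop=ProperWeight}).
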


 We note that a large part of the analysis in proving Theorem \ref{Thm=IntroHecke} applies to general Hecke algebras. However, the strong solidity properties are still pending on whether certain semi-groups extend to quantum Markov semi-groups. In the final Section \ref{Sect=Problems} of this paper we summarize some problems that are open to the knowledge of the authors.

\vspace{0.3cm}

\noindent {\bf Structure of the paper.} Section \ref{Sect=Preliminaries} contains the preliminaries.  Section \ref{Sect=Coefficients} contains results on bimodules and their coefficients. We prove Theorem \ref{Theorem=IntroA}. We also  directly obtain the first strong solidity result, namely Theorem \ref{Thm=IntroPIntegrable}. Section \ref{Sect=RieszEtc} introduces quantum Markov semi-groups, the gradient bimodule and the Riesz transform. We also derive many of the basic properties. Section \ref{section:semi-groups-word-length} proves Theorem \ref{Thm=ReflexionsIntro} and Theorem  \ref{Theorem=NoParityIntro}. Note that here we also establish the Corollaries \ref{corollary:gradient-Sp-Coxeter-groups} and \ref{corollary:no-labels-two-iff} which make it easy to see if a Coxeter group is small at infinity. Section \ref{Sect=Weights} contains an analysis of quantum Markov semi-groups with weights on the generators. This applies mostly to right-angled Coxeter groups and it is crucial in the later sections.
Section \ref{Sect=SolidTensor} proves strong solidity results for Coxeter groups using tensor methods.  In Section \ref{Sect=HeckeStronglySolid} we prove Theorem \ref{Thm=IntroHecke}. We have included Section \ref{Sect=Problems} to list some problems that are left open.

\vspace{0.3cm}

\noindent {\bf Acknowledgements.} The authors wish to express their gratitude to Mario Klisse for several valuable comments and noting the connections obtained in Section  \ref{Sect=SmallAtInfinity}.

	\section{Preliminaries}\label{Sect=Preliminaries}

Inner products are linear on the left and anti-linear on the right.

\subsection{Von Neumann algebras} For standard theory of von Neumann algebras we refer to \cite{StratilaZsido}, \cite{PopaDelaRocheBook} and \cite{TakI}. Let $B(H)$ be the bounded operators on a Hilbert space $H$.  A von Neumann algebra $M$ is a unital $\ast$-subalgebra of $B(H)$ that is closed in the strong operator topology. A von Neumann algebra is {\it finite} if it admits a faithful normal tracial state $\tau: M \rightarrow \mathbb{C}$. We will say that the pair $(M, \tau)$ is a finite von Neumann algebra. We let $L_2(M)$ be the Hilbert space completion of $M$ with respect to the inner product $\langle x, y \rangle_\tau = \tau(y^\ast x)$. Note that we suppress $\tau$ in the notation of $L_2(M)$. In case $M$ is a group von Neumann algebra (see below) $\tau$ is understood as the trace defined by \eqref{Eqn=TauGroup}. We denote $\Omega_\tau \in L_2(M)$ for the element $1 \in M$ identified within $L_2(M)$. A map between von Neumann algebras is called {\it normal} if it is strongly continuous on the unit ball.

\subsection{Operator spaces} For operator spaces we refer to \cite{EffrosRuan}, \cite{PisierBook}. A map $\Phi: M \rightarrow M$ on a von Neumann algebra $M$ is called completely positive if for every $n \in \mathbb{N}$ the map ${\rm id}_n \otimes \Phi: M_n(\mathbb{C}) \otimes M \rightarrow  M_n(\mathbb{C}) \otimes M$ maps positive elements to positive elements.

\hyphenation{a-me-na-ble}
\hyphenation{boun-ded}

\subsection{Approximation properties} A von Neumann algebra $M$ has the weak-$\ast$ completely bounded approximation property if there exists a net of normal completely bounded finite rank maps $\Phi_i: M \rightarrow M, i \in I$ such that $\sup_i \Vert \Phi_i \Vert_{cb} < \infty$ and for every $x \in M$ we have $\Phi_i(x) \rightarrow x$ in the $\sigma$-weak topology. If the $\Phi_i$ can moreover be chosen to be unital and completely positive then $M$ is called {\it amenable}. We refer to \cite{brownAlgebrasFiniteDimensionalApproximations2008} for further equivalent notions of amenability.

\subsection{Bimodules and containment} A {\it bimodule} over an algebra $\calA$ is a Hilbert space $H$ with commuting actions of $\calA$ and the opposite algebra $\calA^{{\rm op}}$. For $x,y \in \calA, \xi \in H$ we denote $x \cdot \xi \cdot y$ or $x \xi y$ for the left action of $x$ and the right action of $y$ on the vector $\xi$.
  In case $\calA$ is also a C$^\ast$-algebra we require that both actions are continuous as maps $\calA \rightarrow B(H)$ (and therefore contractive). In case $\calA$ is a von Neumann algebra we require both actions to be normal. We refer to these bimodules as $\calA$ bimodules and it should be clear from the context whether this is a bimodule over a $\ast$-algebra, C$^\ast$-algebra or von Neumann algebra.

 We say that an $\calA$ bimodule $H$ is {\it contained} in an $\calA$ bimodule $K$ if $H$ is a Hilbert subspace of $K$ that is invariant under the left and right action of $\calA$. We say that $H$ is {\it quasi-contained} in $K$ if $H$ is contained in $\oplus_{i \in I} K$ for some index set $I$ (if  $H$ is separable we may choose $I = \mathbb{N}$). We say that $H$
 is {\it weakly contained} in $K$ if for every $\epsilon > 0$, every finite set $F \subseteq \calA$ and every $\xi \in H$ there exist finitely many $\eta_j \in K$ indexed by $j \in G$ such that for $x,y \in F$,
 \[
 \vert \langle x \xi y, \xi \rangle - \sum_{j \in G} \langle x \eta_j y , \eta_j \rangle \vert < \epsilon.
 \]
 Containment implies quasi-containment which implies weak containment. In this paper we mostly deal with quasi-containment though in most of our applications a weak containment would be sufficient.

Let $M$ be a finite von Neumann algebra. Then  $M$ acts on $L_2(M)$ by left and right multiplication. This turns $L_2(M)$ into an $M$ bimodule called the {\it trivial bimodule}. Similarly $L_2(M) \otimes L_2(M)$ has a bimodule structure by extending
\[
x (\xi \otimes \eta) y = x \xi \otimes \eta y, \qquad x,y \in M, \xi, \eta \in L_2(M).
\]
The $M$ bimodule thus obtained is called the {\it coarse bimodule}.

\subsection{Schatten classes} Let $H$ be a Hilbert space. Define $\calS_{\infty} = \calS_{\infty}(H)$ as the space of compact operators on $H$.  For $p \in (0, \infty)$ we define $\cS_p = \cS_p(H)$ as the space of all $x \in B(H)$ for which
\begin{equation}\label{Eqn=SpNorm}
\Vert x \Vert_p := {\rm Tr}(   \vert x \vert^p )^{\frac{1}{p}} = \left( \sum_{i \in I} \langle \vert x \vert^p e_i, e_i \rangle \right)^{\frac{1}{p}}
\end{equation}
is finite where $e_i, i\in I$, is any orthonormal basis of $H$.
If $p \in [1, \infty]$ then $\|\cdot\|_p$ defines a norm turning $\calS_p$ into a Banach space that is moreover a 2-sided ideal in $B(H)$.

\subsection{Group algebras} Let $\Gamma$ be a discrete group. We denote $e$ for the identity of $\Gamma$.  Let
\[
\Gamma \mapsto   B(   \ell_2(\Gamma)   ): s  \mapsto \lambda_s,
\]
 be the left regular representation where $\lambda_s \delta_t = \delta_{st}$ and where $\delta_t$ is the delta function at $t \in \Gamma$. The {\it group algebra}  $\mathbb{C}[\Gamma]$ is the $\ast$-algebra generated by $\lambda_s, s \in \Gamma$.
  The {\it reduced group C$^\ast$-algebra} $C_r^\ast(\Gamma)$ is the norm closure of $\mathbb{C}[\Gamma]$.   The group von Neumann algebra $\calL(\Gamma)$ is the strong operator topology closure of $\mathbb{C}[\Gamma]$.  $\calL(\Gamma)$ is finite with faithful normal tracial state
 \begin{equation}
 \label{Eqn=TauGroup}
 \tau(x) = \langle x \delta_e, \delta_e \rangle, \qquad x \in \calL(\Gamma).
 \end{equation}

 Note that we have an identification as Hilbert spaces $L_2(\calL(\Gamma)) \simeq \ell_2(\Gamma)$ by $x \mapsto x \delta_e$ with $x \in \mathbb{C}[\Gamma]$. Under this identification $\ell_2(\Gamma)$ is the trivial bimodule with actions given by the left and right regular representations $\lambda$ and $\rho$.  The coarse bimodule is then given by $\ell_2(\Gamma) \otimes \ell_2(\Gamma)$ with left and right actions given by
 \[
 x \cdot (\xi \otimes \eta) \cdot y = (x \xi) \otimes (\eta y), \xi, \eta \in \ell_2(\Gamma).
 \]
 We simply call $\ell_2(\Gamma) \otimes \ell_2(\Gamma)$ with these bimodule actions the coarse bimodule of $\Gamma$. We also summarize that
 \[
 \Gamma \subseteq \mathbb{C}[\Gamma] \subseteq C_r^\ast(\Gamma) \subseteq \calL(\Gamma) \subseteq \ell_2(\Gamma),
 \]
 where the first inclusion is given by $s \mapsto \lambda_s$ and the others were discussed above.

  \subsection{Hyperbolic groups}
  Let $(V, E)$ be a graph with vertex set $V$ and edge set $E$. For $v,w \in V$ a geodesic from $v$ to $w$ is a shortest path in the graph. For $G \subseteq V$ and $\delta > 0$ we define the $\delta$-neighbourhood of $G$ as all points in $V$ for which there exists a geodesic of length at most $\delta$ to a point in $G$.    $(V, E)$ is {\it hyperbolic} if there exists $\delta > 0$ such that for every 3 vertices $v,w,u \in V$ and for all geodesics $[v, w], [w, u]$ and $[u,v]$ between these vertices, we have that $[u,v]$ lies in the $\delta$-neighbourhood of $[v,w] \cup [w,u]$.

  Let $\Gamma$ be a  finitely generated (discrete) group. $\Gamma$ is  {\it hyperbolic} or {\it word hyperbolic}  if its Cayley graph is hyperbolic; this definition is independent of the finite generating set that is used to construct the Cayley graph.  We emphasize that in this paper hyperbolic and word hyperbolic mean the same thing. The terminology `word hyperbolic' is more common in the theory of Coxeter groups.

\subsection{Functions on groups} Let $\Gamma$ be a discrete group.
A {\it length function}  is a function $\psi: \Gamma \rightarrow \mathbb{R}_{\geq 0}$ satisfying $\psi(uv) \leq \psi(u) + \psi(v)$ for all $u,v \in \Gamma$. If $\Gamma$ is generated by a finite set  $S$ then a typical length function is defined by $\psi(w) = n$ where $w = s_1 \ldots s_n$ is the shortest way of writing $w$ as a product of generators $s_i \in S$. Note that $\psi(w)$ is the distance from $w$ to $e$ in the Cayley graph of $\Gamma$.
 A function $\psi: \Gamma \rightarrow \mathbb{R}$ is called {\it conditionally of negative type} (also known as {\it conditionally negative definite})  if $\psi(e) = 0$, $\psi(s) = \psi(s^{-1}), s \in \Gamma$ and for all $n \in \mathbb{N}$ and $s_1, \ldots, s_n \in \Gamma$ and real numbers $c_1, \ldots, c_n$ with $\sum_{i=1}^n c_i = 0$ we have
 \[
 \sum_{i=1}^n \sum_{j=1}^n  c_i c_j \psi(s_j^{-1} s_i) \leq 0.
 \]
 In this paper we shall frequently work with length functions that are conditionally of negative type. A function $\psi: \Gamma \rightarrow \mathbb{R}$ is called {\it proper} if the inverse image of a compact set is compact (hence finite as $\Gamma$ is discrete).

\subsection{Tensor products} With mild abuse of notation we use $\otimes$ for several different tensor products in this paper. If $V_1$ and $V_2$ are vector spaces then $V_1 \otimes V_2$ is the tensor product of these vector spaces. If $V_1$ and $V_2$ are algebras or $\ast$-algebras then we see $V_1 \otimes V_2$ as an algebra or $\ast$-algebra as well. When $V_1$ and $V_2$ are Hilbert spaces $V_1 \otimes V_2$ is the Hilbert space tensor product (closure of the vector space tensor product) and it should be understood from the context which tensor product is meant. We further use $\otimes$ to denote tensor products of linear maps or elements. In case $V_1$ and $V_2$ are C$^\ast$-algebras we will write $\otimes_{{\rm alg}}$ instead of $\otimes$ for their tensor product as a $\ast$-algebra and $\otimes_{{\rm min}}$ for their spatial tensor product; this is also the minimal tensor product by Takesaki's theorem. If $V_1$ and $V_2$ are von Neumann algebras we denote by $V_1 \overline{\otimes} V_2$ the von Neumann algebraic tensor product (strong operator topology closure of the spatial tensor product).

\section{Coefficients of bimodules} \label{Sect=Coefficients}

In this section we study bimodules over the group algebra of a discrete group and provide sufficient criteria for when such a bimodule is quasi-contained in the coarse bimodule. We also consider tensor products of such bimodules. We conclude this section with our first strong solidity result in  Section \ref{Sect=PInt}.

\subsection{Coefficients and quasi-containment}
Let  $\Gamma$ be a discrete group with group algebra $\CC[\Gamma]$, reduced group C$^\ast$-algebra $C_r^\ast(\Gamma)$ and group von Neumann algebra $\calL(\Gamma)$. They include naturally
\[
\mathbb{C}[\Gamma] \subseteq C_r^\ast(\Gamma) \subseteq \calL(\Gamma).
\]
 In turn $\calL(\Gamma) \subseteq \ell_2(\Gamma)$ by $x \mapsto x \delta_e$. Hence we may and will view $\mathbb{C}[\Gamma]$ as the subspace of $\ell_2(\Gamma)$ of functions with finite support. Now a $\CC[\Gamma]$ \emph{bimodule} will be a Hilbert space $H$ with commuting left and right actions of $\Gamma$ and thus of $\CC[\Gamma]$ by extending the actions linearly. 

\begin{definition}[Coefficients]\label{Dfn=Coefficients}
Let  $H$ be a $\CC[\Gamma]$ bimodule. Let $\xi,\eta\in H$ be such that there exists a map  $T_{\xi,\eta}: \CC[\Gamma] \to \CC[\Gamma]$ such that
\begin{equation}\label{Eqn=Coefficient}
\tau(T_{\xi,\eta}(x)y) = \langle x\xi y,\eta\rangle, \qquad x,y\in \CC[\Gamma].
\end{equation}
$T_{\xi, \eta}$ is called the \textit{coefficient} of $H$ at $\xi,\eta$. Set  $T_{\xi} := T_{\xi,\xi}$. The coefficient $T_{\xi,\eta}$ is in $\calS_p$ with $p\in[1,\infty]$ if $T_{\xi, \eta}$ exists and extends to a bounded operator $T_{\xi,\eta}: \ell_2(\Gamma) \to \ell_2(\Gamma)$ that is moreover in $\calS_p :=  \calS_p(\ell_2(\Gamma))$.
\end{definition}

 Note that if the map $T_{\xi, \eta}$ is existent then it is uniquely determined by \eqref{Eqn=Coefficient}. Indeed, if  $T_{\xi,\eta}'$ is another map with this property then $\tau((T_{\xi,\eta}- T_{\xi,\eta}')(x)y) = 0$ for all $x,y\in \CC[\Gamma]$ so that $T_{\xi,\eta} ' =  T_{\xi,\eta}$.

\begin{remark}
In \cite[Definition 13.1.6]{PopaDelaRocheBook} the notion of a coefficient of a von Neumann bimodule is defined.  Definition \ref{Dfn=Coefficients} is an   algebraic analogue which is more convenient for our purposes. The reason that we work in this algebraic setting is that the bimodules we consider in this paper are {\it a priori} not necessarily von Neumann bimodules. In fact for the gradient bimodules we consider in Section \ref{Sect=RieszEtc} this is not even true in general. However, under the conditions of Proposition \ref{proposition:quasicontainment} the normal extensions of the left and right actions automatically exist.
\end{remark}

\begin{proposition}[Quasi-containment]\label{proposition:quasicontainment}
	Let $H$ be a $\mathbb{C}[\Gamma]$ bimodule. Suppose that there exists a dense subset $H_{0} \subset H$ such that for any $\xi \in H_{0}$ the coefficient $T_{\xi}: \CC[\Gamma] \to \CC[\Gamma]$ is in $\calS_2$. Then the left and right $\mathbb{C}[\Gamma]$ actions on $H$ extend to (bounded) normal $\calL(\Gamma)$ actions and the $\calL(\Gamma)$ bimodule $H$ is quasi-contained in the coarse bimodule $\ell_2(\Gamma) \otimes \ell_2(\Gamma)$.
\end{proposition}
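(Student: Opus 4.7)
The plan is to reduce to cyclic sub-bimodules and to realise each one, $\calL(\Gamma)$-bimodularly and isometrically, inside a single copy of the coarse bimodule; taking the direct sum of these embeddings over the dense family $H_{0}$ then exhibits $H$ as a sub-bimodule of $\bigoplus_I(\ell_{2}(\Gamma)\otimes \ell_{2}(\Gamma))$, which is exactly quasi-containment, and normality of the extended $\calL(\Gamma)$-actions on $H$ is automatically inherited from the coarse bimodule.

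For fixed $\xi\in H_{0}$, the cyclic sub-bimodule $H_{\xi}:=\overline{\CC[\Gamma]\xi\CC[\Gamma]}$ has its inner product encoded by the positive functional $\phi_{\xi}(a\otimes b^{\op}):=\langle a\xi b,\xi\rangle=\tau(T_{\xi}(a)b)$ on $\CC[\Gamma]\otimes\CC[\Gamma]^{\op}$. The key step is to use the $\calS_{2}$-hypothesis to extend $\phi_{\xi}$ to a normal positive functional on $\calL(\Gamma)\overline{\otimes}\calL(\Gamma)^{\op}$. Under the Hilbert--Schmidt identification $\calS_{2}(\ell_{2}(\Gamma))\cong\ell_{2}(\Gamma)\otimes\ell_{2}(\Gamma)$, one encodes $T_{\xi}$ as a vector $\hat T_{\xi}\in\ell_{2}(\Gamma)\otimes\ell_{2}(\Gamma)$ of norm $\|T_{\xi}\|_{\calS_{2}}$ (for instance with entries $\overline{T_{\xi}(t^{-1},s)}$). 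Representing $\lambda_{u}\otimes\lambda_{v}^{\op}$ on the coarse bimodule as $\lambda_{u}\otimes\rho_{v^{-1}}$ (via $J\lambda_{v}^{*}J=\rho_{v^{-1}}$), a direct computation on generators gives the crucial identity
\[
\phi_{\xi}(\lambda_{u}\otimes\lambda_{v}^{\op})=\langle(\lambda_{u}\otimes\rho_{v^{-1}})(\delta_{e}\otimes\delta_{e}),\,\hat T_{\xi}\rangle .
\]
The right-hand side is $\sigma$-weakly continuous in its operator argument, so its restriction $\tilde\phi_{\xi}$ to $\calL(\Gamma)\overline{\otimes}\calL(\Gamma)^{\op}$ is normal, and positivity transfers from $\phi_{\xi}$ to $\tilde\phi_{\xi}$ by Kaplansky density applied to the positive cone of $\CC[\Gamma]\otimes\CC[\Gamma]^{\op}$.

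Once $\tilde\phi_{\xi}$ is a normal positive functional, the standard-form theorem produces $\eta_{\xi}$ in the positive cone of $L_{2}(\calL(\Gamma)\overline{\otimes}\calL(\Gamma)^{\op})=\ell_{2}(\Gamma)\otimes\ell_{2}(\Gamma)$ with $\tilde\phi_{\xi}(A)=\langle A\eta_{\xi},\eta_{\xi}\rangle$; matching inner products, the map $a\xi b\mapsto a\eta_{\xi}b$ extends to an $\calL(\Gamma)$-bimodular isometry $U_{\xi}\colon H_{\xi}\hookrightarrow\ell_{2}(\Gamma)\otimes\ell_{2}(\Gamma)$. Since $H=\overline{\sum_{\xi\in H_{0}}H_{\xi}}$ by density of $H_{0}$, assembling the $U_{\xi}$ yields the desired embedding $H\hookrightarrow\bigoplus_{\xi\in H_{0}}(\ell_{2}(\Gamma)\otimes\ell_{2}(\Gamma))$. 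The main obstacle is the bookkeeping behind the crucial identity: the correct choice of $\hat T_{\xi}$ and the interplay between the modular-conjugation identification $\calL(\Gamma)^{\op}\cong J\calL(\Gamma)J$ and the Hilbert--Schmidt correspondence must be tracked consistently; once that is done, Kaplansky density, the standard-form realisation of normal positive functionals, and the direct-sum construction are all routine.
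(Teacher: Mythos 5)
Your proof is, up to bookkeeping, the same argument as the paper's: use the $\calS_2$-hypothesis to realise the vector state $\phi_\xi$ on $\CC[\Gamma]\otimes_{\rm alg}\CC[\Gamma]^{\rm op}$ as a coefficient of a vector in the coarse bimodule, hence as a contractive functional that extends normally to $\calL(\Gamma)\overline{\otimes}\calL(\Gamma)^{\rm op}$; obtain positivity of the extension via Kaplansky density; then invoke the standard form of $\calL(\Gamma)\overline{\otimes}\calL(\Gamma)^{\rm op}$ to produce $\eta_\xi\in\ell_2(\Gamma)\otimes\ell_2(\Gamma)$ with $\langle a\xi b,\xi\rangle=\langle a\eta_\xi b,\eta_\xi\rangle$. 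This is exactly the paper's computation, with your $\hat T_\xi$ playing the role of the paper's $\zeta_\xi$.

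The one place you cut a genuine corner is the closing sentence ``assembling the $U_\xi$ yields the desired embedding $H\hookrightarrow\bigoplus_{\xi\in H_0}(\ell_2(\Gamma)\otimes\ell_2(\Gamma))$.'' The cyclic sub-bimodules $H_\xi$ for different $\xi\in H_0$ overlap, so the family of bimodular isometries $U_\xi\colon H_\xi\to\ell_2(\Gamma)\otimes\ell_2(\Gamma)$ does not naively assemble into a single isometric bimodular embedding of $H$: the obvious candidate $\eta\mapsto(P_{H_\xi}\eta)_\xi$ is neither isometric nor bimodular in general. Passing from ``each cyclic sub-bimodule generated by a vector in a dense set embeds in the coarse bimodule'' to ``$H$ is quasi-contained in the coarse bimodule'' requires an additional argument — e.g.\ a Zorn-type exhaustion of $H$ by pairwise orthogonal cyclic sub-bimodules combined with the observation that the property ``$\phi_\xi$ extends normally to $\calL(\Gamma)\overline{\otimes}\calL(\Gamma)^{\rm op}$'' propagates from the dense set $H_0$ to the cyclic vectors the exhaustion actually produces (norm limits of normal positive functionals are normal), or equivalently the fact that any normal representation of $\calL(\Gamma)\overline{\otimes}\calL(\Gamma)^{\rm op}$ is quasi-contained in its standard representation. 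This is precisely the step the paper outsources by citing Lemma 2.2 of \cite{caspersL2CohomologyDerivationsQuantum2021}; in your write-up it is asserted rather than proved. You also acknowledge, correctly, that the index conventions in the identity $\phi_\xi(\lambda_u\otimes\lambda_v^{\rm op})=\langle(\lambda_u\otimes\rho_{v^{-1}})(\delta_e\otimes\delta_e),\hat T_\xi\rangle$ need care; the paper's cleaner formulation $\rho(x\otimes y^{\rm op})=\langle T_\xi(x),y^*\rangle_\tau$ avoids this bookkeeping entirely.
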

\begin{proof}
Take $\xi \in H_0$. Define the functional
\[
\rho: \CC[\Gamma] \otimes_{{\rm alg}} \CC[\Gamma]^{{\rm op}} \rightarrow \mathbb{C}: x \otimes y^{{\rm op}}  \mapsto \langle x \cdot \xi \cdot y, \xi \rangle.
\]
For $x,y \in \CC[\Gamma]$ by definition of $T_\xi$,
\[
\rho(x \otimes y^{{\rm op}} ) = \langle x \cdot \xi \cdot y, \xi \rangle =  \tau(  T_{\xi}(x) y ) =    \tau(  y T_{\xi}(x) )  = \langle T_{\xi}(x), y^\ast \rangle_\tau.
\]
Now as $T_\xi$ is Hilbert-Schmidt there exists a vector $\zeta_\xi \in \ell_2(\Gamma) \otimes \ell_2(\Gamma)$ such that
\[
\rho(x \otimes y^{{\rm op}} ) = \langle x \otimes  y^{ {\rm op} }   , \zeta_\xi \rangle = \langle (x \otimes  y^{ {\rm op} })  \cdot (1 \otimes 1),  \zeta_\xi \rangle.
\]
This shows that $\rho$ extends contractively to  $C_r^\ast(\Gamma) \otimes_{{\rm min}} C_r^\ast(\Gamma)$. Moreover, this shows that $\rho$ extends to a normal contractive map on the von Neumann algebraic tensor product $\calL(\Gamma) \overline{\otimes} \calL(\Gamma) \rightarrow \mathbb{C}$. By Kaplansky's density theorem this extension of $\rho$ is moreover positive. Since $\ell_2(\Gamma) \otimes \ell_2(\Gamma)$ is the standard form of $\calL(\Gamma) \overline{\otimes} \calL(\Gamma)^{{\rm op}}$ there exists $\eta \in \ell_2(\Gamma) \otimes \ell_2(\Gamma)$ such that
\[
\rho(x \otimes y^{{\rm op}} ) = \langle x \cdot \eta \cdot y, \eta \rangle, \qquad x,y \in \calL(\Gamma).
\]
This proves that the conditions of  \cite[Lemma 2.2]{caspersL2CohomologyDerivationsQuantum2021} are fulfilled and hence $H$ is quasi-contained in the coarse bimodule. We already observed in the preliminaries that this quasi-containment implies that the left and right actions extend to normal actions of $\calL(\Gamma)$.
\end{proof}

A subset $H_{00} \subseteq H$  of a $\mathbb{C}[\Gamma]$ bimodule $H$  is called {\it cyclic} if $H_0 :=  {\rm span}  \CC[\Gamma] H_{00} \CC[\Gamma]$ is dense in $H$. The following lemma tells us that we can reduce Proposition \ref{proposition:quasicontainment} to checking the property only for the coefficient in a cyclic subset.
\begin{lemma}[Reduction to cyclic subset]\label{lemma:reduction-to-cyclic-subset}
	Suppose that $H_{00} \subseteq H$ is a subset whose coefficients $T_{\xi, \eta}$ for  $\xi, \eta \in H_{00}$   are in $\calS_2$. Then the coefficients $T_{\xi, \eta}$ for  $\xi, \eta \in H_0 :=  {\rm span}  \CC[\Gamma] H_{00} \CC[\Gamma]$   are in $\calS_2$. Consequently, if $H_{00}$ is cyclic then $H$ is a $\calL(\Gamma)$ bimodule that is quasi-contained in the coarse bimodule $\ell_2(\Gamma) \otimes \ell_2(\Gamma)$.
\end{lemma}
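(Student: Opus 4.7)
The plan is to show that the $\calS_2$-condition on coefficients is preserved under the bimodule actions and linear combinations, which immediately reduces the statement to the case of a single cyclic vector and then applies Proposition \ref{proposition:quasicontainment}.

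The first step is to compute how the coefficient $T_{a\xi b, c\eta d}$ transforms for $a,b,c,d \in \CC[\Gamma]$ and $\xi,\eta \in H_{00}$. Using only the definition of a $\CC[\Gamma]$ bimodule coefficient together with the identity $\langle x v, w\rangle = \langle v, x^\ast w\rangle$ on $\ell_2(\Gamma)$ and the trace property of $\tau$, I would expand
\[
\tau\bigl(T_{a\xi b, c\eta d}(x)\, y\bigr) \;=\; \langle x\,a\xi b\, y, c\eta d\rangle \;=\; \langle c^\ast x a \cdot \xi \cdot b y d^\ast, \eta\rangle \;=\; \tau\bigl(T_{\xi,\eta}(c^\ast x a)\, b y d^\ast\bigr),
\]
and then move $d^\ast$ across the trace to identify
\[
T_{a\xi b,\, c\eta d}(x) \;=\; d^\ast\, T_{\xi,\eta}(c^\ast x a)\, b.
\]
Viewed as an operator on $\ell_2(\Gamma)$, the right-hand side is the composition of $T_{\xi,\eta}$ with the bounded operators $L_{c^\ast}$, $R_a$, $L_{d^\ast}$, $R_b$ coming from the left and right regular representations. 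Since $\calS_2$ is a two-sided ideal in $B(\ell_2(\Gamma))$, this shows $T_{a\xi b, c\eta d} \in \calS_2$.

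The second step is to pass from the generating vectors of the form $a\xi b$ with $\xi \in H_{00}$ to all of $H_0$ by linearity. The assignment $(\xi,\eta) \mapsto T_{\xi,\eta}$ is linear in $\xi$ and conjugate linear in $\eta$, as one sees directly from $\tau(T_{\xi,\eta}(x)y) = \langle x\xi y,\eta\rangle$. Hence for arbitrary $\xi = \sum_i a_i \xi_i b_i$ and $\eta = \sum_j c_j \eta_j d_j$ in $H_0$ with $\xi_i, \eta_j \in H_{00}$,
\[
T_{\xi,\eta} \;=\; \sum_{i,j} \overline{\phantom{c}}\cdot T_{a_i \xi_i b_i,\, c_j \eta_j d_j}
\]
is a finite sum of elements of $\calS_2$, hence itself in $\calS_2$. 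In particular $T_\xi = T_{\xi,\xi} \in \calS_2$ for every $\xi \in H_0$.

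For the consequence, if $H_{00}$ is cyclic then by definition $H_0$ is dense in $H$, and the previous paragraph furnishes a dense subset of $H$ on which the diagonal coefficients are Hilbert–Schmidt. An application of Proposition \ref{proposition:quasicontainment} then yields the claimed normal $\calL(\Gamma)$-bimodule structure and the quasi-containment in the coarse bimodule. I expect the only mildly subtle point to be the bookkeeping in the identity for $T_{a\xi b, c\eta d}$, in particular the correct placement of adjoints coming from the Hilbert-space inner product versus the trace pairing; once that identity is verified, the rest is a routine application of the ideal property of $\calS_2$ together with bilinearity and Proposition \ref{proposition:quasicontainment}.
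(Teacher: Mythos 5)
Your proof is correct and follows the same route as the paper: derive the formula $T_{a\xi b, c\eta d}(x) = d^\ast T_{\xi,\eta}(c^\ast x a)\, b$ for $a,b,c,d\in\CC[\Gamma]$, invoke the ideal property of $\calS_2$, extend by (sesqui)linearity, and apply Proposition \ref{proposition:quasicontainment}. Two cosmetic remarks: the identity $\langle c\,\xi, \eta\rangle = \langle \xi, c^\ast\eta\rangle$ that you use is on the bimodule $H$ (valid because the $\Gamma$-actions are unitary), not on $\ell_2(\Gamma)$ as written; and the stray $\overline{\phantom{c}}$ in your sum is unnecessary since additivity alone, not conjugation, is what is needed — the correct display is simply $T_{\xi,\eta} = \sum_{i,j} T_{a_i\xi_i b_i,\,c_j\eta_j d_j}$.
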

\begin{proof}
	Let $\xi' = \lambda_g \xi \lambda_h$ and $\eta' =   \lambda_s \eta \lambda_t$ for some $g, h, s,  t \in \Gamma$ and $\xi,\eta \in H_{00}$.
	We have that
	\[
\begin{split}
	 & \tau(T_{\xi',\eta'}(x)y) = \langle x\xi'y,\eta'  \rangle = \langle x\lambda_g\xi \lambda_hy,\lambda_s\eta \lambda_t\rangle
	= \langle \lambda_{s^{-1}}x\lambda_g\xi\lambda_h y \lambda_{t^{-1}},\eta\rangle \\
	= & \tau(T_{\xi,\eta}(\lambda_{s^{-1}}x\lambda_g) \lambda_hy\lambda_{t^{-1}})
	= \tau(\lambda_{t^{-1}}T_{\xi,\eta}(\lambda_{s^{-1}}x\lambda_g)\lambda_h y).
\end{split}	
\]
 This shows that $T_{\xi',\eta'}(x) = \lambda_{t^{-1}}T_{\xi,\eta}(\lambda_{s^{-1}}x\lambda_g)\lambda_h$ and so $T_{\xi',\eta'}$ is in $\calS_2$. The first statement then follows by linearity. By Proposition \ref{proposition:quasicontainment} we find that $H$ is quasi-contained in the coarse bimodule $\ell_2(\Gamma)\otimes \ell_2(\Gamma)$ in case $H_{00}$ is cyclic.
\end{proof}

\subsection{Tensoring bimodules}\label{Sect=TensoringBimodules}

 If $H_1$ and $H_2$  are two $\mathbb{C}[\Gamma]$ bimodules then we can construct a bimodule $H_1 \otimes_{\Gamma} H_2$, which, as a Hilbert space, is the same as $H_1\otimes H_2$ and the actions are given by
 \[
s  \cdot (\xi\otimes \eta):= s   \xi\otimes s \eta \qquad \textrm{ and } \qquad (\xi\otimes \eta) s := \xi s \otimes \eta s, \qquad \xi \in H_1, \eta \in H_2, s \in \Gamma.
 \]
 The actions extend by linearity to actions of $\CC[\Gamma]$. If we take an $n$-fold tensor power of a given bimodule $H$, it will be denoted by $H^{\otimes n}_{\Gamma}$. For later use we also recall that the comultiplication
 \[
 \Delta_\Gamma \colon \CC[\Gamma] \to \CC[\Gamma] \otimes \CC[\Gamma]
 \]
  is given by the linear extension of the assignment $\gamma \mapsto \gamma \otimes \gamma, \gamma \in \Gamma$. Then $\Delta_\Gamma$ extends to an isometry $\ell_2(\Gamma) \rightarrow \ell_2(\Gamma) \otimes \ell_2(\Gamma)$ which we still denote by $\Delta_\Gamma$.

	\begin{lemma}\label{Lem=Fix}
		Let $1 \leq p,q,r \leq \infty$ with $\frac{1}{r} = \frac{1}{p} + \frac{1}{q}$.   There exists a bounded bilinear map
		\[
		\calS_p \times \calS_q \rightarrow \calS_r: (x,y) \mapsto \Delta_\Gamma^{\ast}(x \otimes y) \Delta_\Gamma.
		\]
	\end{lemma}
	\begin{proof}
		For $r = 1$ take $(x,y) \in \calS_p \times \calS_q$ both positive so that $\Delta_\Gamma^{\ast}(x \otimes y) \Delta_\Gamma \in \calS_1$ is positive. Then 
		\[
		\begin{split}
			\Vert  \Delta_\Gamma^{\ast}(x \otimes y) \Delta_\Gamma \Vert_r = & \tau(  \Delta_\Gamma^{\ast}(x \otimes y) \Delta_\Gamma ) = \sum_{g\in \Gamma} \langle x  g,  g\rangle \langle  y  g,  g\rangle \\
			\leq &
			\left( \sum_{g\in \Gamma} \langle x  g,  g\rangle^p \right)^{\frac{1}{p}} \left( \sum_{g\in \Gamma} \langle y  g,  g\rangle^q \right)^{\frac{1}{q}}
			= \Vert x \Vert_{p}\Vert y \Vert_{q}.
		\end{split}
		\]
		As every element in $\calS_p$ and $\calS_q$ can be written as a linear combination of 4 positive elements with smaller or equal norm the lemma follows for $r =1$.	Now, take $r = \infty$. Then also $p = q = \infty$ and for $(x,y)\in \calS_{p}\times \calS_q$ we see that $\Delta_{\Gamma}^*(x\otimes y)\Delta_{\Gamma}\in \calS_{r}$. Furthermore, we have the norm estimate
		$$\|\Delta_{\Gamma}^*(x\otimes y)\Delta_{\Gamma}\| \leq \|\Delta_{\Gamma}^*\|\cdot\|x\otimes y\|\cdot \|\Delta_{\Gamma}\| \leq \|x\|\cdot\|y\|.$$ 
		The lemma then follows from bilinear complex interpolation \cite[Theorem 4.4.1]{berghInterpolationSpacesIntroduction2012}
	\end{proof}

	\begin{lemma}\label{Lem:Younginequality}
		Let $1 \leq p,q,r \leq \infty$ with $\frac{1}{r} = \frac{1}{p} + \frac{1}{q}$.
		Let $H_1$ and $H_2$ be $\mathbb{C}[\Gamma]$ bimodules and let $\xi \in H_1$ and $\eta \in H_2$. Suppose that the coefficient $T_{\xi}$ is in $\calS_p$ and the the coefficient $T_{\eta}$ is in $\calS_q$.    Then the coefficient $T_{\xi\otimes \eta}$ of $H_1 \otimes_{\Gamma} H_2$ is in $\calS_r$.
	\end{lemma}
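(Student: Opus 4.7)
My plan is to identify $T_{\xi \otimes \eta}$ with the Schur (Hadamard) entrywise product of $T_\xi$ and $T_\eta$ in the basis $\{\delta_g\}_{g \in \Gamma}$, and then invoke a Schatten H\"older-type inequality for Schur products. Since $\Gamma$ acts diagonally on $H_1 \otimes_\Gamma H_2$, I would first observe that
\[
\lambda_g \cdot (\xi \otimes \eta) \cdot \lambda_h = (\lambda_g \xi \lambda_h) \otimes (\lambda_g \eta \lambda_h),
\]
so that the defining identity of the coefficient factorises across the two tensor legs:
\[
\tau(T_{\xi \otimes \eta}(\lambda_g)\lambda_h) = \langle \lambda_g \xi \lambda_h, \xi\rangle_{H_1} \cdot \langle \lambda_g \eta \lambda_h, \eta\rangle_{H_2} = \tau(T_\xi(\lambda_g)\lambda_h)\, \tau(T_\eta(\lambda_g)\lambda_h).
\]
Reading off kernels in the basis $\{\delta_g\}$ this gives $\langle T_{\xi \otimes \eta}\delta_g, \delta_h\rangle = \langle T_\xi\delta_g, \delta_h\rangle \cdot \langle T_\eta\delta_g, \delta_h\rangle$, i.e.\ $T_{\xi \otimes \eta}$ is the entrywise Schur product $T_\xi \circ T_\eta$. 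Equivalently, in terms of the comultiplication isometry $\Delta_\Gamma \colon \ell_2(\Gamma) \to \ell_2(\Gamma) \otimes \ell_2(\Gamma)$ introduced in Section \ref{Sect=TensoringBimodules}, one has the compression formula $T_{\xi \otimes \eta} = \Delta_\Gamma^*(T_\xi \otimes T_\eta)\Delta_\Gamma$, from which boundedness of $T_{\xi \otimes \eta}$ on $\ell_2(\Gamma)$ is immediate.

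Given this identification, the Schatten-$r$ membership amounts to a Schur--H\"older inequality
\[
\|A \circ B\|_{\calS_r} \leq \|A\|_{\calS_p}\|B\|_{\calS_q}, \qquad \tfrac{1}{r} = \tfrac{1}{p} + \tfrac{1}{q},
\]
which I would establish by bilinear complex interpolation from a small set of endpoint cases. The case $p = q = r = \infty$ is immediate from the compression formula since $\Delta_\Gamma$ is an isometry. The endpoint $(p,q,r) = (2,2,1)$ follows by duality against $C$ with $\|C\|_{\calS_\infty} \leq 1$, via Cauchy--Schwarz in matrix entries:
\[
|\Tr((A \circ B)C)| = \Big|\sum_{g,h} A_{hg} B_{hg} C_{gh}\Big| \leq \Big(\sum_{g,h} |A_{hg}|^2 |C_{gh}|^2\Big)^{1/2} \|B\|_{\calS_2} \leq \|A\|_{\calS_2}\|B\|_{\calS_2}.
\]
The corner endpoint $(1,\infty,1)$, together with its mirror $(\infty,1,1)$, follows from the SVD $A = \sum_i \sigma_i |u_i\rangle\langle v_i|$ and the identity $|u_i\rangle\langle v_i| \circ B = D_{u_i} B D_{\overline{v_i}}$ (with $D_f$ the diagonal multiplication operator), combined with the non-commutative H\"older estimate $\|D_{u_i} B D_{\overline{v_i}}\|_{\calS_1} \leq \|u_i\|_2 \|B\|_{\calS_\infty} \|v_i\|_2 = \|B\|_{\calS_\infty}$ and the triangle inequality.

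Plugging $A = T_\xi$, $B = T_\eta$ into the Schur--H\"older bound then immediately yields $T_{\xi \otimes \eta} \in \calS_r$. The quasi-Banach range $r < 1$ is accommodated by substituting the $r$-triangle inequality for the triangle inequality and using quasi-Banach in place of Banach complex interpolation. I expect the main technical point to lie in this bilinear interpolation step; the rank-one SVD reduction at the $(1,\infty,1)$ endpoint is what promotes the trivial operator-norm compression bound to the full $\calS_p \times \calS_q \to \calS_r$ H\"older scaling that the lemma demands.
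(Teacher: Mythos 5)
Your identification $T_{\xi\otimes\eta}=T_\xi\circ T_\eta=\Delta_\Gamma^*(T_\xi\otimes T_\eta)\Delta_\Gamma$ is the same as the paper's, and for $r\ge 1$ your bilinear-interpolation proof of the Schur--H\"older bound is sound and genuinely different from the paper's. The paper avoids interpolation entirely: using the convention $|T|=\sqrt{TT^*}$, it expresses $\|T_{\xi\otimes\eta}\|_{\calS_r}^r$ as the scalar sum $\sum_{g}\langle|T_\xi|^r\delta_g,\delta_g\rangle\,\langle|T_\eta|^r\delta_g,\delta_g\rangle$, applies scalar H\"older with conjugate exponents $p/r$ and $q/r$, and then uses the spectral-measure Jensen estimate $\langle S\delta_g,\delta_g\rangle^{t}\le\langle S^t\delta_g,\delta_g\rangle$ for $t\ge 1$ to pass to $\|T_\xi\|_{\calS_p}^r\|T_\eta\|_{\calS_q}^r$. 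Everything there is pointwise and scalar, and all exponents $r>0$ are handled in one pass without any Banach-space machinery.

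The concrete gap in your proposal is exactly the quasi-Banach range $0<r<1$, which the lemma's hypothesis $0<p,q,r<\infty$ includes and which is used downstream (iterating Lemma~\ref{Lem:Younginequality} inside Proposition~\ref{Prop:Spquasicontain} drives the effective exponent $p/n$ below $1$). In reciprocal coordinates $(1/p,1/q,1/r)$ your endpoints sit at $(0,0,0)$, $(\tfrac12,\tfrac12,1)$, $(1,0,1)$, $(0,1,1)$, whose convex hull is precisely $\{(\alpha,\beta,\alpha+\beta):\alpha,\beta\ge 0,\ \alpha+\beta\le 1\}$ --- i.e.\ only $r\ge 1$. Swapping in a quasi-Banach interpolation functor does not repair this: the obstruction is not the choice of functor but that you have no endpoint with $r<1$ to interpolate toward. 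Your SVD trick for $(1,\infty,1)$ cannot be pushed to supply such an endpoint either: for $\calS_p\times\calS_\infty\to\calS_p$ with $p<1$ the factorization $\|D_uBD_{\bar v}\|_{\calS_p}\le\|D_u\|_{\calS_{p_1}}\|B\|\,\|D_{\bar v}\|_{\calS_{p_3}}$ forces $1/p_1+1/p_3=1/p>1$, so some $p_i<2$, and then $\|D_u\|_{\calS_{p_i}}=\|u\|_{p_i}$ is no longer dominated by $\|u\|_2=1$. So your argument is inherently confined to $r\ge 1$; covering the full range requires either genuinely new endpoints in the $r<1$ region or a direct trace computation in the spirit of the paper's proof.
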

	\begin{proof}
		We have for $s,t \in \Gamma,$
		\begin{align*}
			\tau (T_{\xi\otimes \eta}( s )t) &= \langle s \xi t \otimes s \eta t, \xi \otimes \eta\rangle \\
			&= \langle s \xi t, \xi\rangle \langle s \eta t, \eta\rangle \\
			&= \tau(T_{\xi}(s ) t)  \tau(T_{\eta}( s ) t).
		\end{align*}
		It follows that $T_{\xi\otimes \eta} = \Delta_\Gamma^{\ast} (T_{\xi} \otimes T_{\eta}) \Delta_\Gamma$. We conclude the proof by Lemma \ref{Lem=Fix}.
	\end{proof}

\begin{proposition}\label{Prop:Spquasicontain}
Let $H$ be a $\mathbb{C}[\Gamma]$  bimodule  such that for a dense subset of $H$ the coefficients are in $\calS_{p}$. Then the bimodule $H^{\otimes n}_{\Gamma}$ is quasi-contained in the coarse bimodule for any $n\geqslant \frac{p}{2}$.
\end{proposition}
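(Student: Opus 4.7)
The plan is to iterate Lemma \ref{Lem:Younginequality} to exhibit a dense family of vectors in $H^{\otimes n}_\Gamma$ whose coefficients are in $\calS_2$, then apply Proposition \ref{proposition:quasicontainment}. Let $H_0 \subseteq H$ be the given dense subset with $T_\xi \in \calS_p$ for $\xi \in H_0$, and set $D := \{\xi_1 \otimes \cdots \otimes \xi_n : \xi_k \in H_0\} \subseteq H^{\otimes n}_\Gamma$; since $H_0$ is dense in $H$, the linear span of $D$ is dense in $H^{\otimes n}_\Gamma$.

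I would first show $T_\zeta \in \calS_2$ for every elementary tensor $\zeta \in D$. Induct on $n$ using associativity of the bimodule tensor product $H^{\otimes (n+1)}_\Gamma = H^{\otimes n}_\Gamma \otimes_\Gamma H$ together with Lemma \ref{Lem:Younginequality}: assuming $T_{\xi_1 \otimes \cdots \otimes \xi_n} \in \calS_{p/n}$, Lemma \ref{Lem:Younginequality} applied with exponents $p/n$ and $p$ (so that $\tfrac{n}{p} + \tfrac{1}{p} = \tfrac{n+1}{p}$) yields $T_{\xi_1 \otimes \cdots \otimes \xi_{n+1}} \in \calS_{p/(n+1)}$. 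Hence $T_\zeta \in \calS_{p/n}$ for every $\zeta \in D$. The assumption $n \geq p/2$ forces $p/n \leq 2$, and since $\calS_r \subseteq \calS_2$ whenever $r \leq 2$ (a singular value of $T \in \calS_r$ tends to $0$, so only finitely many exceed $1$), this places $T_\zeta$ in $\calS_2$.

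To conclude I would apply Lemma \ref{lemma:reduction-to-cyclic-subset} to the cyclic subset $H_{00} := D$; that lemma demands the $\calS_2$-condition for all mixed coefficients $T_{\zeta, \zeta'}$ with $\zeta, \zeta' \in D$, not merely for $T_\zeta$. The same tensor factorization as in Lemma \ref{Lem:Younginequality} gives, iteratively, $T_{\zeta, \zeta'} = \Delta_\Gamma^{\ast}(T_{\xi_1, \xi_1'} \otimes \cdots \otimes T_{\xi_n, \xi_n'}) \Delta_\Gamma$ (using the iterated comultiplication), and the H\"older argument extends verbatim to the bilinear setting, placing $T_{\zeta, \zeta'}$ in $\calS_{p/n} \subseteq \calS_2$ provided each mixed coefficient $T_{\xi_k, \xi_k'}$ lies in $\calS_p$. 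This last point is the main technical obstacle, since the hypothesis supplies $T_\xi$ only, not $T_{\xi, \eta}$. It is resolved by the polarization identity $4\, T_{\xi, \eta} = \sum_{\alpha \in \{\pm 1, \pm i\}} \bar\alpha\, T_{\xi + \alpha \eta}$, applied after replacing $H_0$ by its linear span and using that the sesquilinear expansion of $T_{\xi + \alpha \eta}$, together with $T_\xi, T_\eta \in \calS_p$, propagates the $\calS_p$-property to all linear combinations from $H_0$. With the mixed $\calS_2$-coefficients in hand, Lemma \ref{lemma:reduction-to-cyclic-subset} and Proposition \ref{proposition:quasicontainment} together yield the claimed quasi-containment of $H^{\otimes n}_\Gamma$ in the coarse bimodule.
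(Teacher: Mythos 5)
Your core computation — iterating Lemma \ref{Lem:Younginequality} to put the coefficient $T_{\xi_1\otimes\cdots\otimes\xi_n}$ of an elementary tensor in $\calS_{p/n}\subseteq\calS_2$ — is exactly what the paper does, and the inclusion $\calS_{p/n}\subseteq\calS_2$ for $n\geq p/2$ is the right observation. But your route to the conclusion takes an unnecessary detour that opens a genuine gap.

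The paper simply feeds the elementary tensors into Proposition~\ref{proposition:quasicontainment}, whose hypothesis concerns only the diagonal coefficients $T_\xi$ for a family of $\xi$ spanning a dense subspace; mixed coefficients never appear. You instead route through Lemma~\ref{lemma:reduction-to-cyclic-subset}, which does demand control of the mixed coefficients $T_{\zeta,\zeta'}$, and then try to recover these by polarization. That step is circular. The polarization identity $4\,T_{\xi,\eta}=\sum_{\alpha}\bar\alpha\,T_{\xi+\alpha\eta}$ produces $T_{\xi,\eta}\in\calS_p$ only if you already know $T_{\xi+\alpha\eta}\in\calS_p$ for the four values of $\alpha$; but $\xi+\alpha\eta$ need not lie in the given dense set $H_0$, and the ``sesquilinear expansion'' $T_{\xi+\alpha\eta}=T_\xi+|\alpha|^2 T_\eta+\bar\alpha\,T_{\xi,\eta}+\alpha\,T_{\eta,\xi}$ already contains the unknown cross terms. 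So you cannot simultaneously use the expansion to establish $T_{\xi+\alpha\eta}\in\calS_p$ and polarization to deduce $T_{\xi,\eta}\in\calS_p$ from it. Nothing in the hypothesis of Proposition~\ref{Prop:Spquasicontain} guarantees that the dense set $H_0$ is a linear subspace or that it is closed under the combinations $\xi+\alpha\eta$.

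The fix is to drop the detour entirely: apply Proposition~\ref{proposition:quasicontainment} to the set $D$ of elementary tensors directly, exactly as the paper does. The underlying mechanism (\cite[Lemma 2.2]{caspersL2CohomologyDerivationsQuantum2021}) only requires a family of vectors with dense linear span whose diagonal vector states extend normally to $\calL(\Gamma)\overline\otimes\calL(\Gamma)^{\rm op}$: the set of $\xi$ with this property generates a \emph{closed} $\calL(\Gamma)$-subbimodule quasi-contained in the coarse bimodule, so totality of $D$ suffices. No mixed coefficients, and hence no polarization, are needed.
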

\begin{proof}
By Lemma \ref{Lem:Younginequality} (and   induction) we get that a dense subset of coefficients of $H^{\otimes n}_{\Gamma}$ is in $\calS_{\frac{p}{n}} \subset \calS_2$, so by Proposition \ref{proposition:quasicontainment}  we get the quasi-containment.
\end{proof}

\begin{definition}
Let $H$ and $K$ be $\mathbb{C}[\Gamma]$ bimodules. A linear map $V: H \rightarrow K$ is called {\it almost bimodular} if for every $x, y \in \mathbb{C}[\Gamma]$ the map
\[
H \rightarrow K: \xi \mapsto  x V(\xi) y - V(x \xi y),
\]
 is compact.
\end{definition}

\begin{lemma}\label{Lem:almostbimod}
Let $H_1$ and $H_2$ be bimodules over $\mathbb{C}[\Gamma]$. Suppose $V_1\colon \ell_2(\Gamma)\to H_1$ and $V_2\colon \ell_2(\Gamma)\to H_2$ are almost bimodular bounded linear maps. Then
\[
V_1\ast V_2:= (V_1\otimes V_2) \circ \Delta_{\Gamma} \colon \ell^2(\Gamma) \to H_1 \otimes_{\Gamma} H_2
\]
 is almost bimodular.
\end{lemma}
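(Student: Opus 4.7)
The plan is to express the almost-bimodularity error of $V_1 \ast V_2$ on any $\xi \in \ell_2(\Gamma)$ in terms of the almost-bimodularity errors of $V_1$ and $V_2$ separately, evaluated on simple tensors. The starting observation is that $\Delta_\Gamma$ intertwines the $\CC[\Gamma]$-bimodule structures in the appropriate sense: for $g,h,s \in \Gamma$ one has $\Delta_\Gamma(gsh) = gsh \otimes gsh = g \cdot \Delta_\Gamma(s) \cdot h$, where on the right we use the \emph{diagonal} bimodule action of $\Gamma$ on $\ell_2(\Gamma) \otimes \ell_2(\Gamma)$ given by $g \cdot (a \otimes b) \cdot h := (gah)\otimes(gbh)$. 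Extending by linearity to $x,y \in \CC[\Gamma]$ and by density/boundedness to $\xi \in \ell_2(\Gamma)$, this yields
\[
(V_1 \ast V_2)(x\xi y) \; = \; (V_1 \otimes V_2)\bigl(x \cdot \Delta_\Gamma(\xi) \cdot y\bigr).
\]
Consequently, the map $\xi \mapsto x(V_1 \ast V_2)(\xi) y - (V_1 \ast V_2)(x\xi y)$ equals $T \circ \Delta_\Gamma$, where $T: \ell_2(\Gamma) \otimes \ell_2(\Gamma) \to H_1 \otimes H_2$ is defined by
\[
T(\zeta) \; = \; x \cdot (V_1 \otimes V_2)(\zeta) \cdot y \; - \; (V_1 \otimes V_2)(x \cdot \zeta \cdot y).
\]
Since $\Delta_\Gamma$ is bounded, it suffices to show that $T$ itself is compact.

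To verify compactness of $T$, I would use linearity to reduce to the case $x, y \in \Gamma$, in which the diagonal action factors nicely through simple tensors: $x \cdot (\eta \otimes \zeta) \cdot y = (x\eta y) \otimes (x\zeta y)$. Writing $K_i: \ell_2(\Gamma) \to H_i$ for the compact operator $\eta \mapsto xV_i(\eta)y - V_i(x\eta y)$ guaranteed by the almost bimodularity of $V_i$, a direct expansion of $T(\eta \otimes \zeta)$ gives the three cross terms
\[
T(\eta \otimes \zeta) \; = \; V_1(x\eta y) \otimes K_2(\zeta) \; + \; K_1(\eta) \otimes V_2(x\zeta y) \; + \; K_1(\eta) \otimes K_2(\zeta),
\]
so as an operator on $\ell_2(\Gamma) \otimes \ell_2(\Gamma)$ we have $T = A_1 \otimes K_2 + K_1 \otimes A_2 + K_1 \otimes K_2$, where each $A_i: \eta \mapsto V_i(x\eta y)$ is bounded. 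Each summand is a tensor product of a bounded operator with a compact operator (or two compact operators) and is therefore compact, so $T$ is compact as claimed.

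The only mild care required is in bookkeeping the two distinct bimodule structures at play: the trivial one on $\ell_2(\Gamma)$ coming from left/right multiplication in $\Gamma$, and the diagonal one on $\ell_2(\Gamma) \otimes \ell_2(\Gamma)$ that governs $H_1 \otimes_\Gamma H_2$. The intertwining property of $\Delta_\Gamma$ is precisely what makes the tensor-convolution $V_1 \ast V_2$ land in the correct bimodule and, once this is in hand, the remainder of the argument is an essentially formal expansion; I do not foresee any serious obstacle beyond this initial check of compatibility.
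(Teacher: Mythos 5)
Your decomposition of the error term is correct and matches the paper's in spirit: after reducing to $x,y \in \Gamma$ you arrive at the identity $x(V_1\ast V_2)y - (V_1\ast V_2)^{x,y} = T \circ \Delta_\Gamma$ with $T = K_1 \otimes K_2 + K_1 \otimes A_2 + A_1 \otimes K_2$, which is essentially the paper's equation (3.1). The gap is in the final step: you claim $T$ is compact because each summand is a tensor product of a compact operator with a bounded one, but this is false. On an infinite-dimensional Hilbert space tensor product, $K \otimes A$ with $K$ compact and $A$ merely bounded is generally \emph{not} compact --- take $K$ a rank-one projection and $A = \mathrm{Id}$; then $K \otimes \mathrm{Id}$ is an infinite-rank projection. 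So $K_1 \otimes A_2$ and $A_1 \otimes K_2$ have no reason to be compact, and $T$ itself is not compact in general.

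The statement is saved by the fact that you only need $T \circ \Delta_\Gamma$ (not $T$) to be compact, and compactness really does survive precomposition with the diagonal isometry $\Delta_\Gamma$, but this requires an argument that uses the specific structure of $\Delta_\Gamma$, not just its boundedness. The paper's proof supplies exactly this: writing $K \ast A := (K \otimes A)\circ \Delta_\Gamma$, one observes that for the finite-rank projection $P_F$ onto $\mathrm{span}\{\delta_s : s \in F\}$ one has $\Delta_\Gamma \circ P_F = (P_F \otimes \mathrm{Id}) \circ \Delta_\Gamma = (\mathrm{Id} \otimes P_F) \circ \Delta_\Gamma$, hence $(K \ast A) P_F = (K P_F) \ast A$, and then
\[
\| (K\ast A) - (K\ast A)P_F \| \;\le\; \| K - K P_F \| \, \| A \| \;\longrightarrow\; 0
\]
as $F$ exhausts $\Gamma$, by compactness of $K$. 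Thus $K \ast A$ is a norm limit of finite-rank operators and is compact, and symmetrically for $A \ast K$. You should replace the sentence ``it suffices to show $T$ itself is compact'' and the tensor-product claim with this approximation argument (applied to each of the three summands of $T$ after composing with $\Delta_\Gamma$); without it the proof does not go through.
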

\begin{proof}
It suffices the check the almost bimodularity for $x= s $ and $y= t$, as the general case will follow by taking linear combinations. For a map $V: \ell_2(\Gamma) \rightarrow H$ with $H$ a $\mathbb{C}[\Gamma]$ bimodule we will write $(s V t)(\xi) = s V(\xi) t$ and  $V^{s, t}(\xi):= V(s \xi t)$ where $\xi \in \ell_2(\Gamma)$.
It follows from the definitions that
\[
s (V_1\ast V_2)  t = (s \otimes s) \cdot  ((V_1 \otimes V_2) \circ \Delta_\Gamma ) \cdot (t \otimes t) =   (s V_1 t \ast s V_2 t).
\]
Further, for $\xi \in \ell_2(\Gamma)$,
\[
\begin{split}
(V_1 \ast V_2)^{s, t}(\xi) = &    (V_1 \otimes  V_2 )   \Delta_\Gamma ( s \xi t)\\
  = & (V_1 \otimes  V_2 )  ( ( s \otimes s) \Delta_\Gamma(\xi) ( t \otimes t) )
   = (V_1^{s, t} \ast V_2^{s, t} )(\xi).
\end{split}
\]
Hence
\[
(V_1 \ast V_2)^{ s , t} = V_1^{ s, t} \ast V_2^{ s , t }.
\]
Therefore we have
\begin{equation}\label{Eq:almostbimod}
s (V_1 \ast V_2) t - (V_1\ast V_2)^{s, t} = (( s  V_1 t - V_1^{ s , t })\ast s V_2 t) + (V_1^{ s , t}\ast (s V_2 t - V_2^{ s, t })).
\end{equation}
By our assumption the operators $s V_1 t - V_1^{ s, t}$ and $s V_2 t - V_2^{ s , t }$ are compact. So it suffices to check that if $K$ is compact and $T$ is bounded then both $K\ast T$ and $T\ast K$ are compact. To check that, for every finite subset $F \subset \Gamma$ consider the corresponding finite rank orthogonal projection $P_F$ onto the linear span of $\delta_s \in \ell_2(\Gamma), s \in F$. We can easily check that $\Delta \circ P_F = (P_F \otimes \oper{Id}) \circ \Delta = (\oper{Id} \otimes P_F) \circ \Delta$. It follows that $(K \ast T)P_F = (KP_F \ast T)$, so $(K\ast T) P_F - K \ast T = (KP_F - K) \ast T$. Further,
\[
\Vert  (K\ast T) P_F - K \ast T \Vert \leq \Vert KP_F - K \Vert \Vert T \Vert.
\]
By compactness of $K$ we see that $\Vert KP_F - K \Vert$  goes to 0 as $F$ increases.  So $K\ast T$ can be approximated in norm by finite rank operators and thus is compact.
The proof for $T\ast K$ is the same. Hence the operator in  \eqref{Eq:almostbimod} is compact, i.e. $V_1\ast V_2$ is almost bimodular.
\end{proof}

\begin{lemma}\label{Lem=PartialIsoConvolution}
For $i=1,2$ suppose that $V_i: \ell_2(\Gamma) \rightarrow H_i$ is a partial isometry to a $\mathbb{C}[\Gamma]$ bimodule $H_i$ such that $\ker(V_i)$ is spanned linearly by a subset $F_i \subseteq \Gamma$. Then $V_1 \ast V_2$ is a partial isometry whose kernel is the linear span of $F_1 \cup F_2$.

\begin{proof}
	The comultiplication $\Delta_\Gamma$ is an isometry $\ell_2(\Gamma) \rightarrow \ell_2(\Gamma) \otimes \ell_2(\Gamma)$.  Clearly $\Delta_\Gamma(s) =  s \otimes s$ is contained in  $\ker(V_1 \otimes V_2)$ if $s$ is in $F_1 \cup F_2$. Further, $V_1 \otimes V_2$ is isometric on $\ker(V_1)^\perp \otimes \ker(V_2)^\perp$ and so it is certainly isometric on the linear span of $\Delta_\Gamma( s ) =  s \otimes s, s \in \Gamma \backslash (F_1 \cup F_2)$. These observations conclude the lemma.
\end{proof}

\end{lemma}

 \subsection{The Akemann-Ostrand property AO$^+$ and strong solidity}
This section serves as a blackbox that connects the theory that we develop in this paper to a central concept in deformation-rigidity theory: strong solidity.  Firstly we recall a version of the Akemann-Ostrand property that was introduced in \cite{isonoExamplesFactorsWhich2015}.

\begin{definition}\label{Dfn=AO}
A finite von Neumann algebra $M$ has property AO$^+$ if there exists a $\sigma$-weakly dense unital C$^\ast$-subalgebra $A \subseteq M$ such that:
\begin{enumerate}
\item $A$ is locally reflexive \cite[Section 9]{brownAlgebrasFiniteDimensionalApproximations2008};
\item There exists a unital completely positive map $\theta: A \otimes_{{\rm min}} A^{{\rm op}} \rightarrow B(L_2(M))$ such that $\theta( a \otimes b^{{\rm op} }) - a b^{{\rm op}}$ is compact for all $a,b \in A$.
\end{enumerate}
\end{definition}

The following theorem will be the main tool to prove that certain von Neumann algebras have AO$^+$ using the Riesz transforms in this paper.

\begin{theorem}[Proposition 5.2 of \cite{caspersL2CohomologyDerivationsQuantum2021}]\label{Thm=RieszImpliesAO}
Let $H$ be a $\mathbb{C}[\Gamma]$ bimodule and let $V: \ell_2(\Gamma) \rightarrow H$ be a bounded linear map. Assume that $H$ is quasi-contained in the coarse bimodule of $\Gamma$,  that $V$ is almost bimodular and that $V^\ast V$ is Fredholm. Assume that $C_r^\ast(\Gamma)$ is locally reflexive.  Then $\calL(\Gamma)$ satisfies AO$^+$.
\end{theorem}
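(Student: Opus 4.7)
The plan is to build from $V$ a unital completely positive map $\theta\colon C_r^\ast(\Gamma)\otimes_{\min}C_r^\ast(\Gamma)^{\op}\to B(\ell_2(\Gamma))$ satisfying $\theta(a\otimes b^{\op})-ab^{\op}\in\calK(\ell_2(\Gamma))$ for all $a,b\in C_r^\ast(\Gamma)$, which together with the assumed local reflexivity of $C_r^\ast(\Gamma)$ verifies AO$^+$ for $\calL(\Gamma)$. The first ingredient is to promote the $\mathbb{C}[\Gamma]$-bimodule structure of $H$ to a $\ast$-representation $\pi\colon C_r^\ast(\Gamma)\otimes_{\min}C_r^\ast(\Gamma)^{\op}\to B(H)$. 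For the coarse bimodule this is automatic: the left-regular representation of $C_r^\ast(\Gamma)$ and the right-regular representation of $C_r^\ast(\Gamma)^{\op}$ have commuting ranges, so they jointly induce a $\ast$-homomorphism on the minimal tensor product. Quasi-containment then identifies $H$ with a Hilbert subspace of a multiple of the coarse bimodule preserved by both actions, and restricting the representation produces the desired $\pi$.

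The second ingredient is a Calkin-algebra analysis. Write $\mathcal{Q}=B(\ell_2(\Gamma))/\calK(\ell_2(\Gamma))$ with quotient map $q$. For $a,b\in\mathbb{C}[\Gamma]$ almost bimodularity gives $\pi(a\otimes b^{\op})V-V\,ab^{\op}\in\calK(\ell_2(\Gamma),H)$; applying $V^\ast$ on the left, taking adjoints of the same identity, and extending to $C_r^\ast(\Gamma)$ by norm-density yields
\[
q(V^\ast\pi(a\otimes b^{\op})V)=q(V^\ast V)\,q(ab^{\op})=q(ab^{\op})\,q(V^\ast V),
\]
so $q(V^\ast V)$ commutes with every $q(ab^{\op})$ in $\mathcal{Q}$. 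Since $V^\ast V\ge 0$ is Fredholm, its kernel projection $P$ is finite rank, so $T:=(V^\ast V+P)^{-1/2}\in B(\ell_2(\Gamma))$ is bounded and $T\,V^\ast V\,T=1-P$. Because $T$ belongs to the commutative $C^\ast$-algebra generated by $V^\ast V$ and $P$, the element $q(T)$ also commutes with every $q(ab^{\op})$. The compression $\phi(x):=T\,V^\ast\pi(x)V\,T$ is then CP, satisfies $\phi(1)=1-P$, and
\[
q(\phi(a\otimes b^{\op}))=q(T)q(V^\ast V)q(ab^{\op})q(T)=q(TV^\ast V T)q(ab^{\op})=q(ab^{\op}),
\]
using commutativity of $q(T)$ with $q(ab^{\op})$ in the second equality; hence $\phi(a\otimes b^{\op})-ab^{\op}\in\calK$ for every $a,b\in C_r^\ast(\Gamma)$.

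To upgrade $\phi$ to a UCP map, pick any state $\omega$ on $C_r^\ast(\Gamma)\otimes_{\min}C_r^\ast(\Gamma)^{\op}$ and set $\theta(x):=\phi(x)+\omega(x)\,P$. The correction is finite rank, so $\theta(a\otimes b^{\op})\equiv ab^{\op}\pmod{\calK}$ while $\theta(1)=(1-P)+P=1$; this $\theta$ verifies Definition~\ref{Dfn=AO}. The main obstacle is the centrality argument in the middle step: one must justify carefully that the Fredholm parametrix $T$ inherits the almost-commutation $[q(V^\ast V),q(ab^{\op})]=0$ modulo compacts, which rests on identifying $T$ with an element of the commutative $C^\ast$-algebra $C^\ast(q(V^\ast V))\subseteq\mathcal{Q}$ and invoking continuous functional calculus. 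A secondary but essential subtlety is that quasi-containment (rather than mere weak containment) is what genuinely lets $\pi$ descend to the minimal $C^\ast$-tensor product in step one.
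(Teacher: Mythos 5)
Your argument is correct, and it follows the same compression-and-normalization strategy as the cited Proposition~5.2 of \cite{caspersL2CohomologyDerivationsQuantum2021} (which the paper invokes without reproducing): quasi-containment furnishes a $\ast$-representation $\pi$ of $C_r^\ast(\Gamma)\otimes_{\min}C_r^\ast(\Gamma)^{\op}$ on $H$, almost bimodularity gives $q(V^\ast\pi(a\otimes b^{\op})V)=q(V^\ast V)q(ab^{\op})=q(ab^{\op})q(V^\ast V)$ in the Calkin algebra, the Fredholm parametrix $T=(V^\ast V+P)^{-1/2}$ satisfies $q(T)=q(V^\ast V)^{-1/2}\in C^\ast(q(V^\ast V),1)$ and hence commutes with $q(ab^{\op})$, and the finite-rank state correction $\omega(\cdot)P$ restores unitality without disturbing compactness. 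One small remark: your closing aside overstates the role of quasi-containment --- weak containment of $H$ in the coarse bimodule would already make the combined left--right action factor through $C_r^\ast(\Gamma)\otimes_{\min}C_r^\ast(\Gamma)^{\op}$, since weak containment is exactly the inclusion of kernels of the associated $\ast$-representations; this does not affect the validity of your proof.
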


The following theorem in turn yields the strong solidity results from  AO$^+$. For the notion of weak amenability we refer to \cite[Section 12.3]{brownAlgebrasFiniteDimensionalApproximations2008}. If $\Gamma$ is a weakly amenable discrete group then $C_r^\ast(\Gamma)$ is automatically locally reflexive. All Coxeter groups are weakly amenable \cite{fendlerWeakAmenabilityCoxeter2002}, \cite{Janus} as well as simple Lie groups of real rank one \cite{CanniereHaagerup}, \cite{CowlingHaagerup}. We recall that amenability of a von Neumann algebra was defined in the introduction and preliminaries. We note that amenability and weak amenability shall not appear explicitly in the proofs of this paper. We recall that a von Neumann algebra is called {\it diffuse} if it does not contain minimal projections.

\begin{definition}\label{Dfn=StronglySolid}
A finite von Neumann algebra $M$ is called {\it strongly solid} if for every diffuse amenable von Neumann subalgebra $B \subseteq M$ we have that the normalizer
\[
\{ u \in M : u \textrm{ unitary and }  u B u^\ast = B \},
\]
generates a von Neumann algebra that is amenable again.
\end{definition}

\begin{theorem}[See \cite{PopaVaesCrelle} and \cite{isonoExamplesFactorsWhich2015}]\label{Thm=AOimpliesStronglySolid}
Let $\Gamma$ be a discrete weakly amenable group such that $\calL(\Gamma)$ satisfies AO$^+$. Then $\calL(\Gamma)$ is {\it strongly solid}.
\end{theorem}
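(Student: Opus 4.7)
The proof strategy is to import the machinery of \cite{PopaVaesCrelle} and \cite{isonoExamplesFactorsWhich2015} as a black box, after upgrading weak amenability of $\Gamma$ to an approximation property at the von Neumann algebra level. Weak amenability supplies a net of finitely supported Herz--Schur multipliers $m_i \colon \Gamma \to \mathbb{C}$ with $\sup_i \Vert m_i \Vert_{\cb} < \infty$ converging pointwise to $1$; these lift to normal completely bounded finite rank Fourier multipliers $\Phi_{m_i} \colon \calL(\Gamma) \to \calL(\Gamma)$ with $\Phi_{m_i} \to \mathrm{id}$ in the point-$\sigma$-weak topology and uniformly bounded cb-norms. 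Hence $\calL(\Gamma)$ has the weak-$\ast$ completely bounded approximation property (W$^\ast$CBAP).

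Fix a diffuse amenable von Neumann subalgebra $B \subseteq M := \calL(\Gamma)$ and let $P := N_M(B)''$; the goal is to prove that $P$ is amenable. Form the Jones basic construction $\langle M, e_B \rangle$ and consider the $P$-$P$ bimodule $\calH := L_2(\langle M, e_B \rangle) \ominus L_2(M)$. By Popa's normaliser criterion (see \cite{PopaVaesCrelle}), amenability of $P$ will follow once $\calH$ is weakly contained in the coarse $P$-$P$ bimodule $L_2(P) \otimes L_2(P)$. The u.c.p.\ map $\theta$ from Definition \ref{Dfn=AO}, composed with the quotient onto the Calkin algebra, yields a u.c.p.\ representation of $C_r^\ast(\Gamma) \otimes_{{\rm min}} C_r^\ast(\Gamma)^{{\rm op}}$ into $B(L_2(M))/K(L_2(M))$. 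Using local reflexivity of $C_r^\ast(\Gamma)$ (automatic from weak amenability) together with the approximating maps $\Phi_{m_i}$, Isono's averaging argument lifts this to normal u.c.p.\ maps on $M \overline{\otimes} M^{{\rm op}}$ that approximate the standard action modulo compacts. Upon restriction to $P \overline{\otimes} P^{{\rm op}}$, this yields the required weak containment.

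The technical heart---and the step I would invoke as a black box---is Isono's passage from the compact-perturbation statement at the C$^\ast$-level to an honest $P$-$P$ bimodule weak containment. The principal obstacle is that the AO$^+$ map $\theta$ lives at the C$^\ast$-algebraic level while the desired bimodule weak containment is a von Neumann algebraic statement; bridging this gap requires a genuine use of both W$^\ast$CBAP (to average) and local reflexivity (to lift to normal maps on $M \overline{\otimes} M^{{\rm op}}$). Popa's intertwining-by-bimodules dichotomy is then used to rule out the only alternative to weak containment---that a corner of $B$ intertwines into an amenable summand---and this alternative is excluded because $B$ is diffuse. All of these steps are carried out in \cite{isonoExamplesFactorsWhich2015} under precisely the hypotheses AO$^+$ and W$^\ast$CBAP that we have now verified, so I would cite that reference for the conclusion.
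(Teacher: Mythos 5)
The paper provides no proof of this statement; it is a direct citation to \cite{PopaVaesCrelle} and \cite{isonoExamplesFactorsWhich2015}, and your proposal ultimately defers to the same references after correctly observing that weak amenability of $\Gamma$ upgrades to the weak-$\ast$ completely bounded approximation property for $\calL(\Gamma)$. One small imprecision in your sketch: the intermediate criterion you quote (that amenability of $P = N_M(B)''$ follows from weak containment of $L_2(\langle M, e_B\rangle) \ominus L_2(M)$ in the coarse $P$-$P$ bimodule) is not quite the form used in those references --- the actual argument runs through weak compactness of the $N_M(B)$-action on $B$ extracted from the approximation property (Ozawa--Popa) combined with a relative-amenability dichotomy that exploits AO$^+$ --- but since you explicitly black-box this step and cite \cite{isonoExamplesFactorsWhich2015} for it, the proposal is correct as a citation-based argument matching the paper's own treatment.
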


\subsection{Application A: Proper cocycles into $p$-integrable representations} \label{Sect=PInt}  We are now able to harvest our first result.  We use a type of ad hoc Riesz transform which is slightly different from what we do in Section \ref{Sect=RieszEtc} but with similar fundamental properties.  We use tensoring of bimodules to establish the Akemann-Ostrand property. The method is exemplary for the rest of the paper.

\begin{definition}
Let $\Gamma$ be a discrete group. Suppose that $\pi\colon \Gamma \to B(H)$ is a unitary (or orthogonal) representation. We say that $\pi$ is $p$-integrable for some $p<\infty$ if there exists a dense subspace $H_0$ such that for any $v \in H_0$ the matrix coefficient $g\mapsto \langle \pi(g) v,  v\rangle$ is in $\ell_p(\Gamma)$.
\end{definition}

The following theorem is the main result of \cite{Sinclair}. The idea of the proof parallels \cite{Sinclair} but is somewhat cleaner and more conceptual we believe. 

We recall that a {\it derivation} $\partial: \mathbb{C}[\Gamma] \rightarrow K$ into a $\mathbb{C}[\Gamma]$ bimodule $K$ is a linear map that satisfies the Leibniz rule
\[
\partial(xy) = x \partial(y) + \partial(x) y, x,y \in \mathbb{C}[\Gamma].
\]

 A function $b: \Gamma \rightarrow H$ with $\pi: \Gamma \rightarrow B(H)$  a unitary representation on a Hilbert space $H$ is called a {\it cocycle} or {\it $1$-cocycle} if $b( st) = \pi(s) b(t) + b(s)$ for all $s,t \in \Gamma$. The following theorem is the only place where cocycles are used in this paper.

\begin{theorem}\label{Thm=Pint}
Let $\Gamma$ be a discrete group admitting a proper cocycle into a $p$-integrable representation for some $p<\infty$. Assume $C_r^\ast(\Gamma)$ is locally reflexive.  Then $\calL(\Gamma)$ satisfies $AO^{+}$.
\end{theorem}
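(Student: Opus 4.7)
The plan is to build an ad hoc Riesz transform $V$ into a bimodule whose coefficients are in $\calS_p$, tensor it enough times to land in the coarse bimodule, and invoke Theorem \ref{Thm=RieszImpliesAO}. Concretely, let $b \colon \Gamma \to H$ be the given proper cocycle for the $p$-integrable representation $\pi$, set $\psi(g) := \|b(g)\|^2$ (a proper, conditionally negative definite function), and consider the Hilbert space $K := \ell_2(\Gamma) \otimes H$ as a $\mathbb{C}[\Gamma]$ bimodule with
\[
\lambda_s \cdot (\delta_g \otimes v) \cdot \lambda_t = \delta_{sgt} \otimes \pi(s)v.
\]
The cocycle identity makes $\partial(\lambda_g) := \delta_g \otimes b(g)$ a derivation, and I will define the Riesz-type map $V \colon \ell_2(\Gamma) \to K$ by $V(\delta_g) = \psi(g)^{-1/2}\,\delta_g \otimes b(g)$ for $g \neq e$ and $V(\delta_e) = 0$. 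Since $\|b(g)\|^2 = \psi(g)$, $V$ is a partial isometry with $V^\ast V = 1 - P_{\delta_e}$, which is Fredholm.

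For the coefficients, I would pick the cyclic subset $H_{00} = \{\delta_e \otimes v : v \in H_0\} \subseteq K$, where $H_0 \subseteq H$ is the dense subspace witnessing $p$-integrability. A direct computation gives
\[
\langle \lambda_g \cdot (\delta_e \otimes v) \cdot \lambda_h,\ \delta_e \otimes w\rangle = \delta_{gh,e}\langle \pi(g)v, w\rangle,
\]
so the coefficient $T_{\delta_e \otimes v,\, \delta_e \otimes w}$ is the Fourier multiplier with symbol $g \mapsto \langle \pi(g)v, w\rangle$, which as an operator on $\ell_2(\Gamma)$ is \emph{diagonal} in the basis $\{\delta_g\}$. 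Polarization plus $p$-integrability shows that this symbol lies in $\ell_p(\Gamma)$ for $v, w \in H_0$, hence $T_{\delta_e \otimes v, \delta_e \otimes w} \in \calS_p$. By Lemma \ref{lemma:reduction-to-cyclic-subset} a dense set of coefficients of $K$ lies in $\calS_p$, so by Proposition \ref{Prop:Spquasicontain} the $n$-fold tensor bimodule $K^{\otimes n}_\Gamma$ is quasi-contained in the coarse bimodule for any $n \geq p/2$.

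The main technical step is almost bimodularity of $V$: this is where properness enters. For $s, t \in \Gamma$,
\[
(sVt - V^{s,t})(\delta_g) \;=\; \delta_{sgt} \otimes \bigl(\psi(g)^{-1/2}\pi(s)b(g) - \psi(sgt)^{-1/2}b(sgt)\bigr).
\]
Using the cocycle identity $b(sgt) = \pi(s)b(g) + \pi(sg)b(t) + b(s)$ together with the length-function inequality $\bigl|\sqrt{\psi(sgt)} - \sqrt{\psi(g)}\bigr| \leq \sqrt{\psi(s)} + \sqrt{\psi(t)}$ (which follows from $\|b(\cdot)\|$ being subadditive via the cocycle relation), I expect the vector in parentheses to go to zero in norm as $\psi(g) \to \infty$. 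Properness then forces $\|(sVt - V^{s,t})\delta_g\| \to 0$ as $g \to \infty$ in $\Gamma$; since the images sit in mutually orthogonal lines (indexed by $\delta_{sgt}$), $sVt - V^{s,t}$ is compact. This is the step where care is needed, but it is a direct estimate.

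Finally, by Lemma \ref{Lem:almostbimod} the tensored map $V^{\ast n} \colon \ell_2(\Gamma) \to K^{\otimes n}_\Gamma$ is almost bimodular, and by Lemma \ref{Lem=PartialIsoConvolution} it is a partial isometry whose kernel is the one-dimensional span of $\delta_e$, so $(V^{\ast n})^\ast V^{\ast n} = 1 - P_{\delta_e}$ is Fredholm. All hypotheses of Theorem \ref{Thm=RieszImpliesAO} are satisfied (local reflexivity of $C_r^\ast(\Gamma)$ is assumed), and we conclude that $\calL(\Gamma)$ has AO$^+$.
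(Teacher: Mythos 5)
Your overall strategy matches the paper's exactly: build the bimodule $K$ (the paper writes it as $H \otimes \ell_2(\Gamma)$ with left action $\pi \otimes \lambda$ and right action $\mathrm{id}\otimes \rho$, which is the same module as yours under the obvious flip of tensor legs), compute that the coefficients at $\delta_e\otimes v$ are diagonal Fourier multipliers with $\ell_p$ symbols, tensor to $K^{\otimes n}_\Gamma$ with $n \geq p/2$ using Proposition~\ref{Prop:Spquasicontain}, and feed the convolved partial isometry into Theorem~\ref{Thm=RieszImpliesAO} via Lemmas~\ref{Lem:almostbimod} and \ref{Lem=PartialIsoConvolution}. The coefficient computation and the final assembly are identical in substance to the paper's.

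The one genuine difference is your treatment of almost bimodularity, and it is worth pointing out. The paper does not prove almost bimodularity of $\partial\circ\Delta^{-1/2}$ in the body of the proof; it invokes \cite[Proposition 5.3]{caspersL2CohomologyDerivationsQuantum2021} as a blackbox. You instead give a direct, self-contained estimate. Your identity
\[
(sVt - V^{s,t})(\delta_g) = \delta_{sgt}\otimes \eta_g, \qquad
\eta_g = \bigl(\psi(g)^{-1/2}-\psi(sgt)^{-1/2}\bigr)\pi(s)b(g) - \psi(sgt)^{-1/2}\bigl(\pi(sg)b(t)+b(s)\bigr),
\]
does give $\|\eta_g\| \leq 2\bigl(\sqrt{\psi(s)}+\sqrt{\psi(t)}\bigr)/\sqrt{\psi(sgt)} \to 0$ as $g\to\infty$ by properness and the subadditivity of $\sqrt{\psi}=\|b(\cdot)\|$, and the mutual-orthogonality observation then yields compactness. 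This is a nice elementary replacement for the external citation and could profitably be spelled out in full.

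One small slip: the kernel of $V$ (equivalently of $\Delta$) is $\mathrm{span}\{\delta_g : b(g)=0\}$, and since $\|b(\cdot)\|$ is subadditive and symmetric the set $\{g : b(g)=0\}$ is a subgroup; properness makes it finite, but it need not equal $\{e\}$. So you should set $V(\delta_g)=0$ for \emph{all} $g$ with $\psi(g)=0$ (otherwise your formula reads $0^{-1/2}\cdot 0$), and the correct statement is that $V^{\ast}V$ is the orthogonal projection onto a closed subspace of finite codimension rather than $1 - P_{\delta_e}$. This does not affect Fredholmness, Lemma~\ref{Lem=PartialIsoConvolution}, or anything downstream, but the finitely many exceptional $g$ (where $\psi(g)=0$ or $\psi(sgt)=0$) also need a one-line separate treatment in the compactness estimate; they contribute vectors of norm at most $1$, which is harmless.
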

\begin{proof}
Let $\pi\colon \Gamma \to B(H)$ be a $p$-integrable representation, $H_0\subset H$ a dense subspace with $p$-integrable coefficients, and let $c\colon \Gamma \to H$ be a proper $\pi$-cocycle.
 The map
 \[
 \lambda_g \mapsto \exp(-t\|c(g)\|^2) \lambda_g,  \qquad g \in \Gamma, t \geq 0,
  \]
  extend to a semi-group of normal unital completely positive maps on $\calL(\Gamma)$ \cite[Theorem C.11]{brownAlgebrasFiniteDimensionalApproximations2008}, i.e. a quantum Markov semi-group as will be defined in Section \ref{Sect=RieszEtc}.
 Set $\Delta: \mathbb{C}[\Gamma] \rightarrow \mathbb{C}[\Gamma]$ by  $\Delta(\lambda_g):= \|c(g)\|^2 \lambda_g, g \in \Gamma$.
 Set $K:=H \otimes \ell_2(\Gamma)$ with the left action given by $\pi\otimes \lambda$ and the right action given by ${\rm id} \otimes \rho$. Now define $\partial \colon \mathbb{C}[\Gamma] \to K$ by $\partial(\lambda(g)):= c(g)\otimes \delta_g$. As $c$ is a cocycle, $\partial$ is a derivation.
  We will check that the coefficients of this bimodule $K$ at vectors of the form $\xi:=v\otimes \delta_g$ are in $\cS_p$, where $v\in H_0, g \in \Gamma$. Let us start with the case $\xi=v\otimes \delta_e$. We claim that $T_{\xi}(\lambda_g)= \langle \pi(g)v, v\rangle \lambda_g$. Indeed, let us take $g_1, g_2 \in \Gamma$ and compute
\[
\langle g_1 \xi g_2, \xi\rangle = \langle \pi(g)v \otimes \delta_{g_1g_2}, v \otimes \delta_e\rangle = \langle \pi(g)v,v\rangle \delta_{g_1g_2=e}.
\]
It is clearly equal to $\tau(T_{\xi}(g_1) g_2)$ from the definition of the coefficient $T_{\xi}$. Therefore the coefficients are diagonal operators with $\ell_p$-coefficients, so they are in $\cS_p$. To handle vectors of the form $v\otimes \delta_g$ note that $T_{\xi\cdot g}= \lambda_g^{\ast} T_{\xi} \lambda_g$ and $(v\otimes \delta_e)\cdot g = v\otimes \delta_g$, so these coefficients are also in $\cS_p$. The Riesz transform defined as
\[
\partial \circ \Delta^{-\frac{1}{2}}: \mathbb{C}[\Gamma] \rightarrow K: \lambda_g \mapsto \|c(g)\|^{-1} c(g) \otimes \delta_g, \qquad g \in \Gamma.
\]
 extends to an almost bimodular isometry $\ell_2(\Gamma) \rightarrow K$  (see \cite[Proposition 5.3]{caspersL2CohomologyDerivationsQuantum2021}) whose kernel by definition equals the kernel of $\Delta$. This kernel is finite dimensional as $c$ is proper.

Now since $K$ has a dense set of coefficients in $\cS_p$ we have that the bimodule $K^{\otimes n}_\Gamma$ has a dense set of coefficients in $\cS_2$ for every $n \geq \frac{p}{2}$. Therefore, by Proposition \ref{proposition:quasicontainment} $K^{\otimes n}_\Gamma$ is quasi-contained in the coarse bimodule of $\Gamma$. From the previous paragraph  and Lemma \ref{Lem:almostbimod} we see that $(\partial \circ \Delta^{-\frac{1}{2}} )^{ \ast n }$ is an almost bimodular map. $(\partial \circ \Delta^{-\frac{1}{2}})^{\ast n }$ is moreover a partial isometry with a finite dimensional cokernel by Lemma \ref{Lem=PartialIsoConvolution}. Therefore the assumptions of Theorem \ref{Thm=RieszImpliesAO}  are satisfied and if $C_r^\ast(\Gamma)$ is locally reflexive we conclude that $\calL(\Gamma)$ has AO$^{+}$.
\end{proof}

\begin{remark}
In the proof of Theorem \ref{Thm=Pint} we may view $\partial$ as an unbounded densely defined operator $\ell_2(\Gamma) \rightarrow K$. It is not difficult to check that $(\partial^\ast \partial)(\lambda_g) = \Delta(\lambda_g), g \in \Gamma$ as for the derivations that occur in Section \ref{Sect=RieszEtc}.
\end{remark}

Recall that $\Gamma$ is called icc if all conjugacy classes except for the identity are infinite.

\begin{corollary}
Let $\Gamma$ be an icc lattice in either $SO(n,1), n \geq 3$ or $SU(m,1), m \geq 2$. Then $\calL(\Gamma)$ is strongly solid.
\end{corollary}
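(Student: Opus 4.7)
The plan is to chain together two results from this paper: Theorem \ref{Thm=Pint} to obtain AO$^+$ for $\calL(\Gamma)$, and then Theorem \ref{Thm=AOimpliesStronglySolid} to upgrade AO$^+$ to strong solidity. Two hypotheses must be verified: (i) $\Gamma$ is weakly amenable, which supplies both local reflexivity of $C_r^\ast(\Gamma)$ (needed for Theorem \ref{Thm=Pint}) and the hypothesis of Theorem \ref{Thm=AOimpliesStronglySolid}; and (ii) $\Gamma$ admits a proper $1$-cocycle into a $p$-integrable unitary representation for some $p < \infty$. For (i) I would invoke the Cowling-Haagerup theorem \cite{CowlingHaagerup} already recalled in the text: $SO(n,1)$ and $SU(m,1)$ are weakly amenable with Cowling-Haagerup constant one, weak amenability passes to closed subgroups so $\Gamma$ is weakly amenable, and hence $C_r^\ast(\Gamma)$ is exact and in particular locally reflexive.

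For ingredient (ii), which is the actual content of the statement, I would appeal to the classical harmonic analysis of rank-one simple Lie groups. At the endpoint of the spherical complementary series there is a unitary representation $\pi$ of the ambient group $G \in \{SO(n,1),\, SU(m,1)\}$ whose $K$-finite matrix coefficients lie in $L^p(G)$ for $p$ sufficiently large (beyond the relevant critical exponent), and which comes equipped with a proper conditionally negative type function arising from the Iwasawa / horospherical structure. Restricting $\pi$ to $\Gamma$ and integrating over a fundamental domain shows that a dense set of matrix coefficients of $\pi|_\Gamma$ lie in $\ell^p(\Gamma)$, while the restriction of the conditionally negative type function is proper on $\Gamma$ because $\Gamma$ inherits the Haagerup property from $G$ and has no compact subgroups of positive dimension. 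The resulting proper $1$-cocycle into a $p$-integrable representation is precisely the object constructed (in greater generality) in \cite{Sinclair}; indeed the present corollary is effectively the main application of that paper.

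With (i) and (ii) in hand, Theorem \ref{Thm=Pint} yields AO$^+$ for $\calL(\Gamma)$, after which Theorem \ref{Thm=AOimpliesStronglySolid} gives strong solidity. The icc hypothesis plays no role in the core argument; it merely guarantees that $\calL(\Gamma)$ is a type $\mathrm{II}_1$ factor so that strong solidity delivers the standard Voiculescu-type consequences such as the absence of a Cartan subalgebra. The only substantive obstacle is pinning down the correct $p$ and the correct unitary representation for each of the two families of Lie groups in (ii); everything else is a direct invocation of machinery already set up, and the subtle work has been absorbed into Theorem \ref{Thm=Pint} and Theorem \ref{Thm=AOimpliesStronglySolid}.
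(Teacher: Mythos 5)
Your overall strategy is the same as the paper's: weak amenability of $\Gamma$ via Cowling--Haagerup gives local reflexivity of $C_r^\ast(\Gamma)$, the existence of a proper cocycle into a $p$-integrable representation feeds into Theorem \ref{Thm=Pint} to give AO$^+$, and Theorem \ref{Thm=AOimpliesStronglySolid} then yields strong solidity. The difference is that where the paper simply invokes \cite[Theorem 1.9]{Shalom} for the existence of the proper $p$-integrable cocycle, you sketch a construction, and that sketch contains a couple of incorrect steps worth flagging. First, a complementary series representation of $G$ does not intrinsically ``come equipped with'' a proper conditionally negative type function; the substantive input, which is what Shalom's theorem establishes, is that at the endpoint of the spherical complementary series the first cohomology $H^1(G,\pi)$ is nontrivial and in fact contains a class represented by a proper cocycle. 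The conditionally negative type function $g\mapsto \|c(g)\|^2$ is a consequence of having such a cocycle, not a feature of the representation itself. Second, your properness argument is circular: the Haagerup property of $\Gamma$ asserts the existence of \emph{some} proper conditionally negative type function on $\Gamma$, not that the restriction of a particular one from $G$ is proper. The actual reason the restricted cocycle is proper is simply that $\Gamma$ is a discrete (hence closed) subgroup of $G$, so any proper map on $G$ restricts to a proper map on $\Gamma$; the remark about compact subgroups of positive dimension is vacuous for a discrete group. Finally, the object is best attributed to \cite[Theorem 1.9]{Shalom} rather than to \cite{Sinclair}: Sinclair's paper is the source of (the original version of) Theorem \ref{Thm=Pint} and cites Shalom for the cocycle. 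Since you are ultimately deferring to the literature anyway, the clean route is to cite Shalom directly, exactly as the paper does.
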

\begin{proof}
By \cite{CanniereHaagerup}, \cite{CowlingHaagerup} we have that $\Gamma$ is weakly amenable and in particular $C_r^\ast(\Gamma)$ is locally reflexive. By \cite[Theorem 1.9]{Shalom} $\Gamma$ admits a proper cocycle in a $p$-integrable representation for some $p \in [2, \infty)$. Therefore by Theorem \ref{Thm=Pint} $\calL(\Gamma)$ satisfies AO$^+$. We then conclude the proof by Theorem \ref{Thm=AOimpliesStronglySolid}.
\end{proof}

\section{Quantum Markov semi-groups, gradients and the Riesz transforms}\label{Sect=RieszEtc}
In this section we study quantum Markov semi-groups of Fourier multipliers on the group von Neumann algebra of a discrete group. We introduce the associated Riesz transform which takes values in a certain bimodule that we call the `gradient bimodule' or the bimodule associated with the `carr\'e du champ'. Our main goal is to analyze when the coefficients of this bimodule are in the Schatten $\cS_p$ space and consequently when this bimodule is quasi-contained in the coarse bimodule. We also show that under very natural conditions the Riesz transform is an almost bimodular map in the sense of Section \ref{Sect=Coefficients}.

\subsection{Quantum Markov semi-groups, the gradient bimodule and the Riesz transform} \label{Sect=Riesz}

We start defining the Riesz transform of a quantum Markov semi-group.

\begin{definition}
A {\it  quantum Markov semi-group} (QMS) on a finite von Neumann algebra $(M, \tau)$  is a semi-group $\Phi = (\Phi_t)_{t \geq 0}$ of  normal unital completely positive maps $\Phi_t: M \rightarrow M$ that are  trace preserving ($\tau \circ \Phi_t = \tau, t \geq 0$) and such that for every $x \in M$ the map $t \mapsto \Phi_t(x)$ is strongly continuous. We shall moreover assume that a quantum Markov semi-group is {\it symmetric} meaning that for every $x,y \in M$ and $t \geq 0$ we have $\tau(\Phi_t(x) y ) = \tau(x \Phi_t(y))$. So QMS always means symmetric QMS.
\end{definition}

  Fix a  QMS $\Phi = (\Phi_t)_{t \geq 0}$ on  a finite von Neumann algebra $M$ with a normal faithful tracial state $\tau$. By the Kadison-Schwarz inequality there exists a  semi-group of contractions $(\Phi_t^{(2)})_{t \geq 0}$ on $L_2(M) = L_2(M, \tau)$ by
\[
\Phi_t^{(2)}(  x \Omega_\tau) = \Phi_t(x) \Omega_\tau, \qquad x \in M.
\]
Here $\Omega_\tau = 1_M$ is the cyclic vector in $L_2(M)$.
The semi-group  $(\Phi_t^{(2)})_{t \geq 0}$ is moreover point-norm continuous, i.e. it is continuous for the strong topology on $B(L_2(M))$. By a special case of  the Hille-Yosida theorem there exists an unbounded positive self-adjoint operator $\Delta$ on $L_2(M)$ such that $\Phi_t^{(2)} =  \exp(-t \Delta)$. We will assume the existence of a $\sigma$-weakly dense $\ast$-subalgebra $\calA \subseteq M$ such that $\calA \Omega_\tau \subseteq \Dom(\Delta)$ and $\Delta( \calA \Omega_\tau  ) \subseteq \calA \Omega_\tau$. By identifying $a \in \calA$ with $a \Omega_\tau \in L_2(M)$ we may and will view $\Delta$ as a map $\calA \rightarrow \calA$. We now introduce the {\it carr\'e du champ} or {\it gradient}  as
\[
\Gamma: \calA \times \calA \rightarrow \calA: (a,b) \mapsto \frac{1}{2} (  \Delta(b^\ast) a +  b^\ast \Delta(a) - \Delta( b^\ast a )    ).
\]
Let $H$ be any $\calA$ bimodule, i.e. we recall $H$ is a Hilbert space with commuting left and right actions of $\calA$. For $a,b \in \calA, \xi, \eta \in H$ we set the possibly degenerate inner product on $\calA \otimes H$ (vector space tensor product) by
\[
\langle a \otimes \xi, b \otimes \eta \rangle = \langle \Gamma(a,b) \xi, \eta \rangle.
\]
The Hilbert space obtained by quotienting out the degenerate part of this inner product and taking the completion shall be denoted by $H_\nabla$. We denote by $a \otimes_\nabla \xi$ the element $a \otimes \xi$ identified in $H_\nabla$. For $x, y, a \in \calA$ and $\xi \in H$  we define commuting left and right actions by
\begin{equation}\label{Eqn=Actions}
x \cdot (a \otimes_\nabla \xi) = xa \otimes_\nabla \xi - x \otimes_\nabla  a\xi, \qquad (a \otimes_\nabla \xi) \cdot y = a \otimes_\nabla \xi y.
\end{equation}
In this paper we shall only deal with the case $H= L_2(M)$ with actions by left and right multiplication of $M$. In this case the actions \eqref{Eqn=Actions} extend to contractive actions on the norm closure of $\calA$. We do not say anything about whether the actions are normal at this point, but rather use Proposition \ref{proposition:quasicontainment} to show that they are normal in the cases that are relevant.
 We define a derivation
\[
\nabla: \calA \rightarrow L_2( M )_\nabla: a \mapsto a \otimes_\nabla \Omega_\tau.
\]
More precisely, $\nabla$ satisfies the Leibniz rule
\[
\nabla(xy) = x \nabla(y) + \nabla(x) y, \qquad  x,y \in \calA
\]
 with respect to the module actions \eqref{Eqn=Actions}. This fact uses that $\tau$ is tracial. Since $\Phi_t$ is $\tau$-preserving it follows that for $x \in \calA$ we have $\langle \Delta(x) \Omega_\tau, \Omega_\tau \rangle = \frac{d}{dt}\vert_{t = 0} \langle \Phi_t(x) \Omega_\tau, \Omega_\tau \rangle = 0$ (upper derivative).
Therefore, as $\Delta \geq 0$,
\[
\begin{split}
\Vert \nabla(a) \Vert^2 = & \langle \Gamma(a, a) \Omega_\tau, \Omega_\tau \rangle \\
= & \frac{1}{2} (\langle \Delta(a)  \Omega_\tau,  a \Omega_\tau \rangle +  \langle a \Omega_\tau, \Delta(a) \Omega_\tau \rangle  - \langle \Delta(a^\ast a) \Omega_\tau, \Omega_\tau \rangle ) \\
= & \frac{1}{2} (\langle \Delta^{\frac{1}{2}}(a)  \Omega_\tau,  \Delta^{\frac{1}{2}}( a) \Omega_\tau \rangle +  \langle \Delta^{\frac{1}{2}}(a) \Omega_\tau, \Delta^{\frac{1}{2}}(a) \Omega_\tau \rangle  - 0 ) \\
= & \Vert  \Delta^{ \frac{1}{2} } (a)  \Omega_\tau \Vert^2.
\end{split}
\]
It follows that we have an isometric map
\[
 \nabla \Delta^{-\frac{1}{2}}: \ker(\Delta)^\perp \rightarrow L_2(M)_\nabla.
\]
We extend this map  to a partial isometry
\[
R_\Phi: L_2(M) \rightarrow L_2(M)_\nabla
\]
 by defining it to have $\ker(\Delta)$ as its kernel. We call $R_\Phi$ the {\it Riesz transform}.

 \begin{remark}
 This Riesz transform was also used in \cite[Section 5]{caspersL2CohomologyDerivationsQuantum2021}. Note that mapping that was introduced in \cite[Section 5, Eqn. (5.1)]{caspersL2CohomologyDerivationsQuantum2021} differs from $R_\Phi$ only on $\ker(\Delta)$. If the kernel of $\Delta$ is finite dimensional then $R_\Phi$ agrees with  \cite[Eqn. (5.1)]{caspersL2CohomologyDerivationsQuantum2021} up to a finite rank perturbation. In particular this is the case if $\Delta \geq 0$ has a compact resolvent. The results of \cite[Section 5]{caspersL2CohomologyDerivationsQuantum2021} stay intact under this finite rank perturbation.
\end{remark}


\subsection{Coefficients of the gradient bimodule}
 We now start our analysis of coefficients of the gradient bimodule. The following definition of `gradient-$\cS_p$' that first occurred in \cite{caspersGradientFormsStrong2021}  (for $p=2$) and \cite{caspersL2CohomologyDerivationsQuantum2021} (for general $p$)  plays a central role in this paper. The definition may depend on the choice of the $\sigma$-weakly dense subalgebra $\calA$ of $M$ which we fixed before in our notation. This paper contains the first results for the gradient-$\cS_p$ property in the context of group algebras.

\begin{definition} \label{Dfn=GradientSp}
Let $p \in [1, \infty]$. Consider a QMS $\Phi$
 on a finite von Neumann algebra $(M, \tau)$
with generator $\Delta$ and a dense $\ast$-subalgebra $\calA \subseteq M$ as in Section \ref{Sect=Riesz}. $\Phi$ is called gradient-$\cS_p$ if for every $a,b \in \calA$ the map
\[
\Psi^{a,b}: \calA \rightarrow \calA: x \mapsto    \Delta(axb) +  a \Delta(x) b -  \Delta(ax) b -  a \Delta(xb ),
\]
extends as $x \Omega_\tau \mapsto \Psi^{a,b}(x) \Omega_\tau$ to a bounded map on $L_2(M)$ that is moreover in the Schatten $p$-class $\cS_p = \cS_p(L_2(M))$.
\end{definition}

\begin{remark}
Since $\Delta$ is self-adjoint we have for $a,b,x,y \in \calA$,
\[
\begin{split}
  \langle \Psi^{a,b}(x) \Omega_\tau, y \Omega_\tau \rangle =  &   \langle (\Delta(axb) + a \Delta(x) b - \Delta(ax) b - a \Delta(xb ) ) \Omega_\tau, y \Omega_\tau \rangle \\
= &    \langle x \Omega_\tau, (\Delta(a^\ast y b^\ast) + a^\ast \Delta(y) b^\ast - \Delta(a^\ast x) b^\ast - a^\ast \Delta(y b^\ast ) )  \Omega_\tau \rangle \\
= &  \langle x \Omega_\tau, \Psi^{a^\ast,b^\ast}(y) \Omega_\tau \rangle.
\end{split}
\]
So it follows that
\begin{equation}\label{calculation:adjoint}
  (\Psi^{a,b})^\ast =  \Psi^{a^\ast,b^\ast}, \qquad a,b \in \calA.
\end{equation}
\end{remark}

The following lemma simplifies verifying whether a QMS has the gradient-$\cS_p$ property.

	\begin{lemma}[Condition that implies Gradient-$\calS_p$ property]\label{lemma:reduction-gradient-Sp-to-generators}
  Let $p\in [1,\infty]$. Let $\mathcal{A}_0\subseteq \calA$  be a self-adjoint subset that generates $\mathcal{A}$ as a $\ast$-algebra.
  Then $(\Phi_t)_{t\geq 0}$ is gradient-$\mathcal{S}_p$ if and only if for all  $a, b \in \mathcal{A}_0$ we have that $\Psi^{a,b}$ is in $\mathcal{S}_p$.
 	\end{lemma}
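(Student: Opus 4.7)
The ``only if'' direction is immediate, so the content is in the converse. The plan is to establish two Leibniz-type identities for $\Psi^{a,b}$ and then run a bilinear induction using that $\mathcal{S}_p$ is a two-sided ideal.

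First I would establish the identities (direct expansion of the defining formula for $\Psi^{a,b}$):
\begin{equation*}
\Psi^{a_1 a_2, b}(x) = \Psi^{a_1, b}(a_2 x) + a_1 \Psi^{a_2, b}(x), \qquad \Psi^{a, b_1 b_2}(x) = \Psi^{a, b_2}(x b_1) + \Psi^{a, b_1}(x) b_2,
\end{equation*}
for all $a,b,a_1,a_2,b_1,b_2,x \in \calA$. Each identity reduces to a short cancellation once one writes out the four terms on each side. Thinking of $\Psi^{a,b}$ as a densely defined operator on $L_2(M)$ and letting $L_c$, $R_c$ denote left/right multiplication by $c \in M$, these become operator identities
\begin{equation*}
\Psi^{a_1 a_2, b} = \Psi^{a_1, b}\, L_{a_2} + L_{a_1}\, \Psi^{a_2, b}, \qquad \Psi^{a, b_1 b_2} = \Psi^{a, b_2}\, R_{b_1} + R_{b_2}\, \Psi^{a, b_1},
\end{equation*}
valid on the dense subspace $\calA \Omega_\tau \subseteq L_2(M)$.

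Now I would run the induction. Since $L_c$ and $R_c$ are bounded on $L_2(M)$ with norm at most $\Vert c \Vert_M$, and $\mathcal{S}_p$ is a two-sided ideal in $B(L_2(M))$, each of the two identities shows that whenever the right-hand summands are in $\mathcal{S}_p$, the left-hand side is a bounded operator on $L_2(M)$ in $\mathcal{S}_p$ too. By induction on the word length, the set
\begin{equation*}
\mathcal{P} := \{(a,b) \in \calA \times \calA : \Psi^{a,b} \text{ extends to an operator in } \mathcal{S}_p\}
\end{equation*}
therefore contains every pair $(a,b)$ where $a,b$ are products of elements of $\mathcal{A}_0$. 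Since $\Psi^{a,b}$ is manifestly bilinear in $(a,b)$, the set $\mathcal{P}$ is also a bi-linear subspace, and hence contains every pair $(a,b)$ where $a$ and $b$ lie in the (plain) algebra generated by $\mathcal{A}_0$.

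The final step is to observe that because $\mathcal{A}_0$ is self-adjoint, the $\ast$-algebra it generates coincides with the ordinary algebra it generates — closing under $\ast$ adds nothing — and by assumption this algebra equals $\calA$. Hence $\mathcal{P} = \calA \times \calA$ and $(\Phi_t)_{t \geq 0}$ is gradient-$\mathcal{S}_p$. (Alternatively, self-adjointness of $\mathcal{A}_0$ together with \eqref{calculation:adjoint} makes the $\ast$-closure step a tautology, since $(\Psi^{a,b})^\ast = \Psi^{a^\ast,b^\ast}$ lies in $\mathcal{S}_p$ iff $\Psi^{a,b}$ does.) There is no real obstacle beyond bookkeeping: the only thing to check with any care is the algebraic derivation of the two Leibniz-type identities, after which the argument is a standard ideal-and-bilinearity induction.
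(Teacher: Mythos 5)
Your proof is correct and follows essentially the same route as the paper: the two Leibniz-type identities for $\Psi^{a,b}$ are exactly the ones used there, the induction via the two-sided-ideal property of $\mathcal{S}_p$ is the same, and the bilinearity-plus-self-adjointness wrap-up matches as well. (The paper combines both identities into a single simultaneous induction step rather than two separate ones, but this is a cosmetic difference.)
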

		\begin{proof}
			The only if statement follows directly from the definition of gradient-$\mathcal{S}_p$. We will prove the other direction. We must prove that $\Psi^{a,b}$ is in $\cS_p$ for every $a,b \in \calA$. Since $\calA_0$ is self-adjoint $\calA$ is generated by $\calA_0$ as an algebra. So $\calA$ is spanned linearly by $(\calA_0)^n, n \in \mathbb{N}$. Note that the map $\Psi^{a,b}$ depends linearly on both $a$ and $b$. So in order to prove that $\Psi^{a,b}$ is in $\cS_p$ for all $a,b \in \calA$ it suffices to prove that $\Psi^{a,b}$ is in $\cS_p$ for all $a,b \in (\calA_0)^n$ for every $n \in \mathbb{N}_{\geq 1}$. We shall prove this latter statement by induction on $n$. The case $n = 1$  holds by assumption of the lemma. We now assume that we have proved the statement for $n$ and shall prove it for $n+1$.

First note that for   $u_1,u_2,v,w\in \mathcal{A}$ we have
			\begin{equation}
\begin{split}
			\Psi^{u_1u_2,w}(v) &= \Delta(u_1u_2vw) + u_1u_2\Delta(v)w - \Delta(u_1u_2v)w - u_1u_2\Delta(vw)\\
			&=\left(\Delta(u_1u_2vw) + u_1\Delta(u_2v)w - \Delta(u_1u_2v)w - u_1\Delta(u_2vw)\right)\\
			&+	u_1\left(\Delta(u_2vw) + u_2\Delta(v)w - \Delta(u_2v)w - u_2\Delta(vw)\right)\\
			&=\Psi^{u_1,w}(u_2v)+u_1\Psi^{u_2,w}(v),
\end{split}			
\end{equation}
			and likewise for $u,v,w_1,w_2\in \mathcal{A}$ we have
			\begin{equation}
			\Psi^{u,w_2w_1}(v) =\Psi^{u,w_1}(vw_2)+ \Psi^{u,w_2}(v)w_1.
			\end{equation}
			Combining these expressions we see that for $u = u_1 u_2$ and $w = w_2 w_1$ we have
			\begin{equation}\label{eq:Sp-property-generators}
\begin{split}
			\Psi^{u,w}(v) &= \Psi^{u_1u_2,w}(v)\\
			&= \Psi^{u_1,w}(u_2v) + u_1\Psi^{u_2,w}(v)\\
			&= \Psi^{u_1,w_2w_1}(u_2v) + u_1\Psi^{u_2,w_2w_1}(v)\\
			&= \left(\Psi^{u_1,w_1}(u_2vw_2) +\Psi^{u_1,w_2}(u_2v)w_1\right) +
			u_1\left(\Psi^{u_2,w_1}(vw_2) + \Psi^{u_2,w_2}(v)w_1\right).
\end{split}			
\end{equation}
			By the induction hypothesis we have that
			$\Psi^{u_1,w_1},\Psi^{u_1,w_2},\Psi^{u_2,w_1},\Psi^{u_2,w_2}$ are all in $\mathcal{S}_p$. Since the $\mathcal{S}_p$ class forms an ideal in $B(L_2(M,\tau))$  we have that the four operators in \eqref{eq:Sp-property-generators} are all in $\mathcal{S}_p$.  This finishes the induction and thus shows that the associated semi-group is gradient-$\mathcal{S}_p$.
		\end{proof}

\subsection{Almost bimodularity of the Riesz transform}
Next we analyze when the Riesz transform is almost bimodular. Therefore we introduce the following notions. We say that a QMS $\Phi$ on a finite von Neumann algebra is {\it filtered} if the generator $\Delta$ has a compact resolvent and for every eigenvalue $\lambda$ of $\Delta$ there exists a (necessarily finite dimensional) subspace $\calA(\lambda) \subseteq \calA$ such that $\calA(\lambda) \Omega_\tau$ equals the eigenspace of $\Delta$ at eigenvalue $\lambda$. Moreover, we assume that for an increasing enumeration $(\lambda_n)_{n \geq 0}$ of the eigenvalues of $\Delta$ we have for all $k,l \geq 0$ that
\[
\calA = \bigoplus_{n=0}^\infty \calA(\lambda_n), \qquad \calA(\lambda_l) \calA(\lambda_k) \subseteq \bigoplus_{n=0}^{l+k} \calA(\lambda_n).
\]
We will further say that $\Delta$  has {\it subexponential growth} if
\[
\lim_{ k \rightarrow \infty } \frac{ \lambda_{k+1}  }{  \lambda_k  } = 1.
\]
\begin{remark}
In \cite{caspersRieszTransformCompact2021} a more general notion of filtering and subexponential growth was considered for central Fourier multipliers on compact quantum groups. The current `linear'  type of definition suffices however for our purposes.
\end{remark}

 \begin{theorem}[Theorem 5.12 of \cite{caspersL2CohomologyDerivationsQuantum2021}]\label{Thm=AlmostBimodularRiesz}
 Suppose that a QMS $\Phi$ on a finite von Neumann algebra $M$ is filtered with subexponential growth. Then the Riesz transform $R_\Phi: L_2(M) \rightarrow L_2(M)_\nabla$ is almost bimodular.
 \end{theorem}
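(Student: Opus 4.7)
The plan is to exploit the spectral decomposition of $\Delta$ coming from the filtered structure together with the subexponential growth of eigenvalues in order to prove compactness of
\[
T_{x,y} := M_x^{L} \circ R_\Phi \circ M_y^{R} - R_\Phi \circ M_x^{L} \circ M_y^{R},
\]
for $x,y \in \calA$, where $M_x^L$ and $M_y^R$ denote the left action of $x$ and the right action of $y$ respectively. Since $\Delta$ has compact resolvent, the spectral projections $P_N$ of $\Delta$ onto $\bigoplus_{n=0}^N \calA(\lambda_n)\Omega_\tau$ are finite rank and converge strongly to $\Id$. Hence it suffices to prove $\Vert T_{x,y}(\Id - P_N)\Vert \to 0$ as $N \to \infty$, so that $T_{x,y}$ is a norm limit of finite-rank operators. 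By linearity I may assume $x \in \calA(\lambda_l)$ and $y \in \calA(\lambda_m)$; by the multiplicative property of the filtration, for $a \in \calA(\lambda_k)$ the product $xay$ decomposes spectrally as $xay = \sum_{n=0}^{l+k+m} c_n$ with $c_n \in \calA(\lambda_n)$.

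The core computation uses the Leibniz rule $\nabla(xay) = x\nabla(ay) + \nabla(x)ay = xa\nabla(y) + x\nabla(a)y + \nabla(x)ay$. Combined with $R_\Phi(a\Omega_\tau) = \lambda_k^{-\frac{1}{2}}\nabla(a)$ and $R_\Phi(xay\Omega_\tau) = \sum_{n \geq 1}\lambda_n^{-\frac{1}{2}}\nabla(c_n)$ (the $n=0$ term vanishing because $\nabla$ kills the kernel of $\Delta$), one obtains
\[
T_{x,y}(a\Omega_\tau) = \sum_{n \geq 1}(\lambda_k^{-\frac{1}{2}} - \lambda_n^{-\frac{1}{2}})\nabla(c_n) - \lambda_k^{-\frac{1}{2}}\bigl(xa\nabla(y) + \nabla(x)ay\bigr).
\]
For the correction terms, the factor $\lambda_k^{-\frac{1}{2}}$ together with the fact that $xa\nabla(y)$ and $\nabla(x)ay$ are controlled in norm by a constant times $\Vert a\Vert_2$ (using that $\Vert\nabla(y)\Vert$, $\Vert\nabla(x)\Vert$, $\Vert x\Vert$, $\Vert y\Vert$ are finite) gives an estimate $O(\lambda_k^{-\frac{1}{2}})\Vert a\Vert_2$. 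For the main term, the crucial estimate is
\[
\bigl(\lambda_k^{-\frac{1}{2}} - \lambda_n^{-\frac{1}{2}}\bigr)^2 \lambda_n = \frac{(\lambda_n - \lambda_k)^2}{\lambda_k(\lambda_n^{\frac{1}{2}} + \lambda_k^{\frac{1}{2}})^2} \longrightarrow 0 \quad (k \to \infty),
\]
provided $|n-k|$ is bounded, because the subexponential growth condition $\lambda_{k+1}/\lambda_k \to 1$ forces $\lambda_n/\lambda_k \to 1$. Combined with $\Vert\nabla(c_n)\Vert^2 = \lambda_n\Vert c_n\Vert_2^2$ and $\Vert c_n\Vert_2 \leq \Vert x\Vert\Vert a\Vert_2\Vert y\Vert$, this controls each summand. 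Since only $n \leq l+k+m$ contributes and there are at most $l+m+1$ relevant $n$ values close to $k$, the total norm of the main term is at most a quantity tending to zero as $k \to \infty$, uniformly in $\Vert a\Vert_2 \leq 1$.

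From these bounds one deduces that $T_{x,y}$ sends each eigenspace $\calA(\lambda_k)\Omega_\tau$ into a vector of norm $\varepsilon_k\Vert a\Vert_2$ with $\varepsilon_k \to 0$. The principal remaining difficulty, and the main obstacle, is passing from this eigenspace-by-eigenspace estimate to a bound on the full operator norm of $T_{x,y}(\Id - P_N)$, because the images $\nabla(c_{k,n})$ associated to different $k$'s need not be orthogonal in $L_2(M)_\nabla$. I would handle this by computing $T_{x,y}(\Id - P_N) T_{x,y}^\ast$ and bounding it via a Schur-type argument: the matrix entries $\langle T_{x,y}(a_k\Omega_\tau), T_{x,y}(a_{k'}\Omega_\tau)\rangle$ are nonzero only for $|k - k'|$ bounded by a constant depending on $l, m$, because for a fixed $x,y$ only finitely many spectral subspaces of $\calA$ can interact. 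A finite-band matrix with entries of size $\varepsilon_k\varepsilon_{k'}$ has operator norm bounded by $\sup_k\varepsilon_k^2$ times the bandwidth, so the operator norm of $T_{x,y}(\Id - P_N)$ is controlled by $\sup_{k > N}\varepsilon_k \to 0$. This closes the argument and shows that $T_{x,y}$ is compact, i.e.\ $R_\Phi$ is almost bimodular.
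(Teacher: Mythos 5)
Your overall strategy — decompose by the filtration, use the Leibniz rule to express $xR_\Phi(a\Omega_\tau)y - R_\Phi(xay\Omega_\tau)$ in terms of the eigenvalues, exploit subexponential growth, and finish with a banded Schur estimate — is the right one and the central computation is correct, but you silently rely on three facts that need to be established. (1) You need $c_n=0$ for $n<k-l-m$ as well as $n>k+l+m$; otherwise $(\lambda_k^{-1/2}-\lambda_n^{-1/2})^2\lambda_n$ does not tend to zero. The paper's definition of \emph{filtered} only supplies the upper bound $\calA(\lambda_l)\calA(\lambda_k)\subseteq\bigoplus_{n\le l+k}\calA(\lambda_n)$, so the two-sidedness must be derived. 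It does follow from the $*$-structure: $\Phi_t$ is $*$-preserving, hence $\calA(\lambda_l)^*=\calA(\lambda_l)$, and then for $b\in\calA(\lambda_n)$ with $n<k-l$ one has $\langle xa\Omega_\tau,b\Omega_\tau\rangle=\langle a\Omega_\tau,x^*b\Omega_\tau\rangle=0$ because $x^*b$ sits in levels $\leq l+n<k$. (2) The estimate $\|xa\nabla(y)\|=O(\|a\|_2)$ does not follow from finiteness of $\|\nabla(y)\|,\|x\|,\|y\|$: the left $\calA$-action on $L_2(M)_\nabla$ is controlled only by the operator norm $\|a\|$, not by $\|a\|_2$. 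The correct reason is the bounded-vector property $\|a\nabla(y)\|^2=\tau\bigl(a\,\Gamma(y^*,y^*)\,a^*\bigr)\leq\|\Gamma(y^*,y^*)\|\,\|a\|_2^2$, which rests on positivity of the carr\'e du champ (and analogously for $\nabla(x)ay$). (3) The claim that $\langle T_{x,y}(a_k\Omega_\tau),T_{x,y}(a_{k'}\Omega_\tau)\rangle$ vanishes for $|k-k'|$ large is true but papers over real work: the main term lives in the orthogonal decomposition $\bigoplus_n\nabla(\calA(\lambda_n))$ (using $\langle\nabla(c),\nabla(c')\rangle=\tfrac{\lambda_n+\lambda_m}{2}\langle c\Omega_\tau,c'\Omega_\tau\rangle$), but the correction terms involve right multiplication by $y$ and do not visibly respect this grading; checking that their cross-terms still vanish outside a band of width $O(l+m)$ requires evaluating several $\Gamma$-inner-products and uses (1) again.

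A shorter route that avoids (3) altogether: put $D(a\Omega_\tau):=xa\nabla(y)+\nabla(x)ay$, a bounded operator $L_2(M)\to L_2(M)_\nabla$ by (2), and let $N$ denote the operator $\xi\mapsto x\xi y$ on $L_2(M)$. Then, up to a finite-rank perturbation coming from $\ker\Delta$,
\[
T_{x,y} \;=\; R_\Phi\bigl(\Delta^{1/2}N\Delta^{-1/2}-N\bigr)\;-\;D\Delta^{-1/2}.
\]
Here $D\Delta^{-1/2}$ is compact because $\Delta$ has compact resolvent. The operator $\Delta^{1/2}N\Delta^{-1/2}-N$ acts on $L_2(M)$ and sends $a\Omega_\tau\mapsto\sum_n(\lambda_n^{1/2}\lambda_k^{-1/2}-1)c_n\Omega_\tau$; since the eigenspaces of $\Delta$ in $L_2(M)$ are genuinely orthogonal, this operator is banded with block norms at most $\max_{|n-k|\leq l+m}|\lambda_n^{1/2}\lambda_k^{-1/2}-1|\cdot\|x\|\,\|y\|\to 0$. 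Your Schur estimate applies without ambiguity and gives compactness, and since $R_\Phi$ is an isometry, $T_{x,y}$ is compact.
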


\subsection{Semi-groups of Fourier multipliers on group von Neumann algebras}
Now consider the case that $M$ is a group von Neumann algebra $\calL(\Gamma)$ of a discrete group $\Gamma$ and $\calA = \mathbb{C}[\Gamma]$. The following theorem is a version of  Sch\"onberg's theorem.

\begin{theorem}[See Appendix C of \cite{bekkaKazhdanProperty2008}]\label{Thm=Schoenberg}
Let $\psi: \Gamma \rightarrow \mathbb{R}$. The following are equivalent:
\begin{enumerate}
\item   $\psi$ is  conditionally of negative type.
\item   There exists a (recall: symmetric) QMS $\Phi = (\Phi_t)_{t \geq 0}$ on $M$ determined by
\[
\Phi_t(  \lambda_\gamma ) = \exp(-t \psi(\gamma))   \lambda_\gamma, \qquad \gamma \in \Gamma.
\]
\end{enumerate}
\end{theorem}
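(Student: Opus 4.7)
The plan is to derive the equivalence from two well-known dictionaries: Sch\"onberg's classical theorem, which links conditionally negative type functions to one-parameter families of positive definite functions, and the standard correspondence between normalized positive definite functions on $\Gamma$ and normal unital completely positive Fourier multipliers on $\calL(\Gamma)$. Both dictionaries are recorded in \cite{bekkaKazhdanProperty2008} (Appendix C and Theorem C.4.7), so the proof will amount to stitching them together and checking that the additional requirements of a symmetric QMS come for free.

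For the implication (1) $\Rightarrow$ (2), the first step is to invoke Sch\"onberg's theorem to conclude that for every $t\geq 0$ the function $\varphi_t(\gamma) := \exp(-t\psi(\gamma))$ is a positive definite function on $\Gamma$ with $\varphi_t(e) = 1$ (since $\psi(e) = 0$). By the standard correspondence, the prescription $\Phi_t(\lambda_\gamma) := \varphi_t(\gamma)\lambda_\gamma$ then extends uniquely to a normal unital completely positive map $\Phi_t \colon \calL(\Gamma) \rightarrow \calL(\Gamma)$. The semi-group property $\Phi_s \Phi_t = \Phi_{s+t}$ follows directly from $\varphi_s\varphi_t = \varphi_{s+t}$. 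Trace preservation is immediate from
\[
\tau(\Phi_t(\lambda_\gamma)) = \varphi_t(\gamma)\tau(\lambda_\gamma) = \varphi_t(\gamma)\delta_{\gamma,e} = \delta_{\gamma,e} = \tau(\lambda_\gamma),
\]
using $\psi(e)=0$, and symmetry $\tau(\Phi_t(x)y) = \tau(x\Phi_t(y))$ reduces to the identity $\varphi_t(\gamma^{-1}) = \varphi_t(\gamma)$, which holds because $\psi(\gamma^{-1}) = \psi(\gamma)$. For strong continuity, note that for each $\gamma$ one has $\Phi_t(\lambda_\gamma) \rightarrow \lambda_\gamma$ in $L_2(\calL(\Gamma))$ as $t \downarrow 0$, since $\varphi_t(\gamma) \rightarrow 1$; combining this with $\|\Phi_t\|\leq 1$ and $\sigma$-weak density of $\mathbb{C}[\Gamma]$ gives point-strong continuity by a standard $\varepsilon/3$ approximation.

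For (2) $\Rightarrow$ (1), one runs the dictionary backwards. The symbol $\varphi_t(\gamma) := \exp(-t\psi(\gamma))$ of a normal unital completely positive Fourier multiplier $\Phi_t$ is a normalized positive definite function on $\Gamma$ (again by the standard correspondence), and symmetry of $\Phi_t$ together with $\tau$-preservation forces $\varphi_t(\gamma^{-1}) = \varphi_t(\gamma)$ and $\varphi_t(e)=1$, i.e.\ $\psi(e) = 0$ and $\psi(\gamma^{-1}) = \psi(\gamma)$. Applying the other direction of Sch\"onberg's theorem to the family $(\varphi_t)_{t\geq 0}$ then yields that $\psi$ is conditionally of negative type.

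There is no real obstacle: everything is classical once one has Sch\"onberg and the positive-definite/UCP correspondence in hand, and the only point that requires a small argument rather than a citation is upgrading pointwise convergence on $\{\lambda_\gamma\}$ to point-strong continuity of the semi-group on all of $\calL(\Gamma)$, which is handled by the uniform bound $\|\Phi_t\|\leq 1$ and density.
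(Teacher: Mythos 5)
The paper does not prove this theorem itself; it is cited directly to Appendix C of Bekka--de~la~Harpe--Valette. Your argument is precisely the standard derivation that reference supports: Sch\"onberg's theorem translating conditionally negative type into the one-parameter family of normalized positive definite functions $e^{-t\psi}$, the correspondence between normalized positive definite functions and normal unital completely positive Fourier multipliers, and routine verification of trace-preservation, symmetry, and strong continuity via the density/contraction argument. This is correct and matches the intended approach.
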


We will call a QMS $\Phi$ as in Theorem \ref{Thm=Schoenberg} a {\it QMS of Fourier multipliers} or a QMS associated with  $\psi: \Gamma \rightarrow \mathbb{R}$. Note that we assume such QMS's to be symmetric.  We view the generator of this semi-group as a map on   $\mathbb{C}[\Gamma]$ which is given by
\[
\Delta_\psi: \mathbb{C}[\Gamma] \rightarrow \mathbb{C}[\Gamma]:  \lambda_\gamma \mapsto \psi(\gamma) \lambda_\gamma.
\]
The following Theorem \ref{Thm=GradientCoefficients} connects Definition \ref{Dfn=GradientSp} to Section \ref{Sect=Coefficients}.

\begin{theorem}\label{Thm=GradientCoefficients}
Consider a QMS $\Phi = (\Phi_t)_{t \geq 0}$ of Fourier multipliers on $\calL(\Gamma)$. Let
\[
H_{00} = \{ a \otimes_\nabla c   \in \ell_2(\Gamma)_\nabla  : a, c \in \mathbb{C}[\Gamma] \} \subseteq \ell_2(\Gamma)_\nabla.
\]
 If $\Phi$ is gradient-$\cS_p$ with $p \in [1, \infty]$ then for every $\xi, \eta \in  {\rm span} \mathbb{C}[\Gamma] H_{00} \mathbb{C}[\Gamma]$ the coefficient $T_{\xi,\eta}$ is in $\cS_p$.
\end{theorem}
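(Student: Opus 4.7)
The plan is to write down the coefficient $T_{\xi,\eta}$ explicitly for $\xi,\eta\in H_{00}$ and identify it, up to bounded left/right multiplications on $\ell_2(\Gamma)$, with one of the operators $\Psi^{b^\ast,a}$ that enters the gradient-$\cS_p$ hypothesis.

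First I would reduce to $H_{00}$: since $x\cdot(a\otimes_\nabla c)\cdot y = xa\otimes_\nabla cy - x\otimes_\nabla acy$ for $x,y,a,c\in \CC[\Gamma]$, both summands still lie in $H_{00}$, so $\CC[\Gamma]\cdot H_{00}\cdot \CC[\Gamma]\subseteq \mathrm{span}\,H_{00}$ and the span appearing in the statement coincides with $\mathrm{span}\,H_{00}$. By sesquilinearity of $(\xi,\eta)\mapsto T_{\xi,\eta}$ in its arguments, it then suffices to prove the claim for $\xi=a\otimes_\nabla c$ and $\eta=b\otimes_\nabla d$ with $a,b,c,d\in\CC[\Gamma]$.

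Next I would compute, for $x,y\in\CC[\Gamma]$,
\[
\langle x\xi y,\eta\rangle = \langle xa\otimes_\nabla cy - x\otimes_\nabla acy,\, b\otimes_\nabla d\rangle = \tau\bigl(d^\ast\bigl(\Gamma(xa,b) - \Gamma(x,b)a\bigr)\,cy\bigr),
\]
using the definition of the inner product on $L_2(M)_\nabla$ and that $\tau$ is a trace on $\CC[\Gamma]$. A direct calculation from the formula $\Gamma(u,v)=\tfrac12(\Delta(v^\ast)u+v^\ast\Delta(u)-\Delta(v^\ast u))$ gives the key algebraic identity
\[
\Gamma(xa,b) - \Gamma(x,b)\,a = -\tfrac12\bigl(\Delta(b^\ast xa)+b^\ast\Delta(x)a-\Delta(b^\ast x)a-b^\ast\Delta(xa)\bigr) = -\tfrac12\,\Psi^{b^\ast,a}(x).
\]
Plugging this back and using traciality to push $c$ to the left, one obtains
\[
\tau\bigl(T_{\xi,\eta}(x)\,y\bigr) = -\tfrac12\,\tau\bigl(d^\ast\,\Psi^{b^\ast,a}(x)\,c\,y\bigr), \qquad \text{hence}\qquad T_{\xi,\eta}(x) = -\tfrac12\, d^\ast\,\Psi^{b^\ast,a}(x)\,c.
\]

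Finally I would conclude: as an operator on $\ell_2(\Gamma)=L_2(\calL(\Gamma))$, $T_{\xi,\eta}$ is the composition of $\Psi^{b^\ast,a}$ with left multiplication by $d^\ast$ and right multiplication by $c$, both of which are bounded on $\ell_2(\Gamma)$ since $c,d\in\CC[\Gamma]\subseteq \calL(\Gamma)$. The gradient-$\cS_p$ assumption gives $\Psi^{b^\ast,a}\in\cS_p$, and because $\cS_p$ is a two-sided ideal in $B(\ell_2(\Gamma))$ we conclude $T_{\xi,\eta}\in\cS_p$. The result for arbitrary $\xi,\eta\in\mathrm{span}\,H_{00}$ follows by linearity. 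The main (but essentially only) obstacle is checking the identity for $\Gamma(xa,b)-\Gamma(x,b)a$ — a routine but careful unwinding of the carré du champ against the definition of $\Psi^{b^\ast,a}$.
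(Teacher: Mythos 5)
Your proposal is correct and matches the paper's argument almost exactly: the key identity $T_{\xi,\eta}(x) = -\tfrac12\, d^\ast\,\Psi^{b^\ast,a}(x)\,c$ is precisely what the paper derives, and the conclusion that $T_{\xi,\eta}\in\cS_p$ follows the same way from the ideal property of $\cS_p$. The only (minor, cosmetic) difference is in passing from $H_{00}$ to the span: you observe directly that $\CC[\Gamma]\cdot H_{00}\cdot\CC[\Gamma]\subseteq\operatorname{span}H_{00}$ and invoke sesquilinearity, whereas the paper cites its Lemma~\ref{lemma:reduction-to-cyclic-subset} (which shows $T_{\lambda_g\xi\lambda_h,\lambda_s\eta\lambda_t}$ is $T_{\xi,\eta}$ pre- and post-composed with bounded operators); both routes are correct and lead to the same place.
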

\begin{proof}
Let $a,b,c,d,x,y \in \CC[\Gamma]$ and let $\xi = a \otimes_\nabla c, \eta = b \otimes_\nabla d$ be elements of $H_{00}$.  We have
\[
\begin{split}
&   2  \langle x \cdot (a \otimes_\nabla c ) \cdot y, b \otimes_\nabla d   \rangle =
 2 \langle xa \otimes_\nabla c y   - x \otimes_\nabla a c y   , b \otimes_\nabla d   \rangle
 = 2 \langle \Gamma(xa, b  ) cy   - \Gamma( x, b ) a c y, d   \rangle_\tau \\
= &   \langle   (   b^\ast \Delta(x a)  + \Delta(b^\ast) x a   -  \Delta(b^\ast xa) - b^\ast \Delta(x) a -  \Delta(b^\ast) x a  + \Delta(b^\ast x) a    )  c y  , d   \rangle_\tau \\
 = &    \langle   (    \Delta(b^\ast x) a + b^\ast \Delta(x a) - \Delta(b^\ast xa) - b^\ast \Delta(x) a  )  c y  , d   \rangle_\tau
=   -   \langle   \Psi^{b^\ast, a}(x)  c y  , d  \rangle_\tau
=   -    \tau(   d^\ast \Psi^{b^\ast, a}(x)  c y ).
\end{split}
\]
We conclude that
\[
-2 T_{\xi,\eta}(x) = d^\ast \Psi^{b^\ast, a}(x)  c.
\]
In particular if  $\Psi^{b^\ast, a}$ is in $\calS_p$ then so is $T_{\xi, \eta}$. The statement now follows from Lemma \ref{lemma:reduction-to-cyclic-subset}.
\end{proof}

Let us now  show that in the case of semi-groups of Fourier multipliers, the case gradient-$\cS_p$ is conceptually much easier to understand.  Consider again a QMS $\Phi = (\Phi_t)_{t \geq 0}$ of Fourier multipliers associated with a function $\psi: \Gamma \rightarrow \mathbb{R}$ that is conditionally of negative type.  Let $\Delta_\psi: \mathbb{C}[\Gamma] \rightarrow  \mathbb{C}[\Gamma]$ be as before.
	For $u,w\in \Gamma$ we define a function $\gamma_{u,w}^{\psi}:\Gamma\to \RR$ as
\begin{equation}\label{Eqn=GammaFunction}
\gamma_{u,w}^{\psi }(v)= \psi(uvw)+\psi(v)-\psi(uv)-\psi(vw).
\end{equation}
  We have that the function $\gamma_{u,w}^{\psi}$ is related to the operator $\Psi^{\lambda_u,\lambda_w}$ associated with $\Delta_\psi$ as follows
\[
\begin{split}
	\Psi_{\Delta_{\psi}}^{\lambda_{u},\lambda_{v}}(\lambda_{v}) &=  \Delta_{\psi}(\lambda_{uvw}) + \lambda_{u}\Delta_{\psi}(\lambda_{v})\lambda_{w} - \Delta_{\psi}(\lambda_{uv})\lambda_{w} - \lambda_{u}\Delta_{\psi}(\lambda_{vw})
	 = \gamma_{u,w}^{\psi}(v)\lambda_{uvw}.
\end{split}
\]
	Now as by \eqref{calculation:adjoint} we have $(\Psi^{\lambda_u,\lambda_w})^*=\Psi^{\lambda_u^*,\lambda_w^*}=\Psi^{\lambda_{u^{-1}},\lambda_{w^{-1}}}$
	we obtain that
\begin{equation}\label{Eqn=FiniteRankish}
\begin{split}
	|\Psi^{\lambda_{u},\lambda_{w}}|^2(\lambda_{v}) &= \Psi^{\lambda_{u^{-1}},\lambda_{w^{-1}}}\Psi^{\lambda_{u},\lambda_{w}}(\lambda_{v})
	 = \gamma_{u^{-1},w^{-1}}^{\psi}(uvw)
	\gamma_{u,w}^{\psi}(v)\lambda_{v}
 = |\gamma_{u,w}^{\psi}(v)|^2\lambda_{v}.
\end{split}	
\end{equation}
	This then means that $|\Psi^{\lambda_u,\lambda_w}|^p(\lambda_v) = |\gamma_{u,w}(v)|^p\lambda_v$ and therefore, as $\{\lambda_{v}\}_{v\in \Gamma}$ forms an orthonormal basis, we have that
	\begin{equation}\label{Eqn=PsiComp}
	\|\Psi^{\lambda_u,\lambda_w}\|_{\mathcal{S}_p} = (\sum_{v\in \Gamma}\langle |\Psi^{\lambda_u,\lambda_w}|^p(\lambda_{v}),\lambda_{v}\rangle )^{\frac{1}{p}} =  \|\gamma_{u,w}^{\psi}\|_{\ell_p(\Gamma)}.
	\end{equation}
		Now for $p\in [1,\infty)$, in order to check whether $\Psi^{\lambda_{u},\lambda_{w}}$ is in $\calS_p$ we thus need to check whether $\gamma_{u,w}^{\psi}\in \ell_p(\Gamma)$. Moreover, for $p=\infty$, the condition that $\Psi^{\lambda_{u},\lambda_{w}}\in \calS_p$ means that $\Psi^{\lambda_{u},\lambda_{w}}$ is a compact operator, which is precisely the case when $\gamma_{u,w}^{\psi}\in c_0(\Gamma)$, i.e. when $\gamma_{u,w}^{\psi}$ vanishes at infinity.\\
		
		The above calculations, together with Lemma \ref{lemma:reduction-gradient-Sp-to-generators}, give us a simple condition to check for $p\in [1,\infty]$ whether the semi-group $(\Phi_t)_{t\geq 0}$ is gradient-$\calS_p$.
	
	\begin{lemma}\label{lemma:reduction-to-set-generating-the-group}
		Let  $p\in [1,\infty)$. Let $\Gamma_0 \subseteq \Gamma$ be a subset that generates a discrete group $\Gamma$  with  $\Gamma_0^{-1} = \Gamma_0$.   Let $\Phi = (\Phi_t)_{t\geq 0}$ be a  QMS   associated with a proper function $\psi: \Gamma \rightarrow \mathbb{R}$ that is conditionally of  negative type. If $\gamma_{u,w}^{\psi}\in \ell_p(\Gamma)$ for all $u,w\in \Gamma_0$ then the QMS $\Phi$ is gradient-$\calS_p$. The same holds true for $p=\infty$ when $\ell_p(\Gamma)$ is replaced with $c_0(\Gamma)$.
		\begin{proof}
			We denote $\mathcal{A}_0:=\{\lambda_{g}: g \in \Gamma_0 \}\subseteq \CC[\Gamma]$. Since $\Gamma_0^{-1} = \Gamma_0$ and $\Gamma_0$ generates $\Gamma$ we have that $\calA_0$ is self-adjoint and generates $\CC[\Gamma]$ as an algebra.  Now, if for   $u,w\in \Gamma_0$ we have that $\gamma_{u,w}^{\psi}\in \ell_p(\Gamma)$ then by \eqref{Eqn=PsiComp}  we have that $\Psi^{\lambda_u,\lambda_w} \in \calS_p$. Then Lemma  \ref{lemma:reduction-gradient-Sp-to-generators} shows  that $\Phi$ is gradient-$\calS_p$. The proof is similar for $p=\infty$.
		\end{proof}
	\end{lemma}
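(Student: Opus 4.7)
The plan is to assemble this directly from the two ingredients that have already been prepared in the text just above the statement, namely the reduction Lemma \ref{lemma:reduction-gradient-Sp-to-generators} and the explicit spectral computation \eqref{Eqn=FiniteRankish}--\eqref{Eqn=PsiComp}. First I would set $\calA_0 := \{\lambda_g : g \in \Gamma_0\} \subseteq \mathbb{C}[\Gamma]$ and observe that the hypothesis $\Gamma_0^{-1} = \Gamma_0$ makes $\calA_0$ self-adjoint, while the assumption that $\Gamma_0$ generates $\Gamma$ as a group makes $\calA_0$ generate $\mathbb{C}[\Gamma]$ as a $\ast$-algebra. This is precisely the setup required to invoke Lemma \ref{lemma:reduction-gradient-Sp-to-generators}, which reduces the problem to showing that $\Psi^{\lambda_u,\lambda_w} \in \calS_p$ for each $u,w \in \Gamma_0$.

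Next I would apply equation \eqref{Eqn=PsiComp}, which has already been established by direct computation: $\Psi^{\lambda_u,\lambda_w}$ is diagonal in the orthonormal basis $\{\lambda_v\}_{v \in \Gamma}$ of $\ell_2(\Gamma)$ with eigenvalues $\gamma_{u,w}^\psi(v)$, so its Schatten norm equals $\|\gamma_{u,w}^\psi\|_{\ell_p(\Gamma)}$. Thus the hypothesis $\gamma_{u,w}^\psi \in \ell_p(\Gamma)$ immediately gives $\Psi^{\lambda_u,\lambda_w} \in \calS_p$ for all $u,w \in \Gamma_0$, which via Lemma \ref{lemma:reduction-gradient-Sp-to-generators} yields gradient-$\calS_p$ for $\Phi$.

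For the $p = \infty$ case, I would repeat the same argument but use the diagonal form of $\Psi^{\lambda_u,\lambda_w}$ from \eqref{Eqn=FiniteRankish}: a diagonal operator in a distinguished orthonormal basis is compact if and only if its diagonal entries vanish at infinity, so $\Psi^{\lambda_u,\lambda_w}$ is compact if and only if $\gamma_{u,w}^\psi \in c_0(\Gamma)$. Invoking Lemma \ref{lemma:reduction-gradient-Sp-to-generators} once more (with $\calS_\infty$ interpreted as compact operators, as in Definition \ref{Dfn=GradientSp}) concludes this case as well.

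There is no real obstacle here; the proof is essentially a bookkeeping exercise because both nontrivial steps, the reduction to a generating set and the identification of the Schatten norm with an $\ell_p$ norm of the function $\gamma_{u,w}^\psi$, have been carried out already. The only point that requires a small amount of care is ensuring the generating set is self-adjoint so that Lemma \ref{lemma:reduction-gradient-Sp-to-generators} is applicable, which is exactly the role of the hypothesis $\Gamma_0^{-1} = \Gamma_0$.
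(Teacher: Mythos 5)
Your proposal is correct and follows exactly the same route as the paper's proof: define $\calA_0 = \{\lambda_g : g \in \Gamma_0\}$, check it is self-adjoint and generating, use the diagonal computation in \eqref{Eqn=PsiComp} (resp.\ \eqref{Eqn=FiniteRankish} for the $p=\infty$ case) to place $\Psi^{\lambda_u,\lambda_w}$ in $\calS_p$, and invoke Lemma \ref{lemma:reduction-gradient-Sp-to-generators}. No meaningful difference from the published argument.
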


\begin{lemma}\label{lemma:reduction-to-set-generating-the-group2}
Let $\Phi = (\Phi_t)_{t \geq 0}$ be a QMS associated to a proper symmetric function $\psi: \Gamma \rightarrow \mathbb{Z}$ that is conditionally of negative type.  If $\Phi$ is gradient-$\cS_p$ for some $p \in [1, \infty]$ then for every $u,v \in \Gamma$ the function $\gamma^\psi_{u,v}: \Gamma \rightarrow \mathbb{Z}$  has compact support. In particular by \eqref{Eqn=FiniteRankish} we find that $\Psi^{\lambda_u, \lambda_v}$ is of finite rank and $\Phi$ is gradient-$\cS_p$ for all $p \in [1, \infty]$.
\end{lemma}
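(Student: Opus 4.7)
The key observation is that integer-valued $\ell_p$ functions (or $c_0$ functions) must be finitely supported, since any nonzero integer has absolute value at least $1$.

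My plan is to start from the assumption that $\Phi$ is gradient-$\calS_p$ for some fixed $p \in [1, \infty]$. By definition this means that $\Psi^{\lambda_u, \lambda_w} \in \calS_p$ for every $u, w \in \Gamma$. Using the diagonal identity \eqref{Eqn=FiniteRankish}, namely $|\Psi^{\lambda_u, \lambda_w}|^2 (\lambda_v) = |\gamma^\psi_{u,w}(v)|^2 \lambda_v$, together with the computation \eqref{Eqn=PsiComp}, the $\calS_p$ membership translates (for $p < \infty$) into $\gamma_{u,w}^\psi \in \ell_p(\Gamma)$, while for $p = \infty$ the operator $\Psi^{\lambda_u, \lambda_w}$ is a compact diagonal operator in the basis $\{\lambda_v\}_{v \in \Gamma}$, which is equivalent to $\gamma_{u,w}^\psi \in c_0(\Gamma)$.

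Next, I exploit the integer-valuedness: since $\psi$ takes values in $\mathbb{Z}$, the function $\gamma_{u,w}^\psi(v) = \psi(uvw) + \psi(v) - \psi(uv) - \psi(vw)$ also takes values in $\mathbb{Z}$. Hence each nonzero value satisfies $|\gamma_{u,w}^\psi(v)| \geq 1$. An $\ell_p$ or $c_0$ function with this property can only have finitely many nonzero values, so $\gamma_{u,w}^\psi$ has finite support. Because $\Gamma$ is discrete, finite support is the same as compact support, giving the first statement of the lemma.

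For the ``in particular'' statement, having $\gamma_{u,w}^\psi$ finitely supported immediately gives, via \eqref{Eqn=FiniteRankish}, that $\Psi^{\lambda_u, \lambda_w}$ is a finite-rank diagonal operator for every $u,w \in \Gamma$. Finite-rank operators lie in $\calS_{p'}$ for every $p' \in [1, \infty]$. Since $\Gamma \subseteq \mathbb{C}[\Gamma]$ is a self-adjoint generating set (as $\psi$ is symmetric, but in any case $\Gamma$ is closed under inversion), Lemma \ref{lemma:reduction-gradient-Sp-to-generators} then upgrades the $\calS_{p'}$ property from these generators to all $\Psi^{a,b}$ with $a, b \in \mathbb{C}[\Gamma]$, yielding gradient-$\calS_{p'}$ for every $p' \in [1, \infty]$. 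No step here looks technically hard; the only subtlety is remembering that the $p = \infty$ case must be handled through compactness of the diagonal operator, rather than through the $\ell_\infty$-norm of $\gamma_{u,w}^\psi$, which would not give compact support.
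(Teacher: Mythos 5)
Your proof is correct and follows exactly the same route as the paper: observe $\gamma^\psi_{u,w}$ is integer-valued, use the equivalence between $\calS_p$-membership of $\Psi^{\lambda_u,\lambda_w}$ and $\ell_p$ (resp.\ $c_0$) membership of $\gamma^\psi_{u,w}$ from \eqref{Eqn=FiniteRankish}--\eqref{Eqn=PsiComp}, deduce finite support, hence finite rank, hence gradient-$\calS_{p'}$ for every $p'$. The paper leaves the final step as ``directly clear,'' whereas you spell it out via Lemma~\ref{lemma:reduction-gradient-Sp-to-generators} (or simply bilinearity of $(a,b)\mapsto\Psi^{a,b}$), and you correctly flag the $p=\infty$ case as $\calS_\infty=$ compact operators rather than $\ell_\infty$; both are accurate and match the paper's intent.
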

\begin{proof}
If $\psi$ takes integer values then so does $\gamma^\psi_{u,v}$ for all $u,v \in \Gamma$. Therefore $\gamma^\psi_{u,v}$ is contained in $\ell_p(\Gamma), p \in [1, \infty)$ or $c_0(\Gamma)$ if and only if $\gamma^\psi_{u,v}$ has compact support. The remainder of the lemma is directly clear.
\end{proof}

\subsection{Almost bimodularity of the Riesz transform for length functions}
We show that a QMS of Fourier multipliers associated with a $\mathbb{Z}_{\geq 0}$-valued length function automatically satisfies the  conditions of Theorem  \ref{Thm=AlmostBimodularRiesz}. Recall that $\psi: \Gamma \rightarrow  \mathbb{Z}_{\geq 0}$ is a length function if
\begin{equation}\label{Eqn=LengthFunction}
\psi(uw)\leq \psi(u)+\psi(w) \qquad \textrm{  for all } \qquad u,w \in \Gamma.
\end{equation}

\begin{theorem}\label{subsection:show-filtered-subexponential-growth}
	Let  $\psi: \Gamma \rightarrow \mathbb{Z}_{\geq 0}$ be a proper length function that is  conditionally of  negative type.   Then $\Delta_{\psi}$   is moreover filtered. If $\psi(\Gamma) = \mathbb{Z}_{\geq 0}$ or if  $\Gamma$ is finitely generated then $\Delta_\psi$ has subexponential growth.
  \end{theorem}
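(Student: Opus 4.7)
The plan is to verify each piece of the definition of ``filtered'' directly from the diagonal form of $\Delta_\psi$, and then to establish subexponential growth in the two hypothesized cases separately. Under the identification $\ell_2(\Gamma) \simeq L_2(\calL(\Gamma))$, the operator $\Delta_\psi$ is diagonal in the basis $\{\delta_\gamma\}_{\gamma\in\Gamma}$ with $\Delta_\psi\delta_\gamma = \psi(\gamma)\delta_\gamma$. Since $\psi$ is proper and $\mathbb{Z}_{\geq 0}$-valued, every integer has finite preimage under $\psi$ and $\psi(\gamma)\to\infty$ as $\gamma$ leaves finite sets, so $\Delta_\psi$ has compact resolvent. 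I would define $\calA(n) := \mathrm{span}\{\lambda_\gamma : \psi(\gamma) = n\}\subseteq \mathbb{C}[\Gamma]$ for each $n\in\mathbb{Z}_{\geq 0}$; this is finite-dimensional, its image under $x\mapsto x\Omega_\tau$ is the eigenspace of $\Delta_\psi$ at $n$, and $\mathbb{C}[\Gamma] = \bigoplus_n \calA(n)$. The length function inequality $\psi(uv)\leq\psi(u)+\psi(v)$ immediately yields $\calA(l)\calA(k) \subseteq \bigoplus_{m\leq l+k}\calA(m)$, which after re-indexing by the increasing enumeration of $\psi(\Gamma)$ (discarding the empty $\calA(m)$ for $m\notin\psi(\Gamma)$) supplies the required filtration property.

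For subexponential growth, enumerate $\psi(\Gamma) = \{\lambda_0 < \lambda_1 < \cdots\}$. The case $\psi(\Gamma)=\mathbb{Z}_{\geq 0}$ is immediate since then $\lambda_n = n$ and $\lambda_{n+1}/\lambda_n = (n+1)/n \to 1$. In the finitely generated case (the finite-$\Gamma$ subcase being vacuous), my plan is to pick a finite generating set $S$, set $M := \max_{s\in S}\psi(s)$, and show that the consecutive gaps $\lambda_{n+1}-\lambda_n$ are uniformly bounded by $M$. Combined with the trivial lower bound $\lambda_n\geq n$ (the $\lambda_n$ are strictly increasing integers), bounded gaps immediately yield $\lambda_{n+1}/\lambda_n\leq 1 + M/\lambda_n \to 1$.

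The main obstacle is thus the gap bound in the finitely generated case, which I would prove by contradiction. Suppose some interval $(N, N+M]$ contains no value of $\psi$ and let $B := \{\gamma\in\Gamma : \psi(\gamma)\leq N\}$, which is finite by properness. Conditional negative definiteness gives $\psi(s^{-1}) = \psi(s)\leq M$ for every $s\in S$, so for any $\gamma\in B$ and any $s\in S\cup S^{-1}$ subadditivity yields $\psi(s\gamma)\leq N+M$; the gap hypothesis then forces $\psi(s\gamma)\leq N$, so $s\gamma\in B$. Since $e\in B$ and $S\cup S^{-1}$ generates $\Gamma$, connectedness of the Cayley graph forces $B=\Gamma$, contradicting infiniteness. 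This establishes the gap bound and completes the argument.
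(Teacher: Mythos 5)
Your proof is correct and follows essentially the same route as the paper: diagonal form of $\Delta_\psi$ for compact resolvent and the filtration, then a bounded-gap argument for the eigenvalues in the finitely generated case. The one place you go beyond the paper is in actually proving the gap bound (the paper merely asserts that $\mathbb{Z}_{\geq 0}\setminus\psi(\Gamma)$ cannot contain a long interval); your contradiction argument via the Cayley-graph connectedness of $B=\{\gamma:\psi(\gamma)\leq N\}$ is a clean and correct way to supply this, using both properness and $\psi(s)=\psi(s^{-1})$ as you note.
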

	\begin{proof}
		First of all we have that $(1+\Delta_{\psi})^{-1}(\lambda_{v}) = (1 + \psi(v))^{-1}  \lambda_v$ for all  $v\in \Gamma$.
As $\psi$ is proper this shows that  $(1+\Delta_{\psi})^{-1}$ is a compact operator on $\ell_2(\Gamma)$.
		Consider the finite dimensional spaces
		\begin{align}
		\CC[\Gamma](l) := \Span\{ \lambda_v \in \CC[\Gamma] :  \psi(v) = l \} \quad \text{ for  } l \in \mathbb{Z}_{\geq 0}.
		\end{align}
		Then $\CC[\Gamma](l)\Omega_{\tau}$ equals the eigenspace of $\Delta_{\psi}$ at the eigenvalue $l$.
	We have
		\begin{align}
		\CC[\Gamma] = \bigoplus_{l\geq 0}\CC[\Gamma](l) & \quad &\CC[\Gamma](l)\CC[\Gamma](k)\subseteq \bigoplus_{j=0}^{l+k}\CC[\Gamma](j) \text{ for } l,k\geq 0
		\end{align}
where $\bigoplus$ denotes the algebraic direct sum. The first equality holds because
		$\psi$ only takes positive integer values and the second equality holds because $\psi$ is a length function, i.e. \eqref{Eqn=LengthFunction}. This shows that $\Delta_{\psi}$ is filtered.

That $\Delta_{\psi}$    has subexponential growth follows in the first case from the fact that $\ZZ_{\geq 0}$ is the set of eigenvalues and we have $(l+1)/l \rightarrow 1$ as $l \rightarrow \infty$. In case $\Gamma$ is generated by a finite set $\Gamma_0$ we set
$K := \{ \max \psi(u) : u \in \Gamma_0 \}$.
  Then \eqref{Eqn=LengthFunction} implies that  $\mathbb{Z}_{\geq 0} \backslash \psi(\Gamma)$ cannot contain an interval of length $K+1$. Hence if $\lambda_0 \leq \lambda_1 \leq \ldots$ is an increasing enumeration of $\psi(\Gamma)$ then $\lambda_{k+1} \leq \lambda_k + K$. Hence  $\lambda_{k+1}/\lambda_k \rightarrow 1$ as $k \rightarrow \infty$.
	\end{proof}

\begin{corollary}\label{Cor=RieszBimodular}
Assume that $\Gamma$ is finitely generated.
	Let  $\psi: \Gamma \rightarrow \mathbb{Z}_{\geq 0}$ be a proper length function that is  conditionally of  negative type.   Let $\Phi$ be the associated QMS of Fourier multipliers. Then the Riesz transform $R_\Phi: \ell_2(\Gamma) \rightarrow \ell_2(\Gamma)_\nabla$ is almost bimodular.
\end{corollary}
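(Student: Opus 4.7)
The plan is to deduce this corollary by combining the two main results that precede it: Theorem \ref{subsection:show-filtered-subexponential-growth} and Theorem \ref{Thm=AlmostBimodularRiesz}. Essentially, the corollary is the formal synthesis of those two statements, with the finite generation hypothesis on $\Gamma$ being used precisely to activate the subexponential-growth half of Theorem \ref{subsection:show-filtered-subexponential-growth}.

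First I would unpack the setting. The QMS $\Phi = (\Phi_t)_{t\geq 0}$ of Fourier multipliers on $\calL(\Gamma)$ associated with $\psi$ is well-defined by Sch\"onberg's Theorem \ref{Thm=Schoenberg} and its generator acts on $\mathbb{C}[\Gamma]$ by $\Delta_\psi(\lambda_\gamma) = \psi(\gamma)\lambda_\gamma$. Taking $M = \calL(\Gamma)$ with its canonical trace $\tau$, $\Omega_\tau = \delta_e$, and the dense $*$-subalgebra $\calA = \mathbb{C}[\Gamma]$, all the structural assumptions required in Section \ref{Sect=Riesz} (invariance of $\calA\Omega_\tau$ under $\Delta_\psi$, existence of the carr\'e du champ, etc.) are automatic from the Fourier-multiplier form of $\Delta_\psi$.

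Next, I would invoke Theorem \ref{subsection:show-filtered-subexponential-growth} directly. Since $\psi$ is a proper length function conditionally of negative type, that theorem already gives that $\Delta_\psi$ is filtered, with the filtration by the eigenspaces $\mathbb{C}[\Gamma](l) = \Span\{\lambda_v : \psi(v) = l\}$. To obtain subexponential growth I would use the hypothesis that $\Gamma$ is finitely generated: the second case of Theorem \ref{subsection:show-filtered-subexponential-growth} applies, giving $\lambda_{k+1}/\lambda_k \to 1$ for the increasing enumeration of eigenvalues of $\Delta_\psi$. Thus the QMS $\Phi$ is filtered with subexponential growth in the sense of Section \ref{Sect=Riesz}.

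Finally, I would apply Theorem \ref{Thm=AlmostBimodularRiesz} to the QMS $\Phi$. Its conclusion is exactly that the Riesz transform $R_\Phi : L_2(M) \to L_2(M)_\nabla$ is almost bimodular, which under the identification $L_2(\calL(\Gamma)) \simeq \ell_2(\Gamma)$ is the stated map $R_\Phi : \ell_2(\Gamma) \to \ell_2(\Gamma)_\nabla$. There is no real obstacle here; the only mild care needed is to verify that the hypotheses of Theorem \ref{Thm=AlmostBimodularRiesz} are matched by the filtered/subexponential-growth structure produced by Theorem \ref{subsection:show-filtered-subexponential-growth}, which is immediate from the definitions once one identifies the eigenspace grading with the word-length-type grading on $\mathbb{C}[\Gamma]$.
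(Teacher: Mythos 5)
Your proposal is correct and is exactly the paper's proof: the paper's one-line argument cites precisely Theorem \ref{subsection:show-filtered-subexponential-growth} (using finite generation to get subexponential growth) followed by Theorem \ref{Thm=AlmostBimodularRiesz}, which is the same synthesis you carry out, just written out in more detail.
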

\begin{proof}
This follows from Theorem \ref{Thm=AlmostBimodularRiesz} and Theorem \ref{subsection:show-filtered-subexponential-growth}.
\end{proof}

\begin{theorem}\label{Thm:AO}
Assume that $\Gamma$ is finitely generated and that $C_r^\ast(\Gamma)$ is locally reflexive.
	If there exists a   proper length function   $\psi: \Gamma \rightarrow \mathbb{Z}_{\geq 0}$ that is  conditionally of  negative type such that the associated QMS is gradient-$\calS_p$ for some $p \in [1, \infty)$. Then $\calL(\Gamma)$ has AO$^{+}$.
\end{theorem}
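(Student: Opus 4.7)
The plan is to verify the hypotheses of Theorem \ref{Thm=RieszImpliesAO} with $H = \ell_2(\Gamma)_\nabla$ and $V = R_\Phi$, the Riesz transform associated with $\Phi$. Local reflexivity of $C_r^\ast(\Gamma)$ is given, so three things must be checked: (i) $\ell_2(\Gamma)_\nabla$ is quasi-contained in the coarse bimodule, (ii) $R_\Phi$ is almost bimodular, and (iii) $R_\Phi^\ast R_\Phi$ is Fredholm.

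For (i), I would first invoke Lemma \ref{lemma:reduction-to-set-generating-the-group2}: since $\psi$ is $\mathbb{Z}_{\geq 0}$-valued, gradient-$\calS_p$ for some $p \in [1,\infty)$ upgrades to gradient-$\calS_p$ for all $p \in [1,\infty]$, in particular for $p=2$. Then Theorem \ref{Thm=GradientCoefficients} produces a subset
\[
H_{00} = \{ a \otimes_\nabla c : a,c \in \mathbb{C}[\Gamma] \} \subseteq \ell_2(\Gamma)_\nabla
\]
whose coefficients, together with those of the cyclic span $\mathrm{span}\,\mathbb{C}[\Gamma] H_{00} \mathbb{C}[\Gamma]$, lie in $\calS_2$. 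Since $H_{00}$ is cyclic in $\ell_2(\Gamma)_\nabla$ by construction of the gradient bimodule, Lemma \ref{lemma:reduction-to-cyclic-subset} (which applies Proposition \ref{proposition:quasicontainment}) yields that $\ell_2(\Gamma)_\nabla$ is an $\calL(\Gamma)$ bimodule quasi-contained in the coarse bimodule $\ell_2(\Gamma) \otimes \ell_2(\Gamma)$.

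For (ii), Corollary \ref{Cor=RieszBimodular} directly gives almost bimodularity of $R_\Phi$, using that $\Gamma$ is finitely generated and that $\psi$ is a proper conditionally negative type length function (so that Theorem \ref{subsection:show-filtered-subexponential-growth} provides both filtering and subexponential growth for $\Delta_\psi$).

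For (iii), recall that $R_\Phi$ is defined as a partial isometry with kernel exactly $\ker(\Delta_\psi)$. Since $\Delta_\psi \lambda_g = \psi(g) \lambda_g$, we have $\ker(\Delta_\psi) = \overline{\mathrm{span}}\{ \delta_g : \psi(g) = 0 \}$; properness of $\psi$ forces $\psi^{-1}(0)$ to be finite, so $\ker(R_\Phi)$ is finite-dimensional. Consequently $R_\Phi^\ast R_\Phi$ is the orthogonal projection onto $\ker(\Delta_\psi)^\perp$, which has finite-dimensional kernel and trivial cokernel, hence is Fredholm. All three hypotheses of Theorem \ref{Thm=RieszImpliesAO} are met, and we conclude that $\calL(\Gamma)$ has AO$^+$. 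No step poses a genuine obstacle here: the whole point is that the earlier sections of the paper have prepared every ingredient, and the role of the $\mathbb{Z}_{\geq 0}$-valuedness is precisely to make tensoring unnecessary via Lemma \ref{lemma:reduction-to-set-generating-the-group2}.
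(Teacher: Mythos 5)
Your proof is correct and is essentially the \emph{alternate} route that the paper itself points out in a closing remark of its own proof. The paper's default argument instead exhibits the tensor-product machinery: it fixes $n \geq p/2$, forms $(H_\nabla)_\Gamma^{\otimes n}$, applies Proposition \ref{Prop:Spquasicontain} (which rests on Lemma \ref{Lem:Younginequality}) to get quasi-containment of the tensored bimodule in the coarse bimodule, and then uses Lemmas \ref{Lem:almostbimod} and \ref{Lem=PartialIsoConvolution} to show the convolved map $R_\Phi^{\ast n}$ is still an almost bimodular partial isometry with finite-dimensional cokernel, feeding that into Theorem \ref{Thm=RieszImpliesAO}. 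You instead observe up front, via Lemma \ref{lemma:reduction-to-set-generating-the-group2}, that $\mathbb{Z}$-valuedness of $\psi$ collapses all the gradient-$\calS_p$ classes to a single one, hence gradient-$\calS_2$, so that the single bimodule $\ell_2(\Gamma)_\nabla$ is already quasi-contained and no tensoring or convolution is required; you then apply Theorem \ref{Thm=RieszImpliesAO} directly with $V = R_\Phi$. Your route is shorter and cleaner for this particular theorem; the paper's tensor route is chosen to rehearse the technique that \emph{is} needed in the later applications (Theorems \ref{Thm=Pint}, \ref{Thm=StrongSolidRightAngled}) where the length function is not $\mathbb{Z}$-valued or the bimodule is not a single gradient bimodule. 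Your Fredholmness argument and the identification of $\ker R_\Phi$ with $\overline{\mathrm{span}}\{\delta_g : \psi(g)=0\}$ are also what the paper does, and the cyclicity of $H_{00}$ is immediate from the construction of $H_\nabla$, so there are no gaps.
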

\begin{proof}
Let $H_\nabla := \ell_2(\Gamma)_{\nabla}$ be the gradient bimodule. Let $n \geqslant \frac{p}{2}$. Then by Proposition \ref{Prop:Spquasicontain} the bimodule $(H_{\nabla})_{\Gamma}^{\otimes n}$ is quasi-contained in the coarse bimodule. Let $R_\Phi:  \ell_{2}(\Gamma) \to H_{\nabla}$ be the Riesz transform. The kernel of $R_\Phi$ is spanned by all $\delta_g$ with $\psi(g) = 0$. Since $\psi$ is proper $\ker(R_\Phi)$ is finite dimensional. By Corollary \ref{Cor=RieszBimodular} we see that $R_\Phi$ is almost bimodular. By Lemma \ref{Lem:almostbimod} and Lemma  \ref{Lem=PartialIsoConvolution} the convolved Riesz transform $R_\Phi^{\ast n}\colon \ell^2(\Gamma) \to (H_{\nabla})_{\Gamma}^{\otimes n}$ is an almost bimodular partial isometry. Therefore we obtain  AO$^{+}$ from Theorem \ref{Thm=RieszImpliesAO}.

Note that in fact we could have avoided the tensor products in this proof by using Lemma \ref{lemma:reduction-to-set-generating-the-group2} instead.
\end{proof}

\section{Characterizing gradient-$\cS_p$ for Coxeter groups}	\label{section:semi-groups-word-length}
	
In this section we will consider the case of Coxeter groups.  For any Coxeter group the word length defines a proper length function that is conditionally of  negative type  \cite{bozejkoINFINITECOXETERGROUPS1988} (see also \cite[2.22]{Tits}). Therefore it determines a QMS of Fourier multipliers. The aim of this section is to find characterizations of when this specific QMS  is gradient-$\cS_p$.

Throughout Sections \ref{subsection:describing-support-of-gamma} -- \ref{Sect=GraphsWithParity} we give an almost characterization of gradient-$\cS_p$ in  terms of the Coxeter diagram. In particular we give sufficient conditions for gradient-$\cS_p$ that are easy to verify in Corollary \ref{corollary:gradient-Sp-Coxeter-groups} and Corollary \ref{corollary:no-labels-two-iff}. We also argue that these conditions are necessary for a large class of Coxeter groups.

In Section \ref{Sect=SmallAtInfinity}  we show that gradient-$\calS_p$  is equivalent to smallness at infinity of the Coxeter group. More precisely, a certain natural compactification of the Coxeter group that was considered in \cite{CapraceLecureux}, \cite{klisseTopologicalBoundariesConnected2020} (see also \cite{KlisseSimplicity}), \cite{LamThomas} is small at infinity.  This result can be understood directly after Section \ref{subsection:describing-support-of-gamma}.


\subsection{Preliminaries on Coxeter groups}
	Consider a finite set $S=\{s_1,..,s_n\}$ and a symmetric matrix $M = (m_{ij})_{1 \leq i,j \leq n}$ with $m_{i,j} \in \NN \cup \{\infty\}$ satisfying $m_{i,i} =1$ and $m_{i,j } \geq 2$ whenever $i\not=j$. Occasionally we write $m_{s_i,s_j}$ for $m_{i,j}$; this notation is convenient when considering $m_{s,t}$ without referring to the indices of the generators $s, t \in S$.

 We shall write $W=\langle S|M\rangle$ for the group freely generated by the set $S$ subject to the relation  $(s_is_j)^{m_{i,j}} = e$, where $e$ denotes the identity element of $W$. We call $W=\langle S|M\rangle$ a   {\it finite rank Coxeter system}. We sometimes simply say {\it Coxeter system} as we assume that they are all of finite rank.  A group that can be represented in such way is called a {\it Coxeter group}. Generally, a Coxeter group can be represented by different  pairs $S,M$.
 The group is called {\it right-angled} if moreover   $m_{i,j} \in \{1,2,\infty\}$ for all $1 \leq i, j \leq n$.
Throughout this entire section $W=\langle S|M\rangle$ is a general finitely generated Coxeter system.

When we deal with Coxeter groups we shall usually denote the elements of $W$ with boldface letters and the generators in $S$ with normal letters. This makes the exposition more clear.
Let $\ww \in W$. We say that an expression $w_1 \ldots w_n$ with  $w_i \in S$ is a reduced expression for $\ww$ if $\ww = w_1 \ldots w_n$ and this decomposition is of minimal length.   The minimal length is called the {\it word length} and which we denote by $\vert \ww \vert = n$.  We also set
 \[
 \psi_{S}:W\to \ZZ_{\geq 0}: \ww \mapsto \vert \ww \vert.
 \]

 \begin{theorem}[See \cite{bozejkoINFINITECOXETERGROUPS1988}]
For any Coxeter group $\psi_{S}:W\to \ZZ_{\geq 0}$  is conditionally of negative type.
 \end{theorem}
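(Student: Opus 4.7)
The strategy is to realize $\psi_S(\ww) = |\ww|$ as the squared norm of a $1$-cocycle, following the approach of \cite{bozejkoINFINITECOXETERGROUPS1988}. I would use the classical equivalence: a function $\psi \colon W \to \RR$ with $\psi(e)=0$ and $\psi(\ww^{-1}) = \psi(\ww)$ is conditionally of negative type if and only if there exist a real Hilbert space $H$, an orthogonal representation $\pi \colon W \to O(H)$, and a $1$-cocycle $b \colon W \to H$ (i.e. $b(\uu\vv) = b(\uu) + \pi(\uu) b(\vv)$) such that $\psi(\ww) = \|b(\ww)\|^2$. The implication from the cocycle side is short: using $b(s_j^{-1} s_i) = \pi(s_j^{-1})(b(s_i) - b(s_j))$, one computes $\sum_{i,j} c_i c_j \|b(s_j^{-1} s_i)\|^2 = -2\|\sum_i c_i b(s_i)\|^2 \leq 0$ whenever $\sum_i c_i = 0$.

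For the Coxeter construction, let $T := \{\uu s \uu^{-1} : \uu \in W, s \in S\}$ be the set of reflections, set $H := \ell_2(T, \RR)$, and for each $\ww \in W$ consider the \emph{inversion set}
\[
T(\ww) := \{t \in T : |\ww t| < |\ww|\}.
\]
A standard consequence of the strong exchange condition (see \cite[2.22]{Tits}) is that if $\ww = s_{i_1} \cdots s_{i_n}$ is a reduced expression then
\[
T(\ww) = \{s_{i_1} \cdots s_{i_{k-1}} s_{i_k} s_{i_{k-1}} \cdots s_{i_1} : 1 \leq k \leq n\},
\]
and this set has cardinality exactly $n = |\ww|$. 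The candidate cocycle is $b(\ww) := \sum_{t \in T(\ww)} \delta_t$, so that $\|b(\ww)\|^2 = |\ww|$ holds automatically.

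It remains to produce a compatible orthogonal representation. The natural choice is the signed permutation formula
\[
\pi(s) \delta_t = \begin{cases} -\delta_t & \text{if } t = s,\\ \delta_{sts} & \text{if } t \neq s,\end{cases} \qquad s \in S,\ t \in T.
\]
The main obstacle is to verify that this assignment respects the braid relations $(\pi(s)\pi(s'))^{m_{s,s'}} = 1$ for all $s, s' \in S$, so that $\pi$ extends to a well-defined orthogonal representation of $W$ on $H$. This reduces to a finite computation inside the dihedral subgroup $\langle s, s'\rangle$, whose action on its reflection orbits inside $T$ can be written down explicitly. Granting this, the cocycle identity $b(\uu\vv) = b(\uu) + \pi(\uu) b(\vv)$ is then a bookkeeping argument: one combines the symmetric difference identity $T(\uu\vv) = T(\uu) \triangle \uu T(\vv) \uu^{-1}$ (another consequence of the exchange condition) with the observation that the sign $\eta(\uu, t) \in \{\pm 1\}$ defined by $\pi(\uu)\delta_t = \eta(\uu,t) \delta_{\uu t \uu^{-1}}$ equals $-1$ precisely when $\uu t \uu^{-1} \in T(\uu)$. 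Terms indexed by $T(\uu) \cap \uu T(\vv) \uu^{-1}$ then cancel in pairs, and what remains is exactly $b(\uu\vv)$, completing the construction.
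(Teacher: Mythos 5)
The paper does not prove this statement but cites it to Bo{\.z}ejko--Januszkiewicz--Spatzier (and Tits for the exchange condition), and your construction via the reflection representation on $\ell_2(T,\RR)$ with the inversion-set cocycle is exactly the argument in that reference, so the approach matches. One small internal inconsistency worth flagging: you define $T(\ww)=\{t\in T:|\ww t|<|\ww|\}$ (right inversions), but the explicit formula $\{s_{i_1}\cdots s_{i_{k-1}}s_{i_k}s_{i_{k-1}}\cdots s_{i_1}\}$, the symmetric difference identity $T(\uu\vv)=T(\uu)\triangle\uu T(\vv)\uu^{-1}$, and the sign characterization $\eta(\uu,t)=-1\Leftrightarrow\uu t\uu^{-1}\in T(\uu)$ that you use all pertain to the \emph{left} inversion set $\{t:|t\ww|<|\ww|\}$; replacing $|\ww t|$ by $|t\ww|$ in the definition fixes everything at once. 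Also, rather than checking the braid relations for $\pi$ ``by a finite dihedral computation,'' the cleaner and more standard justification is to realize $\pi$ from the Tits geometric representation: $W$ permutes the root system $\Phi=\Phi^+\sqcup(-\Phi^+)$, and $\pi$ is the induced action on the $(-1)$-eigenspace of the antipodal involution in $\ell_2(\Phi,\RR)$, identified with $\ell_2(T,\RR)$ via $t\leftrightarrow\alpha_t\in\Phi^+$; the sign formula $\eta(\uu,t)=-1\Leftrightarrow\ell(\uu t)<\ell(\uu)$ is then a standard property of root systems.
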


 Therefore by Theorem \ref{Thm=Schoenberg} there exists a QMS of Fourier multipliers on $\calL(W)$ associated with the word length function $\psi_S$.  The aim of the current Section \ref{section:semi-groups-word-length} is to describe when     this  QMS       has gradient-$\cS_p$.  Recall that by Lemmas \ref{lemma:reduction-to-set-generating-the-group} and \ref{lemma:reduction-to-set-generating-the-group2} we  must  thus investigate  for generators $u,w\in S$ when precisely $\gamma_{u,w}^{\psi_S}$ is finite rank where  $\gamma_{u,w}^{\psi_S}$  was defined in \eqref{Eqn=GammaFunction}.


	 \subsection{Describing support of the function $\gamma_{u,w}^{\psi_{S}}$} \label{subsection:describing-support-of-gamma}
The aim of this subsection is to describe the support of   $\gamma_{u,w}^{\psi_{S}}$ explicitly. In fact in anticipation of Section \ref{Sect=Weights} we will give this description for more general length functions $\psi$. Let $\mathds{1}( \cdot )$ be the indicator function which equals 1 if the statement within brackets is true.

	\begin{lemma}\label{lemma:shifting-identity}
		Let $W= \langle S|M\rangle$ be a Coxeter group. Suppose $\psi: W \rightarrow \mathbb{R}$ is conditionally of  negative type satisfying $\psi(\ww) = \psi(w_1) + ... + \psi(w_k)$ whenever $\ww = w_1...w_k$ is a reduced expression. Then for  $u,w\in S$ and
$\mathbf{v}\in W$ we have that
\[
|\gamma_{u,w}^{\psi}(\mathbf{v})|=2\psi(u)\mathds{1}(u\vv=\vv w)=2\psi(w)\mathds{1}(u\vv=\vv w).
\]
		\begin{proof}
			We first note that, since we have $u^2=w^2=e$ as they are generators, we have that
			$$\gamma_{u,w}^{\psi}(\mathbf{v}) = \gamma_{u,w}^{\psi}(u\mathbf{v}w) = -\gamma_{u,w}^{\psi}(u\mathbf{v}) = -\gamma_{u,w}^{\psi}(\mathbf{v}w).$$
			When $\vv$ is fixed, we can let $\mathbf{z}\in \{\mathbf{v},u\mathbf{v},\mathbf{v}w,u\mathbf{v}w\}$ be such that $|\mathbf{z}|=\min \{|\mathbf{v}|,|u\vv|,|\vv w|,|u\mathbf{v}w|\}$. Then we have $|\gamma_{u,w}^{\psi}(\mathbf{z})|= |\gamma_{u,w}^{\psi}(\mathbf{v})|$. Furthermore, because $|\zz|$ is minimal we have $|u\zz| = |\zz w|= |\zz|+1$. Thus, if $\zz = z_1....z_k$ is a reduced expression for $\zz$ we have that $uz_1...z_k$ and $z_1....z_kw$ are reduced expressions for $u\zz$ respectively $\zz w$. Therefore, $\psi(u\zz) = \psi(u)+\psi(\zz)$ and $\psi(\zz w)=\psi(\zz) + \psi(w)$. Hence
\[			
\begin{split}
			\gamma_{u,w}^{\psi}(\zz) &= \psi(u\zz w) + \psi(\zz) - \psi(u\zz) - \psi(\zz w)\\
			&=\psi(u\zz w) - \psi(\zz) - \psi(u) - \psi(w).
			\end{split}
\]
Now, since $|u\zz|=|\zz|+1$ we either have that $|u\zz w| = |\zz|+2$ or $|u\zz w| = |\zz|$. We shall consider these two separate cases, from which the result will follow.
			
			In the first case we have that $uz_1....z_kw$ is reduced so that $\psi(u\zz w) = \psi(u)+\psi(\zz)+\psi(w)$ and therefore $|\gamma_{u,w}^{\psi}(\vv)|=|\gamma_{u,w}^{\psi}(\zz)| = 0$. We note that in this case also $u\vv\not=\vv w$. Namely, $u\vv = \vv w$ would imply $u\zz = \zz w$ and hence $u\zz w = \zz$, which contradicts that $|u\zz w| = |\zz| +2$.

			 In the second case we have that $uz_1....z_kw$ is not reduced.
			Therefore, by the exchange condition (see \cite[Theorem 3.3.4.]{davisGeometryTopologyCoxeter2008}) and the fact that $|u\zz w|=|\zz|<|\zz w|$ we have that $uz_1....z_kw$ is equal to $z_1...z_{i-1}z_{i+1}..z_kw$ for some index $1\leq i\leq k$, or that $uz_1....z_kw = z_1....z_k$. Now in the former case we also have that
			$u\zz = z_1...z_{i-1}z_{i+1}...z_k$ so that $|u\zz|<|\zz|$ which is a contradiction. In this case we must thus have that $u\zz w = \zz$ and hence $u\zz=\zz w$. This then implies that $\psi(u\zz w)=\psi(\zz)$ and $\psi(u) = \psi(u\zz) - \psi(\zz) = \psi(\zz w) -\psi(\zz) = \psi(w)$. In this case we thus obtain that
			\[
			\gamma_{u,w}^{\psi}(\zz)=\psi(u\zz w) - \psi(\zz) - \psi(u) - \psi(w)
			= -2\psi(u) = -2\psi(w)
			\]
which shows that $|\gamma_{u,w}^{\psi}(\vv)| = |\gamma_{u,w}^{\psi}(\zz)| = 2\psi(u) = 2\psi(w)$ in this case.
			
			The result now follows from these cases. Namely, either we have that $|\gamma_{u,w}^{\psi}(\vv)|=0$ and that $\vv$ does not satisfy  $u\vv = \vv w$, or we have that $|\gamma_{u,w}^{\psi}(\vv)| = 2\psi(u) = 2\psi(w)$ and that $\vv$ does satisfy $u\vv = \vv w$.
			This thus shows us that $|\gamma_{u,w}^{\psi}(\vv)| = 2\psi(u)\mathds{1}(u\vv =\vv w) = 2\psi(w)\mathds{1}(u\vv = \vv w)$.\\
		\end{proof}
	\end{lemma}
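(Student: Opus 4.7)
My plan is to exploit the two $\mathbb{Z}/2$-symmetries of $\gamma_{u,w}^{\psi}$ to reduce to a minimal representative, then split into the two length cases allowed by the Coxeter structure.

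First, a direct computation from \eqref{Eqn=GammaFunction} using $u^2 = w^2 = e$ yields $\gamma_{u,w}^{\psi}(u\vv) = \gamma_{u,w}^{\psi}(\vv w) = -\gamma_{u,w}^{\psi}(\vv)$, so $|\gamma_{u,w}^{\psi}|$ is constant on the four-element orbit $\{\vv, u\vv, \vv w, u\vv w\}$; the relation $u\vv = \vv w$ is likewise invariant on this orbit. I would then pick an element $\zz$ of minimal length in the orbit, which forces $|u\zz| = |\zz w| = |\zz| + 1$. Choosing a reduced expression $\zz = z_1 \cdots z_k$, the words $u z_1 \cdots z_k$ and $z_1 \cdots z_k w$ are reduced, so the additivity hypothesis gives $\psi(u\zz) = \psi(u) + \psi(\zz)$ and $\psi(\zz w) = \psi(\zz) + \psi(w)$, whence
$$\gamma_{u,w}^{\psi}(\zz) = \psi(u\zz w) - \psi(\zz) - \psi(u) - \psi(w).$$

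Multiplication by a generator changes word length by exactly $\pm 1$, so $|u\zz w| \in \{|\zz|, |\zz|+2\}$. In the case $|u\zz w| = |\zz|+2$, the word $u z_1 \cdots z_k w$ is reduced, so additivity makes $\gamma_{u,w}^{\psi}(\zz)$ vanish; moreover $u\vv = \vv w$ would translate, via the orbit, to $u\zz w = \zz$, which is incompatible with $|u\zz w| = |\zz|+2$, so both sides of the claimed identity equal zero. In the case $|u\zz w| = |\zz|$, the word $u z_1 \cdots z_k w$ has $k+2$ letters but length $k$, so the deletion condition for Coxeter groups (\cite[Theorem 3.3.4]{davisGeometryTopologyCoxeter2008}) lets one erase two letters to obtain an equal word. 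A short case analysis, using reducedness of $z_1 \cdots z_k$ together with $|u\zz| = |\zz w| = k+1$, eliminates every deletion pattern except erasing $u$ and $w$; this forces $u\zz w = \zz$, hence $u\zz = \zz w$. Comparing lengths yields $\psi(u) = \psi(w)$, and substitution gives $\gamma_{u,w}^{\psi}(\zz) = -2\psi(u) = -2\psi(w)$.

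Combining the two cases and transporting back via the orbit symmetry gives the claimed formula. The main obstacle is the deletion-condition bookkeeping in the second case: one must verify that each of the candidate two-letter deletions — both endpoints, one endpoint and one middle letter, or two middle letters — either contradicts reducedness of $\zz$ or contradicts one of the length identities $|u\zz| = k+1$, $|\zz w| = k+1$, so that only the simultaneous deletion of $u$ and $w$ survives, encoding precisely the twisted-conjugation relation $u\vv = \vv w$ that appears in the statement.
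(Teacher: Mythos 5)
Your proposal is correct and follows essentially the same route as the paper: reduce via the $\pm$-symmetries to a minimal representative $\zz$ in the four-element orbit, use the additivity of $\psi$ on reduced expressions, and split on $|u\zz w| \in \{|\zz|, |\zz|+2\}$. The only cosmetic difference is that in the second case you invoke the deletion condition (erase two letters, then rule out three of the four deletion patterns), whereas the paper applies the exchange condition directly to $u \cdot (\zz w)$ and so has only two subcases to dismiss; both are standard Coxeter-theoretic facts and lead to the same conclusion $u\zz w = \zz$.
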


\subsection{A characterization in terms of Coxeter diagrams}

	We note that for the word length $\psi_{S}$ we have $\psi_{S}(s)>0$ for all generators $s\in S$. Now by Lemma \ref{lemma:shifting-identity}, in order to see when $\gamma_{u,w}^{\psi_{S}}$ is finite-rank, we have to know what kind of words $\vv\in W$ have the property that $u\vv=\vv w$. For this we introduce some notation.\\
	
	For distinct $i,j\in \{1,...,|S|\}$ we will, whenever the label $m_{i,j}$ is finite, denote $k_{i,j} = \floor{\frac{m_{i,j}}{2}}\geq 1$. Now if $m_{i,j}$ is
	even, then $m_{i,j} = 2k_{i,j}$  and we set $\mathbf{r}_{i,j} = s_i(s_js_i)^{k_{i,j}-1}$. If $m_{i,j}$ is odd, then $m_{i,j} = 2k_{i,j} + 1$ and we set $\mathbf{r}_{i,j} = (s_is_j)^{k_{i,j}}$.
	Furthermore we set
	\begin{align}
	a_{i,j} &= s_i & b_{i,j} &= \begin{cases}
	s_i &	m_{i,j} \text{ even}\\
	s_j &	m_{i,j} \text{ odd}
	\end{cases}&
	c_{i,j} &= s_j & d_{i,j} &= \begin{cases}
	s_j & m_{i,j}	\text{ even}\\
	s_i	& m_{i,j}	\text{ odd}
	\end{cases}.
	\end{align}
	 Then $a_{i,j}$ and $b_{i,j}$ are respectively the first and last letter of the word $\mathbf{r}_{i,j}$. Furthermore when $m_{i,j}$ is even we have
	\[
c_{i,j}\mathbf{r}_{i,j} = s_{j}s_i(s_js_i)^{k_{i,j}-1} = (s_js_i)^{k_{i,j}} = (s_is_j)^{k_{i,j}} = \mathbf{r}_{i,j}s_j = \mathbf{r}_{i,j}d_{i,j},
\]
 and when $m_{i,j}$ is odd we have
	
\[
c_{i,j}\mathbf{r}_{i,j} = s_{j}(s_is_j)^{k_{i,j}} = s_i(s_js_i)^{k_{i,j}} = \mathbf{r}_{i,j}s_i = \mathbf{r}_{i,j}d_{i,j}.
\]
 Thus in either case $c_{i,j}\mathbf{r}_{i,j} = \mathbf{r}_{i,j}d_{i,j}$.\\

	For given generators $u,w\in S$ we will now check for what kind of words $\vv\in W$ with $|\vv|\leq |u\vv|,|\vv w|$ we have that $u\vv=\vv w$. In Proposition \ref{proposition:support-gamma} we then give a precise description of the support of $\gamma_{u,w}^{\psi_S}$.
	
	\begin{lemma}\label{lemma:word-sequence-expression}
		For generators $u,w\in S$ and a word $\mathbf{v}\in W$ with $|\mathbf{v}|\leq |u\vv|,|\vv w|$ we have $u\mathbf{v}=\mathbf{v}w$ if and only if $\mathbf{v}$ can be written in the reduced form $\mathbf{v} = \mathbf{r}_{i_1,j_1}.....\mathbf{r}_{i_k,i_k}$ so that $u = c_{i_1,j_1}$ and $w = d_{i_k,j_k}$ and so that for $l=1,...,k-1$ we have that $c_{i_{l+1},j_{l+1}} = d_{i_l,j_l}$ and $a_{i_{l+1},j_{l+1}}\not\in \{s_{i_l},s_{j_l}\}$ and
		 $b_{i_l,j_l}\not\in \{s_{i_{l+1}},s_{j_{l+1}}\}$.
	\end{lemma}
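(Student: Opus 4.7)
The plan is to prove the two directions separately: the $(\Leftarrow)$ direction is a direct telescoping computation, while the $(\Rightarrow)$ direction proceeds by induction on $|\vv|$ with one standard structural result about left descent sets in Coxeter groups as the only nontrivial ingredient.

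For $(\Leftarrow)$, using the identity $c_{i,j}\mathbf{r}_{i,j} = \mathbf{r}_{i,j}d_{i,j}$ established just before the lemma, together with the chain compatibility $c_{i_{l+1}, j_{l+1}} = d_{i_l, j_l}$, one immediately telescopes
\[
u\vv = c_{i_1, j_1}\mathbf{r}_{i_1, j_1}\mathbf{r}_{i_2, j_2}\cdots = \mathbf{r}_{i_1, j_1}d_{i_1, j_1}\mathbf{r}_{i_2, j_2}\cdots = \cdots = \mathbf{r}_{i_1, j_1}\cdots\mathbf{r}_{i_k, j_k}d_{i_k, j_k} = \vv w.
\]
The side conditions $a_{i_{l+1}, j_{l+1}}\notin\{s_{i_l}, s_{j_l}\}$ and $b_{i_l, j_l}\notin\{s_{i_{l+1}}, s_{j_{l+1}}\}$ rule out any cancellation between consecutive dihedral blocks; an induction on $k$ then shows that $\mathbf{r}_{i_1, j_1}\cdots\mathbf{r}_{i_k, j_k}$ is a reduced expression for $\vv$ and that appending $u$ on the left or $w$ on the right preserves reducedness, giving $|u\vv| = |\vv w| = |\vv|+1 \geq |\vv|$.

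For $(\Rightarrow)$, I induct on $n = |\vv|$; the case $n = 0$ forces $u = w$ and is handled by the empty product ($k = 0$). For $n\geq 1$ fix any reduced expression $\vv = v_1\cdots v_n$; the hypothesis gives $|u\vv| = |\vv w| = n+1$. The crucial observation is that both $u$ and $v_1$ lie in the left descent set $D_L(x)$ of the common element $x := u\vv = \vv w$: $u\cdot x = \vv$ has length $n$, and $v_1\cdot x = v_1\vv w$ has length at most $\ell(v_1\vv)+1 = n$. By the classical theorem (see e.g.\ \cite{davisGeometryTopologyCoxeter2008}) that any subset $J\subseteq D_L(x)$ generates a finite parabolic subgroup $W_J$ and that $x$ factors reducedly as $x = w_J\cdot y$ with $w_J$ the longest element of $W_J$, applied to $J = \{u, v_1\}$, we conclude $m := m_{u, v_1} < \infty$ and $u\vv = w_J\cdot \xx$ reducedly. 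Choosing the reduced expression of $w_J$ starting with $u$ and stripping this leading $u$ yields $\vv = \underbrace{v_1 u v_1\cdots}_{m-1}\cdot \xx$ reducedly, and a direct inspection of the definitions identifies the prefix as $\mathbf{r}_{i_1, j_1}$ with $s_{i_1} = v_1$, $s_{j_1} = u$, whence $c_{i_1, j_1} = u$.

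The identity $u\mathbf{r}_{i_1, j_1} = \mathbf{r}_{i_1, j_1}d_{i_1, j_1}$ then collapses $u\vv = \vv w$ to $d_{i_1, j_1}\xx = \xx w$, and reducedness of the known expressions for $u\vv$ and $\vv w$ forces $|\xx|\leq |d_{i_1, j_1}\xx|, |\xx w|$. Since $|\xx| = n+1-m < n$, the inductive hypothesis applied to $(d_{i_1, j_1}, w, \xx)$ produces a decomposition $\xx = \mathbf{r}_{i_2, j_2}\cdots\mathbf{r}_{i_k, j_k}$ with $c_{i_2, j_2} = d_{i_1, j_1}$, $d_{i_k, j_k} = w$ and the interior junction conditions for $l\geq 2$. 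The only remaining check is the junction at $l = 1$: if $a_{i_2, j_2} = s_{i_2}$ lay in $\{s_{i_1}, s_{j_1}\}$, then combined with the built-in equality $s_{j_2} = c_{i_2, j_2} = d_{i_1, j_1}\in\{s_{i_1}, s_{j_1}\}$ and $s_{i_2}\neq s_{j_2}$, a short case distinction on the parity of $m_{i_1, j_1}$ forces $\{s_{i_2}, s_{j_2}\} = \{s_{i_1}, s_{j_1}\}$ with the first letter of $\mathbf{r}_{i_2, j_2}$ equal to the last letter of $\mathbf{r}_{i_1, j_1}$, contradicting reducedness of $\vv$; the second junction condition $b_{i_1, j_1}\notin\{s_{i_2}, s_{j_2}\}$ then follows from this and $b_{i_1, j_1}\neq d_{i_1, j_1} = s_{j_2}$. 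The only non-elementary step in the whole argument is the invocation of the parabolic-subgroup / left-descent theorem; everything else is bookkeeping with reduced words.
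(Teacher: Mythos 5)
Your proof is correct and follows the same overall strategy as the paper's: the $(\Leftarrow)$ direction is the identical telescoping computation via $c_{i,j}\rr_{i,j}=\rr_{i,j}d_{i,j}$, and the $(\Rightarrow)$ direction is the same induction on $|\vv|$ that peels off a leading dihedral block $\rr_{i_1,j_1}$ and recurses on the tail. The one genuine difference lies in how the dihedral prefix is produced: the paper invokes the exchange/folding argument from Davis \cite[Theorem 3.4.2(ii)]{davisGeometryTopologyCoxeter2008} applied to the two reduced expressions $uv_1\cdots v_n$ and $v_1\cdots v_n w$ of $u\vv$, whereas you observe that $u$ and $v_1$ both lie in the left descent set of $x=u\vv$ and then apply the standard parabolic-subgroup theorem ($J\subseteq D_L(x)$ implies $W_J$ finite and $x=w_Jy$ reducedly). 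These are two packagings of the same structural fact about Coxeter groups, so the substitution costs nothing and arguably reads more cleanly; the descent-set formulation also gives $m_{u,v_1}<\infty$ with slightly less fuss. Your handling of the junction condition at $l=1$ matches the paper's (both use $b_{i_1,j_1}\ne d_{i_1,j_1}$ to deduce $a_{i_2,j_2}=b_{i_1,j_1}$ would force a repeated letter in the reduced expression for $\vv$), though your parenthetical "case distinction on the parity of $m_{i_1,j_1}$" is unnecessary, since $b\ne d$ holds uniformly. Two small points worth making explicit if this were to be written out in full: $u\ne v_1$ (needed so that $J=\{u,v_1\}$ is a genuine dihedral set) follows from reducedness of $uv_1\cdots v_n$, and when $m=n+1$ (so $\xx=e$) the recursion stops with $k=1$ and $w=d_{i_1,j_1}$, a case the paper handles separately.
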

		\begin{proof}
			First, suppose that $\mathbf{v}$ can be written in the given form $\mathbf{v} =\mathbf{r}_{i_1,j_1}.....\mathbf{r}_{i_k,i_k}$ with the given conditions on $c_{i_l,j_l}$ and $d_{i_l,j_l}$. Then since we have $c_{i_l,j_l}\mathbf{r}_{i_l,j_l} = \mathbf{r}_{i_l,j_l}d_{i_l,j_l} = \mathbf{r}_{i_l,j_l}c_{i_{l+1},j_{l+1}}$ for $l=1,...,k-1$, and since $u = c_{i_1,j_1}$ and $w =d_{i_k,j_k}$ we have $u\mathbf{v}=\mathbf{v}w$, which shows the
			`if' direction.\\
			
			We now prove the opposite direction. First note that the statement holds for $\vv =e$ as this can be written as the empty word.
		 	We now prove by induction on $n$ that for $\vv\in W$ with $|\vv|\geq 1$ and $|\vv|\leq n$ and $|\vv|\leq |u\vv|,|\vv w|$ and $u\vv=\vv w$ for some $u,w\in S$,	 	
		 	we can write $\vv$ in the given form. Note first that the statement holds for $n=0$, since then no such $\vv\in W$ exists. Thus, assume that the statement holds for $n-1$, we prove the statement for $n$. Let $u,w\in S$ and $\mathbf{v}\in W$ be with $|\mathbf{v}|=n$ and $|u\mathbf{v}|=|\mathbf{v}w|=|\mathbf{v}|+1$ and $u\vv=\vv w$. 	
			Let $v_1 \ldots v_n$ be a reduced expression for $\mathbf{v}$. Then the expression $u v_1 \ldots v_n$ and $v_1 \ldots v_n w$ are reduced expressions for $u\mathbf{v}=\mathbf{v}w$. In particular we have $u\not=v_1$. Set $m:=m_{u,v_1}$.
			Now, since $u\mathbf{v}$ and $\mathbf{v}w$ are equal and $u\not=v_1$, we can as in the proof of
			\cite[theorem 3.4.2(ii)]{davisGeometryTopologyCoxeter2008} find
			a reduced expression $y_1 \ldots y_{n+1}$ for $u\mathbf{v}$ with
			$n\geq m-1$ so that $y_1 \ldots y_m = uv_1uv_1 \ldots u$ whenever $m$ is odd, and $y_1 \ldots y_m = uv_1 \ldots uv_1$ whenever $m$ is even. This is to say that if we let $i_0,j_0\in \{1,...,|S|\}$ be such that $v_1 = s_{i_0}$ and $u = s_{j_0}$, then we have that $\mathbf{r}_{i_0,j_0} =y_2 \ldots y_m$ and $c_{i_0,j_0}=s_{j_0}=u$. Note that by the proof of \cite[theorem 3.4.2(ii)]{davisGeometryTopologyCoxeter2008} we have in particular that $m<\infty$.
			Now moreover, since $y_1=u$ we have that  $y_2 \ldots y_{n+1}w$ is an expression for $\mathbf{v}w$, and this expression is reduced since $|\mathbf{v}w|=n+1$. \\
			
			Now suppose that $m = n+1$, then $\mathbf{v} = \mathbf{r}_{i_0,j_0}$ and $i_0\not=j_0$ since $u\not=v_1$. Now, we have $u = s_{j_0} =c_{i_0,j_0}$ and furthermore, since $\mathbf{r}_{i_0,j_0}d_{i_0,j_0} = c_{i_0,j_0}\mathbf{r}_{i_0,i_0} = u\mathbf{v} =\mathbf{v}w = \mathbf{r}_{i_0,j_0}w$, also $w=d_{i_0,j_0}$. Thus in this case we can write $\mathbf{v}$ in the given form.\\

			Now suppose $m<n+1$ and define $\mathbf{v}'= y_{m+1} \ldots y_{n+1}$ and $u' = d_{i_0,j_0}$ and $w' =w$. Note that since $u =s_{j_0} = c_{i_0,j_0}$ and $u' = d_{i_0,j_0}$ we have
			
			$$\mathbf{r}_{i_0,j_0}u'\mathbf{v}' = u\mathbf{r}_{i_0,j_0}\mathbf{v}'
			= u\mathbf{v} = \mathbf{v}w = \mathbf{r}_{i_0,j_0}\mathbf{v}'w'.$$
			Therefore $u'\mathbf{v}' = \mathbf{v}'w'$. Moreover $|u'\mathbf{v}'| = |\mathbf{v}'w'| = |\mathbf{v}'| +1$ since $y_{m+1} \ldots y_{n+1} w$ is a reduced expression for $\vv' w$. Now, since also $|\mathbf{v}'|\geq 1$ and $|\mathbf{v}'|\leq n-1$ we have by the induction hypothesis that there is a reduced expression $\mathbf{v}' = \mathbf{r}_{i_1,j_1}....\mathbf{r}_{i_k,j_k}$ for some indices $i_l,j_l\in \{1,...,|S|\}$ with $i_l\not=j_l$ so that $u' = c_{i_1,j_1}$ and $w' = d_{i_k,j_k}$ and so that for $l=1, \ldots ,k-1$ we have that $c_{i_{l+1},j_{l+1}} = d_{i_l,j_l}$ and
			$a_{i_{l+1},j_{l+1}}\not\in \{s_{i_l},s_{j_l}\}$ and
			$b_{i_l,j_l}\not\in \{s_{i_{l+1}},s_{j_{l+1}}\}$.
			Hence we can write $\mathbf{v} = \mathbf{r}_{i_0,j_0}\mathbf{v}' = \mathbf{r}_{i_0,j_0} \ldots \mathbf{r}_{i_k,j_k}$.
			We also have $u = s_{j_0} = c_{i_0,j_0}$ and $w = w' = d_{i_k,j_k}$ and $d_{i_0,j_0} = u' = c_{i_1,j_1}$. Furthermore, since $|\vv| = n = (m-1) + (n-m + 1) = |\mathbf{r}_{i_0,j_0}| + |\mathbf{v}'|$, and since the expression for $\mathbf{v}'$ is reduced we thus have that the expression for $\mathbf{v}$ is also reduced. Now suppose that $b_{i_0,j_0} \in \{s_{i_1},s_{j_1}\}$. We note that $b_{i_0,j_0}\not=d_{i_0,j_0}=c_{i_1,j_1}\not=a_{i_1,j_1}$. Now as also $c_{i_1,j_1},a_{i_1,j_1}\in \{s_{i_1},s_{j_1}\}$ we obtain that $a_{i_1,j_1} =b_{i_0,j_0}$.
			However as $\rr_{i_0,j_0}$ ends with $b_{i_0,j_0}$ and as $\rr_{i_1,j_1}$ starts with $a_{i_1,j_1}$ we then obtain that $\rr_{i_0,j_0}\rr_{i_1,j_1}$ is not a reduced expression. This contradicts the fact that the expression for $\vv$ is reduced.
			Likewise, if $a_{i_1,j_1}\in \{s_{i_0},s_{j_0}\}$ we have
			because of the fact that $a_{i_1,j_1}\not=c_{i_1,j_1}=d_{i_0,j_0}\not=b_{i_0,j_0}$ and $d_{i_0,j_0},b_{i_0,j_0} \in \{s_{i_0},s_{j_0}\}$ that $a_{i_1,j_1} = b_{i_0,j_0}$. This then shows that $\rr_{i_0,j_0}\rr_{i_1,j_1}$ is not a reduced expression, which contradicts the fact that the expression for $\vv$ is reduced.
			This proves the lemma.
		\end{proof}

	\begin{proposition}\label{proposition:support-gamma}
		Let $u,w\in S$. Then we have $\zz\in \supp(\gamma_{u,w}^{\psi_S})$ if and only if $\zz \in \{\vv,u\vv,\vv w,u\vv w\}$, where $\vv$ is a word as in \cref{lemma:word-sequence-expression}.
\end{proposition}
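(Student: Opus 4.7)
The plan is to combine Lemma \ref{lemma:shifting-identity} (applied to the word length $\psi_S$) with Lemma \ref{lemma:word-sequence-expression} by a reduction-to-minimal-length argument. Since $\psi_S(s) = 1$ for every $s \in S$, Lemma \ref{lemma:shifting-identity} tells us that
\[
\supp(\gamma_{u,w}^{\psi_S}) = \{\vv \in W : u\vv = \vv w\}.
\]
So the whole task is to describe this set of solutions of the conjugation equation $u\vv = \vv w$.

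First I would observe that $\{\vv : u\vv = \vv w\}$ is stable under the four maps $\vv \mapsto \vv$, $\vv \mapsto u\vv$, $\vv \mapsto \vv w$, $\vv \mapsto u\vv w$, using only $u^2 = w^2 = e$; for instance, $u(u\vv) = \vv$ while $(u\vv) w = \vv w^2 = \vv$, and analogously for the other cases. Hence, given any $\zz$ in the support, the whole orbit $\{\zz, u\zz, \zz w, u\zz w\}$ lies in the support. Choose $\vv$ to be an element of minimal word length in this orbit. Because $u, w \in S$ are generators, $|u\vv| = |\vv|\pm 1$ and similarly for $|\vv w|$; minimality together with the fact that $u\vv, \vv w$ also lie in the orbit forces $|u\vv| = |\vv w| = |\vv|+1$, i.e.\ $|\vv| \leq |u\vv|, |\vv w|$.

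Now I would invoke Lemma \ref{lemma:word-sequence-expression}: since $u\vv = \vv w$ and $|\vv|\leq |u\vv|, |\vv w|$, either $\vv = e$ (in which case $u = w$ and the orbit degenerates to $\{e, u\}$, which is handled by interpreting the empty product $k=0$ in Lemma \ref{lemma:word-sequence-expression} as $\vv = e$), or $\vv$ admits the prescribed reduced expression $\vv = \mathbf{r}_{i_1,j_1}\cdots \mathbf{r}_{i_k,j_k}$ with the compatibility conditions on the letters $a,b,c,d$. In either case $\zz$ is one of the four elements of the orbit of $\vv$, yielding the direction $(\Rightarrow)$.

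For the converse, Lemma \ref{lemma:word-sequence-expression} directly gives $u\vv = \vv w$ for any $\vv$ of the described form, and by the closure observation above all of $\vv, u\vv, \vv w, u\vv w$ satisfy the relation $u\zz = \zz w$; hence all of them lie in $\supp(\gamma_{u,w}^{\psi_S})$ by Lemma \ref{lemma:shifting-identity}. The only mild subtlety I anticipate is bookkeeping around the degenerate case $\vv = e$ (where $u = w$); I would mention this explicitly so that the empty-product convention in Lemma \ref{lemma:word-sequence-expression} is coherent with the statement.
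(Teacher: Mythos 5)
Your proof is correct and follows essentially the same route as the paper: reduce via Lemma~\ref{lemma:shifting-identity} to the conjugation equation $u\vv = \vv w$, pass to an orbit representative of minimal length, and invoke Lemma~\ref{lemma:word-sequence-expression}. The extra detail you give on why a minimal-length orbit element satisfies $|\vv| \leq |u\vv|, |\vv w|$, and the explicit note on the degenerate case $\vv = e$, are small elaborations rather than a different argument.
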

		\begin{proof} It is clear that if $\zz\in \{\vv,u\vv,\vv w,u\vv w\}$ where $\vv$ is of the form of Lemma \ref{lemma:word-sequence-expression}, that we then have that $u\zz =\zz w$, and hence by Lemma \ref{lemma:shifting-identity} that $\psi_{u,w}^{\psi_{S}}(\zz)\not=0$.
		For the other direction we suppose that $\zz\in \supp(\gamma_{u,w}^{\psi_S})$. Then we have that $u\zz = \zz w$ holds by \cref{lemma:shifting-identity}. Now there is a $\vv \in \{\zz, u\zz,\zz w,u\zz w\}$ such that $|\vv|\leq |u\vv|,|\vv w|$. This word $\vv$ moreover satisfies $u\vv = \vv w$ as we had $u\zz = \zz w$. Now, this means that $\vv$ can be written in an expression as in Lemma \ref{lemma:word-sequence-expression}. Last, we note that $\zz\in \{\vv,u\vv,\vv w,u\vv w\}$, which finishes the proof.
		\end{proof}
	
	\subsection{Parity paths in Coxeter diagram}
	In Proposition \ref{proposition:support-gamma} we showed precisely for what kind of words $\vv\in W$ we have $\vv\in \supp(\gamma_{u,w}^{\psi_S})$. The question is now whether this support is finite or infinite. It follows from the proposition that the support is finite if and only if there exist only finitely many words $\vv\in W$ that can be written in the form $\vv = \rr_{i_1,j_1}....\rr_{i_k,j_k}$ with the condition from Lemma \ref{lemma:word-sequence-expression}. To answer the question on whether this is the case, we shall identify these expressions with certain walks in a graph. The following defines essentially the Coxeter diagram with the difference that in a Coxeter diagram the edges that are labeled with $m_{i,j} = 2$ are deleted and those labeled with $m_{i,j} = \infty$ are added. Recall that a graph is simplicial if it contains no double edges and no edges from a point to itself. \\
	
\begin{definition}\label{Dfn=Graph}
	We will let $\Graph_S(W)=(V,E)$ be the complete simplicial graph with vertex set $V=S$ and labels $m_{i,j}$ on the edges $\{s_i,s_j\}\in E$.
\end{definition}

\begin{definition}\label{Dfn=Parity}
	Let $k\geq 1$ and $i_l,j_l\in \{1,..,|S|\}$ for $l=1,...,k$. Let $P=(s_{j_1},s_{i_1},s_{j_2},.....,s_{j_{k}},s_{i_{k}})$ be a  walk in the $\Graph_S(W)$, which has even length.
	We will say that $P$ is a \textit{parity path} if the edges of $P$ have finite labels, and if (1)  $i_{l}\not=j_{l}$ for all $l$; (2)  for $l=1,..,k-1$ we have $s_{j_{l+1}} = d_{i_{l},j_l}$ and (3) $i_{l+1}\not\in \{i_{l},j_{l}\}$.
	We will moreover call the parity path $P$ a \textit{cyclic parity path} if the path
	$\overline{P}:= (s_{j_1},s_{i_1},....,s_{j_k},s_{i_k},s_{j_1},s_{i_1})$ is a parity path.\\
\end{definition}	

	The intuition for a parity path is that if you walk an edge with odd label, you have to stay there for one turn and then continue your walk over a different edge than you came from. Furthermore, when you walk an edge with an even label you have to return directly over the same edge, and then continue your walk using another edge. Note that in both cases you may still use same edges as before at a later point in your walk. A cyclic parity path is defined so that walking the same path any number of times in a row gives you a parity path.\\

 We state the following definition.
	\begin{definition}	
			An elementary M-operation on a word $v_1....v_k$ is one of the following operations
			\begin{enumerate}
				\item Delete a subword of the form $s_is_i$.
				\item Replace an alternating subword of the form $s_is_js_is_j...$ of length $m_{i,j}$ by the alternating word $s_js_is_js_i....$ of the same length.
			\end{enumerate}
			A word is called M-reduced if it cannot be shortened by elementary M-operations.
		\end{definition}
	
	We shall now show in the following two Theorems that the gradient-$\mathcal{S}_p$ property of the semi-group $(\Phi_t)_{t\geq 0}$ on $\calL(W)$ associated to the word length $\psi_{S}$, is almost equivalent with the non-existence of cyclic parity paths in $\Graph_S(W)$.

	\begin{theorem}\label{theorem:parity-path-implies-not-gradient-Sp}
	Let $W = \langle S|M\rangle$ be a Coxeter system. Suppose there is a cyclic parity path $$P=(s_{j_1},s_{i_1},s_{j_2},...,s_{j_k},s_{i_k})$$ in $\Graph_S(W)$ in which the labels $m_{i_l,j_l},m_{i_l,i_{l+1}}$, $m_{j_l,i_{l+1}}$ are all unequal to $2$.
	Then the semi-group $(\Phi_{t})_{t\geq 0}$ associated to the word length $\psi_{S}$ is not gradient-$\mathcal{S}_p$ for any $p\in [1,\infty]$.\\
	\begin{proof}
		Suppose the assumptions hold. Then we have that there exists a parity path of the form $\overline{P} =(s_{j_1},s_{i_1},s_{j_2},...,s_{j_k},s_{i_k},s_{j_{k+1}},s_{i_{k+1}})$ where $s_{i_1} = s_{i_{k+1}}$ and $s_{j_1} = s_{j_{k+1}}$.
		We will denote $\mathbf{v}_1 = \mathbf{r}_{i_1,j_1}...\mathbf{r}_{i_k,j_k}$.
		We note that by the definition of a parity path we have $d_{i_l,j_l} = s_{j_{l+1}} = c_{i_{l+1},j_{l+1}}$ for $l=1,..,k-1$ and $d_{i_k,j_k}=s_{j_{k+1}}=s_{j_{1}} = c_{i_{1},j_1}$. We now define $u = c_{i_1,j_1}=d_{i_k,j_k}$. Now we thus have $u\mathbf{v}_1 = \mathbf{v}_1u$.
		This means by Lemma \ref{lemma:shifting-identity} that $\gamma_{u,u}^{\psi_{S}}(\vv_1)\not=0$. We show that $\psi_{S}(\mathbf{v}_1)\geq k$. To see this, note that $a_{i_{l+1},j_{l+1}}=s_{i_{l+1}} \not\in \{s_{i_{l}},s_{j_{l}}\}$ by the definition of the parity path. Furthermore, since $b_{i_{l},j_{l}}\not=d_{i_{l},j_{l}}=c_{i_{l+1},j_{l+1}}$ and $b_{i_{l},j_{l}}\not= a_{i_{l+1},j_{l+1}}$ (as $a_{i_{l+1},j_{l+1}}\not\in \{s_{i_{l}},s_{j_{l}}\}\ni b_{i_{l},j_{l}}$) and $a_{i_{l+1},j_{l+1}}=s_{i_{l+1}}\not=s_{j_{l+1}}=c_{i_{l+1},j_{l+1}}$ we have that $b_{i_{l},j_{l}}\not\in\{a_{i_{l+1},j_{l+1}},c_{i_{l+1},j_{l+1}}\}= \{s_{i_{l+1}},s_{j_{l+1}}\}$. Now, since there are no labels $m_{i_l,j_l}$ equal to $2$ we have that the sub-words $\mathbf{r}_{i_l,j_l}$ contain both elements $s_{i_l}$ and $s_{j_l}$. This means, since $a_{i_{l+1},j_{l+1}}\not\in \{s_{i_l},s_{j_l}\}$ and $b_{i_l,j_l}\not\in \{s_{i_{l+1}},s_{j_{l+1}}\}$, that the only sub-words of $\mathbf{v}_1$ of the form $s_is_js_i \ldots s_{i} s_j$ or $s_is_js_i \ldots s_j s_i$ are the sub-words of $\mathbf{r}_{i_l,j_l}$ for some $l=1,...,k$, and the words
		$b_{i_l,j_l}a_{i_{l+1},j_{l+1}}$  for $l=1,..,k-1$.
		  For an alternating subword $\xx$ of $\rr_{i,j}$ for some $i,j$ we have that $\xx$ is an alternating sequence of $s_i$'s and $s_j$'s and further
			\[|  \xx |\leq |\rr_{i,j}| \leq m_{i,j} -1.
\]
		Furthermore, for a word $s_is_j$ with $s_i=b_{i_l,j_l}$ and $s_j = a_{i_{l+1},j_{l+1}}$ for some $l=1, \ldots ,k-1$ (in which case we have $i\in \{i_l,j_l\}$ and $j = i_{l+1}$)	we have that
\[
|s_is_j| =2\leq \min\{m_{i_l,i_{l+1}},m_{j_l,i_{l+1}}\}-1\leq m_{i,j}-1.
\]

 Furthermore, there are no sub-words of $\vv_1$ of the form $s_i s_i$. This means that the expression for $\vv_1$ is $M$-reduced, and therefore, by \cite[Theorem 3.4.2]{davisGeometryTopologyCoxeter2008}, that the expression is reduced. This means that $\psi_{S}(\vv_1)\geq k$. Now, since we can create cyclic parity paths $P_n$ by walking over $P$ a $n$ number of times, we can create $\vv_n\in W$ with $\psi_{S}(\vv_n)\geq nk$ and $\gamma_{u,u}^{\psi_{S}}(\vv_n)\not=0$. Therefore $\gamma_{u,u}^{\psi_{S}}$ is not finite rank, and hence the semi-group $(\Phi_t)_{t\geq 0}$ is not gradient-$\calS_p$ for any $p\in [1,\infty]$.
	\end{proof}
	\end{theorem}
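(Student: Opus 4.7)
The plan is to translate the hypothesis (existence of a cyclic parity path with no label-$2$ edges) into the construction of an infinite family of words $\mathbf{v}_n \in W$ lying in $\supp(\gamma_{u,u}^{\psi_S})$ for a single well-chosen generator $u \in S$. By \eqref{Eqn=FiniteRankish}, the operator $|\Psi^{\lambda_u,\lambda_u}|^2$ acts diagonally on $\{\lambda_\mathbf{v}\}_{\mathbf{v} \in W}$ with diagonal entries $|\gamma_{u,u}^{\psi_S}(\mathbf{v})|^2$, and by Lemma \ref{lemma:shifting-identity} these values are either $0$ or $2\psi_S(u) = 2$. So to show that $\Psi^{\lambda_u,\lambda_u}$ is not in $\mathcal{S}_p$ for any $p\in[1,\infty]$ (equivalently, is not compact, hence not finite rank, by Lemma \ref{lemma:reduction-to-set-generating-the-group2}), it suffices to find infinitely many $\mathbf{v} \in W$ with $u\mathbf{v}=\mathbf{v}u$. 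Together with Lemma \ref{lemma:reduction-gradient-Sp-to-generators}, this forces the semi-group not to be gradient-$\mathcal{S}_p$ for any $p$.

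To construct these words, write the parity path as $P=(s_{j_1},s_{i_1},\ldots,s_{j_k},s_{i_k})$ and set $\mathbf{v}_1:=\mathbf{r}_{i_1,j_1}\cdots\mathbf{r}_{i_k,j_k}$. The cyclic condition $\overline{P}=(s_{j_1},\ldots,s_{i_k},s_{j_1},s_{i_1})$ being a parity path means that $d_{i_l,j_l}=c_{i_{l+1},j_{l+1}}$ for $l=1,\ldots,k-1$ and also $d_{i_k,j_k}=c_{i_1,j_1}$. Defining $u:=c_{i_1,j_1}=d_{i_k,j_k}$, the identity $c_{i,j}\mathbf{r}_{i,j}=\mathbf{r}_{i,j}d_{i,j}$ telescopes to give $u\mathbf{v}_1=\mathbf{v}_1 u$. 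Iterating the loop $n$ times produces $\mathbf{v}_n:=(\mathbf{r}_{i_1,j_1}\cdots\mathbf{r}_{i_k,j_k})^n$ with $u\mathbf{v}_n=\mathbf{v}_n u$.

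The main work — and the only place where the no-label-$2$ hypothesis is used — is to verify that each expression $\mathbf{v}_n$, viewed as a product of generators, is a reduced expression so that $\psi_S(\mathbf{v}_n) \geq nk$ and in particular the $\mathbf{v}_n$ are distinct. I would apply \cite[Theorem 3.4.2]{davisGeometryTopologyCoxeter2008} (Tits' solution of the word problem): it suffices to show the expression is $M$-reduced. The absence of label-$2$ edges on the relevant entries ensures each $\mathbf{r}_{i_l,j_l}$ has length $\geq 2$ and alternates strictly between $s_{i_l}$ and $s_{j_l}$; thus the only alternating subwords in $s_i,s_j$ appearing inside $\mathbf{v}_n$ are either wholly contained in some $\mathbf{r}_{i_l,j_l}$ (so of length $\leq m_{i_l,j_l}-1$) or straddle a junction, where the parity conditions $a_{i_{l+1},j_{l+1}}\notin\{s_{i_l},s_{j_l}\}$ and $b_{i_l,j_l}\notin\{s_{i_{l+1}},s_{j_{l+1}}\}$ force the straddling alternating subword to have length $2 \leq m_{s,t}-1$ (using $m_{i_l,i_{l+1}}, m_{j_l,i_{l+1}} \neq 2$). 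No $s_is_i$ subword can appear since the $a/b/c/d$ conventions forbid adjacent repeats at junctions. Hence no $M$-operation shortens $\mathbf{v}_n$.

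Once the reducedness is established, the $\mathbf{v}_n$ are pairwise distinct elements in $\supp(\gamma_{u,u}^{\psi_S})$, so this support is infinite. The hardest step to get right is bookkeeping at the junctions between consecutive $\mathbf{r}_{i_l,j_l}$ blocks (including the wraparound from block $k$ to block $1$ when $n\geq 2$), checking that the parity-path constraints exactly rule out both the $s_is_i$-deletion and the $m_{i,j}$-length braid relation — this is precisely where removing the label-$2$ hypothesis would allow $\mathbf{r}_{i_l,j_l}$ to collapse and the argument to fail. All other steps are direct consequences of the formulas already established in Section \ref{Sect=RieszEtc} and the framework of Section \ref{subsection:describing-support-of-gamma}.
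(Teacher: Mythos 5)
Your proposal is correct and takes essentially the same route as the paper: construct $\mathbf{v}_n = (\mathbf{r}_{i_1,j_1}\cdots\mathbf{r}_{i_k,j_k})^n$ from the cyclic parity path, set $u = c_{i_1,j_1}=d_{i_k,j_k}$, telescope to get $u\mathbf{v}_n = \mathbf{v}_n u$, and use the $M$-reducedness criterion of Davis (Tits) together with the no-label-$2$ hypothesis to verify the expressions are reduced, whence $\supp(\gamma^{\psi_S}_{u,u})$ is infinite. One small inaccuracy worth noting: you cite Lemma~\ref{lemma:reduction-gradient-Sp-to-generators} as forcing the failure of gradient-$\mathcal{S}_p$, but that lemma runs in the opposite direction; showing a single $\Psi^{\lambda_u,\lambda_u}\notin\mathcal{S}_p$ already violates Definition~\ref{Dfn=GradientSp} directly, with no auxiliary lemma needed.
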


	\begin{theorem}\label{theorem:no-parity-path-implies-gradient-Sp}
	Let $W=\langle S|M\rangle$ be a Coxeter group. If there does not exist a cyclic parity path in $\Graph_S(W)$ then the semi-group $(\Phi_t)_{t\geq 0}$ associated to the word length $\psi_{S}$ is gradient-$\mathcal{S}_p$ for all $p\in[1,\infty]$.
	\end{theorem}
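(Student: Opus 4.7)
The plan is to show, under the hypothesis, that the function $\gamma_{u,w}^{\psi_S}$ has finite support for every pair of generators $u, w \in S$. By Lemma \ref{lemma:reduction-to-set-generating-the-group2} (applied to the $\mathbb{Z}_{\geq 0}$-valued length function $\psi_S$), this immediately implies that the corresponding $\Psi^{\lambda_u, \lambda_w}$ is of finite rank, and hence by Lemma \ref{lemma:reduction-gradient-Sp-to-generators} that $(\Phi_t)_{t\geq 0}$ is gradient-$\mathcal{S}_p$ for every $p \in [1, \infty]$.

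First, I would apply Proposition \ref{proposition:support-gamma}: every element of $\supp(\gamma_{u,w}^{\psi_S})$ lies in the four-element set $\{\vv, u\vv, \vv w, u\vv w\}$ as $\vv$ ranges over the words described in Lemma \ref{lemma:word-sequence-expression}, subject additionally to $u = c_{i_1,j_1}$ and $w = d_{i_k,j_k}$. Each such $\vv = \rr_{i_1,j_1}\cdots \rr_{i_k,j_k}$ corresponds, by Definition \ref{Dfn=Parity}, to a parity path $P=(s_{j_1},s_{i_1},\ldots,s_{j_k},s_{i_k})$ in $\Graph_S(W)$ with prescribed starting vertex $s_{j_1}$ (determined by $u$) and prescribed endpoint data $d_{i_k,j_k}=w$. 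It therefore suffices to prove that the total number of parity paths in $\Graph_S(W)$ is finite.

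To leverage the no-cyclic-parity-path hypothesis, I would encode parity paths as directed walks in an auxiliary finite directed graph $\mathcal{D}$. Its vertex set consists of the ordered pairs $(s_j, s_i) \in S \times S$ with $i \neq j$ and $m_{i,j} < \infty$, of which there are at most $|S|(|S|-1)$. Draw a directed edge from $(s_j, s_i)$ to $(s_{j'}, s_{i'})$ precisely when $s_{j'} = d_{i,j}$, $i' \notin \{i,j\}$, $i' \neq j'$ and $m_{i',j'} < \infty$. Unwinding Definition \ref{Dfn=Parity}, a parity path in $\Graph_S(W)$ is exactly a directed walk in $\mathcal{D}$, and a cyclic parity path corresponds exactly to a directed cycle in $\mathcal{D}$; indeed, the cycle-closing conditions $s_{j_1} = d_{i_k,j_k}$ and $i_1 \notin \{i_k,j_k\}$ are by construction the conditions for the existence of a directed edge from $(s_{j_k}, s_{i_k})$ back to $(s_{j_1}, s_{i_1})$.

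By hypothesis $\mathcal{D}$ has no directed cycle, so it is a DAG. Hence any directed walk in $\mathcal{D}$ cannot revisit a vertex, so its length is at most $|S|(|S|-1)$, and the total number of such walks is finite. This gives the required finiteness of parity paths, hence of $\supp(\gamma_{u,w}^{\psi_S})$, completing the proof. The only step requiring genuine care is the exact matching, in both directions, between cyclic parity paths and directed cycles in $\mathcal{D}$; this is a bookkeeping verification against Definition \ref{Dfn=Parity} but must be done honestly because the closing-up conditions on $(s_{j_1}, s_{i_1})$ and $(s_{j_k}, s_{i_k})$ are precisely what allows the pigeonhole argument to produce a cyclic parity path rather than merely a repeated state.
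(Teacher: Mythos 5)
Your proposal is correct, and it is in substance the same pigeonhole argument as the paper's own proof, but repackaged more cleanly via the auxiliary digraph $\mathcal{D}$. The paper argues the contrapositive directly: it selects $\zz\in\supp(\gamma_{u,w}^{\psi_S})$ with $\psi_S(\zz)>m|S|^2+2$ (where $m$ is the largest finite label), obtains from Proposition \ref{proposition:support-gamma} and Lemma \ref{lemma:word-sequence-expression} a word $\vv$ and the parity path $P=(s_{j_1},s_{i_1},\ldots,s_{j_k},s_{i_k})$, estimates $k>|S|^2$ from $|\rr_{i,j}|\leq m$, and applies pigeonhole to the pairs $(s_{j_l},s_{i_l})$; the sub-path between two occurrences of the same pair is then a cyclic parity path. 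Your DAG reformulation absorbs exactly this pigeonhole into the standard fact that walks in an acyclic finite digraph cannot revisit vertices and hence have bounded length, and it sidesteps the explicit quantitative estimate on $\psi_S(\zz)$. Two small points worth making explicit in your verification: first, Lemma \ref{lemma:word-sequence-expression} imposes the extra constraint $b_{i_l,j_l}\notin\{s_{i_{l+1}},s_{j_{l+1}}\}$ beyond what Definition \ref{Dfn=Parity} requires, so the admissible $\vv$ correspond to a \emph{subset} of parity paths -- this is harmless for your bound but should be noted. Second, a cyclic parity path is a \emph{closed directed walk} in $\mathcal{D}$, not necessarily a simple directed cycle; since the existence of a closed walk is equivalent to the existence of a directed cycle in any finite digraph, the conclusion that no cyclic parity path forces $\mathcal{D}$ to be a DAG is still valid, but the phrase ``corresponds exactly to a directed cycle'' should be read with this in mind.
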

	\begin{proof}
		Suppose that $(\Phi_t)_{t\geq 0}$ is not gradient-$\mathcal{S}_p$ for some $p\in[1,\infty]$. We will show that a cyclic parity path exists. Namely, since the semi-group is not gradient-$\mathcal{S}_p$,
		there exist by Lemma \ref{lemma:reduction-to-set-generating-the-group2} generators $u,w\in S$ for which $\gamma_{u,w}^{\psi_{S}}$ is not finite rank. Set $m =\max\{m_{i,j}: 1\leq i,j\leq |S|\}\setminus\{\infty\}$. We can thus let $\zz\in \supp(\gamma_{u,w}^{\psi_S})$ be with $\psi_{S}(\zz)>m|S|^2+2$.
		Then by Proposition \ref{proposition:support-gamma} there is a $\vv\in \{\zz,u\zz,\zz w,u\zz w\}$ such that we can write $\vv$ in reduced form $\vv = \mathbf{r}_{i_1,j_1}....\mathbf{r}_{i_k,j_k}$ with the conditions as in Lemma \ref{lemma:word-sequence-expression}. Now define the path $P = (s_{j_1},s_{i_1},....,s_{j_k},s_{i_k})$.
		We show that this is a parity path. By the properties that we obtained from Lemma \ref{lemma:word-sequence-expression}, we have that $i_l\not=j_l$ and that $m_{i_l,j_l}<\infty$ for all $l$. Moreover $s_{j_{l+1}} = c_{i_{l+1},j_{l+1}} = d_{i_{l},j_l}$ and $s_{i_{l}} = a_{i_l,j_l}\not\in \{s_{i_{l+1}},s_{j_{l+1}}\}$. This shows that $P$ is a parity path.		
		Note furthermore that since $\psi_S(\vv)\geq \psi_S(\zz)-2> m|S|^2$, we have that $P$ has length $|P|=2k \geq 2\frac{\psi_S(\vv)}{m}>2|S|^2$. Therefore, there must exist indices $l<l'$ such that  $(s_{j_{l}},s_{i_l}) = (s_{j_{l'}},s_{i_{l'}})$. The sub-path $(s_{j_{l},s_{i_l}}, \ldots ,s_{j_{l'-1},j_{l'-1}})$ then is a cyclic parity path.\\		
	\end{proof}

	\subsection{Characterization of graphs that contain cyclic parity paths}\label{Sect=GraphsWithParity} In the previous subsection, in Theorem \ref{theorem:parity-path-implies-not-gradient-Sp} and Theorem \ref{theorem:no-parity-path-implies-gradient-Sp} we have shown that the gradient-$\calS_p$ property is almost equivalent to the non-existence of a cyclic parity path. We shall now characterize in Proposition \ref{proposition:characterization-parity-paths} precisely when a graph possesses a cyclic parity path. The content of this proposition is moreover visualized in Figure \ref{fig:graphs-with-and-without-cpp}. Thereafter we state two corollaries that follow from this proposition and from Theorem \ref{theorem:parity-path-implies-not-gradient-Sp} and Theorem \ref{theorem:no-parity-path-implies-gradient-Sp}. These corollaries give an `almost' complete characterization of the types of Coxeter systems for which the semi-group associated to $\psi_S$ is gradient-$\calS_p$.\\
	
	The following proposition shows exactly when a cyclic parity path $P$ in the graph $\Graph_S(W)$ exists. Recall that a forest is a union of trees. A graph is a tree if it has no loops/cycles.
	
\begin{figure}[!ht]
	  \textbf{Graphs with and without a cyclic parity path}\par\medskip
\centering
\subfloat[Graph with no cyclic parity path]{
	\begin{tikzpicture}[baseline]
\node[draw] at (2, 0)			(a1) {$s_1$};
\node[draw] at (1, 1.86) 	  	(a2) {$s_2$};
\node[draw] at (-1, 1.86)   	(a3) {$s_3$};
\node[draw] at (-2, 0)   		(a4) {$s_4$};
\node[draw] at (-1, -1.86)   	(a5) {$s_5$};
\node[draw] at (1, -1.86)  	 	(a6) {$s_6$};

\draw[blue,thick] (a1) -- node[midway,above,xshift=0.1cm] {\tiny $5$} (a2);
\draw[] (a1) -- node[pos = 0.15, above] {\tiny $\infty$} (a3);
\draw[] (a1) -- node[pos = 0.35, above] {\tiny $\infty$} (a4);
\draw[blue,thick] (a1) -- node[pos = 0.85,below] {\tiny $13$} (a5);
\draw[blue,thick] (a1) -- node[midway,below,xshift=0.1cm] {\tiny $3$} (a6);

\draw[orange,thick] (a2) -- node[midway,above] {\tiny $6$} (a3);
\draw[] (a2) -- node[pos = 0.15, above] {\tiny $\infty$} (a4);
\draw[] (a2) -- node[pos = 0.35, above,xshift=-0.1cm] {\tiny $\infty$} (a5);
\draw[] (a2) -- node[pos = 0.8, right] {\tiny $\infty$} (a6);

\draw[blue,thick] (a3) -- node[midway,above,xshift=-0.1cm] {\tiny $9$} (a4);
\draw[] (a3) -- node[pos = 0.2, left] {\tiny $\infty$} (a5);
\draw[] (a3) -- node[pos = 0.3, below,xshift=-0.1cm] {\tiny $\infty$} (a6);

\draw[] (a4) -- node[midway,below,xshift=-0.1cm] {\tiny $\infty$} (a5);
\draw[] (a4) -- node[pos = 0.15,below] {\tiny $\infty$} (a6);

\draw[] (a5) -- node[midway,below] {\tiny $\infty$} (a6);
\end{tikzpicture}
\label{fig:graph-without-cyclic-parity-path}}
\quad
\subfloat[Graph with a cyclic parity path]{
\begin{tikzpicture}[baseline]
\node[draw] at (2, 0)			(a1) {$s_1$};
\node[draw] at (1, 1.86) 	  	(a2) {$s_2$};
\node[draw] at (-1, 1.86)   	(a3) {$s_3$};
\node[draw] at (-2, 0)   		(a4) {$s_4$};
\node[draw] at (-1, -1.86)   	(a5) {$s_5$};
\node[draw] at (1, -1.86)  	 	(a6) {$s_6$};

\draw[blue,thick] (a1) -- node[midway,above,xshift=0.1cm] {\tiny $5$} (a2);
\draw[] (a1) -- node[pos = 0.15, above] {\tiny $\infty$} (a3);
\draw[] (a1) -- node[pos = 0.35, above] {\tiny $\infty$} (a4);
\draw[blue,thick] (a1) -- node[pos = 0.85,below] {\tiny $13$} (a5);
\draw[blue,thick] (a1) -- node[midway,below,xshift=0.1cm] {\tiny $3$} (a6);

\draw[orange,thick] (a2) -- node[midway,above] {\tiny $6$} (a3);
\draw[] (a2) -- node[pos = 0.15, above] {\tiny $\infty$} (a4);
\draw[] (a2) -- node[pos = 0.35, above,xshift=-0.1cm] {\tiny $\infty$} (a5);
\draw[] (a2) -- node[pos = 0.8, right] {\tiny $\infty$} (a6);

\draw[blue,thick] (a3) -- node[midway,above,xshift=-0.1cm] {\tiny $9$} (a4);
\draw[] (a3) -- node[pos = 0.2, left] {\tiny $\infty$} (a5);
\draw[] (a3) -- node[pos = 0.3, below,xshift=-0.1cm] {\tiny $\infty$} (a6);

\draw[orange,thick] (a4) -- node[midway,below,xshift=-0.1cm] {\tiny $4$} (a5);
\draw[] (a4) -- node[pos = 0.15,below] {\tiny $\infty$} (a6);

\draw[] (a5) -- node[midway,below] {\tiny $\infty$} (a6);
\end{tikzpicture}\label{fig:graph-with-cyclic-parity-path}}
\quad
\subfloat[Graph with a cyclic parity path]{
	\begin{tikzpicture}[baseline]
	\node[draw] at (2, 0)			(a1) {$s_1$};
	\node[draw] at (1, 1.86) 	  	(a2) {$s_2$};
	\node[draw] at (-1, 1.86)   	(a3) {$s_3$};
	\node[draw] at (-2, 0)   		(a4) {$s_4$};
	\node[draw] at (-1, -1.86)   	(a5) {$s_5$};
	\node[draw] at (1, -1.86)  	 	(a6) {$s_6$};

	\draw[blue,thick] (a1) -- node[midway,above,xshift=0.1cm] {\tiny $5$} (a2);
	\draw[] (a1) -- node[pos = 0.15, above] {\tiny $\infty$} (a3);
	\draw[] (a1) -- node[pos = 0.35, above] {\tiny $\infty$} (a4);
	\draw[blue,thick] (a1) -- node[pos = 0.85,below] {\tiny $13$} (a5);
	\draw[blue,thick] (a1) -- node[midway,below,xshift=0.1cm] {\tiny $3$} (a6);
	
	\draw[orange,thick] (a2) -- node[midway,above] {\tiny $6$} (a3);
	\draw[] (a2) -- node[pos = 0.15, above] {\tiny $\infty$} (a4);
	\draw[] (a2) -- node[pos = 0.35, above,xshift=-0.1cm] {\tiny $\infty$} (a5);
	\draw[] (a2) -- node[pos = 0.8, right] {\tiny $\infty$} (a6);
	
	\draw[blue,thick] (a3) -- node[midway,above,xshift=-0.1cm] {\tiny $9$} (a4);
	\draw[] (a3) -- node[pos = 0.2, left] {\tiny $\infty$} (a5);
	\draw[] (a3) --node[pos = 0.3, below,xshift=-0.1cm] {\tiny $\infty$} (a6);
	
	\draw[] (a4) -- node[midway,below,xshift=-0.1cm] {\tiny $\infty$} (a5);
	\draw[] (a4) -- node[pos = 0.15,below] {\tiny $\infty$} (a6);
	
	\draw[blue,thick] (a5) -- node[midway,below] {\tiny $5$} (a6);
	\end{tikzpicture}\label{fig:graph-with-cyclic-parity-path2}}

\caption{The graph $\Graph_{S}(W)$ is denoted for three different Coxeter systems $W = \langle S|M\rangle$ with $|S|=6$. In each of the graphs the label $m_{i,j}$ is shown on the edge $\{s_i,s_j\}$. We colored the edge orange when the label is even, we colored it blue when the label is odd, and we colored it black when the label is infinity. The relations we imposed on the generators are almost the same in the three cases. They only differ on the edges $\{s_4,s_5\}$ and $\{s_5,s_6\}$.
The graph in (A) satisfies the assumptions of Proposition \ref{proposition:characterization-parity-paths} and hence does not contain a cyclic parity path.
The graph in (B) does not satisfy the assumptions of the proposition as for the connected component $C=\{s_3,s_4\}$ of $(V,E_1)$ there are two distinct edges $\{s_2,s_3\}$ and $\{s_4,s_5\}$ with even label and with (at least) one endpoint in $C$. Therefore the graph contains a cyclic parity path. One is given by
$P=(s_3,s_2,s_3,s_4,s_4,s_5,s_4,s_3)$ (another cyclic parity path uses the node $s_1$).
The graph in (C) does also not satisfy the assumptions of the proposition as it contains a cycle with odd labels. Here a cyclic parity path is given by $P=(s_1,s_5,s_5,s_6,s_6,s_1)$ (another cyclic parity path is obtained by walking in reverse order).}
\label{fig:graphs-with-and-without-cpp}
\end{figure}
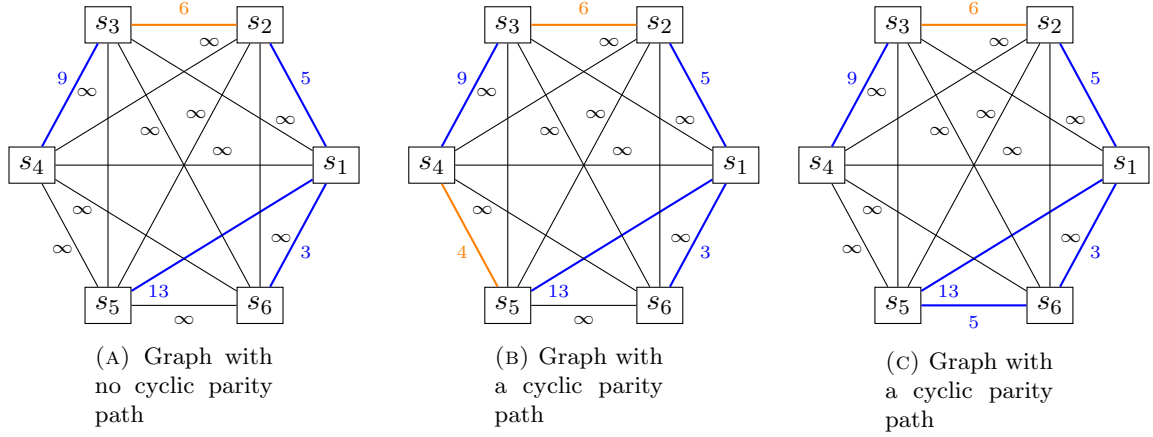

	\begin{proposition}\label{proposition:characterization-parity-paths}
		Let us denote $V=S$ and $E_0=\{\{i,j\}: m_{i,j}\in 2\NN\}$ and $E_1 =\{\{i,j\}: m_{i,j} \in 2\NN+1\}$. Then there does not exist a cyclic parity path $P$ in $\Graph_S(W)$ if and only if $(V,E_1)$ is a forest, and for every connected component $C$ of $(V,E_1)$ there is at most one edge $\{t,r\}\in E_0$ with $t\in C$ and $r\not\in C$, and for every connected component $C$ of $(V,E_1)$ there is no edge $\{t,t'\}\in E_0$ with $t,t'\in C$.
	\end{proposition}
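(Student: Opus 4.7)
The plan is to prove the biconditional by establishing each direction separately; the backward direction (conditions $\Longrightarrow$ no cyclic parity path) is the heart of the argument, while the forward direction is handled by explicit constructions. The key structural observation for both directions is that for any parity path $P = (s_{j_1}, s_{i_1}, \ldots, s_{j_k}, s_{i_k})$ the sequence of tails $s_{j_l}$ traces a walk on $V$ which advances along the current edge when $m_{i_l, j_l}$ is odd (so $s_{j_{l+1}} = s_{i_l}$) and stays put when $m_{i_l, j_l}$ is even (so $s_{j_{l+1}} = s_{j_l}$). Because odd edges are entirely contained in a single $E_1$-component and even edges do not move the tail, this walk is confined to one connected component $C$ of $(V, E_1)$. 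Under conditions (ii) and (iii) the only $E_0$-edge with an endpoint in $C$ is the unique outgoing edge $e^* = \{t, r\}$ (if any, with $t \in C$ and $r \notin C$), and any even edge of $P$ must equal $e^*$ with the tail at $t$.

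For the backward direction, assume for contradiction that a cyclic parity path exists. Condition (ii) prevents two consecutive even edges (they would require two distinct $E_0$-edges at $t$), so any two consecutive odd edges are separated by at most one even edge, and cyclically the odd edges form a closed walk $q_0, q_1, \ldots, q_m = q_0$ in the tree $C$ (a tree by (i)). Translating the parity constraints shows that whenever $q_{i-1} = q_{i+1}$ (an immediate backtrack, taken cyclically), the pivot $q_i$ must equal $t$: a backtrack between two consecutive odd edges can only be enabled by an intervening even edge, which exists only at $t$. The crucial lemma is then that a closed walk in a finite tree whose immediate backtracks all occur at a fixed vertex $t$ is trivial; the proof is by iterated leaf-removal, using that a non-$t$ leaf $v$ of $C$ cannot appear at any $q_i$ (its unique neighbour would force a backtrack at $v \neq t$), so the walk restricts to the subtree obtained by deleting all non-$t$ leaves, and iteration shrinks this subtree to $\{t\}$, forcing $m = 0$. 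The leftover case where $P$ consists only of even edges immediately contradicts (ii) or the non-repetition constraint $s_{i_{l+1}} \neq s_{i_l}$ on consecutive even edges.

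For the forward direction I would construct a cyclic parity path whenever one of the conditions fails. If (i) fails, a simple $E_1$-cycle $v_1, \ldots, v_n, v_1$ with $n \geq 3$ gives the all-odd parity path $e_l = (v_l, v_{l+1})$, the non-backtracking constraints following from simplicity. If (iii) fails with $\{u_0, u_k\} \in E_0$ and a simple $E_1$-path $u_0, \ldots, u_k$ (length $k \geq 2$, since $E_0 \cap E_1 = \emptyset$), the parity path concatenates the forward odd traversal $u_0 \to \cdots \to u_k$, the even edge $(u_k, u_0)$, the reverse odd traversal $u_k \to \cdots \to u_0$, and the closing even edge $(u_0, u_k)$. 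If (ii) fails with two distinct outgoing $E_0$-edges $\{t_1, r_1\}, \{t_2, r_2\}$ from a component $C$: for $t_1 = t_2$ the two edges themselves form a length-two cyclic parity path, and for $t_1 \neq t_2$ the same shape as in case (iii) works, with the two outgoing edges replacing the internal even edge along a simple $E_1$-path from $t_1$ to $t_2$. The main obstacle is the tree-lemma in the backward direction: one must verify that the leaf-removal preserves the ``backtracks only at $t$'' property through each iteration and correctly handle the edge case where $t$ itself is or becomes a leaf of $C$; once this lemma is in place, the parity-path bookkeeping in both directions is routine.
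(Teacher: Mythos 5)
Your proof is correct, and the forward direction agrees with the paper's (the same three explicit constructions for the same three failure modes: an all-odd traversal of an $E_1$-cycle, the out-and-back construction along a simple $E_1$-path with two even edges at the ends, and the length-two loop $(t_1,r_1,t_1,r_2)$ when $t_1=t_2$). The backward direction takes a genuinely different route. The paper normalizes the hypothetical cyclic parity path to start with an even edge and then extracts, from the subsequent run of odd edges until the tail first returns to its start, a closed walk in the forest $(V,E_1)$ which it argues must contain a cycle. You instead view the full cyclic sequence of odd steps as a single closed walk in the $E_1$-component $C$, use (ii) and (iii) to show that every immediate backtrack of this walk pivots at the vertex $t$ carrying the unique outgoing $E_0$-edge of $C$ (if any), and then invoke a standalone graph-theoretic lemma: a non-trivial closed walk in a finite tree, with consecutive vertices distinct and all backtracks pivoting at one fixed vertex, cannot exist. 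That lemma holds, and your leaf-removal proof works (removed vertices are never visited, so the hypothesis persists, and the process terminates at $\{t\}$); a shorter variant notes that the subtree spanned by the walk's image would have at least two leaves, each a forced backtrack pivot and hence equal to $t$, which is absurd. Both proofs ultimately bottom out at the non-existence of suitable closed walks in a forest, but your version packages this as a clean, reusable lemma and avoids the paper's normalization and sub-path extraction; in particular it handles uniformly the potential backtrack at the point where the extracted walk re-closes on itself, which in the paper's more ad hoc argument requires separate care.
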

		\begin{proof}
				First suppose that $(V,E_1)$ is not a forest. Then we can find a cycle $Q = (s_{j_1},s_{j_2}, \ldots ,s_{j_k},s_{j_1})$ in $(V,E_1)$. Now, since all edges are odd, this means that $$P = (s_{j_1},s_{j_2},s_{j_2},s_{j_3},s_{j_3},....,s_{j_{k}},s_{j_k},s_{j_1})$$ is a cyclic parity path. Indeed, if we denote $j_{k+1} := j_1$ and $j_{k+2}:=j_2$, then $j_l\not=j_{l+1}$ for $l=1,\ldots ,k$ and we have $s_{j_{l+1}} = d_{j_{l+1},j_{l}}$ and $j_{l+2}\not\in \{j_{l+1},j_l\}$, which shows all conditions hold.
			
			Now suppose that there is a connected component $C$ of $(V,E_1)$ for which there are two distinct edges $\{t_1,r_1\},\{t_2,r_2\}\in E_0$ with $t_1,t_2\in C$ and $r_1,r_2\not\in C$.
			If $t_1 = t_2$ then $r_1\not=r_2$ and a cyclic parity path is given by
			$P = (t_1,r_1,t_1,r_2)$.
			In the case that $t_1$ and $t_2$ are distinct there is a simple path $Q = (t_1,s_{j_1},...,s_{j_k},t_2)$ in $(V,E_1)$ from $t_1$ to $t_2$. The path $$P = (t_1,s_{j_1},s_{j_1},s_{j_2},s_{j_2},...,s_{j_k},s_{j_k},t_2,t_2,r_2,t_2,s_{j_k},s_{j_k},s_{j_{k-1}},s_{j_{k-1}},...,s_{j_1},s_{j_1},t_1,t_1,r_1)$$then is a cyclic parity path.
			Indeed, just as the previous case we have that
			the paths
			$$P_1:=(t_1,s_{j_1},s_{j_1},s_{j_2},s_{j_2}, \ldots , s_{j_k},s_{j_k},t_2)$$
			and
			$$P_2:=(t_2,s_{j_k},s_{j_k},s_{j_{k-1}},s_{j_{k-1}},\ldots, s_{j_1},s_{j_1}, t_1)$$
			are parity paths, since they are obtained from a simple path in $(V,E_1)$. We then only have to check that in the middle and at the start/end of the path $P$ the conditions are satisfied. For the middle, we see that indeed $r_2\not\in \{s_{j_k},t_2\}$ as the label of the edge between $t_2$ and $r_2$ is even. Furthermore, since $P_1$ is a parity path we have that $s_{j_k}\not=t_2$. Thus also $s_{j_k}\not\in \{t_2,r_2\}$. Furthermore, if we let $i,j$ be such that $t_2=s_j$, $r_2=s_i$, then since $m_{j_k,j}$ is odd, we have that $t_2 = d_{j,j_k}$ and since $m_{i,j}$ is even we have
			$t_2 = d_{i,j}$. This shows all conditions in the middle. The conditions at the start/end hold by symmetry. Thus $P$ is indeed a cyclic parity path.
			
			Now, suppose that there is a connected component $C$ of $(V,E_1)$ for which there exists an edge $\{t,t'\}\in E_0$ with $t,t'\in C$. Then we can, similar to what we just did, obtain a cyclic parity path by taking $t_1 =t$ and $t_2=t'$ and $r_1 = t'$ and $r_2 = t$.\\

			We now prove the other direction. Thus, suppose that $(V,E_1)$ is a forest and that for every connected component $C$ there is at most edge $\{t,r\}\in E_0$ with $t\in C$ and $r\in V$, and that for every connected component there is no edge $\{t,t'\}\in E_0$ with $t,t'\in C$. Suppose there exists a cyclic parity path $P=(s_{j_1},s_{i_1},...,s_{j_k},s_{i_k})$ in $(V,E_0\cup E_1)$, we show that this gives a contradiction. Namely, first suppose that $P$ only has odd edges. Then we have $s_{j_{l+1}}=d_{i_l,j_l} = s_{i_l}$ for $l=1,..,k-1$ and $s_{j_1} = d_{i_k,j_k} =s_{i_k}$, and thus $P = (s_{i_k},s_{i_1},s_{i_1},s_{i_2},s_{i_2},...,s_{i_{k-1}},s_{i_k})$. However, since also $i_{l+1}\not\in \{i_{l},j_l\} = \{i_l,i_{l-1}\}$, this means that  $Q = (s_{i_1},s_{i_2},....,s_{i_k},s_{i_1})$ is a cycle in $(V,E_1)$. But this is not possible since $(V,E_1)$ is a forest, which gives the contradiction. We thus assume that there is an index $l$ such that the label $m_{i_l,j_l}$ is even. By choosing the starting point of $P$ as $j_l$ instead of $j_1$, we can assume that $m_{i_1,j_1}$ is even. Now in that case we have $s_{j_2} = d_{i_1,j_1} = s_{j_1}$. We must moreover have $i_{2}\not\in \{i_1,j_1\}$ as $P$ is a parity path.
			Now as the edges $\{i_1,j_1\}$ and $\{i_2,j_2\}$ are thus distinct, and share an endpoint, we obtain that
			 $m_{i_{2},j_{2}}$ is odd. This means that $j_{3}=d_{i_2,j_2} = i_2\not=j_2$.
			Now the sub-path $ (s_{j_2},s_{i_{2}},\ldots, s_{j_k},s_{i_k},s_{j_1},s_{i_1})$ is also a parity path. Denote $j_{k+1}=j_1$ and $i_{k+1}=i_1$ and let $3<k'\leq k+1$ be the smallest index such that $s_{j_{k'}} = s_{j_2}$. Note that such $k'$ exists since $s_{j_{k+1}}=s_{j_1}=s_{j_2}$. Then the sub-path $P':=(s_{j_2},s_{i_2},...,s_{j_{k'}},s_{i_{k'}})$ is a parity path, and the labels $m_{i_l,j_l}$ for $l=2,\ldots,k'-1$ are odd since $s_{j_2}$ is the only vertex in its connected component in $(V,E_1)$ that is connected by an edge in $E_0$.
			Thus, just like in the previous case we have that $P' := (s_{i_{k'}},s_{i_2},s_{i_2},s_{i_3},\ldots,s_{i_{k'-1}},s_{i_{k'}})$. Now this means that the path $Q = (s_{i_{k'}},s_{i_2},s_{i_3},\ldots ,s_{i_{k'}})$ contains a cycle, which is a contradiction with the fact that $(V,E_1)$ is a forest. This proves the lemma.
		\end{proof}

We now state two corollaries that directly follow from Theorem \ref{theorem:parity-path-implies-not-gradient-Sp}, Theorem \ref{theorem:no-parity-path-implies-gradient-Sp} and Theorem \ref{proposition:characterization-parity-paths}.

\begin{corollary}\label{corollary:gradient-Sp-Coxeter-groups}
		Let $W=\langle S|M\rangle$ be a Coxeter system and fix $p\in [1,\infty]$. Let us denote $E_0=\{(i,j): m_{i,j}\in 2\NN\}$ and $E_1 =\{(i,j): m_{i,j} \in 2\NN+1\}$. Then the semi-group $(\Phi_t)_{t\geq 0}$ on $\calL(W)$ associated to the word length $\psi_{S}$ is gradient-$\calS_p$ if $(S,E_1)$ is a forest, and if for every connected component $C$ of $(S,E_1)$ there is at most one edge $\{t,r\}\in E_0$ with  $t\in C$ and $r\not\in C$ and no edge
		$\{t,t'\}\in E_0$ with $t,t'\in C$.
\end{corollary}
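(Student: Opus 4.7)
The plan is to derive this corollary as an immediate combination of two results already established in the excerpt: Proposition \ref{proposition:characterization-parity-paths} and Theorem \ref{theorem:no-parity-path-implies-gradient-Sp}. There is essentially no new content to prove; the work lies in verifying that the hypotheses of the corollary match precisely the conditions on $(V, E_0, E_1)$ appearing in the proposition, with $V = S$.

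First I would note that Proposition \ref{proposition:characterization-parity-paths} states that the non-existence of a cyclic parity path in the graph $\Graph_S(W)$ is equivalent to the conjunction of exactly three conditions: $(S, E_1)$ is a forest; each connected component $C$ of $(S, E_1)$ has at most one edge in $E_0$ with one endpoint in $C$ and the other outside $C$; and no connected component $C$ of $(S, E_1)$ contains an edge of $E_0$ with both endpoints in $C$. These are precisely the assumptions of the corollary. Hence, under the hypotheses, no cyclic parity path exists in $\Graph_S(W)$.

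Next I would invoke Theorem \ref{theorem:no-parity-path-implies-gradient-Sp}, which asserts that the absence of a cyclic parity path in $\Graph_S(W)$ implies that the QMS $(\Phi_t)_{t \geq 0}$ associated to the word length $\psi_S$ is gradient-$\calS_p$ for all $p \in [1, \infty]$. Chaining the two implications yields the corollary for the fixed $p \in [1, \infty]$.

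There is no real obstacle here; the corollary is a bookkeeping statement. The only minor care required is to observe that the notation in the corollary (using $S$ for the vertex set) agrees with the proposition's notation ($V = S$), and that the edge partitions $E_0, E_1$ are defined identically in both places. Once this matching is made explicit, the conclusion is immediate.
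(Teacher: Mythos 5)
Your proof is correct and matches the paper's intended derivation: the paper explicitly states that Corollaries \ref{corollary:gradient-Sp-Coxeter-groups} and \ref{corollary:no-labels-two-iff} follow directly from Theorems \ref{theorem:parity-path-implies-not-gradient-Sp}, \ref{theorem:no-parity-path-implies-gradient-Sp} and Proposition \ref{proposition:characterization-parity-paths}. Combining Proposition \ref{proposition:characterization-parity-paths} (which characterizes non-existence of a cyclic parity path by exactly the stated conditions) with Theorem \ref{theorem:no-parity-path-implies-gradient-Sp} is precisely what is intended.
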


\begin{corollary}\label{corollary:no-labels-two-iff}
	Let $W=\langle S|M\rangle$ be a Coxeter system satisfying $m_{i,j} \not= 2$ for all $i,j$. Fix $p\in [1,\infty]$. Let us denote $E_0=\{(i,j): m_{i,j} \in  2\NN\}$ and $E_1 =\{(i,j): m_{i,j}  \in 2\NN+1\}$. Then the semi-group $(\Phi_t)_{t\geq 0}$ on $\calL(W)$ associated to the word length $\psi_{S}$ is gradient-$\calS_p$ if and only if $(S,E_1)$ is a forest, and for every connected component $C$ of $(S,E_1)$ there is at most one edge $\{t,r\}\in E_0$ with  $t\in C$ and $r\not\in C$ and no edge
	$\{t,t'\}\in E_0$ with $t,t'\in C$.
\end{corollary}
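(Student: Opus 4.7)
The plan is to obtain this corollary as a direct consequence of three results already at hand: Corollary \ref{corollary:gradient-Sp-Coxeter-groups} (the sufficient direction), Theorem \ref{theorem:parity-path-implies-not-gradient-Sp} (parity paths obstruct gradient-$\calS_p$), and Proposition \ref{proposition:characterization-parity-paths} (the combinatorial characterization of when cyclic parity paths exist). The role of the extra hypothesis $m_{i,j}\neq 2$ is purely to close the small gap between Theorem \ref{theorem:parity-path-implies-not-gradient-Sp} and Theorem \ref{theorem:no-parity-path-implies-gradient-Sp}.

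First I would dispose of the ``if'' direction. The graph-theoretic condition stated here is verbatim the hypothesis of Corollary \ref{corollary:gradient-Sp-Coxeter-groups}, which holds for arbitrary Coxeter systems without any restriction on the values of $m_{i,j}$. So this direction follows immediately.

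For the converse I would argue by contrapositive. Assume that the graph-theoretic condition fails, i.e. either $(S,E_1)$ is not a forest, or some connected component $C$ of $(S,E_1)$ carries two distinct edges of $E_0$ with an endpoint in $C$, or some connected component $C$ contains an edge of $E_0$ whose both endpoints lie in $C$. By Proposition \ref{proposition:characterization-parity-paths}, there exists a cyclic parity path $P=(s_{j_1},s_{i_1},\ldots,s_{j_k},s_{i_k})$ in $\Graph_S(W)$. The standing assumption $m_{i,j}\neq 2$ for all $i,j$ implies that every label of $\Graph_S(W)$ lies in $(2\NN+1)\cup (2\NN_{\geq 2})\cup\{\infty\}$; in particular the labels $m_{i_l,j_l}$, $m_{i_l,i_{l+1}}$, $m_{j_l,i_{l+1}}$ appearing in the hypothesis of Theorem \ref{theorem:parity-path-implies-not-gradient-Sp} are automatically all different from $2$. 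Applying Theorem \ref{theorem:parity-path-implies-not-gradient-Sp} then yields that the semi-group $(\Phi_t)_{t\geq 0}$ associated to $\psi_S$ is not gradient-$\calS_p$ for any $p\in[1,\infty]$, contradicting the hypothesis on $p$ and completing the contrapositive.

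There is no real obstacle: the whole point of the restriction $m_{i,j}\neq 2$ is to render the side condition of Theorem \ref{theorem:parity-path-implies-not-gradient-Sp} vacuous, so that the near-characterization provided by Theorems \ref{theorem:parity-path-implies-not-gradient-Sp} and \ref{theorem:no-parity-path-implies-gradient-Sp} (combined with Proposition \ref{proposition:characterization-parity-paths}) becomes a true equivalence.
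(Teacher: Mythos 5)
Your proof is correct and is exactly the derivation the paper intends; the paper itself introduces both corollaries with the remark that they ``directly follow from'' Theorems~\ref{theorem:parity-path-implies-not-gradient-Sp} and~\ref{theorem:no-parity-path-implies-gradient-Sp} and Proposition~\ref{proposition:characterization-parity-paths}. Your observation that $m_{i,j}\neq 2$ renders the side condition of Theorem~\ref{theorem:parity-path-implies-not-gradient-Sp} vacuous is precisely the point, and the ``if'' direction is Corollary~\ref{corollary:gradient-Sp-Coxeter-groups} verbatim.
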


We would also like to point out the  following result from \cite[Example 5.1]{Brady}. It follows that  the Coxeter groups are in some cases actually equal. In such cases we have obtained the gradient-$\calS_p$ property for multiple quantum Markov semi-groups.

\begin{proposition}
	Let $W_i = \langle S_i|M_i\rangle$ be Coxeter systems for $i=1,2$ such that $\Graph_{S_i}(W)$ has no edges of even label, and such that the edges of odd label form a tree. Then if $\Graph_{S_1}(W_2)$ has the same set of labels as $\Graph_{S_2}(W_2)$ (counting multiplicities), then the Coxeter groups are equal, that is $W_1 = W_2$.
\end{proposition}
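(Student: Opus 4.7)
The plan is to apply Brady's diagram-twisting technique from \cite{Brady}. I will exhibit a ``leaf-move'' operation on the Coxeter diagram that preserves both the underlying group and the multiset of edge labels, then iterate it to transform any tree of odd labels into a canonical form (for concreteness, a linear path with edge labels sorted in increasing order). Since, by hypothesis, the diagrams $\Graph_{S_1}(W_1)$ and $\Graph_{S_2}(W_2)$ have the same multiset of labels, both will reduce to the same canonical presentation, so $W_1 \cong W_2$.

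The leaf-move will rest on the following conjugation identity. If $u, v \in S$ are joined by an edge of odd label $m = 2k+1$, then the braid relation $\underbrace{uvu\cdots}_{m} = \underbrace{vuv\cdots}_{m}$ rearranges (using that the length is odd and $u^2=v^2=e$) into $v = w u w^{-1}$ with $w = (uv)^k$. Now suppose that the tree contains a leaf $r$ attached only to $u$ by an odd edge of label $n$, and that $u$ has another odd-labeled neighbor $v$ as above. I will define $r' := w r w^{-1}$ and verify: (a) $r'^2 = 1$, trivially; (b) $(r'v)^n = w(ru)^n w^{-1} = 1$ using $v = wuw^{-1}$; and (c) for any further generator $t \in S \setminus \{r,u,v\}$, no spurious finite-order relation arises between $r'$ and $t$. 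Granted these verifications, the generating set $(S \setminus \{r\}) \cup \{r'\}$ yields a Coxeter presentation of the same group $W$, whose diagram is obtained by detaching the leaf $r$ from $u$ and reattaching it to $v$ with the same label $n$. In particular the multiset of labels is preserved.

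Iterating leaf-moves, any tree of odd labels can be reshaped into any other tree with the same multiset of labels; applying this to both $W_1$ and $W_2$ yields a common canonical presentation, proving the isomorphism. The main obstacle is verification (c): one must rule out the possibility that the new presentation defines a proper quotient of $W$ rather than $W$ itself. The cleanest route is to invoke Tits' solution to the word problem for Coxeter groups, which ensures that the natural map from the rewritten Coxeter presentation onto $W$ is an isomorphism once one has checked that it is surjective and that the rewritten diagram has the same rank; this is precisely the content of \cite[Example 5.1]{Brady}.
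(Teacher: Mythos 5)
The paper does not prove this statement; it is explicitly attributed to \cite[Example 5.1]{Brady} and cited as a known result, so your proposal is an attempt to reconstruct an argument that the authors deliberately leave to the reference. Your use of diagram twisting is indeed the mechanism behind the Brady--McCammond--M\"uhlherr--Neumann result, and your conjugation identity $v = w u w^{-1}$ with $w = (uv)^k$, $m_{u,v}=2k+1$, is correct (note this $w$ differs from the longest element $w_J = (uv)^k u$ of the dihedral subgroup, which is what the standard twist conjugates by; either choice works here since both satisfy $w u w^{-1} = v$). Item (b) is fine, and the order of $r'v$ is exactly $n$ since conjugation preserves order.

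However, there are two genuine gaps. First, your verification list omits the case $t = u$: for the new diagram to have the leaf detached from $u$ you must also show that $r'u$ has infinite order, and $u$ is explicitly excluded from the set $S\setminus\{r,u,v\}$ you consider in (c). Second, and more importantly, neither (c) nor the $t=u$ case is actually established, and appealing to ``Tits' solution to the word problem'' does not by itself show that a surjection from the abstractly presented Coxeter group $W''$ (on $(S\setminus\{r\})\cup\{r''\}$ with the twisted matrix) onto $W$ is injective. The clean way to close this is to construct the inverse explicitly: define $\psi\colon W \to W''$ on generators by $t\mapsto t$ for $t\neq r$ and $r\mapsto (w'')^{-1} r'' w''$ with $w''=(uv)^k$ computed in $W''$, verify $\psi$ respects the relations of $W$ (this uses only $w'' u (w'')^{-1}=v$ in $W''$ and $(r''v)^n=1$), and then check $\psi\circ\phi = \mathrm{id}$ and $\phi\circ\psi=\mathrm{id}$ on generators. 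Once $\phi$ is an isomorphism, the Coxeter matrix of $(W,S')$ is forced (orders of pairwise products are invariants), so (c) and the $t=u$ case become consequences rather than hypotheses. Finally, the assertion that leaf-moves connect any two trees with the same multiset of labels is used without justification; a short argument (e.g.\ move every leaf toward a fixed vertex to reach a star, which depends only on the multiset) should be supplied.
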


\subsection{Smallness at infinity} \label{Sect=SmallAtInfinity}
We recall the construction of a natural compactification and boundary associated with a finite rank Coxeter group. We base ourselves mostly on the very general construction from \cite{klisseTopologicalBoundariesConnected2020} but in the case of Coxeter groups this boundary was also considered in \cite{CapraceLecureux}, \cite{LamThomas}. In \cite{klisseTopologicalBoundariesConnected2020} then smallness at infinity was studied as well as its connection to the Gromov boundary, which generally is different from the construction below.

Let $W=\langle S|M\rangle$ be a finite rank Coxeter system and let $G$ be its Cayley graph which has vertex set $W$ and   $\ww, \vv \in W$ are connected by an edge if and only if $\ww =  \vv s$ for some $s \in S$. We see $G$ as a rooted graph with $e \in W$ the root. We say that $\ww \leq \vv$ if there exists a geodesic (=shortest path) from $e$ to $\vv$ passing through $\ww$.
An infinite geodesic path is a sequence $\alpha = (\alpha_i)_{i \in \mathbb{N}}$  such that: (1)  $\alpha_i \in W$, (2)  $\alpha_i$ and $\alpha_{i+1}$ have distance 1 in the Cayley graph,  (3) $(\alpha_i)_{i = 0, \ldots, n}$ is the shortest path (geodesic) from $\alpha_0$ to $\alpha_n$ for every $n$. For every $\ww \in W$ we have either $\ww \leq \alpha_i$ for all large enough $i$ or $\ww \not \leq \alpha_i$ for all large enough $i$. We write $\ww \leq  \alpha$ in the former case and $\ww \not \leq \alpha$ in the latter case. We define an equivalence relation $\sim$ by saying that for two infinite geodesics $\alpha$ and $\beta$ we have $\alpha \sim \beta$ if for all $\ww \in W$ both implications $\ww \leq \alpha$ $\Leftrightarrow$ $\ww \leq \beta$ hold. Let $\partial (W, S)$ be the set of infinite geodesics modulo $\sim$. Define $\overline{(W,S)} = W \cup \partial (W,S)$. We equip $\overline{(W,S)}$ with the topology generated by the subbase consisting of
\[
\mathcal{U}_\ww := \left\{  \alpha \in  \overline{(W,S)}  :  \ww  \leq  \alpha   \right\}, \qquad
\mathcal{U}_\ww^c := \left\{  \alpha \in  \overline{(W,S)}  :  \ww \not \leq  \alpha   \right\},
\]
with $\ww \in W$. Then $\overline{(W,S)}$ contains $W$ as an open dense subset and the left translation action of $W$ on $W$ extends to a continuous action on $\overline{(W,S)}$ (see \cite{klisseTopologicalBoundariesConnected2020}). This means that $\overline{(W,S)}$ is a compactification of $W$ in the sense of \cite[Definition 5.3.15]{brownAlgebrasFiniteDimensionalApproximations2008} and $\partial(W,S)$ is the boundary. We now recall the following definition from \cite[Definition 5.3.15]{brownAlgebrasFiniteDimensionalApproximations2008}.

\begin{definition}
We will say that a finite rank Coxeter system $(W,S)$ is {\it small at infinity} if the compactification  $\overline{(W,S)}$ is small at infinity.  This means that for every sequence $(x_i)_{i \in \mathbb{N}} \in W$ converging to a boundary point $z \in \partial(W,S)$ and for every $\ww \in W$ we have that $x_i \ww \rightarrow z$.
\end{definition}

The following is the main theorem of this subsection. The authors are indebted to Mario Klisse for noting the connections in this theorem as well as its proof.

	\begin{theorem}\label{theorem:reflections}
	Let $W=\langle S|M\rangle$ be a Coxeter system. Fix $p\in [1,\infty]$. The following are equivalent:
	\begin{enumerate}
		\item The QMS $(\Phi_t)_{t\geq 0}$ associated with the word length $\psi_{S}$ is gradient-$\calS_p$ on $\calL(W)$.
		\item For all $u,w\in S$ the set $\{\vv\in W: u\vv = \vv w\}$ is finite.
		\item For all $s\in S$ the set $\{\vv\in W:s\vv = \vv s\}$ is finite.
		\item The Coxeter  system $W=\langle S|M\rangle$ is small at infinity.
\end{enumerate}
	\end{theorem}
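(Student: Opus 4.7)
The plan is to prove the cyclic implications $(1) \Leftrightarrow (2) \Leftrightarrow (3) \Leftrightarrow (4)$. The first two equivalences are algebraic consequences of the material developed in Sections \ref{subsection:describing-support-of-gamma}--\ref{Sect=GraphsWithParity}, while $(3) \Leftrightarrow (4)$ is a statement about the compactification $\overline{(W,S)}$ and constitutes the main obstacle.

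For $(1) \Leftrightarrow (2)$, I would apply Lemma \ref{lemma:shifting-identity} with $\psi = \psi_S$ and $u,w \in S$: since $\psi_S(u) = \psi_S(w) = 1 \neq 0$, it identifies $\supp(\gamma_{u,w}^{\psi_S})$ with $\{\vv \in W : u \vv = \vv w\}$. Lemma \ref{lemma:reduction-gradient-Sp-to-generators}, applied to the self-adjoint generating set $\{\lambda_s : s \in S\}$, reduces the gradient-$\calS_p$ property to $\Psi^{\lambda_u, \lambda_w} \in \calS_p$ for all $u, w \in S$. Combining this with the identity $\|\Psi^{\lambda_u, \lambda_w}\|_{\calS_p} = \|\gamma_{u,w}^{\psi_S}\|_{\ell_p(W)}$ of \eqref{Eqn=PsiComp} and with the $\mathbb{Z}$-valuedness of $\psi_S$ via Lemma \ref{lemma:reduction-to-set-generating-the-group2}, the condition collapses independently of $p$ to each $\gamma_{u,w}^{\psi_S}$ having finite support, which is exactly $(2)$.

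For $(2) \Leftrightarrow (3)$, the forward implication follows by setting $u = w = s$. Conversely, if $u \vv_0 = \vv_0 w$ holds for some $\vv_0 \in W$, then any other solution $\vv$ satisfies $\vv_0^{-1} \vv \in C_W(w) := \{\zz : w \zz = \zz w\}$, so $\{\vv : u \vv = \vv w\}$ equals the coset $\vv_0 C_W(w)$, which is finite by $(3)$; if no such $\vv_0$ exists, the set is empty.

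For $(3) \Leftrightarrow (4)$, I would first handle $(4) \Rightarrow (3)$ by contradiction. If $C_W(s)$ were infinite, I would pick $\vv_i \in C_W(s)$ with $|\vv_i| \to \infty$ and, by compactness of $\overline{(W,S)}$, extract a subsequence with $\vv_i \to z \in \partial(W,S)$. Smallness at infinity yields $\vv_i s \to z$, and continuity of the left action gives $s \vv_i \to s \cdot z$; combined with $\vv_i s = s \vv_i$ this forces $s \cdot z = z$. This is ruled out by observing that for any infinite geodesic $\alpha = (\alpha_i)$ representing $z$, the Coxeter exchange property gives $|s \alpha_i| \in \{|\alpha_i| \pm 1\}$ and hence exactly one of $s \leq \alpha_i$, $s \leq s \alpha_i$ holds at each $i$. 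Passing to eventual truth values forces $s \leq \alpha \Leftrightarrow s \not\leq s \alpha$, contradicting $s \alpha \sim \alpha$.

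The converse $(3) \Rightarrow (4)$ is the technical heart of the theorem. By induction on $|\ww|$ it suffices to show that $x_i \to z \in \partial(W,S)$ implies $x_i s \to z$ for each $s \in S$. The key geometric observation is that $x_i$ and $x_i s$ differ by reflection in the wall $x_i s x_i^{-1}$, and two indices $i,j$ yield the same wall iff $x_j^{-1} x_i$ centralizes $s$. Since $C_W(s)$ is finite by $(3)$ and $|x_i| \to \infty$, each wall is achieved only finitely often, so the walls $x_i s x_i^{-1}$ eventually leave any bounded region of the Cayley graph. For a fixed $\vv \in W$ the condition $\vv \leq x_i$ only involves the sides of finitely many walls near $\vv$, and for large $i$ the elements $x_i$ and $x_i s$ agree on the sides of all such walls, yielding $x_i s \to z$. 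For the underlying wall/half-space formalism I would rely on the analysis of $\overline{(W,S)}$ in \cite{klisseTopologicalBoundariesConnected2020} and standard Coxeter group theory.

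The main obstacle is the direction $(3) \Rightarrow (4)$: one must convert the purely group-theoretic finiteness of centralizers into a uniform topological statement over the boundary $\partial(W,S)$, which requires the geometric half-space interpretation of $\overline{(W,S)}$. The remaining implications rest on the explicit shift identity of Lemma \ref{lemma:shifting-identity} and routine coset manipulations.
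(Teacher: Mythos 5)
Your proof is essentially correct, but it departs from the paper's structure in one significant respect. The equivalences $(1)\Leftrightarrow(2)$ and $(4)\Rightarrow(3)$ track the paper closely: the former combines Lemma \ref{lemma:shifting-identity}, Lemma \ref{lemma:reduction-to-set-generating-the-group2} and \eqref{Eqn=PsiComp} exactly as the paper does, and the latter is essentially the paper's proof of $(4)\Rightarrow(2)$ specialised to $u=w=s$, with a more explicit justification of the order-theoretic contradiction (the paper simply asserts $u\not\leq z$ iff $u\leq u\cdot z$, citing \cite{klisseTopologicalBoundariesConnected2020}, whereas you derive the dichotomy from the exchange condition). The genuine divergence is in how the cycle is closed. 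The paper \emph{cites} the whole equivalence $(3)\Leftrightarrow(4)$ from \cite[Theorem 0.3]{klisseTopologicalBoundariesConnected2020} as a black box, notes that $(2)\Rightarrow(3)$ is immediate, and then proves $(4)\Rightarrow(2)$ directly, so it never needs a self-contained $(3)\Rightarrow(2)$ or $(3)\Rightarrow(4)$. You instead prove $(2)\Leftrightarrow(3)$ via a short coset argument -- that $\{\vv: u\vv=\vv w\}$, when nonempty, equals $\vv_0 C_W(w)$ -- which is a cleaner and more elementary replacement for the paper's passage back through the boundary, and could perfectly well be substituted in. However, you then attempt a direct geometric proof of $(3)\Rightarrow(4)$ via walls (escape of the wall sequence $x_is x_i^{-1}$ forced by finiteness of $C_W(s)$, the characterization of $\vv\leq\ww$ by separating walls, and the fact that $x_i$ and $x_is$ agree on all sides but the wall $x_i s x_i^{-1}$). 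That sketch is conceptually sound, but it is essentially reproducing the proof of Klisse's Theorem 0.3 rather than routing around it; the paper's decision to cite Klisse is the more economical option, and by your own admission the wall/half-space formalism you would invoke lives in exactly that reference.
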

	\begin{proof}
		 (1) is equivalent to saying that for all $u,v \in S$ we have that $\gamma_{u,v}^{\psi_S}$ has compact support by Lemma \ref{lemma:reduction-to-set-generating-the-group2}. By Lemma \ref{lemma:shifting-identity} this is equivalent to (2).
		The equivalence between (3) and (4) was proven in \cite[Theorem 0.3]{klisseTopologicalBoundariesConnected2020}.
		The implication $(2)\implies (3)$ is immediate.


Now assume (4). We shall prove that (2) holds by contradiction.  		
So suppose that  $\#\{ \vv :  u \vv= \vv w\} = \infty$ for some $u,w\in S$.
Choose a sequence $(\vv_i)_i$ in $\{\vv :  u \vv= \vv w\}$
		which has increasing word length. By the compactness of the compactification $\overline{(W,S)}$ \cite[Proposition 2.8]{klisseTopologicalBoundariesConnected2020} this implies that (by possibly going over to a subsequence) the sequence $(\vv_i)_i$ converges to a boundary point $z$. Now, by the smallness at infinity and the assumption that $u \vv_i = \vv_i w$   we have that 		$z = \lim_i \vv_i w = \lim_i u \vv_i = u \cdot z$. 	We have either $u \leq z$ or $u \not \leq z$ but not both  in the partial order from \cite[Lemma 2.2]{klisseTopologicalBoundariesConnected2020}. Further, $u \not \leq z$ iff $u \leq u \cdot z = z$ which yields a contradiction.
	\end{proof}

\begin{remark}\label{Rmk=SmallAtInftyHyperbolic}
We refer to \cite[Theorem 0.3]{klisseTopologicalBoundariesConnected2020} for yet another statement that is equivalent to the statements in Theorem \ref{theorem:reflections}. A consequence of   \cite[Theorem 0.3]{klisseTopologicalBoundariesConnected2020} is that Coxeter groups that are small at infinity are word hyperbolic. Conversely, not every word hyperbolic Coxeter group is small at infinity.   The simplest example is probably the Coxeter group generated by $S = \{ s_1, s_2, s_3, s_4\}$ where $m_{i,j} = 2$ if $\vert i - j \vert = 1$ and $m_{i,j} = \infty$ otherwise. We thus see that not for every hyperbolic Coxeter group we have the  gradient-$\cS_p$ property for the QMS associated with the word length. However, in Section \ref{Sect=SolidTensor} we show that using tensoring we may still use our methods for such Coxeter groups.
\end{remark}

\begin{remark}\label{Rmk=StrongSolidityReturns}
It is known that every discrete hyperbolic group is strongly solid by combining results in \cite{HigsonGuentner} (to get AO$^+$ using amenable actions on the Gromov boundary), \cite{OzawaHyperbolic} (for weak amenability, see \cite{fendlerWeakAmenabilityCoxeter2002}, \cite{Janus} for general Coxeter groups) and \cite{PopaVaesCrelle} (for Theorem \ref{Thm=AOimpliesStronglySolid}). Condition AO$^+$ may also be obtained by Theorem \ref{Thm:AO} for the Coxeter groups that admit a QMS with gradient-$\calS_p$. However, Remark \ref{Rmk=SmallAtInftyHyperbolic} shows that this covers a smaller class than   \cite{HigsonGuentner} and so our methods -- for now at least -- do not improve on existing methods concerning strong solidity questions.

There are still two large benefits of the results in this section. Firstly, given a Coxeter system  $W=\langle S|M\rangle$ it is not directly clear whether it is small at infinity. A combination of Theorem \ref{theorem:reflections} and Corollaries \ref{corollary:gradient-Sp-Coxeter-groups} and \ref{corollary:no-labels-two-iff} gives in many cases an easy way to see whether a Coxeter group is small at infinity. Secondly, for now we may not improve on current strong solidity results but in Section \ref{Sect=SolidTensor} we show that using the tensor methods of Section  \ref{Sect=Coefficients} we may prove strong solidity for all hyperbolic right-angled Coxeter groups. This gives an alternative path to the method of \cite{HigsonGuentner} (still not outweighing known results). In Section \ref{Sect=HeckeStronglySolid} this alternative path also gives strong solidity results for Hecke von Neumann algebras. Here we really improve on existing results as the methods of \cite{HigsonGuentner} can only be applied in a limited way, see \cite[Theorem 3.15 and  Corollary 3.17]{klisseTopologicalBoundariesConnected2020}.
\end{remark}

\begin{remark}
 By Theorem \ref{theorem:reflections} (see \cite[Theorem 0.3]{klisseTopologicalBoundariesConnected2020}) smallness at infinity or gradient-$\cS_p$ can be characterized in terms of the finiteness of the centralizers of the generators. Such centralizers can be analyzed using the methods from \cite{Allcock}, \cite{Brink}.
\end{remark}

\section{Gradient-$\mathcal{S}_p$ semi-groups associated to weighted word lengths on Coxeter groups}\label{Sect=Weights}
\label{section:semi-groups-weighted-word-length}
In this section we will consider proper length functions  on Coxeter groups  that are conditionally of negative type and are different from the standard word length. We can then consider the quantum Markov semi-groups associated to these other functions, and study the gradient-$\calS_p$ property of these semi-groups. We show that  these other semi-groups may have the gradient-$\calS_p$ properties in cases where the semi-group associated to the word length $\psi_S$ fails to be gradient-$\calS_p$. For $p\in [1,\infty]$ this gives us new examples of Coxeter groups $W$ for which there exist a gradient-$\calS_p$ quantum Markov semi-group on $\calL(W)$.
These results will turn out to be crucial in Section \ref{Sect=SolidTensor}.

 \subsection{Weighted word lengths}\label{subsection:weighted-word-length-conditionally-negative}
For   non-negative weights $\xx=(x_1,...x_{|S|})$ we consider, if existent, the function $\psi
_{\xx}: W \rightarrow \mathbb{R}$  by taking the word length with respect to the weights $\xx$ on the generators (see below). These functions are conditionally of negative definite type as follows for instance as a special case of \cite[Theorem 1.1]{BozejkoSpeicherMathAnn}. Here we give another purely group theoretical proof.

Fix again a (finite rank) Coxeter group $W = \langle S | M \rangle$. Recall that the graph  $\Graph_S(W)$ was defined in Definition \ref{Dfn=Graph}.
Let $\Graph_S'(W)$ be the subgraph of $\Graph_S(W)$ that has vertex set $S$ and edge set  $E=\{(s_i,s_j): 3\leq m_{i,j} := m_{s_i,s_j} <\infty\}$. Then let  $\mathcal{C}_i$ be the connected component in $\Graph_S'(W)$ that contains $s_i$.

\begin{lemma}\label{lemma:weighted-word-lenght-conditionally-negative}
	Let $W=\langle S|M\rangle$ be a Coxeter group. Then if $\xx\in [0,\infty)^{|S|}$ is such that $x_i = x_j$ whenever $\mathcal{C}_i=\mathcal{C}_j$, then the function
\[
\psi_{\xx}:W\to [0,\infty),
\]
 given for a word $\ww = w_1....w_k$ in reduced expression by $\psi_{\xx}(\ww) = \sum_{i=1}^{|S|}x_i|\{l:w_l=s_i\}|$ is well-defined and is conditionally of negative type.
	\begin{proof}
		Let $\nn=(n_1,...,n_{|S|})\in \NN^{|S|}$ be such that $n_i=n_j$ whenever $\mathcal{C}_i=\mathcal{C}_j$. We will construct a new Coxeter group $\widetilde{W}_{\nn} = \langle S_{\nn}| M_{\nn}\rangle$ as follows. We denote
		$S_{\nn} = \{s_{i,k}: 1\leq i\leq |S|, 1\leq k\leq n_i\}$ for the set of letters. We then define $M_{\nn}:S_{\nn}\to \NN\cup\{\infty\}$ as:\\
		\[
m_{\nn,  s_{i,k},s_{j,l} } = \begin{cases}
		m_{  s_i,s_j } & \mathcal{C}_i = \mathcal{C}_j \text{ and }k=l\\
		2 & \mathcal{C}_i=\mathcal{C}_j \text{ and } k\not=l\\
		m_{s_i,s_j} & \mathcal{C}_i\not=\mathcal{C}_j
		\end{cases}.
\]
		We set the Coxeter group $\widetilde{W}_{\nn} = \langle S_{\nn}| M_{\nn}\rangle$.   We now define a homomorphism $\varphi_{\nn}:W\to \widetilde{W}_{\nn}$ given for generators by $\varphi_{\nn}(s_i) = s_{i,1}s_{i,2}....s_{i,n_i}$. We note that
		$\varphi_{\nn}(s_i)^2 = s_{i,1}...s_{i,n_i}s_{i,1}...s_{i,n_i} = s_{i,1}^2...s_{i,n_i}^2 = e$.
		Furthermore, when $\mathcal{C}_i=\mathcal{C}_j$ we have that $n_i=n_j$ and
		$(\varphi_{\nn}(s_i)\varphi_{\nn}(s_j))^{m} = (s_{i,1}....s_{i,n_i}s_{j,1}....s_{j,n_j})^{m} = (s_{i,1}s_{j,1})^m(s_{i,2},s_{j,2})^m....(s_{i,n_i}s_{j,n_j})^m$. This means that in this case $(\varphi_{\nn}(s_i)\varphi_{\nn}(s_j))^{m_{s_{i},s_j} }=e$. If $\mathcal{C}_i\not=\mathcal{C}_j$ then either $m_{s_i,s_j} =2$ or $m_{s_i,s_j} =\infty$. If $m_{s_i,s_j} =2$ then also $\varphi_{\nn}(s_i)\varphi_{\nn}(s_j) = s_{i,1}...s_{i,n_i}s_{j,1}...s_{j,n_j} = s_{j,1}...s_{j,n_j}s_{i,1}...s_{i,n_i} = \varphi_{\nn}(s_i)\varphi_{\nn}(s_j)$ holds. Therefore, we can extend $\varphi_{\nn}$ to words $\ww=w_1....w_k\in W$ by defining $\varphi_{\nn}(\ww)=\varphi_{\nn}(w_1)...\varphi_{\nn}(w_k)$. By what we just showed, this map is well-defined. Furthermore, from the definition it follows that this map is a homomorphism. Moreover, we note that if $\ww=w_1...w_k\in W$ is a reduced expression, then $\varphi_{\nn}(\ww)= \varphi_{\nn}(w_1)...\varphi_{\nn}(w_k)$ is also a reduced expression. This means in particular that $\varphi_{\nn}$ is injective. Furthermore, if we denote $\widetilde{\psi}_{\nn}$ for the word length on $\widetilde{W}_{\nn}$, then we have that for a word $\ww=w_1....w_k\in W$ written in a reduced expression that
		\[
		\widetilde{\psi}_{\nn}\circ \varphi_{\nn}(\ww)  = \sum_{l=1}^k\widetilde{\psi}_{\nn}(\varphi_{\nn}(w_l))
		 =\sum_{i=1}^{|S|}\widetilde{\psi}_{\nn}(\varphi_{\nn}(s_i))|\{l:w_l=s_i\}|
		 =\sum_{i=1}^{|S|}n_i|\{l:w_l=s_i\}|.
		\]

	Now fix $\xx\in [0,\infty)^{|S|}$ with $x_i=x_j$ whenever $\mathcal{C}_i=\mathcal{C}_j$. For $m\in\NN$ define $\nn_m\in \NN^{|S|}$ by $(\nn_m)_i = \ceil{m x_i}+1\in\NN$.
Now, for $\ww\in W$ with reduced expression $\ww =w_1...w_k$ we have
\[
\begin{split}
 & \left|\frac{1}{m}\widetilde{\psi}_{\nn_m}\circ \varphi_{\nn_m}(\ww)- \sum_{i=1}^{|S|}x_i|\{l:w_l=s_i\}|\right|
 \leq \sum_{i=1}^{|S|}|\frac{(\nn_m)_i}{m}-x_i|\cdot|\{l:w_l=s_i\}| \\
= & \sum_{i=1}^{|S|}\frac{|\ceil{mx_i} +1 -mx_i|}{m}|\{l:w_l=s_i\}|
 \leq \sum_{i=1}^{|S|}\frac{2}{m}|\{l:w_l=s_i\}|
 \leq \frac{2|\ww|}{m},
\end{split}
\]
 and hence $\frac{1}{m}\widetilde{\psi}_{\nn_m}\circ \varphi_{\nn_m}(\ww)\to \sum_{i=1}^{|S|}x_i|\{l:w_l=s_i\}|$ as $m\to \infty$. This shows in particular that $\psi_{\xx}$ is well defined. Now, since $\frac{1}{m}\widetilde{\psi}_{\nn_m}\circ \varphi_{\nn_m}\to \psi_{\xx}$ point-wise. Since $\frac{1}{m}\widetilde{\psi}_{\nn_m}\circ \varphi_{\nn_m}$ is conditionally of negative type we have by \cite[Proposition C.2.4(ii)]{bekkaKazhdanProperty2008} that $\psi_{\xx}$ is conditionally of negative type.
	\end{proof}
\end{lemma}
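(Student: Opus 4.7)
The plan has two parts: first verify that $\psi_\xx$ is well-defined as a function on $W$, then realize it as a pointwise limit of conditionally-negative-type functions.

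For well-definedness, I would invoke Tits' theorem that any two reduced expressions for a given $\ww \in W$ are related by a sequence of braid moves $(s_is_j)^{k}s_i \leftrightarrow s_j(s_is_j)^{k}$ when $m_{i,j}=2k+1$, the even-label analogue $(s_is_j)^{k} \leftrightarrow (s_js_i)^{k}$ when $m_{i,j}=2k$, and pure commutations when $m_{i,j}=2$. The even-label and commutation moves preserve the multiplicity of each generator, while an odd-label braid move changes the counts of $s_i$ and $s_j$ by $\pm 1$ each; but such a move only exists when $3 \leq m_{i,j} < \infty$, so $s_i$ and $s_j$ lie in the same connected component of $\Graph_S'(W)$, whence $x_i = x_j$ by hypothesis and the weighted sum $\sum_i x_i |\{l : w_l = s_i\}|$ is invariant.

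For the main claim, the strategy is to approximate $\psi_\xx$ by functions of the form $\frac{1}{m}\widetilde\psi_{\nn_m}\circ\varphi_{\nn_m}$, where $\widetilde\psi_{\nn_m}$ is the standard word length on an auxiliary Coxeter group $\widetilde W_{\nn_m}$ constructed by replacing each generator $s_i$ with $n_i$ mutually commuting copies $s_{i,1},\ldots,s_{i,n_i}$ (with cross-relations inherited from $W$), and $\varphi_{\nn_m} : W \to \widetilde W_{\nn_m}$ is the homomorphism sending $s_i \mapsto s_{i,1}\cdots s_{i,n_i}$. Taking $\nn_m$ to be an integer tuple approximating $m\xx$, adjusted to be constant on each connected component of $\Graph_S'(W)$, each approximant will be conditionally of negative type: it is the pullback along a group homomorphism of the word length on a Coxeter group, which is itself conditionally of negative type by Bo\.zejko. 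The conclusion then follows from closure of this class under pointwise limits, as in Proposition C.2.4 of \cite{bekkaKazhdanProperty2008}.

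The main technical obstacle is showing that $\varphi_\nn$ is a well-defined homomorphism and that it is length-compatible in the sense $\widetilde\psi_\nn \circ \varphi_\nn = \psi_\nn$. Well-definedness amounts to checking $(\varphi_\nn(s_i)\varphi_\nn(s_j))^{m_{i,j}} = e$ in $\widetilde W_\nn$. When $\mathcal{C}_i = \mathcal{C}_j$ (so $n_i = n_j$), commuting internal copies past each other reduces this product to $\prod_{k=1}^{n_i}(s_{i,k}s_{j,k})^{m_{i,j}}$, which is trivial by the defining relations of $\widetilde W_\nn$. When $\mathcal{C}_i \neq \mathcal{C}_j$ one has $m_{i,j} \in \{2,\infty\}$, and the finite case is immediate from cross-commutativity. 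Length compatibility will follow by checking that the concatenation $\varphi_\nn(w_1)\cdots\varphi_\nn(w_k)$ of images of a reduced expression for $\ww$ remains reduced in $\widetilde W_\nn$. With these ingredients in hand, choosing $(\nn_m)_i = \lceil m x_i \rceil + 1$ and letting $m \to \infty$ will yield the desired pointwise convergence and complete the argument.
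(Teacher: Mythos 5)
Your proposal follows essentially the same route as the paper: construct the auxiliary Coxeter group $\widetilde W_\nn$ with commuting copies of each generator, check that $\varphi_\nn$ is a well-defined length-compatible homomorphism, and conclude by taking pointwise limits of the pulled-back word lengths $\frac{1}{m}\widetilde\psi_{\nn_m}\circ\varphi_{\nn_m}$, using Bo\.zejko's theorem and closure under pointwise limits. The only (cosmetic) difference is that you give a separate direct argument for well-definedness of $\psi_\xx$ via Matsumoto/Tits moves, whereas the paper extracts well-definedness as a byproduct of the pointwise convergence; both are fine, and the core of the argument is identical.
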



\begin{remark}\label{remark:weights-right-angled-Coxeter-group}
	By Lemma \ref{lemma:weighted-word-lenght-conditionally-negative} in the case of a right-angled Coxeter group $W=\langle S|M\rangle$ we have that every weight $\xx \in [0,\infty)^{|S|}$ defines a function that is  conditionally  of negative type.
\end{remark}
	
\begin{remark}For a general Coxeter group $W=\langle S|M\rangle$ and arbitrary non-negative weights $\xx\in [0,\infty)^{|S|}$ the weighted word length is not well-defined. Indeed, if $s_i,s_j\in S$ are such that $m_{s_i,s_j}$ is odd, then for $k_{i,j}:= \floor{\frac{1}{2}  m_{s_i,s_j} }$ we have that
$(s_is_j)^{k_{i,j}}s_i$ and $s_j(s_is_j)^{k_{i,j}}$ are two reduced expressions for the same word, but the values of $|\{l: w_l=s_i\}|$ and $|\{l:w_l=s_j\}|$ depend on the choice of the reduced expressions.
\end{remark}
We shall now turn to examine when a weighted word length is proper.
Fix again a Coxeter system $W=\langle S|M\rangle$. Let $\mathcal{I} \subseteq S$ be a subset of the generators such that for $i=1,\ldots,|S|$ either $\mathcal{C}_i\subseteq \mathcal{I}$ or $\mathcal{C}_i\cap \mathcal{I}=\emptyset$. We set
\[
\psi_{\mathcal{I}} = \psi_{\xx}, \qquad \textrm{ with } \xx\in [0,\infty)^{|S|} \textrm{ defined by } \xx(i)=\chi_{\mathcal{I}}(i),
\]
where $\chi_{\mathcal{I}}$ is the indicator function on $\mathcal{I}$. Then by Lemma \ref{lemma:weighted-word-lenght-conditionally-negative} we have that $\psi_{\mathcal{I}}: W \rightarrow \mathbb{R}$  is a well-defined function that is conditionally of negative type.
 We give the following characterization on when the function $\psi_{\calI}$ is  proper.

\begin{proposition}\label{Prop=ProperWeight}
	The function $\psi_{\mathcal{I}}$ is proper if and only if the elements $S\setminus \mathcal{I}$ generate a finite subgroup.
	\begin{proof}
		Indeed, if the generated group $H$ is infinite, then $\psi_{\calI}$ is not proper as $\psi_{\calI}|_{H} =0$. On the other hand, if the generated group $H$ contains $N<\infty$ elements, then for a reduced expression $\ww = w_1....w_k\in W$ we can not have that $w_l,w_{l+1},...w_{l+N}\in S\setminus \mathcal{I}$ for some $1\leq l\leq k-N$ as the expressions $w_l, w_lw_{l+1},w_lw_{l+1}w_{l+2},..$ would all be distinct elements in $H$.  This thus implies that $\psi_{\mathcal{I}}(\ww)> \frac{|\ww|}{N+1}-1$ which shows that $\psi_{\mathcal{I}}$ is proper in this case.
	\end{proof}
\end{proposition}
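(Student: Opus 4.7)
The plan is to prove the two directions separately, relying on the standard fact from Coxeter theory that if $T \subseteq S$ and $\ww$ belongs to the parabolic subgroup $\langle T \rangle$, then every reduced expression of $\ww$ uses only letters from $T$ (a consequence of the exchange condition, see e.g. \cite[Section 4.1]{davisGeometryTopologyCoxeter2008}). By Lemma \ref{lemma:weighted-word-lenght-conditionally-negative} the quantity $\psi_{\mathcal{I}}(\ww)$ does not depend on the chosen reduced expression, so the arguments will be purely combinatorial.

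For the ``only if'' direction, I would assume $H := \langle S \setminus \mathcal{I}\rangle$ is infinite. For any $\ww \in H$, the parabolic fact applied with $T = S \setminus \mathcal{I}$ gives that every reduced expression of $\ww$ contains only letters in $S \setminus \mathcal{I}$, hence $\psi_{\mathcal{I}}(\ww) = 0$. Consequently $\psi_{\mathcal{I}}^{-1}(\{0\}) \supseteq H$ is infinite, so $\psi_{\mathcal{I}}$ fails to be proper.

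For the ``if'' direction, set $N := |H| < \infty$ and fix a reduced expression $\ww = w_1 \cdots w_k$. The key observation is that no block of $N+1$ consecutive letters $w_l, w_{l+1}, \ldots, w_{l+N}$ can all lie in $S \setminus \mathcal{I}$: if they did, the $N+2$ prefixes $e, w_l, w_l w_{l+1}, \ldots, w_l \cdots w_{l+N}$ would all lie in $H$, and since every subword of a reduced expression is itself reduced, these prefixes have strictly increasing ordinary word length and are thus pairwise distinct — contradicting $|H| = N$. Hence at least one of every $N+1$ consecutive letters lies in $\mathcal{I}$, yielding $|\ww| \leq \psi_{\mathcal{I}}(\ww)(N+1) + N$. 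Since $W$ is finitely generated, each bounded level set of the ordinary word length is finite, so $\psi_{\mathcal{I}}^{-1}([0,M])$ is finite for every $M$ and $\psi_{\mathcal{I}}$ is proper.

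The only real hurdle will be invoking the parabolic-reduced-expression fact cleanly in the ``only if'' direction; once that is in hand, both halves reduce to short counting arguments.
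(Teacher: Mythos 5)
Your proof is correct and follows essentially the same route as the paper: the ``only if'' direction observes $\psi_{\mathcal{I}}$ vanishes on $H = \langle S\setminus\mathcal{I}\rangle$ (you spell out the standard-parabolic reduced-expression fact, which the paper leaves implicit), and the ``if'' direction is the same pigeonhole argument on blocks of $N+1$ consecutive letters of a reduced word. The slight bookkeeping differences ($N+2$ prefixes including $e$ versus the paper's $N+1$, and the bound $|\ww|\leq\psi_{\mathcal{I}}(\ww)(N+1)+N$ versus $\psi_{\mathcal{I}}(\ww)>|\ww|/(N+1)-1$) are cosmetic.
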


\subsection{Gradient-$\mathcal{S}_p$ property with respect to weighted word lengths on right-angled Coxeter groups}
\label{subsection:weighted-word-length-gradient-Sp}

In this subsection we shall consider a right-angled Coxeter group $W=\langle S|M\rangle$. Recall that right-angled means that $m_{s,t} \in \{ 2, \infty\}$ for all $s,t \in S, s \not = t$ so either $s,t$ are free or they commute.  By Remark \ref{remark:weights-right-angled-Coxeter-group} it follows that for any $\xx\in [0,\infty)^{|S|}$ we have that $\psi_{\xx}: W \rightarrow \mathbb{R}$ is well-defined and conditionally of negative definite type.   We note also that   $\psi_{\xx}(\ww)=\psi_{\xx}(w_1) + .. + \psi_{\xx}(w_k)$ when $\ww = w_1...w_k$ is a reduced expression. Therefore by Lemma \ref{lemma:shifting-identity} we have that $\gamma_{u,w}^{\psi_{\xx}}(\vv) \not=0$ for $u,w\in S$ and $\vv\in W$ if and only if $u\vv =\vv w$ and $\psi_{\xx}(u)>0$.

\begin{theorem}\label{lemma:gradient-Sp-weighted-word-length-right-angled-Coxeter-group}
	Let $W=\langle S|M\rangle$ be a right-angled Coxeter group. Let $\xx\in [0,\infty)^{|S|}$ and $p\in [1,\infty]$. Suppose the function $\psi_{\xx}$ is proper. Then, the semi-group $(\Phi_t)_{t\geq 0}$ induced by $\psi_{\xx}$ is gradient-$\mathcal{S}_p$ if and only if there do not exist generators $r,s,t\in S$ with $m_{r,s} = m_{r,t} =2, m_{s,t} = \infty$ and $\psi_{\xx}(r)>0$.
	\begin{proof}
		Suppose that $(\Phi_t)_{t\geq 0}$ is not gradient-$\mathcal{S}_p$ for some $p\in[1,\infty]$. We will show the generators with the given properties exist. Namely, there are generators $u,w$ for which $\gamma_{u,w}^{\psi_{\xx}}$ is not finite rank.  We can thus let $\mathbf{v}\in W$ with $|\vv|>|S|+1$ be such that $\gamma_{u,w}^{\psi_{\xx}}(\vv)\not=0$.
		Then $u\vv = \vv w$ and $\psi_{\xx}(u),\psi_{\xx}(w)>0$ by Lemma \ref{lemma:shifting-identity}. We note moreover that  by \cite[Lemma 3.3.3]{davisGeometryTopologyCoxeter2008} we have that $u=w$ because these elements are conjugate and the Coxeter group is right-angled.
		We can now let $\zz \in \{\vv,u\vv,\vv w,u\vv w\}$ be such that $|\zz|\leq |u\zz|,|\zz w|$. Then the equality $u\mathbf{z}=\mathbf{z}w$ also holds. Therefore, we can write $\zz$ in reduced form $\zz = \mathbf{r}_{i_1,j_1}....\mathbf{r}_{i_k,j_k}$ with the conditions as in Lemma \ref{lemma:word-sequence-expression}.
		Now, as $m_{ i_l, j_l } := m_{ s_{i_l},s_{j_l} }    <\infty$ we must have $ m_{ s_{i_l},s_{j_l} }=2$ for $l=1,...,k$. Hence $\zz = s_{i_1}s_{i_2}...s_{i_k}$. Furthermore $s_{j_{l+1}}=s_{j_l}$ for $l=1,..,k-1$ since $m_{ s_{i_l},s_{j_l} }$ is even. We define $r= s_{j_1}$. Then $r = c_{i_1,j_1} = u$ so that $\psi_{\xx}(r)>0$. Furthermore, since $k=|\zz|\geq |\vv|-1>|S|$ there exist indices $l<l'$ such that $m_{s_{i_l},s_{i_{l'}}} = \infty$. We then set $s = s_{i_l}$ and $t = s_{i_{l'}}$.
		Then $m_{s,r } =m_{s_{i_l},s_{j_l}}=2$ and likewise $m_{t,r} =2$. This shows that all stated properties hold for $r,s,t$.
		
		For the other direction, suppose that there exist $r,s,t\in S$ with $m_{r,s} = m_{r,t} =2$ and $m_{s,t} = \infty$ and $\psi_{\xx}(r)>0$. Define the words $\vv_n = (st)^n$. Then we have $|\vv_n|=2n$ and hence $\{\vv_n\}_{n\geq 1}$ are all distinct. Moreover, we have $r\vv_n = \vv_nr$ and $\psi_{\xx}(r)>0$. This means by Lemma \ref{lemma:shifting-identity} that $\gamma_{r,r}^{\psi_{\xx}}(\vv_n)=\psi_{\xx}(r)>0$ for $n\geq 1$.
		Thus the semi-group $(\Phi_t)_{t\geq 0}$ is not gradient-$\mathcal{S}_p$.
	\end{proof}
\end{theorem}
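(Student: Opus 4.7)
The plan is to reduce to the generators via Lemma \ref{lemma:reduction-gradient-Sp-to-generators} and then exploit the additive formula $\psi_\xx(\ww) = \psi_\xx(w_1) + \cdots + \psi_\xx(w_k)$ (valid for any reduced expression since $W$ is right-angled) so that Lemma \ref{lemma:shifting-identity} applies and gives $|\gamma^{\psi_\xx}_{u,w}(\vv)| = 2\psi_\xx(u)\mathds{1}(u\vv=\vv w)$. By the computation in \eqref{Eqn=FiniteRankish}, $\Psi^{\lambda_u,\lambda_w}$ is diagonal on the basis $\{\lambda_\vv\}$ with these values as singular values; hence it lies in $\calS_p$ for some (equivalently every) $p$ if and only if either $\psi_\xx(u)=0$ or $\{\vv : u\vv=\vv w\}$ is finite. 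So the gradient-$\calS_p$ property becomes the combinatorial statement: for every $u,w \in S$ with $\psi_\xx(u)>0$, the conjugation-equation set is finite.

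For the easy direction, I would assume that a triple $r,s,t$ with $m_{r,s}=m_{r,t}=2$, $m_{s,t}=\infty$ and $\psi_\xx(r)>0$ exists, and just exhibit the family $\vv_n := (st)^n$. Because $m_{s,t}=\infty$ the elements $\vv_n$ have word length $2n$ and are therefore pairwise distinct; because $r$ commutes with both $s$ and $t$, we have $r\vv_n = \vv_n r$, so Lemma \ref{lemma:shifting-identity} yields $|\gamma^{\psi_\xx}_{r,r}(\vv_n)| = 2\psi_\xx(r)>0$ for every $n$. The support of $\gamma^{\psi_\xx}_{r,r}$ is thus infinite and $\Psi^{\lambda_r,\lambda_r}$ is not even compact.

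For the converse, I would assume gradient-$\calS_p$ fails and use the reduction to generators to obtain $u,w \in S$ with $\psi_\xx(u) = \psi_\xx(w) > 0$ and infinitely many $\vv$ satisfying $u\vv = \vv w$. First I would note that $u$ and $w$ are conjugate, hence equal by the right-angled case of \cite[Lemma 3.3.3]{davisGeometryTopologyCoxeter2008}. Picking $\vv$ with $|\vv| > |S|+1$ and replacing it by the shortest element $\zz \in \{\vv,u\vv,\vv w, u\vv w\}$ (which still satisfies $u\zz=\zz w$ and has $|\zz| \leq |u\zz|,|\zz w|$), I can apply Lemma \ref{lemma:word-sequence-expression} to write $\zz = \rr_{i_1,j_1}\cdots\rr_{i_k,j_k}$ in reduced form. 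Since each $m_{i_l,j_l}$ must be finite and $W$ is right-angled, necessarily $m_{i_l,j_l}=2$, so $\rr_{i_l,j_l}=s_{i_l}$ and (because the labels are even) $s_{j_{l+1}}=d_{i_l,j_l}=s_{j_l}$; in particular all $s_{j_l}$ agree with a single generator $r := s_{j_1} = c_{i_1,j_1} = u$, and $\psi_\xx(r)>0$.

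The remaining step, and the main obstacle, is to extract from the reduced word $\zz = s_{i_1}\cdots s_{i_k}$ the pair $\{s,t\}$ with $m_{s,t}=\infty$ while both commute with $r$. Since $k > |S|$, the pigeonhole principle gives $l<l'$ with $s_{i_l}=s_{i_{l'}}$, and because the expression is reduced we cannot commute them together, so there must exist $m \in (l,l')$ with $m_{s_{i_m},s_{i_l}}=\infty$ (this uses the standard fact that in a right-angled Coxeter group, reduced expressions for the same element differ only by commutation of commuting generators). Taking $s:=s_{i_m}$ and $t:=s_{i_l}$, both satisfy $m_{r,s}=m_{s_{j_m},s_{i_m}}=2$ and $m_{r,t}=m_{s_{j_l},s_{i_l}}=2$ by construction of the $\rr_{i_l,j_l}$, while $m_{s,t}=\infty$, producing the forbidden triple. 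The subtlety in this last paragraph is only in invoking the right-angled reduced-word cancellation correctly; everything else is structural bookkeeping from Section \ref{subsection:describing-support-of-gamma}.
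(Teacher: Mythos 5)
Your proposal is correct and follows essentially the same route as the paper: reduce to generators via Lemma \ref{lemma:reduction-gradient-Sp-to-generators}, translate $\Psi^{\lambda_u,\lambda_w}\in\calS_p$ into finiteness of $\{\vv : u\vv = \vv w\}$ via Lemma \ref{lemma:shifting-identity} and \eqref{Eqn=FiniteRankish}, use Lemma \ref{lemma:word-sequence-expression} to express a long solution $\zz$ as $s_{i_1}\cdots s_{i_k}$ with all $s_{j_l}$ equal to a single $r=u$, and extract a non-commuting pair from $\{s_{i_l}\}$. The one place where you go slightly beyond the paper is the final extraction: the paper asserts directly that $k>|S|$ yields $l<l'$ with $m_{s_{i_l},s_{i_{l'}}}=\infty$, whereas you first find a repeated generator by pigeonhole and then invoke the right-angled reduced-word criterion (commutation-only rewriting) to locate a blocking letter $s_{i_m}$ with $m_{s_{i_m},s_{i_l}}=\infty$ between the two occurrences; this correctly fills in what the paper leaves implicit.
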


\section{Strong solidity for hyperbolic right-angled Coxeter groups}\label{Sect=SolidTensor}

We conclude this paper with two applications  that combines all the techniques that we have developed so far. This section contains the first application.
 We prove that any right-angled hyperbolic Coxeter group has a strongly solid group von Neumann algebra. This result was surely known before; it follows for instance from \cite{PopaVaesCrelle}. Nevertheless we present our alternative proof to demonstrate the techniques that we have established in this paper. For the rest of this section fix a (finite rank) {\it right-angled} Coxeter system $W = \langle S | M \rangle$. We shall use the following characterisation of word hyperbolicity.

\begin{theorem}[See \cite{davisGeometryTopologyCoxeter2008}]\label{Thm=HyperbolicRightAngled}
Let $W = \langle S | M \rangle$ be a right-angled Coxeter system.  The following are equivalent:
\begin{enumerate}
\item $W = \langle S | M \rangle$ is word hyperbolic.
\item\label{Item=Hypberolic2} There do not exist four distinct elements $s,t,u,v \in S$ such that $m_{s,t} = m_{u,v} = \infty$ and $m_{s, u} = m_{s,v} = m_{t,u} = m_{t,v} = 2$.
\end{enumerate}
\end{theorem}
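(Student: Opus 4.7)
The plan is to handle the two implications separately. For (1) $\Rightarrow$ (2), I would argue contrapositively by exhibiting an explicit $\ZZ^2$ subgroup. Suppose four distinct generators $s,t,u,v \in S$ satisfy the conditions in (2). Since $m_{s,t} = \infty$ the pair $\{s,t\}$ generates the infinite dihedral group $\langle s,t\rangle \cong \ZZ/2 \ast \ZZ/2 \cong D_\infty$, and similarly $\langle u,v\rangle \cong D_\infty$. The commutation relations $m_{s,u} = m_{s,v} = m_{t,u} = m_{t,v} = 2$ imply that every element of $\langle s,t\rangle$ commutes with every element of $\langle u,v\rangle$, and a standard normal-form argument for right-angled Coxeter groups (or the identification of the relevant special/parabolic subgroup) shows $\langle s,t,u,v\rangle \cong D_\infty \times D_\infty$. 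In particular $st$ and $uv$ have infinite order, commute, and generate a subgroup isomorphic to $\ZZ^2$. Since no word hyperbolic group contains a $\ZZ^2$ subgroup (all abelian subgroups of a hyperbolic group are virtually cyclic), $W$ cannot be word hyperbolic.

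For the reverse implication (2) $\Rightarrow$ (1), the natural route is via Moussong's hyperbolicity criterion for Coxeter systems. Moussong's theorem states, in combinatorial terms, that a Coxeter system fails to be word hyperbolic exactly when it contains an ``affine'' or ``Euclidean'' parabolic subgroup of rank at least two. In the right-angled setting the only such parabolic subgroup is $D_\infty \times D_\infty$, whose existence inside $W$ is equivalent to the existence of the four-generator pattern in (2): two non-commuting pairs ($m = \infty$) with all cross pairs commuting ($m = 2$). An equivalent, more geometric, route goes through the Davis complex $\Sigma(W,S)$, which for a right-angled Coxeter system is a CAT(0) cube complex on which $W$ acts properly and cocompactly. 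By the flat plane theorem, $\Sigma(W,S)$ is Gromov hyperbolic if and only if it contains no isometrically embedded Euclidean plane; in the right-angled case such a flat plane corresponds precisely to a pair of commuting infinite dihedral subgroups, forcing again the configuration in (2).

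The main obstacle is the direction (2) $\Rightarrow$ (1), which rests on the nontrivial input of Moussong's criterion (or, equivalently, the combination of the cube complex structure of the Davis complex and the flat plane theorem). The forward direction (1) $\Rightarrow$ (2) is elementary. Since the result is quoted from \cite{davisGeometryTopologyCoxeter2008}, where it appears as a specialization of the general machinery for Coxeter groups, I would ultimately cite that reference and present only the sketch above to convey the structural idea behind each direction.
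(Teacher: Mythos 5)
Your sketch is correct, and it is essentially the argument behind the result as cited: the paper itself gives no proof of this theorem, quoting it directly from Davis's book, where it is a specialization of Moussong's hyperbolicity criterion to the right-angled case. Your reduction of Moussong's two conditions to condition (2) is accurate — the affine clause of rank $\geq 3$ is vacuous in the right-angled setting (the only right-angled affine system is $\widetilde{A}_1 = D_\infty$, which has rank $2$), and the ``two commuting infinite special subgroups'' clause collapses to the four-generator configuration since an infinite right-angled special subgroup must contain a pair with $m = \infty$ — and the converse direction via an embedded $D_\infty \times D_\infty \supseteq \ZZ^2$ is the standard elementary obstruction.
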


Our aim is to prove the following. The proof is based on Proposition \ref{Prop=RACGquasi} and Lemma \ref{lemma:product-finite-support} which we prove at the end.

\begin{theorem}\label{Thm=StrongSolidRightAngled}
Let $W = \langle S | M \rangle$ be a word hyperbolic right-angled Coxeter group. Then $\calL(W)$ satisfies AO$^+$ and is strongly solid.
\end{theorem}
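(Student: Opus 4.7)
The plan is to apply Theorem \ref{Thm=RieszImpliesAO} to get AO$^+$ for $\calL(W)$ and then invoke Theorem \ref{Thm=AOimpliesStronglySolid} for strong solidity. For the latter step I use that every Coxeter group is weakly amenable \cite{fendlerWeakAmenabilityCoxeter2002}; weak amenability also supplies the local reflexivity of $C_r^\ast(W)$ demanded by Theorem \ref{Thm=RieszImpliesAO}. The real task is therefore to exhibit a $\mathbb{C}[W]$-bimodule $H$ quasi-contained in the coarse bimodule, together with an almost bimodular map $V \colon \ell_2(W) \to H$ for which $V^\ast V$ is Fredholm.

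I would produce these by tensoring and convolving the Riesz transforms of a carefully chosen finite family of weighted word-length semigroups. Because $W$ is right-angled, every $\xx \in [0,\infty)^{|S|}$ defines, via Lemma \ref{lemma:weighted-word-lenght-conditionally-negative} and Theorem \ref{Thm=Schoenberg}, a symmetric QMS on $\calL(W)$ with Riesz transform $R_{\xx} \colon \ell_2(W) \to H_{\xx}$. Theorem \ref{lemma:gradient-Sp-weighted-word-length-right-angled-Coxeter-group} identifies the gradient-$\cS_p$ property of this QMS with $\supp(\xx)$ being disjoint from the bad set
\[
\mathcal{B} := \{ r \in S : \exists\, s, t \in S,\ m_{r,s} = m_{r,t} = 2,\ m_{s,t} = \infty \},
\]
while Proposition \ref{Prop=ProperWeight} identifies properness of $\psi_{\xx}$ with the zero set of $\xx$ generating a finite subgroup of $W$. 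Given a suitable family $\xx_1, \ldots, \xx_k$ with these properties together with large enough tensor exponents $n_i$, Lemma \ref{Lem:Younginequality} and Proposition \ref{proposition:quasicontainment} make the tensor bimodule $H := H_{\xx_1}^{\otimes n_1} \otimes_W \cdots \otimes_W H_{\xx_k}^{\otimes n_k}$ quasi-contained in the coarse bimodule, and Corollary \ref{Cor=RieszBimodular} together with Lemma \ref{Lem:almostbimod} makes the convolved map $V := R_{\xx_1}^{\ast n_1} \ast \cdots \ast R_{\xx_k}^{\ast n_k}$ almost bimodular. By Lemma \ref{Lem=PartialIsoConvolution} $V$ is a partial isometry whose kernel is spanned by $\bigcup_i \ker R_{\xx_i}$, so $V^\ast V$ is Fredholm exactly when this union is finite.

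The heart of the argument is producing the family $\xx_1, \ldots, \xx_k$; this is the content of Proposition \ref{Prop=RACGquasi} and Lemma \ref{lemma:product-finite-support}, and it is where the main obstacle lies. The gradient-$\cS_p$ condition forces each $\xx_i$ to vanish on all of $\mathcal{B}$, while properness of $\psi_{\xx_i}$ requires its zero set — which therefore contains $\mathcal{B}$ — to generate a finite subgroup. This cannot in general be arranged with a single weight, since under hyperbolicity $\mathcal{B}$ itself may still generate an infinite subgroup (for instance in $(\mathbb{Z}/2 \times D_\infty) \ast (\mathbb{Z}/2 \times D_\infty)$, where two bad generators are joined by an infinite edge). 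The hyperbolicity hypothesis, via the exclusion of the four-element configuration in Theorem \ref{Thm=HyperbolicRightAngled}, is precisely the combinatorial input that rigidifies $\mathcal{B}$ enough to combine several weights into a family for which the joint coefficients lie in $\cS_2$ and the joint kernel is finite. Once these ingredients are in place, Theorem \ref{Thm=RieszImpliesAO} delivers AO$^+$ and Theorem \ref{Thm=AOimpliesStronglySolid} concludes strong solidity.
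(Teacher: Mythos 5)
Your high-level plan (tensor/convolve Riesz transforms of weighted word-length semigroups and feed the result into Theorem~\ref{Thm=RieszImpliesAO}) matches the paper, but your mechanism for quasi-containment has a genuine gap, and you actually notice the symptom yourself without resolving it. You propose to prove quasi-containment by applying Lemma~\ref{Lem:Younginequality}, which requires each factor $H_{\xx_i}$ to have coefficients in some $\cS_{p_i}$ with $p_i<\infty$. By Theorem~\ref{lemma:gradient-Sp-weighted-word-length-right-angled-Coxeter-group} this forces $\supp(\xx_i)\cap\mathcal{B}=\emptyset$, and by Proposition~\ref{Prop=ProperWeight} properness of $\psi_{\xx_i}$ forces the zero set of $\xx_i$ (which then contains $\mathcal{B}$) to generate a finite subgroup. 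So $\mathcal{B}$ itself must generate a finite subgroup — a condition that, as you observe, fails for many hyperbolic right-angled Coxeter groups. Since this obstruction is per-weight, ``combining several weights'' cannot rescue a strategy that routes through Lemma~\ref{Lem:Younginequality}: no single $\xx_i$ exists with both required properties, so there is no admissible family at all.

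The paper's Proposition~\ref{Prop=RACGquasi} avoids this precisely by not requiring any single $H_{\mathcal{I}}$ to be gradient-$\cS_p$. It tensors over \emph{all} cliques $\mathcal{I}\in\Cliq(S|M)$ and computes the coefficient of the cyclic vectors of $\bigotimes_{\mathcal{I}}H_{\mathcal{I}}$ directly; it comes out proportional to the pointwise product $\widetilde{\gamma}_{\uu,\ww}(\vv)=\prod_{\mathcal{I}}\gamma^{\psi_{S\setminus\mathcal{I}}}_{\uu,\ww}(\vv)$, and Lemma~\ref{lemma:product-finite-support} shows this product has finite support under hyperbolicity. The key mechanism — the content you are missing — is that for each long word $\vv$ one can find a clique $\mathcal{I}_0=\mathcal{I}_0(\vv)$, \emph{depending on $\vv$}, with $\gamma^{\psi_{S\setminus\mathcal{I}_0}}_{\uu,\ww}(\vv)=0$. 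Each individual $\gamma^{\psi_{S\setminus\mathcal{I}}}_{\uu,\ww}$ may have infinite support, yet the product is eventually zero because the vanishing clique rotates with $\vv$. That is where hyperbolicity (via Theorem~\ref{Thm=HyperbolicRightAngled}) enters: it guarantees that the set $\mathcal{I}_0$ of generators commuting with a long middle segment of $\uu\vv\ww$ is itself a clique. Without this ``product of coefficients'' computation in place of Lemma~\ref{Lem:Younginequality}, the argument does not go through.
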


\begin{proof}
Let $\Cliq(S | M)$ be the set of subsets  $\mathcal{I} \subseteq S$ that generate a finite Coxeter subgroup $W_{\mathcal{I}}$ of $W = \langle S | M \rangle$.  These are precisely the subsets  $\mathcal{I} \subseteq S$ such that for every $s,t \in \mathcal{I}$ we have $m_{s,t} = 2$. We call $\Cliq(S | M)$ the set of {\it cliques}. They could also be seen as the cliques in a natural graph that is associated with the graph product decomposition of a right-angled Coxeter group, where a clique is defined as a complete subgraph, see \cite{GreenThesis}, \cite{caspersGraphProductsOperator2017}. We shall not need this graph product decomposition here except for the fact that it explains the terminology.

For $\mathcal{I} \in \Cliq(S | M)$ the function $\psi_{S \backslash \mathcal{I}}$ is proper (see Proposition \ref{Prop=ProperWeight}) and conditionally of negative  type (see Lemma \ref{lemma:weighted-word-lenght-conditionally-negative}). We may therefore consider the QMS $\Phi_{\mathcal{I}}$ associated with $\psi_{S \backslash \mathcal{I}}$, the associated gradient $\mathbb{C}[W]$ bimodule $H_{\mathcal{I}}$ and the Riesz transform $R_\mathcal{I}: \ell_2(\Gamma) \rightarrow H_{\mathcal{I}}$. The Riesz transform $R_{\mathcal{I}}$ is then a partial isometry with a finite dimensional kernel spanned by $\delta_u, u \in W_{\mathcal{I}}$. $R_{\mathcal{I}}$ is almost bimodular by Corollary \ref{Cor=RieszBimodular}. We now consider the $\otimes_\Gamma$ tensor product of bimodules with $\Gamma = W$   over all $\mathcal{I} \in \Cliq(S | M)$ as was defined in Section \ref{Sect=TensoringBimodules},
\begin{equation}\label{Eqn=ConvolveBimodule}
H_W =   \bigotimes_{\mathcal{I} \in \Cliq(S | M), \Gamma} H_{\mathcal{I}}.
\end{equation}
We note that the order in which the tensor products are taken is not relevant for our analysis.
Consider the convolution product of Riesz transforms
\[
R_W = \ast_{\mathcal{I} \in \Cliq(S | M)}  R_{\mathcal{I}}: \ell_2(\Gamma) \rightarrow H_W.
\]
By Lemma \ref{Lem:almostbimod} and Lemma \ref{Lem=PartialIsoConvolution} we see that $R_W$ is an almost bimodular partial isometry whose kernel is spanned by all vectors $\delta_u$ where $u \in W_{\mathcal{I}}$ for some $\mathcal{I} \in \Cliq(S | M)$. In particular the kernel of $R_W$ is finite dimensional. Let $L \subseteq H_W$ be the smallest $\mathbb{C}[W]$ subbimodule containing the image of $R_W$. Then $R_W: \ell_2(W) \rightarrow L$ is still an almost bimodular partial isometry with finite dimensional kernel.

Recall that $C_r^\ast(W)$ is locally reflexive and $\calL(W)$ has the weak-$\ast$ completely bounded approximation property as $W$ is weakly amenable (see \cite{fendlerWeakAmenabilityCoxeter2002}, \cite{Janus}). It then follows from Theorem \ref{Thm=RieszImpliesAO} that if $L$ is quasi-contained in the coarse bimodule of $W$ then $\calL(W)$ satisfies AO$^+$. Consequently, $\calL(W)$ is strongly solid by Theorem \ref{Thm=AOimpliesStronglySolid}. The proof that  $H_W$ is quasi-contained in the coarse bimodule of $W$ is given in Proposition \ref{Prop=RACGquasi} below.
\end{proof}

\begin{proposition}\label{Prop=RACGquasi}
The $\mathbb{C}[W]$ bimodule $L$ defined in the proof of Theorem \ref{Thm=StrongSolidRightAngled}  is quasi-contained in the coarse bimodule of the word hyperbolic right-angled  Coxeter group $W$.
\end{proposition}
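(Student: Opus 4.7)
The plan is to verify the hypothesis of Proposition \ref{proposition:quasicontainment} via its cyclic-set reduction Lemma \ref{lemma:reduction-to-cyclic-subset}. Since $L$ is by construction the smallest $\mathbb{C}[W]$-subbimodule of $H_W$ containing the image of $R_W$, the family
\[
L_{00}:=\{R_W(\lambda_g):g\in W\}\subseteq L
\]
is cyclic. It therefore suffices to show that for every $g,h\in W$ the coefficient $T_{R_W(\lambda_g),R_W(\lambda_h)}$ lies in $\mathcal{S}_2(\ell_2(W))$.

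First I would compute this coefficient explicitly. Assuming $R_W(\lambda_g),R_W(\lambda_h)\neq 0$ (otherwise there is nothing to show), we have from the definition of $R_W$ that $R_W(\lambda_g)=\alpha_g\cdot\bigotimes_{\mathcal{I}\in \Cliq(S|M)}\nabla_{\mathcal{I}}(\lambda_g)$ with $\alpha_g=\prod_{\mathcal{I}}\psi_{S\setminus\mathcal{I}}(g)^{-1/2}$, and analogously for $h$. Theorem \ref{Thm=GradientCoefficients} applied in each tensor factor gives
\[
T_{\nabla_{\mathcal{I}}(\lambda_g),\nabla_{\mathcal{I}}(\lambda_h)}(\lambda_v)=-\tfrac{1}{2}\,\gamma^{\psi_{S\setminus\mathcal{I}}}_{h^{-1},g}(v)\,\lambda_{h^{-1}vg},
\]
and iterating the tensor formula $T_{\xi_1\otimes\xi_2}=\Delta_{\Gamma}^{\ast}(T_{\xi_1}\otimes T_{\xi_2})\Delta_{\Gamma}$ from Lemma \ref{Lem:Younginequality} over all $\mathcal{I}\in \Cliq(S|M)$ produces
\[
T_{R_W(\lambda_g),R_W(\lambda_h)}(\lambda_v)=C_{g,h}\cdot F_{g,h}(v)\cdot \lambda_{h^{-1}vg},\qquad v\in W,
\]
with $C_{g,h}\neq 0$ a normalization constant and $F_{g,h}(v):=\prod_{\mathcal{I}\in \Cliq(S|M)}\gamma^{\psi_{S\setminus\mathcal{I}}}_{h^{-1},g}(v)$. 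Since $v\mapsto h^{-1}vg$ is a bijection of $W$, this operator is a scalar-weighted partial isometry of $\ell_2(W)$, so its Hilbert--Schmidt norm equals $|C_{g,h}|\cdot\|F_{g,h}\|_{\ell_2(W)}$; as each factor of $F_{g,h}$ is uniformly bounded in $v$ by $2(|h^{-1}|+|g|)$, it suffices to prove that $F_{g,h}$ has finite support.

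This last statement is precisely the content of Lemma \ref{lemma:product-finite-support} below, which in fact gives that $T_{R_W(\lambda_g),R_W(\lambda_h)}$ is finite rank. Granting the lemma, Lemma \ref{lemma:reduction-to-cyclic-subset} together with Proposition \ref{proposition:quasicontainment} yields that $L$ is a $\calL(W)$ bimodule quasi-contained in the coarse bimodule, completing the proof.

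The hard part is Lemma \ref{lemma:product-finite-support}; this is the only step in which the hyperbolicity assumption is used in an essential way. Heuristically, an infinite sequence $(v_n)\subseteq W$ on which every $\gamma^{\psi_{S\setminus\mathcal{I}}}_{h^{-1},g}(v_n)$ is simultaneously non-zero must, after expanding each $\psi_{S\setminus\mathcal{I}}$ as a sum of single-generator weighted lengths in the spirit of Lemma \ref{lemma:weighted-word-lenght-conditionally-negative} and applying the support description from Lemma \ref{lemma:shifting-identity}, force the existence of four distinct generators $s,t,u,v\in S$ with $m_{s,t}=m_{u,v}=\infty$ and $m_{s,u}=m_{s,v}=m_{t,u}=m_{t,v}=2$, contradicting the hyperbolicity criterion of Theorem \ref{Thm=HyperbolicRightAngled}(\ref{Item=Hypberolic2}). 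Without hyperbolicity the lemma genuinely fails, as is already apparent from Theorem \ref{lemma:gradient-Sp-weighted-word-length-right-angled-Coxeter-group} combined with Remark \ref{Rmk=SmallAtInftyHyperbolic}.
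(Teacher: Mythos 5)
Your proposal is correct and follows essentially the same route as the paper: reduce to a cyclic set via Lemma \ref{lemma:reduction-to-cyclic-subset}, compute the coefficient at vectors of the form $R_W(\delta_g)$ (the paper uses the slightly different, but scalar-multiple, cyclic vectors $\xi_{\vv}=\bigotimes_{\mathcal{I}}\lambda_{\vv}\otimes_{\nabla_{\mathcal{I}}}\delta_e$), observe that the result is a weighted permutation operator $\lambda_v\mapsto\widetilde{\gamma}(v)\lambda_{h^{-1}vg}$ whose $\cS_2$-membership is controlled by the product function $\widetilde{\gamma}_{h^{-1},g}=\prod_{\mathcal{I}}\gamma^{\psi_{S\setminus\mathcal{I}}}_{h^{-1},g}$, and defer the finiteness of its support to Lemma \ref{lemma:product-finite-support}. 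Your heuristic for that last lemma also correctly identifies where hyperbolicity enters, namely in forcing the set of generators commuting with a long stretch of $\vv$ to be a clique, which is the contrapositive form of your ``four generators violate Theorem \ref{Thm=HyperbolicRightAngled}(\ref{Item=Hypberolic2})'' phrasing.
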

\begin{proof}
We shall prove that a cyclic set of coefficients is in $\cS_2$ so that the proposition follows from  Lemma \ref{lemma:reduction-to-cyclic-subset}.
Let us denote $H_{00}\subseteq L$ for the sets of all the vectors
\[
\xi_{\vv}:=  (  \ast_{\mathcal{I} \in \Cliq(S | M)}   R_{\mathcal{I}})  (\delta_{\vv}) =   \bigotimes_{  \mathcal{I} \in \Cliq(S | M)  }  \lambda_{\vv} \otimes_{\nabla_{\mathcal{I}}}  \delta_e, \qquad \vv\in W.
\]
Here we used the symbol $\otimes_{\nabla_{\mathcal{I}}}$ to denote elements in the gradient bimodule constructed from $\mathcal{I}$.  By construction of $L$ we have that $H_{00}$ is cyclic for $L$.
For $\xi_{\uu},\xi_{\ww}\in H_{00}$ we now inspect the coefficient $T_{\xi_{\uu},\xi_{\ww}}$. We have for $\uu, \vv, \ww \in W, y \in \mathbb{C}[W]$,
\[
\begin{split}
\tau(T_{\xi_{\ww},\xi_{\uu}}(\lambda_\vv)y) &=
\langle \lambda_{\vv}\cdot \xi_{\ww} \cdot y, \xi_{\uu}\rangle \\
&= \prod_{\calI  \in \Cliq(S | M)   }\langle \lambda_{\vv}\cdot (\lambda_{\ww}\otimes_{\calI} \delta_e)\cdot y, \lambda_{\uu}\otimes_{\calI} \delta_e\rangle_{\calI}\\
&= \prod_{\calI \in \Cliq(S | M)  }\langle \Psi^{\lambda_{\uu^{-1}},\lambda_{\ww}}_{\mathcal{I}}(\lambda_{\vv})\delta_e y,\delta_e\rangle\\
&= \prod_{\calI \in \Cliq(S | M)  }\gamma_{\uu^{-1},\ww}^{\psi_{S\setminus \calI}}(\vv)\langle \lambda_{\uu^{-1}\vv\ww}\delta_e y,\delta_e\rangle.
\end{split}
\]
Define the function
\begin{align}\label{eq:gamma-product}
\widetilde{\gamma}_{\uu,\ww}(\vv) = \prod_{\calI\in \Cliq(S | M) )}\gamma_{\uu,\ww}^{\psi_{S\setminus \calI}}(\vv).
\end{align}
 Then, if $\widetilde{\gamma}_{\uu^{-1},\ww}(\vv) = 0$ we have that $\tau(T_{\xi_{\ww},\xi_{\uu}}(\lambda_{\vv})y) = 0$ for all $y\in \CC[W]$ and consequently  $T_{\xi_{\ww},\xi_{\uu}}(\lambda_{\vv})=0$.  We thus have that $T_{\xi_{\ww},\xi_{\uu}}$ is finite rank whenever $\widetilde{\gamma}_{\uu^{-1},\ww}$ has finite support. In Lemma \ref{lemma:product-finite-support} we shall show that the function $\widetilde{\gamma}_{\uu,\ww}$ has  finite rank for all $\uu,\ww\in W$ so that we conclude the proof.
 \end{proof}

In order to prove Lemma \ref{lemma:product-finite-support} rigorously we shall introduce some notation here. A tuple
$(w_1,\ldots,w_k)$ with $w_i\in S$ will be call \textit{reduced} if the expression $w_1 \ldots w_k$ is reduced. Furthermore, we will call the tuple \textit{semi-reduced} whenever $|w_1 \ldots w_k| + |\{l: w_l=e\}|  = k$. We will say that a pair $(i,j)$ with $i<j$ \textit{collapses} for a tuple $(w_1,\ldots,w_k)$ whenever $w_i = w_j\not=e$ and the elements $\{w_l: i\leq l\leq j\}$ pair-wise commute. In that case we will call the tuple $(w_1,\ldots,w_{i-1},e,w_{i+1},\ldots,w_{j-1},e,w_{j+1},\ldots,w_k)$ the \textit{tuple obtained from $(w_1,\ldots ,w_k)$ by collapsing on the pair $(i,j)$}. We note that the word $w_1 \ldots w_k$ corresponding to $(w_1,\ldots ,w_k)$ is in fact the same as the word $w_1 \ldots w_{i-1}ew_{i+1} \ldots w_{j-1}ew_{j+1} \ldots w_k$ corresponding to the collapsed tuple. The notation that we introduced here is convenient because it keeps indices aligned correctly.
We also note that a tuple $(w_1,\ldots,w_k)$ is semi-reduced if and only if we cannot collapse on any pair $(i,j)$. Hence, for a general tuple we can obtain a semi-reduced tuple by subsequently collapsing on pairs $(i_1,j_1),\ldots,(i_q,j_q)$.

\begin{lemma}\label{lemma:product-finite-support}
	For a right-angled word hyperbolic Coxeter group $W$ we have that for $\uu,\ww\in W$ the function $\widetilde{\gamma}_{\uu,\ww}: W\to \RR$ defined in  \eqref{eq:gamma-product}   has finite support.
\end{lemma}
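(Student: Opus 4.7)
The plan is to rewrite each factor $\gamma^{\psi_{S\setminus\calI}}_{\uu,\ww}(\vv)$ combinatorially, confine the support of the product $\widetilde{\gamma}_{\uu,\ww}$ to a finite union of explicit subsets of $W$, and then rule out infinite intersections using the hyperbolicity of $W$.

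First, since $\psi_{S\setminus\calI}=\sum_{s\notin\calI}N_s$, where $N_s(\zz)$ denotes the number of occurrences of the generator $s$ in any reduced expression of $\zz\in W$, a direct computation from \eqref{Eqn=GammaFunction} gives
\[
\gamma^{\psi_{S\setminus\calI}}_{\uu,\ww}(\vv)\;=\;-2\sum_{s\notin\calI}B^s(\uu,\vv,\ww),
\]
where $B^s(\uu,\vv,\ww):=\tfrac{1}{2}\bigl(N_s(\uu\vv)+N_s(\vv\ww)-N_s(\uu\vv\ww)-N_s(\vv)\bigr)\ge 0$ counts the \emph{bridging} cancellations of $s$ between $\uu$ and $\ww$ in $\uu\vv\ww$. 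In the tuple--collapse framework, $B^s(\uu,\vv,\ww)>0$ is equivalent to the existence of reduced decompositions $\uu=\uu_1 s\uu_2$ and $\ww=\ww_1 s\ww_2$ with $\uu_2\vv\,\ww_1\in W_{s^\perp}$. Every factor in $\widetilde{\gamma}_{\uu,\ww}(\vv)$ is thus nonpositive, and $\widetilde{\gamma}_{\uu,\ww}(\vv)\neq 0$ precisely when $\Sigma(\vv):=\{s\in S:B^s(\uu,\vv,\ww)>0\}$ lies in no clique, i.e.\ when $\Sigma(\vv)$ contains two generators $a,b\in S$ with $m_{a,b}=\infty$.

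Since $\uu$ and $\ww$ admit only finitely many reduced decompositions of the required form, the support of $\widetilde{\gamma}_{\uu,\ww}$ is contained in the finite union
\[
\bigcup_{\substack{a,b\in S\\ m_{a,b}=\infty}}\ \bigcup_{\text{decomp.}}\bigl(\uu_{2,a}^{-1}W_{a^\perp}\ww_{1,a}^{-1}\bigr)\cap\bigl(\uu_{2,b}^{-1}W_{b^\perp}\ww_{1,b}^{-1}\bigr).
\]
Parametrising an element of such an intersection as $\vv=\uu_{2,a}^{-1}x_a\ww_{1,a}^{-1}=\uu_{2,b}^{-1}x_b\ww_{1,b}^{-1}$ with $x_a\in W_{a^\perp}$ and $x_b\in W_{b^\perp}$, a standard coset argument bounds its cardinality by $|W_{b^\perp}\cap p^{-1}W_{a^\perp}p|$ for an explicit $p\in W$; this intersection is itself contained in $C_W(b)\cap C_W(p^{-1}ap)=C_W\bigl(\langle b,p^{-1}ap\rangle\bigr)$.

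The main obstacle is to show this last centraliser is finite, and this is where hyperbolicity enters. Using the identification $C_W(b)=\langle b\rangle\times W_{b^\perp}$ together with the abelianisation $W^{\mathrm{ab}}\cong\bigoplus_{s\in S}\mathbb{Z}/2$, one observes that any conjugate of $a$ has $a$-component $1$ in $W^{\mathrm{ab}}$, whereas every element of $C_W(b)$ has trivial $a$-component because $a\notin b^\perp\cup\{b\}$. Hence $p^{-1}ap\notin C_W(b)$, and so $\langle b,p^{-1}ap\rangle$ is always infinite dihedral. Setting $\tau:=b\cdot p^{-1}ap$, which is of infinite order, conjugation by $b$ inverts $\tau$; any element of $C_W(b)\cap C_W(\tau)=C_W\bigl(\langle b,p^{-1}ap\rangle\bigr)$ is therefore a fixed point of an order-two action on the virtually cyclic group $C_W(\tau)$ (virtually cyclic by hyperbolicity of $W$, Theorem \ref{Thm=HyperbolicRightAngled}). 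A short exact sequence argument, using that the only power of $\tau$ fixed by inversion is $\tau^0=e$, then shows that this fixed subgroup is finite, so $W_{b^\perp}\cap p^{-1}W_{a^\perp}p$ is finite, completing the proof.
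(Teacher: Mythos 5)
Your approach is genuinely different from the paper's, and the main reduction is an appealing one. The paper proceeds by brute-force combinatorics on semi-reduced tuples: for $|\vv| > |\uu| + |\ww| + |S| + 2$ it singles out the clique $\calI_0$ of generators commuting with a long uncancelled stretch $\calJ$ of $\vv$ and shows the $\calI_0$-factor vanishes, with hyperbolicity entering only through Theorem~\ref{Thm=HyperbolicRightAngled}(\ref{Item=Hypberolic2}) to guarantee $\calI_0\in\Cliq(S\mid M)$. You instead confine $\supp(\widetilde{\gamma}_{\uu,\ww})$ to a finite union of intersections of translates of parabolics, bound each such intersection by $\lvert C_W(b)\cap C_W(p^{-1}ap)\rvert$ for non-commuting $a,b\in S$, and use that centralizers of infinite-order elements in a hyperbolic group are virtually cyclic. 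That reduction, and especially the observation that $b$ and $p^{-1}ap$ must generate an infinite dihedral group (forced by the abelianisation), is a clean group-theoretic argument that the paper does not use, and it replaces the paper's explicit bound on $|\vv|$ by a soft finiteness statement.

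There is, however, a concrete error at the start. The inequality $B^s(\uu,\vv,\ww)\geq 0$ is false: already $\uu=\vv=\ww=s$ gives $B^s=\tfrac12\bigl(N_s(e)+N_s(e)-N_s(s)-N_s(s)\bigr)=-1$, and one can keep $|\vv|$ arbitrarily large, e.g.\ $\uu=\ww=s$ and $\vv=s(ab)^n$ with $a,b\in s^\perp$, $m_{a,b}=\infty$. Hence not ``every factor is nonpositive'', and the equivalence $\widetilde{\gamma}_{\uu,\ww}(\vv)\neq 0\ \Leftrightarrow\ \Sigma(\vv)=\{s:B^s>0\}$ not contained in a clique does not follow as stated: what does follow is that $\{s:B^s\neq 0\}$ is not contained in any clique, and the witnesses $a,b$ could have $B^a<0$. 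This invalidates the containment of $\supp(\widetilde{\gamma}_{\uu,\ww})$ in your finite union as written. I believe the argument is repairable: for any $s$ with $B^s\neq 0$ (positive or negative) the wall picture still forces reduced decompositions $\uu=\uu_1 s\uu_2$, $\ww=\ww_1 s\ww_2$ with $\uu_2\vv\ww_1\in C_W(s)=\langle s\rangle\times W_{s^\perp}$ --- note $C_W(s)$, not just $W_{s^\perp}$, is needed --- and the coset/centralizer part of your argument goes through verbatim with $C_W(s)$ in place of $W_{s^\perp}$. But as submitted the sign error is a genuine gap and the equivalence for $B^s>0$ is asserted rather than proved. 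Two smaller points: the identity $C_W(b)=\langle b\rangle\times W_{b^\perp}$ should be justified (it follows from Lemma~\ref{lemma:word-sequence-expression} specialized to the right-angled case), and the virtually-cyclic-centralizer property is a standard consequence of hyperbolicity that should be cited as such rather than attributed to Theorem~\ref{Thm=HyperbolicRightAngled}, which only encodes hyperbolicity combinatorially.
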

	\begin{proof}
Let $\uu=u_1 \ldots u_{n_1},\vv=v_1 \ldots v_{n_2}, \ww=w_1 \ldots w_{n_3}\in W$ written in reduced expression.
 We will moreover assume that $|\vv|>|\uu| +|\ww| + |S| +2$. We will show that for such words we have  $\widetilde{\gamma}_{\uu,\ww}(\vv)=0$. This then shows that $\widetilde{\gamma}_{\uu,\ww}$ has finite support.\\

Let $(u_1', \ldots ,u_{n_1}',v_1', \ldots ,v_{n_2}')$ be the semi-reduced tuple obtained by subsequently collapsing the tuple $(u_1, \ldots ,u_{n_1},v_1, \ldots ,v_{n_2})$ on pairs $(i_1',j_1'),\ldots ,(i_{q_1}',j_{q_1}')$. Then we must have $i_l'\leq n_1$ and $j_l'>n_1$ since the expressions for $\uu$ and $\vv$ were reduced. Also $|\uu\vv| =|\uu| + |\vv| - 2q_1$ and more generally for a weight $\xx\in [0,\infty)^{|S|}$ we have
$$\psi_{\xx}(\uu\vv) = \psi_{\xx}(\uu) + \psi_{\xx}(\vv) - 2\sum_{l=1}^{q_1}\psi_{\xx}(u_{i_l'}).$$

Likewise let $(v_1'', \ldots ,v_{n_2}'',w_1'', \ldots ,w_{n_3}'')$ be the semi-reduced tuple obtained by subsequently collapsing the tuple $(v_1, \ldots ,v_{n_2},w_1, \ldots ,w_{n_3})$ on pairs $(i_1'',j_1''),\ldots ,(i_{q_2}'',j_{q_2}'')$. Then we must have $i_l''\leq n_2$ and $j_l''>n_2$ since the expressions for $\vv$ and $\ww$ were reduced. Also $|\vv\ww| =|\vv| + |\ww| - 2q_2$ and more generally for a weight $\xx\in [0,\infty)^{|S|}$ we have
$$\psi_{\xx}(\vv\ww) =\psi_{\xx}(\vv) + \psi_{\xx}(\ww) -  2\sum_{l=1}^{q_2}\psi_{\xx}(w_{j_l''-n_2}).$$

Let us denote
\[
\calJ = \{v_j: j\in \{1,\ldots,n_2\}\setminus (\{j_1'-n_1,\ldots,j_{q_1}'-n_1\}\cup\{i_1'',\ldots,i_{q_2}''\})\}.
\]
 Now since $n_2=|\vv|>|\uu|+|\ww|+|S|+2\geq q_1+q_2+|S|+2$ we have that $|\mathcal{J}|\geq |S|+2$. Hence, there are two elements $g_1,g_2\in \mathcal{J}$ that do not mutually commute. Now, if $s_1,s_2\in S$ commute with all elements in $\mathcal{J}$ , then $s_1,s_2$ commute with both $g_1$ and $g_2$ so that by the word hyperbolicity of $W$ (see \eqref{Item=Hypberolic2} of  Theorem \ref{Thm=HyperbolicRightAngled}) we must have that also $s_1$ commutes with $s_2$. We now let $\calI_0\subseteq S$ be the set of all generators that commute with all elements in $\mathcal{J}$. Then by what we just mentioned we have that the elements in $\calI_0$ pair-wise commute, i.e. $\calI_0\in   \Cliq(S | M)$.\\

Now, for $i=1,\ldots,n_1$ let us set $\widetilde{u_i} = u_i'$ and for $i=1,\ldots,n_3$ set $\widetilde{w_i} = w_i''$. Furthermore, for $i=1, \ldots ,n_2$ set $\widetilde{v_i}=e$ whenever either $v_i'=e$ or $v_i''=e$ but not both, and set $\widetilde{v_i}=v_i$ otherwise. Let us also denote $\widetilde{\uu} = \widetilde{u_1} \ldots \widetilde{u_{n_2}}$, $\widetilde{\vv}=\widetilde{v_1}...\widetilde{v_{n_2}}$ and $\widetilde{\ww} = \widetilde{w_1} \ldots \widetilde{w_{n_3}}$.

 We claim that then we have
$\widetilde{\uu}\widetilde{\vv}\widetilde{\ww} = \uu\vv\ww$.
Indeed, we have that $\uu\vv\ww = \uu v_1''\ldots v_{n_2}''w_1'' \ldots w_{n_3}''$. Now we can collapse
$(u_1,\ldots,u_{n_1},v_{1}'',\ldots,v_{n_2}'',w_{1}'',\ldots,w_{n_3}'')$ subsequently on the pairs $(i_l',j_l')$ for $l=1,\ldots,q_1$ except when $v_{j_l'-n_1}'' \not= v_{j_l'-n_1}$ for some $1\leq l\leq q_1$, in which case $v_{j_l'-n_1}''=e$.
If this is the case then $j_l'-n_1 = i_{k_l}''$ for some $k_l\in \{1,\ldots,q_2\}$. In particular it follows that in this case $u_{i_l'}=v_{j_l'-n_1}=v_{i_{k_l}''}=w_{j_{k_l}''-n_2}$ and that this element commutes with all elements in $\calJ$. Therefore $u_{i_l'}\in \calI_0$. We can then simply interchange the elements at index $i_l'$ (which is $u_{i_l'}$) and the element at index $j_{l}'$ (which is $v_{j_{l}'-n_1}''=e$). This manipulation does not change the word, and allows us to continue collapsing on the remaining pairs. Once we are done collapsing on all pairs we have obtained the tuple $(\widetilde{u}_1,\ldots,\widetilde{u_{n_1}},\widetilde{v_1},\ldots, \widetilde{v_{n_2}},\widetilde{w_1},...,\widetilde{w_{n_3}})$. This thus shows us that $\uu\vv\ww = \widetilde{\uu}\widetilde{\vv}\widetilde{\ww}$. It also shows us that $\widetilde{v_{j_l'-n_1}}\in \{e\}\cup \calI_0$ for $l=1,\ldots,q_2$. Note that also by definition $\widetilde{u_{i_l'}}=e$ for $l=1,\ldots,q_1$ and $\widetilde{w_{j_l''-n_2}}=e$ for $l=1,\ldots,q_2$. Therefore we also have that $\psi_{S\setminus \calI_0}(\widetilde{u_{i_l'}})=
\psi_{S\setminus \calI_0}(e)=0$ for $l=1,\ldots,q_1$ and likewise $\psi_{S\setminus \calI_0}(\widetilde{w_{j_l''-n_2}}) =0$ for $l=1,\ldots,q_2$. Furthermore $\psi_{S\setminus \calI_0}(\widetilde{v_{j_l'-n_1}}) = 0$ for $l=1,\ldots,q_1$ and $\psi_{S\setminus \calI_0}(\widetilde{v_{i_l''}}) = 0$ for $l=1,\ldots,q_2$.

Also, if we can collapse $(\widetilde{u_1},\ldots, \widetilde{u_{n_1}},\widetilde{v_1},\ldots, \widetilde{v_{n_2}},\widetilde{w_1},\ldots,\widetilde{w_{n_3}})$ on some pair $(i,j)$ then we must have $i\leq n_1$ and $j>n_1+n_2$. Indeed otherwise we have that either $(u_1',\ldots,u_{n_1}',v_1',\ldots,v_{n_2}')$ or $(v_1'',\ldots,v_{n_2}'',w_1',\ldots,w_{n_3}'')$ is not semi-reduced, which is a contradiction. Now let $(i_1,j_1),\ldots,(i_q,j_q)$ be the pairs on which we can subsequently collapse $(\widetilde{u_1},\ldots, \widetilde{u_{n_1}},\widetilde{v_1},...\widetilde{v_{n_2}},\widetilde{w_1},\ldots,\widetilde{w_{n_3}})$ to obtain a semi-reduced tuple. Then we thus must have $i_l\leq n_1$ and $j_l>n_1+n_2$. This thus implies that for $l=1,\ldots,q$ we have that $\widetilde{u_{i_l}} = \widetilde{w_{j_l}}$ commutes with the elements from $\mathcal{J}$. Therefore we have $\{\widetilde{u_{i_l}}: l=1,\ldots,q\} = \{   \widetilde{w_{j_l   } } :l=1,\ldots,q\}\subseteq \calI_0$.

Now, we have that
\[
\begin{split}
\psi_{S\setminus \calI_0}(\uu\vv\ww) &=	\psi_{S\setminus \calI_0}(\uu) +	\psi_{S\setminus \calI_0}(\vv)
+\psi_{S\setminus \calI_0}(\ww) \\ &-2\left[\sum_{l=1}^{q_1}\psi_{S\setminus \calI_0}(u_{i_l'})
+ \sum_{l=1}^{q_2}\psi_{S\setminus \calI_0}(w_{i_l''-n_2})
+ \sum_{l=1}^{q} \psi_{S\setminus\calI_0}(\widetilde{u_{i_l}})\right]\\
&= \psi_{S\setminus \calI_0}(\uu\vv) + \psi_{S\setminus \calI_0}(\vv\ww) - \psi_{S\setminus \calI_0}(\vv) 	+ 2\sum_{l=1}^{q} \psi_{S\setminus\calI_0}(\widetilde{u_{i_l}})\\
&= \psi_{S\setminus \calI_0}(\uu\vv) + \psi_{S\setminus \calI_0}(\vv\ww) - \psi_{S\setminus \calI_0}(\vv).
\end{split}
\]
This shows that $\gamma^{\psi_{S\setminus \calI_0}}_{\uu,\ww}(\vv) = 0$.
Therefore, as $\calI_0\in \Cliq(S | M)$ we obtain that $\widetilde{\gamma}_{\uu,\ww}(\vv)=0$.  Now as this holds for every $\vv\in W$ with $|\vv|>|\uu|+|\ww| + |S| +2$, we obtain that $\widetilde{\gamma}_{\uu,\ww}$ has finite support.
	\end{proof}

\section{Application B: Strong solidity of Hecke von Neumann algebras}\label{Sect=HeckeStronglySolid}

In this final section we obtain strong solidity results for Hecke von Neumann algebras. These are $q$-deformations of the group (von Neumann) algebra of a Coxeter group. If $q=1$ we retrieve the classical case of a group (von Neumann) algebra of a Coxeter group.

For the Hecke deformations our methods turn out to improve on existing strong solidity results. In \cite[Theorem 0.7]{klisseTopologicalBoundariesConnected2020} it was shown that for  Coxeter groups that are small at infinity, their Hecke von Neumann algebras satisfy the condition AO$^+$. If  such Hecke von Neumann algebras have the weak-$\ast$ completely bounded approximation property then they are strongly solid by \cite[Theorem A]{isonoExamplesFactorsWhich2015} (this is a generalisation of Theorem \ref{Thm=AOimpliesStronglySolid} from \cite{PopaVaesCrelle}). The weak-$\ast$ completely bounded approximation property was proved in \cite{caspersAbsenceCartanSubalgebras2020} for Hecke von Neumann algebras  of right-angled Coxeter groups; outside the right-angled case this is an open problem. Therefore right-angled Coxeter groups that are small at infinity have strongly solid Hecke von Neumann algebras.  It was proved in \cite{klisseTopologicalBoundariesConnected2020} that such   Coxeter groups are in fact free products of   abelian Coxeter groups; hence this result is somewhat more limited than one would hope for.

It is natural to ask whether these strong solidity results for Hecke von Neumann algebras apply to more general word hyperbolic Coxeter groups. In the group case ($q=1$)  this is exactly Theorem \ref{Thm=StrongSolidRightAngled}. However, the results from \cite{klisseTopologicalBoundariesConnected2020} and in particular  \cite[Corollary 3.17]{klisseTopologicalBoundariesConnected2020}  show that it is hard to extend current methods beyond free products of abelian Coxeter groups. A typical right-angled word hyperbolic Coxeter group that was not covered before this paper is given by
\begin{equation} \label{Eqn=FourPointsOnALine}
\langle \{ s_1, s_2, s_3, s_4 \} | M = (m_{i,j})_{i,j} \rangle \qquad \textrm{ with } m_{i,j} = 2 \textrm{ if } \vert i - j \vert = 2 \textrm{  and  } m_{i,j} = \infty \textrm{ otherwise}.
\end{equation}
In this section we prove that also the Hecke deformations of this Coxeter group satisfy AO$^+$ and are strongly solid. The precise statement is contained in Theorem \ref{Thm=HeckeStronglySolid}.

\subsection{Definition of Hecke algebras}\label{subsection:definition-Hecke-algebra}
Fix a (not necessarily right-angled, finite rank) Coxeter system $W = \langle S | M \rangle$.
 Let $q = (q_s)_{s\in S}$ with $q_s>0$ for $s\in S$ and such that $q_s = q_t$ whenever $s,t\in S$ are conjugate in $W$. In this text we shall call such tuples \textit{Hecke tuples}. Moreover, we will denote $p_s(q) = \frac{q_s-1}{\sqrt{q_s}}$.
We can as in \cite[Theorem 19.1.1]{davisGeometryTopologyCoxeter2008}
define for $s\in S$ the operators $T_s^{(q)}: \ell_2(W)\to \ell_2(W)$ given by
$$T_s^{(q)}(\delta_{\mathbf{w}}) = \begin{cases}
\delta_{s\mathbf{w}} & |s\mathbf{w}|> |\mathbf{w}|\\
\delta_{s\mathbf{w}} + p_s(q)\delta_{\mathbf{w}} & |s\mathbf{w}|< |\mathbf{w}|
\end{cases}.$$
For these operators we have
\[
\begin{split}
\langle T_s^{(q)}(\delta_{\mathbf{w}}),\delta_{\mathbf{z}}\rangle
&=
\langle \delta_{s\mathbf{w}},\delta_{\mathbf{z}}\rangle +
\langle p_s(q)\delta_{\mathbf{w}} ,\delta_{\mathbf{z}}\rangle\mathds{1}(|s\mathbf{w}|<|\mathbf{w}|)\\
&= 	\langle \delta_{\mathbf{w}},\delta_{s\mathbf{z}}\rangle +
\langle \delta_{\mathbf{w}}  , p_s(q)\delta_{\mathbf{z}}\rangle\mathds{1}(|s\mathbf{z}|<|\mathbf{z}|)\\
&=\langle \delta_{\mathbf{w}},T_s^{(q)}(\delta_{\mathbf{z}})\rangle
\end{split}
\]
that is $(T_{s}^{(q)})^* = T_s^{(q)}$.

Now, for a word $\mathbf{w}\in W$ with a reduced expression $\mathbf{w}=w_1 \ldots w_k$ we can set
$T_{\mathbf{w}}^{(q)} = T_{w_1}^{(q)}.....T_{w_k}^{(q)}$, which is well-defined by \cite[Theorem 19.1.1]{davisGeometryTopologyCoxeter2008}.
We note that we have $(T_{\mathbf{w}}^{(q)})^* = T_{\mathbf{w}^{-1}}^{(q)}$ and $T_{\mathbf{w}}^{(q)}(\delta_e) = \delta_{\mathbf{w}}$.
Furthermore for $s\in S$ and $\mathbf{w}\in W$ they satisfy the equations
\[
\begin{split}
T_s^{(q)}T_{\mathbf{w}}^{(q)} &= T_{s\mathbf{w}}^{(q)} + p_s(q)T_{\mathbf{w}}^{(q)}\mathds{1}(|s\mathbf{w}|<|\mathbf{w}|),\\
T_{\mathbf{w}}^{(q)}T_s^{(q)} &= T_{\mathbf{w}s}^{(q)} + p_s(q)T_{\mathbf{w}}^{(q)}\mathds{1}(|\mathbf{w}s|<|\mathbf{w}|).
\end{split}
\]
Note that the first equation holds by the proof of \cite[Theorem 19.1.1]{davisGeometryTopologyCoxeter2008}, and the second equation follows by taking the adjoint on both sides.

We will denote $\CC_q[W]$ for the $*$-algebra spanned by the linear basis $\{T_\mathbf{w}^{(q)} : \mathbf{w}\in W\}$.	
We furthermore denote $C_{r, q}^*(W)\subseteq B(\ell_2(W))$ for the reduced C$^\ast$-algebra obtained by taking the norm closure of $\CC_q[W]$. Finally, we define the Hecke von Neumann algebra $\mathcal{N}_q(W)$ (or simply $\mathcal{N}_q$) as the strong closure of $C_{r,q}^*(W)$. We equip the von Neumann algebra with the faithful finite trace $\tau(x) = \langle x\delta_e,\delta_e\rangle$. We note here that when $q=(q_s)_{s\in S}$ is taken as $q_s=1$ for $s\in S$, then $(\mathcal{N}_q,\tau)$ is simply the group von Neumann algebra $\calL(W)$ with canonical trace $\tau$. The group von Neumann algebra is thus a special case of a Hecke algebra.

\subsection{Coefficients for gradient bimodules of Hecke algebras}\label{subsection:Hecke-gradient-Sp-property}
We freely use the notation of Section \ref{subsection:definition-Hecke-algebra}. In particular we fix the tuple $q=(q_s)_{s\in S}$. We will simply write $T_{\mathbf{w}}$  instead of $T_\mathbf{w}^{(q)}$ and $p_s$ instead of $p_s(q)$.
   We let $\psi: W \rightarrow \mathbb{R}$ be proper and conditionally of negative type.  Define
\[
\Delta_\psi := \Delta_{\psi}^{(q)}:  \CC_q[W] \rightarrow \CC_q[W]: T_\mathbf{w} \mapsto \psi(\mathbf{w})T_\mathbf{w},
\]
and for $t \geq 0$,
\begin{equation}\label{Eqn=PhiHecke}
\Phi_t := \Phi_t^{(q)}:  \CC_q[W] \rightarrow \CC_q[W]: T_\mathbf{w} \mapsto \exp(-t \psi(\mathbf{w}) ) T_\mathbf{w}.
\end{equation}
We will now work under the following assumption.

\begin{assumption}\label{assumption:extension-ucp-Hecke-algebras}
  $\Phi_t, t \geq 0$  extends to a normal unital completely positive map   $\calN_q(W) \rightarrow \calN_q(W)$.
\end{assumption}

The main point of the assumption is the complete positivity of $\Phi_t$; the unitality is automatic since $\psi(e) = e$ and also the existence of a normal extension can usually be proved using a standard argument once one knows that $\Phi_t$ is bounded (see the final paragraph of the proof of \cite[Theorem 4.13]{caspersAbsenceCartanSubalgebras2020}).

The assumption holds in case $q = 1$ by Sch\"onberg's theorem and in case $W$ is right-angled by combining \cite[Corollary 3.4, Proposition 3.7]{caspersAbsenceCartanSubalgebras2020} and  \cite[Proposition 2.30]{caspersGraphProductsOperator2017}. Note that if the assumption holds then  $\calN_q(W)$ satisfies the Haagerup property since $\psi$ is proper. In general we do not know whether Assumption \ref{assumption:extension-ucp-Hecke-algebras} holds. In fact, it is not even known whether $\calN_q(W)$ has the Haagerup property unless $W$ is right-angled (see \cite[Section 3]{caspersAbsenceCartanSubalgebras2020}) or $q = 1$ (see \cite{bozejkoINFINITECOXETERGROUPS1988}).

It is standard  to check that if Assumption \ref{assumption:extension-ucp-Hecke-algebras} holds then $\Phi = (\Phi_t)_{t \geq 0}$ is a symmetric quantum Markov semi-group. For the continuity property note that $\Phi_t$ is a contractive semi-group on $L_2(\calN_q, \tau)$ and then use that on the unit ball of $\calN_q$ the strong topology equals the   $L_2(\calN_q, \tau)$-topology.

  We shall now investigate the gradient-$\cS_p, p \in [1, \infty]$ property for $\Phi$ with respect to the $\sigma$-weak dense subalgebra $\calA := \CC_q[W]$ of $\calN_q(W)$.  The set $\mathcal{A}_0:= \{T_s :  s\in S\}$ forms a self-adjoint set that generates the $\ast$-algebra $\mathcal{A}$. Therefore by Lemma \ref{lemma:reduction-gradient-Sp-to-generators}  in order to check the gradient-$\mathcal{S}_p$ property for $\Phi$ we only have to check that $\Psi^{T_u,T_w}$ given in Definition \ref{Dfn=GradientSp} is in $\mathcal{S}_p = \mathcal{S}_p(L_2(\calN_q(W)))$ for generators $u,w\in S$. To check  this  we shall make some calculations to obtain a simplified expression for $\Psi^{T_u,T_w}$.

Fix $u,w\in S$ and let $\mathbf{v}\in W$. We have by the multiplication rules that
\[
\begin{split}
T_u(T_\mathbf{v}T_w)	=& T_uT_{\mathbf{v}w} + T_uT_{\mathbf{v}}p_w\mathds{1}(|\mathbf{v}w|<|\mathbf{v}|)\\
= & T_{u\mathbf{v}w}+ p_uT_{\mathbf{v}w}\mathds{1}(|u\mathbf{v}w|<|\mathbf{v}w|)\\
 & +(T_{u\mathbf{v}}
+p_uT_{\mathbf{v}}\mathds{1}(|u\mathbf{v}|<|\mathbf{v}|))p_w\mathbf{1}(|\mathbf{v}w|<|\mathbf{v}|).
\end{split}
\]
We can now make the following calculations
\[
\begin{split}
\Delta_{\psi}(T_uT_\mathbf{v}T_w)	= & \psi(u\mathbf{v}w)T_{u\mathbf{v}w} + \psi(\mathbf{v}w)p_uT_{\mathbf{v}w}\mathds{1}(|u\mathbf{v}w|<|\mathbf{v}w|)\\ &+\psi(u\mathbf{v})T_{u\mathbf{v}}p_w\mathds{1}(|\mathbf{v}w|<|\mathbf{v}|)
+\psi(\mathbf{v})p_uT_{\mathbf{v}}p_w\mathds{1}(|u\mathbf{v}|<|\mathbf{v}|)\mathbf{1}(|\mathbf{v}w|<|\mathbf{v}|),\\
T_u\Delta_{\psi}(T_\mathbf{v})T_w	=& \psi(\mathbf{v})(T_{u\mathbf{v}w} + p_uT_{\mathbf{v}w}\mathds{1}(|u\mathbf{v}w|<|\mathbf{v}w|))\\
&+\psi(\mathbf{v})(T_{u\mathbf{v}}
+p_uT_{\mathbf{v}}\mathds{1}(|u\mathbf{v}|<|\mathbf{v}|))p_w\mathbf{1}(|\mathbf{v}w|<|\mathbf{v}|),\\
T_u\Delta_{\psi}(T_\mathbf{v}T_w) &= \psi(\mathbf{v}w)T_uT_{\mathbf{v}w} + \psi(\mathbf{v})T_uT_{\mathbf{v}}p_w\mathds{1}(|\mathbf{v}w|<|\mathbf{v}|),\\
= & \psi(\mathbf{v}w)(T_{u\mathbf{v}w} + p_uT_{\mathbf{v}w}
\mathds{1}(|u\mathbf{v}w|<|\mathbf{v}w|))\\
&+ \psi(\mathbf{v})(T_{u\mathbf{v}} + p_uT_\mathbf{v}\mathds{1}(|u\mathbf{v}|<|\mathbf{v}|))p_w\mathds{1}(|\mathbf{v}w|<|\mathbf{v}|),\\
\Delta_{\psi}(T_uT_\mathbf{v})T_w &= \psi(u\mathbf{v})T_{u\mathbf{v}}T_w + \psi(\mathbf{v})p_uT_{\mathbf{v}}T_w\mathds{1}(|u\mathbf{v}|<|\mathbf{v}|)\\
= & \psi(u\mathbf{v})(T_{u\mathbf{v}w} + T_{u\mathbf{v}}p_w
\mathds{1}(|u\mathbf{v}w|<|u\mathbf{v}|))\\
&+ \psi(\mathbf{v})p_u(T_{\mathbf{v}w} + T_\mathbf{v}p_w\mathds{1}(|\mathbf{v}w|<|\mathbf{v}|))\mathds{1}(|u\mathbf{v}|<|\mathbf{v}|).
\end{split}
\]
Let $\psi_{S}$ be again the word length function on $W$.  Now by collecting all previous terms we get
\[
\begin{split}
\Psi^{T_u,T_w} (T_{\mathbf{v}}) = & \Delta_{\psi}(T_uT_\mathbf{v}T_w) + T_u\Delta_{\psi}(T_\mathbf{v})T_w  - T_u\Delta_{\psi}(T_\mathbf{v}T_w)- \Delta_{\psi}(T_uT_\mathbf{v})T_w\\
= & (\psi(u\mathbf{v}w) + \psi(\mathbf{v}) -\psi(\mathbf{v}w) - \psi(u\mathbf{v}))T_{u\mathbf{v}w}\\
&+[(\psi(u\mathbf{v}) + \psi(\mathbf{v}) - \psi(\mathbf{v}))\mathds{1}(|\mathbf{v}w|<|\mathbf{v}|)
-\psi(u\mathbf{v})\mathds{1}(|u\mathbf{v}w|<|u\mathbf{v}|)]
T_{u\mathbf{v}}p_w\\
&+[(\psi(\mathbf{v}w) + \psi(\mathbf{v}) - \psi(\mathbf{v}w))\mathds{1}(|u\mathbf{v}w|<|\mathbf{v}w|)
-\psi(\mathbf{v})\mathds{1}(|u\mathbf{v}|<|\mathbf{v}|)]
p_uT_{\mathbf{v}w}\\
&+(\psi(\mathbf{v}) + \psi(\mathbf{v}) - \psi(\mathbf{v}) - \psi(\mathbf{v})) \mathds{1}(|u\mathbf{v}|<|\mathbf{v}|)\mathds{1}(|\mathbf{v}w|<|\mathbf{v}|)p_uT_{\mathbf{v}}p_w\\
= & \gamma_{u,w}^{\psi}(\mathbf{v})T_{u\mathbf{v}w}\\
&+ \psi(u\mathbf{v})(\mathds{1}(|\mathbf{v}w|<|\mathbf{v}|) - \mathds{1}(|u\mathbf{v}w|<|u\mathbf{v}|))T_{u\mathbf{v}}p_w \\
&+\psi(\mathbf{v})(\mathds{1}(|u\mathbf{v}w|<|\mathbf{v}w|) - \mathds{1}(|u\mathbf{v}|<|\mathbf{v}|)) p_uT_{\mathbf{v}w}\\
= & \gamma_{u,w}^{\psi}(\mathbf{v})T_{u\mathbf{v}w}\\
&+ \psi(u\mathbf{v})\left(\frac{|\vv| - |\vv w|+1}{2} -\frac{|u\vv| - |u\vv w| +1}{2}\right)T_{u\mathbf{v}}p_w \\
&+\psi(\mathbf{v})\left(\frac{|\vv w| - |u\vv w|+1}{2} -\frac{|\vv| - |u\vv| +1}{2}\right)p_uT_{\mathbf{v}w}\\
= & \gamma_{u,w}^{\psi}(\mathbf{v})T_{u\mathbf{v}w}
+ \frac{1}{2}\left(|u\vv w| + |\vv| - |\vv w| -|u\vv|\right) (\psi(u\mathbf{v})T_{u\mathbf{v}}p_w -\psi(\vv)p_uT_{\vv w})\\
= & \gamma_{u,w}^{\psi}(\mathbf{v})T_{u\mathbf{v}w}
+ \frac{1}{2}\gamma_{u,w}^{\psi_{S}}(\vv) (\psi(u\mathbf{v})T_{u\mathbf{v}}p_w -\psi(\vv)p_uT_{\vv w}).
\end{split}
\]
Now when  $u\vv \not=\vv w$ we have by Lemma \ref{lemma:shifting-identity} that $\gamma_{u,w}^{\psi_{S}}(\vv)=0$.
When  $u\mathbf{v} = \mathbf{v}w$  we have  $|\frac{1}{2}\gamma_{u,w}^{\psi_S}(\vv)|=\psi_{S}(u)=1$. In this case the elements $u$ and $w$ are also conjugate and therefore $p_u=p_w$.
Combining these facts we obtain the simplified formula
\begin{equation}
\Psi^{T_u,T_w}(T_{\vv}) = \gamma_{u,w}^{\psi}(\vv)T_{u\vv w} + \frac{1}{2}\gamma_{u,w}^{\psi_{S}}(\vv)(\psi(u\vv)-\psi(\vv))T_{u\vv}p_w.
\end{equation}
We will proceed under the further assumption that $\psi$ is a length function.
\begin{assumption}
We shall assume from this point that the proper, conditionally of negative type function $\psi: W \rightarrow \mathbb{R}$ is also a length function.
\end{assumption}

Using the fact that $\{T_{\vv}\}_{\vv\in W}$ is an orthonormal basis for $L_2(\calN_q(W),\tau)$ we obtain that for the $\calS_2$-norm of $\Psi^{T_u,T_w}$ we have the following bound
\begin{equation}\label{Eqn=HeckeEstimateNew}
\begin{split}
\|\Psi^{T_u,T_w} \|_{\mathcal{S}_2}^2 = & \sum_{\vv\in W}\langle \Psi^{T_u,T_w} (T_{\vv}),\Psi^{T_u,T_w} (T_{\vv})\rangle\\
= &\sum_{\vv\in W}\left[|\gamma_{u,w}^{\psi}(\vv)|^2 + \frac{1}{4}|\gamma_{u,w}^{\psi_{S}}(\vv)|^2|\psi(u\vv)-\psi(\vv)|^2|p_u|^2\right]\\
\leq &\|\gamma_{u,w}^{\psi}\|_{\ell_2(W)}^2 + \frac{1}{4}|\psi(u)|^2p_u^2\|\gamma_{u,w}^{\psi_{S}}\|_{\ell_2(W)}^2.
\end{split}
\end{equation}
We are then thus interested in functions $\psi$ for which this bound is finite for all $u,w\in S$.\\

\begin{theorem}\label{Thm=SpHecke}
Let $W = \langle S | M \rangle$ be a right angled Coxeter group and let $q = (q_s)_{s\in S}$ with $q_s >0$. Assume that the elements in
\begin{equation}\label{Eqn=HeckeClique}
\mathcal{I} := \{ r \in S : \exists s,t \in S \textrm{ such that } m_{r,s} = m_{r,t} = 2 \textrm{ and } m_{s,t} = \infty \}
\end{equation}
commute, i.e. $\mathcal{I} \in \Cliq(S \mid M)$. Suppose that   $\psi := \psi_{S \backslash \mathcal{I}}$ satisfies Assumption \ref{assumption:extension-ucp-Hecke-algebras}.
 Then the QMS on $\calN_q(W)$ determined by \eqref{Eqn=PhiHecke} associated with $\psi_{S \backslash \mathcal{I}}$ is gradient-$\cS_2$.
\end{theorem}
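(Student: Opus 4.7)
The plan is to apply Lemma \ref{lemma:reduction-gradient-Sp-to-generators}, which reduces the task to showing $\Psi^{T_u, T_w} \in \cS_2$ for every pair of generators $u, w \in S$. Conveniently, the computation carried out in this subsection immediately before the theorem already supplies the explicit upper bound
\[
\|\Psi^{T_u,T_w}\|_{\cS_2}^2 \;\leq\; \|\gamma_{u,w}^{\psi}\|_{\ell_2(W)}^2 \;+\; \tfrac{1}{4}\psi(u)^2 p_u^2 \|\gamma_{u,w}^{\psi_S}\|_{\ell_2(W)}^2,
\]
and Lemma \ref{lemma:shifting-identity} tells us that both $\gamma_{u,w}^\psi$ and $\gamma_{u,w}^{\psi_S}$ are supported on the set $Z_{u,w} := \{\vv \in W : u\vv = \vv w\}$, where they take the constant absolute values $2\psi(u)$ and $2$ respectively. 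Since by construction $\psi(u) = 0$ precisely when $u \in \mathcal{I}$, the entire right-hand side vanishes as soon as $u \in \mathcal{I}$, and the proof is reduced to showing that $Z_{u,w}$ is finite whenever $u \in S \setminus \mathcal{I}$.

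A first reduction is that $Z_{u,w}$ is empty unless $u$ and $w$ are conjugate in $W$, which by \cite[Lemma 3.3.3]{davisGeometryTopologyCoxeter2008} forces $u = w$ in a right-angled Coxeter group. Hence it remains to prove that the centralizer $Z_u := Z_W(u)$ is finite for every $u \notin \mathcal{I}$. For this I will apply Lemma \ref{lemma:word-sequence-expression} to a representative $\vv$ of minimal length within its orbit $\{\vv, u\vv, \vv u, u\vv u\}$; by Proposition \ref{proposition:support-gamma} every element of $Z_u$ is obtained from such a $\vv$ by at most one left- and one right-multiplication by $u$. In the right-angled setting every label $m_{i_l, j_l}$ appearing in the decomposition $\vv = \rr_{i_1, j_1} \cdots \rr_{i_k, j_k}$ is finite and even, hence equal to $2$, so each block $\rr_{i_l, j_l}$ is a single letter $s_{i_l}$. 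Tracking the constraints of Lemma \ref{lemma:word-sequence-expression} with $w = u$ forces $s_{j_l} = u$ for all $l$ and $s_{i_l} \in S_u := \{s \in S \setminus \{u\} : m_{u,s} = 2\}$, the commuting link of $u$.

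This brings us to the main (and only) nontrivial step: invoking the defining property of $\mathcal{I}$. The statement $u \notin \mathcal{I}$ amounts precisely to saying that $S_u$ is a clique in $\Graph_S(W)$, i.e.\ every two elements of $S_u$ commute in $W$. Consequently the parabolic subgroup $W_{S_u}$ is the finite elementary abelian $2$-group $(\mathbb{Z}/2)^{|S_u|}$, and every minimal representative $\vv$ produced above lies inside $W_{S_u}$. Since $u$ commutes with each element of $S_u$, we conclude $Z_u \subseteq \langle u \rangle \times W_{S_u}$, which is finite. This yields the required finiteness of $Z_{u,w}$ for $u \notin \mathcal{I}$, and thereby completes the gradient-$\cS_2$ estimate. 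The remaining clique assumption on $\mathcal{I}$ itself is not needed for this computation; it enters elsewhere to guarantee, via Proposition \ref{Prop=ProperWeight}, that $\psi = \psi_{S \setminus \mathcal{I}}$ is proper, which is necessary for the subsequent AO$^+$ application.
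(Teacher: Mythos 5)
Your proof is correct and reaches the paper's conclusion by the same overall strategy: reduce to generators via Lemma \ref{lemma:reduction-gradient-Sp-to-generators}, apply the bound \eqref{Eqn=HeckeEstimateNew}, and case-split on whether $u\in\mathcal{I}$. The genuine difference is in the case $u\notin\mathcal{I}$. The paper cites Theorem \ref{lemma:gradient-Sp-weighted-word-length-right-angled-Coxeter-group} to get $\gamma_{u,w}^{\psi}\in\ell_2$ and then uses the pointwise identity $|\gamma_{u,w}^{\psi}|=|\gamma_{u,w}^{\psi_S}|$, available because $\psi(u)=1=\psi_S(u)$, to handle the second summand. You instead inline a self-contained finiteness argument: reduce to $u=w$ by conjugacy, and then, using Lemma \ref{lemma:word-sequence-expression} and Proposition \ref{proposition:support-gamma}, deduce that every minimal representative $\vv$ lies in $W_{S_u}$ and hence that the centralizer $Z_W(u)$ is contained in the finite parabolic $W_{\{u\}\cup S_u}$, where $S_u$ is the commuting link of $u$ and is a clique precisely because $u\notin\mathcal{I}$. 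This buys a cleaner structural picture (the relevant support is a centralizer sitting inside a finite parabolic) and makes transparent your correct closing observation that the clique hypothesis $\mathcal{I}\in\Cliq(S\mid M)$ is not actually consumed by the $\cS_2$ estimate itself: the paper's route uses it indirectly, since Theorem \ref{lemma:gradient-Sp-weighted-word-length-right-angled-Coxeter-group} assumes properness of $\psi_{S\setminus\mathcal{I}}$, whereas your direct derivation avoids this. As you note, that hypothesis is still needed in the subsequent Theorem \ref{Thm=HeckeStronglySolid}, where properness enters for the Riesz transform.
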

\begin{proof}
We have shown already in Theorem \ref{lemma:gradient-Sp-weighted-word-length-right-angled-Coxeter-group} that for $u,w\in S$ we have that $\gamma_{u,w}^{\psi_{S \backslash \calI}}\in \ell_2(\Gamma)$.
 Now if $u \in \mathcal{I}$ then $\psi_{ S \backslash \mathcal{I} }  (u) = 0$ and hence by \eqref{Eqn=HeckeEstimateNew},
 \[
 \|\Psi^{T_u,T_w} \|_{\mathcal{S}_2}^2 \leq \|\gamma_{u,w}^{\psi_{ S \backslash \mathcal{I} } }\|_{\ell_2(W)}^2 < \infty.
 \]
 If $u \in S \backslash \mathcal{I}$ then $\psi_{ S \backslash \mathcal{I} }  (u) = 1$
 and therefore by Lemma \ref{lemma:shifting-identity} we have
\[
|\gamma_{u,w}^{\psi_{S \backslash \calI}}(\vv)| = 2\psi_{ S \backslash  \mathcal{I} }(u)\mathds{1}(u\vv = \vv w) = 2\psi_{S }(u)\mathds{1}(u\vv = \vv w) = |\gamma_{u,w}^{\psi_{S}}(\vv)|.
\]
This means that in this case $\gamma_{u,w}^{\psi_{S \backslash \calI}} = \gamma_{u,w}^{\psi_{S }}\in \ell_2(\Gamma)$. We conclude from \eqref{Eqn=HeckeEstimateNew} that
\[
\|\Psi^{T_u,T_w} \|_{\mathcal{S}_2}^2\leq\|\gamma_{u,w}^{\psi_{S \backslash \calI}}\|_{\ell_2(\Gamma)}^2 + \frac{1}{4}  p_u^2
\cdot\|\gamma_{u,w}^{\psi_{S}}\|_{\ell_2(\Gamma)}^2<\infty.
\]
\end{proof}

\begin{theorem}\label{Thm=HeckeStronglySolid}
Let $W = \langle S | M \rangle$ be a right angled Coxeter group and let $q = (q_s)_{s\in S}$ with $q_s >0$. Assume that   \eqref{Eqn=HeckeClique} is contained in $\Cliq(S \mid M)$. Then $\calN_q(W)$ satisfies AO$^+$ and is strongly solid.
\end{theorem}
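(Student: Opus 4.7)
The plan is to carry out in the Hecke algebra $\calN_q(W)$ the same Riesz-transform strategy that was used in Theorem \ref{Thm=StrongSolidRightAngled} for the group case, but using the weighted length $\psi := \psi_{S\setminus\mathcal{I}}$ of Section \ref{section:semi-groups-weighted-word-length} instead of the plain word length. First I would observe that since $\mathcal{I}\in\Cliq(S\mid M)$, the subgroup generated by $\mathcal{I}$ is the finite Coxeter group $(\mathbb{Z}/2\mathbb{Z})^{|\mathcal{I}|}$, so by Proposition \ref{Prop=ProperWeight} the length $\psi$ is proper; by Lemma \ref{lemma:weighted-word-lenght-conditionally-negative} it is also conditionally of negative type. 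The semi-group $\Phi_t(T_\ww) = e^{-t\psi(\ww)}T_\ww$ extends to a normal ucp QMS on $\calN_q(W)$ by \cite[Corollary 3.4, Proposition 3.7]{caspersAbsenceCartanSubalgebras2020} together with \cite[Proposition 2.30]{caspersGraphProductsOperator2017}, so Assumption \ref{assumption:extension-ucp-Hecke-algebras} holds and Theorem \ref{Thm=SpHecke} gives that $\Phi$ is gradient-$\calS_2$.

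Next I would run the coefficient-to-quasi-containment machinery of Section \ref{Sect=Coefficients} inside $\calN_q(W)$. The Hecke analogue of Theorem \ref{Thm=GradientCoefficients} is proved by exactly the same calculation as in the group case (replacing $\lambda_\gamma$ by $T_\ww$ and using traciality of $\tau$): gradient-$\calS_2$ yields a cyclic set of $\calS_2$-coefficients for the gradient $\mathbb{C}_q[W]$-bimodule $L_2(\calN_q(W))_\nabla$, and then the general finite-von-Neumann-algebra version of Proposition \ref{proposition:quasicontainment} shows that $L_2(\calN_q(W))_\nabla$ is a $\calN_q(W)$-bimodule quasi-contained in the coarse bimodule $L_2(\calN_q(W))\otimes L_2(\calN_q(W))$.

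Now I would show that the Riesz transform $R_\Phi\colon L_2(\calN_q(W))\to L_2(\calN_q(W))_\nabla$ satisfies the hypotheses of (the finite-von-Neumann-algebra version of) Theorem \ref{Thm=RieszImpliesAO}. The QMS $\Phi$ is filtered with subexponential growth in the sense of Section \ref{Sect=RieszEtc}: setting $\calA(n):=\Span\{T_\ww : \psi(\ww)=n\}$, the fact that $\psi$ is a length function together with the Hecke multiplication rules (where $T_uT_\ww$ differs from $T_{u\ww}$ only by lower-length corrections) gives $\calA(l)\calA(k)\subseteq\bigoplus_{n=0}^{l+k}\calA(n)$, while subexponential growth of eigenvalues follows from $\psi(W)\subseteq\ZZ_{\geq 0}$ having bounded gaps (bounded by $1+|(\mathbb{Z}/2)^{|\mathcal{I}|}|$). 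Theorem \ref{Thm=AlmostBimodularRiesz} then ensures that $R_\Phi$ is almost bimodular. Its kernel is $\Span\{T_\ww : \psi(\ww)=0\}$, i.e.\ the span of $T_\ww$ for $\ww$ in the finite subgroup $\langle\mathcal{I}\rangle$, hence finite dimensional, so $R_\Phi^\ast R_\Phi$ is Fredholm.

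Finally, since right-angled Hecke von Neumann algebras are weakly amenable with the weak-$\ast$ CBAP \cite{caspersAbsenceCartanSubalgebras2020}, $C_{r,q}^\ast(W)$ is locally reflexive, and Theorem \ref{Thm=RieszImpliesAO} gives AO$^+$ for $\calN_q(W)$. Strong solidity follows by combining AO$^+$ with the weak-$\ast$ CBAP via \cite[Theorem A]{isonoExamplesFactorsWhich2015}. The main obstacle I anticipate is the bookkeeping required to check that the results in Section \ref{Sect=Coefficients} and Section \ref{Sect=RieszEtc}, originally stated for $\CC[\Gamma]$-bimodules, carry over verbatim to $\CC_q[W]$-bimodules; this is essentially a matter of replacing $\lambda_\gamma$ with $T_\ww$ throughout, using that $\{T_\ww\}_{\ww\in W}$ is an orthonormal basis of $L_2(\calN_q(W),\tau)$ and that $\tau$ is a faithful normal tracial state, but it is the only point where care is needed.
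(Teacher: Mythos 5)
Your proof follows essentially the same route as the paper's: gradient-$\calS_2$ for the QMS associated with $\psi_{S\setminus\mathcal{I}}$ via Theorem \ref{Thm=SpHecke}, the Hecke analogue of Theorem \ref{Thm=GradientCoefficients} plus Proposition \ref{proposition:quasicontainment} for quasi-containment in the coarse bimodule, filtration and subexponential growth for almost bimodularity of the Riesz transform, and then Theorem \ref{Thm=RieszImpliesAO} followed by \cite[Theorem A]{isonoExamplesFactorsWhich2015}. One small imprecision: you derive local reflexivity of $C_{r,q}^\ast(W)$ from the weak-$\ast$ CBAP of the von Neumann algebra $\calN_q(W)$, but the weak-$\ast$ CBAP of a von Neumann algebra does not by itself yield local reflexivity of a $\sigma$-weakly dense C$^\ast$-subalgebra; the paper instead quotes exactness of $C_{r,q}^\ast(W)$ from \cite{caspersGraphProductKhintchine2021} (alternatively, \cite{caspersAbsenceCartanSubalgebras2020} establishes the CCAP at the C$^\ast$-level, which also gives local reflexivity, so your cited reference does contain what is needed once read at the right level).
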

\begin{proof}
Theorem \ref{Thm=SpHecke} shows that the QMS $\Phi$ on $\calN_q(W)$ associated with the length function $\psi_{S \backslash \mathcal{I}}$ is gradient-$\cS_2$ . Therefore by Theorem \ref{Thm=GradientCoefficients}  we see that a dense set of coefficients of the associated gradient bimodule $L_2(\calN_q(W), \tau)_\nabla$ is in $\cS_2$.
Note that Theorem \ref{Thm=GradientCoefficients} is stated only for groups, but a straightforward adaptation of the computations in the proof yields the same result for Hecke algebras.
  Hence the gradient bimodule is quasi-contained in the coarse bimodule of $\calN_q(W)$ by \cite[Theorem 3.9]{caspersL2CohomologyDerivationsQuantum2021} (see also Proposition \ref{proposition:quasicontainment}). The Riesz transform  is then an isometry $R_\Phi:  L_2(\calN_q(W), \tau) \rightarrow L_2(\calN_q(W), \tau)_\nabla$. The kernel of $R_\Phi$ is given by the space spanned by the vectors $T_\ww$ with $\ww$ in the (finite) group generated by $\mathcal{I}$.
 Essentially in the same way as in the group case ($q = 1$) one checks that $\Phi$ is filtered with subexponential growth. Therefore by Theorem \ref{Thm=AlmostBimodularRiesz}  we see that  $R_\Phi$ is almost bimodular. By \cite[Theorem 6.1]{caspersGraphProductKhintchine2021} $C_{r,q}^\ast(W)$ is exact and hence locally reflexive \cite{brownAlgebrasFiniteDimensionalApproximations2008}.  We may now invoke Theorem \cite[Proposition 5.2]{caspersL2CohomologyDerivationsQuantum2021} (see also Theorem \ref{Thm=RieszImpliesAO}) to conclude that $\calN_q(W)$ satisfies AO$^+$. By \cite[Theorem A]{caspersAbsenceCartanSubalgebras2020} $\calN_q(W)$ satisfies the weak-$\ast$ completely bounded approximation property. Hence \cite[Theorem A]{isonoExamplesFactorsWhich2015}  (see also Theorem \ref{Thm=AOimpliesStronglySolid}) shows that $\calN_q(W)$ is strongly solid.
\end{proof}

\begin{remark}
The strong solidity result of Theorem \ref{Thm=HeckeStronglySolid} can also be proved by combining the results in this paper with the methods of \cite{caspersGradientFormsStrong2021}, \cite{OzawaPopaAJM}, \cite{Peterson} without using condition AO$^+$.
\end{remark}

\begin{remark}
The set \eqref{Eqn=HeckeClique} can be understood as all elements in $S$ that belong to exactly one maximal clique.
\end{remark}

\section{Open problems}\label{Sect=Problems}

 We list two natural problems which we believe are open.

 \begin{problem}
Consider a Coxeter system $W = \langle S | M \rangle$ and $q = (q_s)_{s\in S}$ with $q_s>0, s\in S$   such that $q_s = q_t$ whenever $s,t\in S$ are conjugate in $W$. Does the Hecke von Neumann algebra $\calN_q(W)$ have the Haagerup property and/or the weak-$\ast$ completely bounded approximation property? An affirmative answer for both properties is known in case  $q_s =1$ for all $s \in S$ \cite{bozejkoINFINITECOXETERGROUPS1988}, \cite{fendlerWeakAmenabilityCoxeter2002}, \cite{Janus} or in case $W = \langle S | M \rangle$ is right-angled \cite{caspersAbsenceCartanSubalgebras2020}. For all other cases these properties are open. In particular we do not know in which generality Assumption \ref{assumption:extension-ucp-Hecke-algebras} holds  for $\psi = \psi_S$ the (unweighted) word length function.
 \end{problem}

\begin{problem}
For $W = \langle S | M \rangle$ a right-angled word hyperbolic Coxeter system and $q = (q_s)_{s\in S}$ with $q_s>0, s\in S$. Is $\calL_q(W)$ strongly solid? The cases obtained in Theorem \ref{Thm=HeckeStronglySolid} are word hyperbolic but do not exhaust all word hyperbolic right-angled Coxeter groups. In case $q_s = 1, s \in S$ the tensor product techniques from Section \ref{Sect=SolidTensor} allows one to improve the results of Section \ref{Sect=HeckeStronglySolid} to all  word hyperbolic right-angled Coxeter groups. However, such  tensor products of bimodules are unavailable unless $q_s =1, s\in S$ by the absence of a suitable comultiplication for Hecke algebras.
\end{problem}

\end{document}